 \numberwithin{equation}{section}
\theoremstyle{plain}
\newtheorem{thm}{Theorem}[section]
\newtheorem{lem}[thm]{Lemma}
\newtheorem{pro}[thm]{Proposition}
\newtheorem{cor}[thm]{Corollary}
\newtheorem{ex}[thm]{Example}
\newtheorem{de}[thm]{Definition}
\newtheorem{rem}[thm]{Remark}
\def\eproof{\hfill $\Box$ \bigskip}
\def\R {{\Bbb R}}
\def\N {{\Bbb N}}
\def\M {{\mathcal M}}
\def\I {{\mathcal I}}
\def\A {{\mathcal A}}
\def\C {{\mathcal C}}
\def\D {{\mathcal D}}
\def\G {{\mathcal G}}
\def\ba{{\bf a}}
\def\bc{{\bf c}}
\def\bp{{\bf p}}
\def\bq{{\bf q}}
\begin{document}
\baselineskip 16pt
\title{Weighted thermodynamic formalism and applications.}

\author{Julien Barral}
\address{LAGA (UMR 7539), D\'epartement de Math\'ematiques, Institut Galil\'ee, Universit\'e Paris 13, 99 avenue Jean-Baptiste Cl\'ement , 93430  Villetaneuse, France}
\email{barral@math.univ-paris13.fr}
\author{De-Jun Feng}
\address{
Department of Mathematics\\
The Chinese University of Hong Kong\\
Shatin,  Hong Kong\\
} \email{djfeng@math.cuhk.edu.hk}

\keywords{Thermodynamic formalism,  Equilibrium states, Symbolic dynamics,  Affine invariant sets,  Multifractal analysis,  Hausdorff dimension}
\subjclass{Primary 37D35, Secondary 37B10, 37A35, 28A78}

\date{}
\begin{abstract}

Let $(X,T)$ and $(Y,S)$ be two subshifts so that $Y$ is a factor of
$X$.
 For any asymptotically sub-additive potential $\Phi$ on $X$ and $\ba=(a,b)\in
\R^2$ with $a>0$, $b\geq 0$, we introduce the notions of
$\ba$-weighted topological pressure and $\ba$-weighted equilibrium
state of $\Phi$. We setup the weighted variational principle. In the
case that $X, Y$ are full shifts with one-block factor map, we prove
the uniqueness and Gibbs property of $\ba$-weighted equilibrium
states for almost additive potentials having the bounded distortion
properties. Extensions are given to the higher dimensional weighted
thermodynamic formalism. As an application, we conduct the
multifractal analysis for a new type of level sets associated with Birkhoff averages, as well as for weak Gibbs measures
associated with asymptotically additive potentials  on self-affine symbolic
spaces.
\end{abstract}

\maketitle
\section{Introduction}\label{intro}

The classical thermodynamic formalism developed by Sinai, Ruelle,
Bowen and Walters plays a fundamental role in statistical mechanics
and dynamical systems (see, e.g. \cite{Rue78, Wal82}). It adapts to
describe geometric properties of invariant sets and measures for
situations in which the statistics (box counting) carry all the
useful geometric information (e.g. the Hausdorff dimension of
conformal sets and measures \cite{Bow79, Rue82}, the topological entropy of level sets
of Birkhoff averages \cite{Bow73, Pes97, PeWi97}). However it seems not so
efficient when statistical and geometrical point of views reveal
different behaviors (e.g. the Hausdorff dimension of non-conformal
sets and measures). In this paper we
develop the so-called  weighted thermodynamic formalism, which may
provide a frame for which non-conformal geometry can be understood
through natural thermodynamical quantities. This is indeed the case for the dynamics of expanding diagonal endomorphisms of tori. For instance, let $m_1>m_2\ge 2$ be two integers,  let $K\subset \mathbb{T}^2$ be a self-affine Sierpinski carpet invariant by $T={\rm diag} (m_1,m_2)$, and $S$ denotes the map $y\mapsto m_2y(\mbox{mod } 1)$; let $\pi$ be the restriction to $K$ of the second coordinate projection. Let $\ba=(a,b):=(1/\log m_1, 1/\log m_2-1/\log m_1)$. Our starting point is to substitute the $\ba$-weighted entropy $h^\ba_\mu (T)=ah_\mu (T)+bh_{\mu\circ\pi^{-1}} (S)$ to the classical one in the variational definition of the topological pressure of any continuous potential $\phi$; this yields the ``$\ba$-weighted pressure'' $P^\ba(T,\phi)$ (in this setting, the Hausdorff dimension of $K$ obtained in \cite{McM84, Bed84, KePe96} is $P^\ba(T,0)$).
Then, we derive the uniqueness and the new Gibbs property for  $\ba$-weighted equilibrium states associated with any continuous potential $\phi$ satisfying the bounded distortion property, and prove for this case the differentiability of the $\ba$-weighted pressure function of $\phi$, namely $P^\ba(T,q\phi)$. This is used to establish a bridge between this weighted thermodynamic formalism and the Hausdorff dimension of invariant subsets of $K$, thanks to a fundamental result claiming that any invariant measure $\mu$ is the limit, in the weak-star topology, of a sequence of $\ba$-weighted equilibrium states whose $\ba$-weighted entropies converge to that of $\mu$. This property, as well as the Ledrappier-Young type formula $\dim_ H\nu= h^\ba_\nu (T)$  for any ergodic measure $\nu$ (see \cite{KePe96}), are exploited to find a sharp lower bound for the Hausdorff dimension of the set of generic points of any invariant measure $\mu$, which turns out to be equal to $h^\ba_\mu (T)$. It is also exploited to conduct the multifractal analysis of a new family of level sets associated with  the Birkhoff averages of $\phi$. There, the Hausdorff dimensions of level sets are expressed via the Legendre transform of $P^\ba(T,q\phi)$.

%This is applied to derive the and weighted weak Gibbs measures on self-affine symbolic
%spaces.

 In fact, our results hold in the more general framework for ``self-affine symbolic spaces'' and almost additive potentials. Before formulating them, we first give some definitions.

 We say that $(X,T)$ is a {\it topological dynamical system}
(TDS) if $X$ is a compact metric space and $T$ is a continuous map
from $X$ to $X$. Assume that $(X,T)$ and $(Y, S)$ are two TDSs such
that there is a continuous surjective map $\pi: X\to Y$ with $\pi
T=S\pi$, that is,  $Y$ is a {\it factor of $X$ with  factor map
$\pi$}. Let $\Phi=(\log \phi_n)_{n=1}^\infty$ be a sequence of
functions on $X$. We say that $\Phi$ is a {\it sub-additive
potential} and write $\Phi\in \C_{s}(X,T)$ if $\phi_n$ is
non-negative continuous for each $n$ and there exists a constant $c>0$ such that
$$
\phi_{n+m}(x)\leq c\phi_n(x)\phi_m(T^nx),\quad \forall \;x\in X, \; n,m\in \N.
$$
(we admit that $\phi_n$ takes the value zero).
More generally,   $\Phi=(\log \phi_n)_{n=1}^\infty$ is said to be an {\it asymptotically sub-additive potential} and  write $\Phi\in \C_{ass}(X,T)$ if for any $\varepsilon>0$, there exists a sub-additive potential $\Psi=(\log \psi_n)_{n=1}^\infty$ on $X$ such that
$$
\limsup_{n\to \infty}\frac{1}{n}\sup_{x\in X} |\log \phi_n(x)-\log \psi_n(x)|\leq \varepsilon,
$$
where we take the convention $\log 0-\log 0=0$.
Furthermore $\Phi$ is called an {\it asymptotically additive potential} and  write $\Phi\in \C_{asa}(X,T)$
if both $\Phi$ and $-\Phi$ are asymptotically sub-additive, where $-\Phi$ denotes $(\log (1/\phi_n))_{n=1}^\infty$.
In particular,  $\Phi$ is called {\it additive} if each $\phi_n$ is a continuous positive-valued function so that
$\phi_{n+m}(x)=\phi_n(x)\phi_m(T^nx)$ for all $x\in X$ and $m,n\in \N$; in this case, there is a
continuous real function $g$
on $X$ such that $\phi_n(x)=\exp(\sum_{i=0}^{n-1}g(T^ix))$ for each $n$.

Let $\Phi=(\log \phi_n)_{n=1}^\infty$ be an  asymptotically
sub-additive potential on $X$.
Let $\ba=(a,b)\in
\R^2$ so that $a>0$ and  $b\geq 0$. We introduce
\begin{equation}
\label{e-0.1}
P^{\ba}(T,\Phi)=\sup\{\Phi_*(\eta)+ah_\eta(T)+bh_{\eta\circ \pi^{-1}}(S):\; \eta\in \M(X,T)\},
\end{equation}
where $\M(X,T)$ denotes the collection of $T$-invariant probability
measures on $X$ endowed with the weak-star topology, $h_{\eta}(T)$
and $h_{\eta\circ \pi^{-1}}(S)$ denote the measure theoretic
entropies of $\eta$ and $\eta\circ\pi^{-1}$ (cf. \cite{Wal82}), and
$\Phi_*(\eta)$ is given by
\begin{equation}
\label{e-0.2}
 \Phi_*(\eta)=\lim_{n\to \infty}\frac{1}{n}\int \log
\phi_n(x)\; d\eta(x).
\end{equation}
By subadditivity, the limit in (\ref{e-0.2}) always exists (but may
take the value $-\infty$).  We call $P^{\ba}(T,\Phi)$ the {\it
$\ba$-weighted topological pressure of $\Phi$}. A measure $\eta\in
\M(X,T)$ is called an {\it $\ba$-weighted equilibrium state of
$\Phi$} if the supremum in (\ref{e-0.1}) is attained at $\eta$.

When $\ba=(1,0)$, we write $P^{\ba}(T,\Phi)$ simply as $P(T,\Phi)$ and call it
 the {\it  topological pressure of $\Phi$}. We remark that $P(T,\Phi)$ is a natural
 generalization of the classical topological pressure of additive
 functions, and it has been defined in an alternative way via separated sets or open covers
 in \cite{CFH08}.

Let $\nu\in \M(Y,S)$. We say that $\mu\in \M(X,T)$ is a {\it
conditional equilibrium state of $\Phi$ with respect to $\nu$} if
$\mu\circ \pi^{-1}=\nu$ and
\begin{equation}
\label{e-0.3}
\Phi_*(\mu)+h_\mu(T)-h_\nu(S)=\sup\{\Phi_*(\eta)+h_\eta(T)-h_\nu(S):\;\eta\in
\M(X,T),\; \eta\circ \pi^{-1}=\nu\}.
\end{equation}

 In the remainder part of this section,  we assume
that $X$ is a subshift over a finite alphabet $\A$, and $Y$  a
subshift over a finite alphabet $\D$ together with a one-block
factor map $\pi:\; X\to Y$ (see \S\ref{S-2.1} for the definitions).
Under this setting, the entropy function is upper semi-continuous
and hence the supremums in (\ref{e-0.1}) and (\ref{e-0.3}) are
attainable. For $I\in \A^n$, the $n$-th cylinder set $[I]$ in
$\A^\N$ is defined as
$$[I]=\{(x_i)_{i=1}^\infty\in \A^{\N}:\; x_1\ldots x_n=I\}.$$
Similarly for $J\in \D^n$, let $[J]$ denote the $n$-th cylinder set
in $\D^\N$. Our first result is the following.

\begin{thm}
\label{thm-0.1} Let $\ba=(a,b)\in \R^2$ so that $a>0$ and  $b\geq
0$. Let $\Phi=(\log \phi_n)_{n=1}^\infty$ be an asymptotically sub-additive potential on $X$, i.e. $\Phi \in \C_{ass}(X,T)$. Define a sequence $\Psi=(\log
\psi_n)_{n=1}^\infty$ of functions  on $Y$ by
\begin{equation*}\label{e-0.4}
\psi_n(y)=\sum_{I\in \A^n:\; [I]\cap \pi^{-1}(y)\neq \emptyset}
\sup_{x\in [I]\cap \pi^{-1}(y)}\phi_n(x)^{\frac 1a},\quad y\in Y.
\end{equation*}
Set $\frac{a}{a+b}\Psi=\left(\log\left(
\psi_n^{\frac{a}{a+b}}\right)\right)_{n=1}^\infty$. Then $\Psi$ and
$\frac{a}{a+b}\Psi$ are in $\C_{ass}(Y,S)$, moreover
\begin{equation}
\label{e-0.5}
\begin{split} P^\ba(T,\Phi)&=(a+b)
P\left(S,\frac{a}{a+b}\Psi\right)\\
&=\lim_{n\to \infty}\frac{a+b}{n}\log \sum_{J\in \D^n}\sup_{y\in
[J]\cap Y}\psi_n(y)^{\frac{a}{a+b}}.
\end{split}
\end{equation}
Furthermore, $\mu\in \M(X,T)$ is an $\ba$-weighted equilibrium state
of $\Phi$ if and only if  $\nu=\mu\circ \pi^{-1}$ is an equilibrium
state of $\frac{a}{a+b}\Psi$ and, $\mu$ is a conditional equilibrium
state of $\frac{1}{a}\Phi$ with respect to $\nu$, where
$\frac{1}{a}\Phi$ denotes the potential $(\log (\phi_n^{1/a}))_{n=1}^\infty$.

\end{thm}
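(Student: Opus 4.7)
\textbf{Plan and Step 1 ($\Psi,\tfrac{a}{a+b}\Psi\in\C_{ass}(Y,S)$).} The strategy is to convert the $\ba$-weighted variational problem on $X$ into a standard pressure problem on $Y$ for the induced potential $\tfrac{a}{a+b}\Psi$, by peeling off the factor contribution $(a+b)h_\nu(S)$ and absorbing the remaining fibre contribution into $\Psi_*(\nu)$ via a relative variational principle. For the first claim, suppose first $\Phi\in\C_s(X,T)$ with constant $c$, fix $y\in Y$, and split each $I\in\A^{n+m}$ with $[I]\cap\pi^{-1}(y)\neq\emptyset$ as $I=I_1I_2$ of lengths $n,m$. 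For any $x\in[I]\cap\pi^{-1}(y)$ the one-block property places $T^nx\in[I_2]\cap\pi^{-1}(S^ny)$, so sub-additivity and $(uv)^{1/a}=u^{1/a}v^{1/a}$ give
\[
\phi_{n+m}(x)^{1/a}\le c^{1/a}\sup_{z\in[I_1]\cap\pi^{-1}(y)}\!\phi_n(z)^{1/a}\,\sup_{z'\in[I_2]\cap\pi^{-1}(S^ny)}\!\phi_m(z')^{1/a}.
\]
Taking the supremum over $x$ and summing over $I_1,I_2$ yields $\psi_{n+m}(y)\le c^{1/a}\psi_n(y)\psi_m(S^ny)$, so $\Psi\in\C_s(Y,S)$, and raising to the power $a/(a+b)$ shows $\tfrac{a}{a+b}\Psi\in\C_s(Y,S)$. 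For general $\Phi\in\C_{ass}$, approximate $\Phi$ by sub-additive $\Phi^{(\varepsilon)}$; pushing the uniform bound $\tfrac{1}{n}\sup_X|\log\phi_n-\log\phi_n^{(\varepsilon)}|\to\varepsilon$ through the power $1/a$ and the finite sum yields $\tfrac{1}{n}\sup_Y|\log\psi_n-\log\psi_n^{(\varepsilon)}|\to\varepsilon/a$, placing $\Psi$ in $\C_{ass}(Y,S)$.

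\textbf{Step 2 (variational identity for fixed factor measure).} For $\mu\in\M(X,T)$ with $\nu=\mu\circ\pi^{-1}$, rewrite
\[
\Phi_*(\mu)+ah_\mu(T)+bh_\nu(S)=(a+b)h_\nu(S)+a\bigl(\tfrac{1}{a}\Phi_*(\mu)+h_\mu(T)-h_\nu(S)\bigr).
\]
Fix $\nu$ and take the supremum over $\mu$ with $\mu\circ\pi^{-1}=\nu$: the parenthesised quantity is precisely the expression maximised in the definition (1.3) of conditional equilibrium states of $\tfrac{1}{a}\Phi$ relative to $\nu$. By the relative variational principle for asymptotically sub-additive potentials on subshifts (with the one-block symbolic structure identifying the relative pressure as $P(\tfrac{1}{a}\Phi\mid y)=\limsup_n \tfrac{1}{n}\log\psi_n(y)$), this inner supremum equals $\int P(\tfrac{1}{a}\Phi\mid y)\,d\nu(y)$, and Kingman's sub-additive ergodic theorem applied to $\log\psi_n$ (first for ergodic $\nu$, then by ergodic decomposition for general $\nu$) identifies the integral with $\Psi_*(\nu)$. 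Hence
\[
\sup_{\mu\circ\pi^{-1}=\nu}\{\Phi_*(\mu)+ah_\mu(T)+bh_\nu(S)\}=(a+b)\bigl(h_\nu(S)+(\tfrac{a}{a+b}\Psi)_*(\nu)\bigr).
\]

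\textbf{Step 3 and main obstacle.} Taking the supremum over $\nu\in\M(Y,S)$ gives the first equality in (1.5); the second equality is the symbolic pressure formula for an asymptotically sub-additive potential on a subshift (cf.\ \cite{CFH08}). The equilibrium characterisation follows immediately from the two-stage supremum: a measure $\mu$ attains the overall sup iff both $\nu=\mu\circ\pi^{-1}$ attains the outer sup over $\M(Y,S)$ (i.e.\ $\nu$ is an equilibrium state of $\tfrac{a}{a+b}\Psi$) and $\mu$ attains the inner conditional sup (i.e.\ $\mu$ is a conditional equilibrium state of $\tfrac{1}{a}\Phi$ with respect to $\nu$). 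The crux is Step 2, which requires a relative variational principle valid in the asymptotically sub-additive setting, together with the identification of the relative/fibre pressure with $\lim_n\tfrac{1}{n}\log\psi_n(y)$; this in turn demands the approximation scheme (with the convention $\log 0-\log 0=0$ accommodating vanishing $\phi_n$), Kingman's theorem, and the affinity of $\Psi_*$ under ergodic decomposition.
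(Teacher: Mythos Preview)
Your proposal is correct and follows essentially the same route as the paper: decompose the $\ba$-weighted supremum into an inner fibre supremum (identified with $\Psi_*(\nu)$ via the relative variational principle of Proposition~\ref{pro-1.3} and the ergodic-theoretic identity packaged in Lemma~\ref{lem-2.1}) and an outer supremum over $\nu\in\M(Y,S)$ handled by the ordinary variational principle, then read off the equilibrium characterisation from the two-stage structure. The only difference is cosmetic: you spell out the verification that $\Psi\in\C_{ass}(Y,S)$ and invoke Kingman explicitly, whereas the paper says ``direct to check'' and cites Lemma~\ref{lem-2.1}(i) (which already covers the non-ergodic case without needing a separate ergodic decomposition step).
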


Formula (\ref{e-0.5}) can be viewed as a kind of weighted
variational principle. To further study  weighted equilibrium
states, we shall put more assumptions on $\Phi$. We say that
$\Phi=(\log \phi_n)_{n=1}^\infty$ is {\it almost additive} if
$\phi_n$ is positive and continuous on $X$  for each $n$ and  there
is a constant $c>0$ such that
\begin{equation*}
\label{e-taa}
\frac{1}{c} \phi_n(x)\phi_m(T^nx)\leq \phi_{n+m}(x)\leq c
\phi_n(x)\phi_m(T^nx),\quad \forall \; x\in X,\; n,m\in \N.
\end{equation*}
For convenience, we denote by  $\C_{aa}(X,T)$ the collection of
almost additive potentials on $X$. Furthermore we say that $\Phi$
has the {\it bounded distortion property} if there exists a constant
$c>0$ such that
\begin{equation}\label{BDP}
\frac{1}{c}\phi_n(y)\leq \phi_n(x)\leq c\phi_n(y)\quad \mbox{
whenever $x,y\in X$ are in the same $n$-th cylinder}.
\end{equation}

Following \cite{BMP92}, a  full supported Borel probability  measure $\mu$ on $\A^\N$ is
called to be {\it quasi-Bernoulli} if there exists a constant $c>0$
such that
\begin{equation}\label{quasib}
c^{-1}\leq \frac{\mu(IJ)}{\mu(I)\mu(J)}\leq c,\qquad \forall
\; I, J\in \A^*:=\bigcup_{i=1}^\infty\A^n,
\end{equation}
here and afterwards,  we use $\mu(I)$ to denote $\mu([I])$ for $I\in \A^*$, if there is no confusion.
For two families $\{a_i\}_{i\in \I}$, $\{b_i\}_{i\in \I}$ of
non-negative numbers, we write $a_i\approx b_i$ if there exists
$c>0$ such that $(1/c)b_i\leq a_i\leq cb_i$ for all $i\in \I$.  Our
next result is the following.

\begin{thm}
\label{thm-0.2} Assume that $X=\A^\N$ and $Y=\D^\N$ are two full
shifts and $\pi:\; X\to Y$ is a one-block factor map. Let
$\ba=(a,b)\in \R^2$ so that $a>0$ and  $b\geq 0$. Let  $\Phi=(\log \phi_n)_{n=1}^\infty\in
\C_{aa}(X,T)$. Assume that $\Phi$ satisfies the bounded distortion
property. Then $\Phi$ has  a unique $\ba$-weighted equilibrium
state, denoted as $\mu$. The measure $\mu$ is quasi-Bernoulli and
has the following Gibbs property:
\begin{equation}
\label{e-2.2} \mu(I)\approx \exp\left(
\frac{-nP^{\ba}(T,\Phi)}{a+b} \right) \frac{\phi(I)^{1/a}} {\psi(\pi
I)^{b/(a+b)}},\quad I\in \A^n,\; n\in \N,
\end{equation}
where $$\phi(I):=\sup_{x\in [I]}\phi_n(x)\mbox{ for }I\in \A^n\quad
\mbox{and}\quad \psi(J):=\sum_{I\in \A^n:\; \pi
I=J}\phi(I)^{1/a}\mbox{ for }J\in \D^n.$$ Furthermore for
$\nu:=\mu\circ \pi^{-1}$, we have
\begin{equation}
\label{e-2.2'} \nu(J)\approx \exp\left(
\frac{-nP^{\ba}(T,\Phi)}{a+b} \right) \psi(J)^{\frac{a}{a+b}},\quad
J\in \D^n,\; n\in \N
\end{equation}
and
\begin{equation}
\label{e-2.3} \mu(I)^a\nu(\pi I)^b\approx
\phi(I)\exp(-nP^{\ba}(T,\Phi)),\quad I\in \A^n,\; n\in \N.
\end{equation}
\end{thm}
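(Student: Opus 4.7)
The plan is to combine Theorem~\ref{thm-0.1} with the standard Gibbs theory for almost additive potentials on full shifts. By Theorem~\ref{thm-0.1}, a measure $\mu\in\M(X,T)$ is an $\ba$-weighted equilibrium state of $\Phi$ if and only if $\nu:=\mu\circ\pi^{-1}$ is an equilibrium state of $\frac{a}{a+b}\Psi$ on $(Y,S)$ and $\mu$ is a conditional equilibrium state of $\frac{1}{a}\Phi$ with respect to $\nu$. The task thus splits into producing and controlling $\nu$ on $Y$, and then producing the conditional measure on the fibers of $\pi$.

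First I would verify that $\frac{a}{a+b}\Psi$ lies in $\C_{aa}(Y,S)$ with a particularly rigid structure. Because $X, Y$ are full shifts and $\pi$ is one-block, $\psi_n(y)$ depends only on $y_1\cdots y_n$; denote its common value on $[J]$ by $\psi(J)$, which is exactly the quantity in the theorem statement. In particular $\Psi$ satisfies the bounded distortion property trivially. Using the almost additivity and BDP of $\Phi$ one obtains $\sup_{x\in[II']}\phi_{n+m}(x)\approx \phi(I)\phi(I')$ for $I\in\A^n$ and $I'\in\A^m$, and the one-block identity $\pi(II')=\pi(I)\pi(I')$ factorizes the defining sum of $\psi_{n+m}$, yielding $\psi_{n+m}(y)\approx\psi_n(y)\psi_m(S^n y)$; hence $\frac{a}{a+b}\Psi\in\C_{aa}(Y,S)$. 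The standard transfer-operator theory for almost additive potentials on full shifts with BDP then provides a unique equilibrium state $\nu$ of $\frac{a}{a+b}\Psi$ that is quasi-Bernoulli and Gibbs:
$$\nu(J)\approx \exp\bigl(-n P(S,\tfrac{a}{a+b}\Psi)\bigr)\,\psi(J)^{a/(a+b)},\quad J\in\D^n.$$
Substituting $P(S,\tfrac{a}{a+b}\Psi)=P^\ba(T,\Phi)/(a+b)$, supplied by Theorem~\ref{thm-0.1}, gives exactly~(\ref{e-2.2'}).

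Next I would construct the conditional equilibrium state $\mu$ of $\frac{1}{a}\Phi$ relative to $\nu$ by prescribing the conditional distribution on each fiber to be proportional to $\phi(I)^{1/a}$, so that $\mu(I)\approx \nu(\pi I)\,\phi(I)^{1/a}/\psi(\pi I)$ for $I\in\A^n$. The almost additivity and BDP of $\Phi$, combined with the analogous property for $\psi$ established above, make these weights consistent up to multiplicative constants; a transfer-operator argument on $X$, or a compactness argument extracting a weak-star limit of the associated finite-dimensional pseudo-Bernoulli measures, upgrades them to a genuine $T$-invariant probability measure satisfying the displayed Gibbs comparison. Inserting~(\ref{e-2.2'}) then yields~(\ref{e-2.2}). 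Uniqueness of $\mu$ follows from uniqueness of $\nu$ together with uniqueness of the conditional equilibrium state on each fiber, the latter being a consequence of strict concavity of conditional entropy and the asymptotically affine behavior of $n^{-1}\log \phi_n^{1/a}$ on cylinders. Quasi-Bernoullicity of $\mu$ is then immediate from~(\ref{e-2.2}), the almost additivity of $\phi$ on $\A^*$, and the corresponding property of $\psi$ on $\D^*$. Finally (\ref{e-2.3}) follows by raising~(\ref{e-2.2}) to the $a$-th power and~(\ref{e-2.2'}) to the $b$-th power and multiplying: the $\psi(\pi I)$-factors cancel and the $P^\ba(T,\Phi)$-exponents combine to give $\phi(I)\exp(-nP^\ba(T,\Phi))$.

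The main obstacle is promoting the approximate Gibbs formula $\mu(I)\approx\nu(\pi I)\phi(I)^{1/a}/\psi(\pi I)$ to an honest $T$-invariant probability measure and proving its uniqueness. The almost additivity constants obstruct naive Kolmogorov consistency, so one must either set up a fiberwise transfer operator on $X$ that produces $\mu$ directly, or take a weak-star limit of finite-dimensional distributions and verify that the Gibbs bounds survive in the limit. Establishing strict uniqueness of the conditional structure on every fiber, rather than merely $\nu$-a.e.\ uniqueness on $Y$, is the technical heart of the argument.
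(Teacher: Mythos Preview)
Your overall architecture matches the paper's proof exactly: reduce via Theorem~\ref{thm-0.1}, apply the Gibbs theory for almost additive potentials (Proposition~\ref{pro-1.2}) to get $\nu$ uniquely with the estimate~(\ref{e-2.2'}), and then build and characterize the conditional equilibrium state of $\frac{1}{a}\Phi$ over $\nu$. The paper carries out the last step in Proposition~\ref{pro-1.4} via precisely the weak-star limit route you mention: form $\mu_n(I)=\nu(\pi I)\phi(I)^{1/a}/\psi(\pi I)$, extract a subsequential limit, average under $T$, and check that the Gibbs bounds survive and that $\mu\circ\pi^{-1}=\nu$ (both are ergodic and mutually absolutely continuous, hence equal). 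An explicit entropy computation then shows $\Phi_*(\mu)/a+h_\mu(T)-h_\nu(S)=\Psi_*(\nu)$, i.e.\ the constructed $\mu$ is indeed a conditional equilibrium state. You omit this verification; the Gibbs comparison alone does not obviously force it.

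The genuine gap is your uniqueness argument. The functional $\mu\mapsto \frac{1}{a}\Phi_*(\mu)+h_\mu(T)-h_\nu(S)$ is \emph{affine} on $\{\mu\in\M(X,T):\mu\circ\pi^{-1}=\nu\}$, so there is no strict concavity to invoke; ``strict concavity of conditional entropy'' does not apply here. The paper instead uses Bowen's classical mechanism: if $\mu'\neq\mu$ were another (ergodic) conditional equilibrium state, then $\mu'\perp\mu$, so for each $\varepsilon>0$ and large $n$ there are unions $F_n$ of $n$-cylinders with $\mu(F_n)<\varepsilon$ and $\mu'(F_n)>1-\varepsilon$. Using the Gibbs relation $\mu(I)\approx\nu(\pi I)\phi(I)^{1/a}/\psi(\pi I)$ to replace $\log\phi(I)$ one obtains
\[
0\le \sum_{I\in\A^n}\mu'(I)\log\frac{\mu(I)}{\mu'(I)}+O(1)\le \mu'(F_n)\log\mu(F_n)+\mu'(X\setminus F_n)\log\mu(X\setminus F_n)+O(1),
\]
and the right-hand side tends to $-\infty$ as $\varepsilon\to0$, a contradiction. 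This argument uses the Gibbs property of $\mu$ in an essential way; it is not a soft concavity statement. Once you replace your uniqueness sketch by this Bowen-type argument (and add the verification that the constructed $\mu$ attains $\Psi_*(\nu)$), your proof coincides with the paper's.
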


A probability measure $\mu$ (not necessarily to be $T$-invariant) on
$X$ is called  an {\it $\ba$-weighted Gibbs measure}, if there
exists $\Phi\in \C_{aa}(X,T)$ satisfying the bounded distortion
property so that  \eqref{e-2.2} holds for $\mu$. Clearly, any
$\ba$-weighted Gibbs measure is quasi-Bernoulli. As an application
of Theorem \ref{thm-0.2}, we have the following result regarding the
regularity property of $P^\ba(T,\cdot)$.

\begin{thm}
\label{thm-0.3} Under the  assumptions of Theorem \ref{thm-0.2}, let
$\Phi_1,\ldots, \Phi_d\in \C_{aa}(X,T)$  satisfy the bounded
distortion property. Then the map $Q:\R^d \to \R$ defined as
$$\bq=(q_1,\ldots, q_d)\mapsto P^{\ba}\left(T,\sum_{i=1}^dq_i\Phi_i\right),$$
is
$C^1$ over $\R^d$ with
$$\nabla Q(q_1,\ldots, q_d)=((\Phi_1)_*(\mu_\bq),\ldots,
(\Phi_d)_*(\mu_\bq)),$$
 where
$\nabla$ denotes the gradient and $\mu_{\bq}$ is the unique
$\ba$-weighted equilibrium state of $\sum_{i=1}^dq_i\Phi_i$.

\end{thm}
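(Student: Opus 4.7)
The strategy is to combine the uniqueness of the $\ba$-weighted equilibrium state from Theorem~\ref{thm-0.2} with a standard convex-analytic sandwich argument. Note first that any $\R$-linear combination $\Psi^\bq:=\sum_{i=1}^dq_i\Phi_i$ remains in $\C_{aa}(X,T)$ and satisfies the bounded distortion property (with constants controlled by $\prod_i c_i^{|q_i|}$ where $c_i$ are those of $\Phi_i$), so Theorem~\ref{thm-0.2} provides a unique $\ba$-weighted equilibrium state $\mu_\bq$ with
$$Q(\bq)=\sum_{i=1}^d q_i(\Phi_i)_*(\mu_\bq)+ah_{\mu_\bq}(T)+bh_{\mu_\bq\circ\pi^{-1}}(S).$$
As the pointwise supremum of affine functions of $\bq$, the map $Q$ is convex on $\R^d$, hence continuous.

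The key preliminary step is the weak-$*$ continuity of $\bq\mapsto\mu_\bq$. Almost additivity of $\Phi_i=(\log\phi_{i,n})_n$ gives $\big|\int\log\phi_{i,n+m}\,d\eta-\int\log\phi_{i,n}\,d\eta-\int\log\phi_{i,m}\,d\eta\big|\le\log c_i$ uniformly in $\eta\in\M(X,T)$, and Fekete's lemma applied to the sub- and super-additive adjustments yields uniform-in-$\eta$ convergence of $\frac{1}{n}\int\log\phi_{i,n}\,d\eta$ to $(\Phi_i)_*(\eta)$; hence each $(\Phi_i)_*$ is continuous on $\M(X,T)$. Given $\bq_n\to\bq$, the compactness of $\M(X,T)$ produces a weak-$*$ accumulation point $\mu^*$ of $(\mu_{\bq_n})$, and combining continuity of the $(\Phi_i)_*$ with upper semi-continuity of the two entropies on the expansive subshifts $X,Y$ yields
$$Q(\bq)=\lim_n Q(\bq_n)\le\sum_{i=1}^dq_i(\Phi_i)_*(\mu^*)+ah_{\mu^*}(T)+bh_{\mu^*\circ\pi^{-1}}(S)\le Q(\bq).$$
Hence $\mu^*$ is an $\ba$-weighted equilibrium state at $\bq$; by uniqueness $\mu^*=\mu_\bq$, so the full sequence converges.

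From here the gradient formula drops out of a two-sided comparison: testing the variational problem at $\bq+tv$ with the non-optimal measure $\mu_\bq$, and the one at $\bq$ with $\mu_{\bq+tv}$, gives for $t>0$
$$t\sum_{i=1}^dv_i(\Phi_i)_*(\mu_\bq)\le Q(\bq+tv)-Q(\bq)\le t\sum_{i=1}^dv_i(\Phi_i)_*(\mu_{\bq+tv}),$$
with inequalities reversed for $t<0$. Dividing by $t$ and letting $t\to0$, the continuity of $\bq\mapsto\mu_\bq$ and of the $(\Phi_i)_*$ pins both one-sided directional derivatives to the common value $\sum_i v_i(\Phi_i)_*(\mu_\bq)$. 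Hence $Q$ is differentiable at every $\bq$ with the claimed gradient, and since $\nabla Q$ is continuous for the same reasons, $Q\in C^1(\R^d)$. The main obstacle is the uniform-in-$\eta$ convergence that underpins continuity of the $(\Phi_i)_*$; once it is secured, the remainder is routine convex analysis powered by Theorem~\ref{thm-0.2}.
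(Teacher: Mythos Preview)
Your proof is correct and rests on the same core ingredients as the paper's: uniqueness of $\mu_\bq$ from Theorem~\ref{thm-0.2}, continuity of each $(\Phi_i)_*$ on $\M(X,T)$, and convexity of $Q$. The organization, however, is genuinely different. The paper does not establish weak-$*$ continuity of $\bq\mapsto\mu_\bq$ at all; instead it applies an abstract subdifferential identity (Proposition~\ref{pro-2.3}, quoted from \cite{FeHu09a}) asserting $\partial g(\bq)=\bigcup_{z\in\I(\bq)}\partial f(\bq,z)$ for $g(\bq)=\sup_z f(\bq,z)$ under mild hypotheses. Since $\I(\bq)=\{\mu_\bq\}$ is a singleton and $f(\cdot,\mu_\bq)$ is affine, the subdifferential of $Q$ at $\bq$ is the single vector $((\Phi_i)_*(\mu_\bq))_i$, giving differentiability; the $C^1$ conclusion is then obtained by citing Rockafellar's theorem that a finite convex function differentiable on an open set is automatically $C^1$. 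Your route trades these two black boxes for the explicit weak-$*$ continuity argument plus the two-sided sandwich, which is more elementary and self-contained but a bit longer. Both arguments are standard convex analysis; the paper's is terser, yours avoids external citations.
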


Using Theorems \ref{thm-0.2} and \ref{thm-0.3}, we derive the
following two results, which play key roles in the multifractal
analysis on self-affine symbolic spaces,  and  are  of independent
interest.

\begin{thm}
\label{thm-0.4} Assume that $X=\A^\N$ and $Y=\D^\N$ are two full
shifts and $\pi:\; X\to Y$ is a one-block factor map. Let
$\ba=(a,b)\in \R^2$ so that $a>0$ and  $b\geq 0$.  Then for each
fully supported measure $\eta\in \M(X,T)$ and each $n\in \N$, there
is a unique measure $\mu=\mu(\ba, \eta,n)$ in $\M(X,T)$ attaining
the following supremum
$$
\sup\{ah_\mu(T)+bh_{\mu\circ \pi^{-1}}(S):\;
\mu(I)=\eta(I) \mbox{ for all } \omega\in \A^n\}.
$$
Furthermore $\mu(\ba, \eta,n)$ is the $\ba$-weighted equilibrium
state of certain $\Phi\in \C_{aa}(X,T)$ with the bounded distortion
property, and hence $\mu(\ba, \eta,n)$ is a  fully supported
quasi-Bernoulli measure.
\end{thm}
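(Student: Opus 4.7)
The plan is to realize $\mu^{*}=\mu(\ba,\eta,n)$ as an $\ba$-weighted equilibrium state of a carefully chosen additive potential, and then invoke Theorems~\ref{thm-0.2} and~\ref{thm-0.3} to read off uniqueness, full support and the quasi-Bernoulli property. For each $\bq=(q_I)_{I\in\A^n}\in\R^{\A^n}$, I would set
\begin{equation*}
g_\bq=\sum_{I\in\A^n}q_I\mathbf{1}_{[I]},\qquad \Phi_\bq=\Big(\sum_{j=0}^{m-1}g_\bq\circ T^j\Big)_{m=1}^\infty.
\end{equation*}
Since $g_\bq$ is locally constant, $\Phi_\bq$ is additive, lies in $\C_{aa}(X,T)$, and satisfies the bounded distortion property \eqref{BDP} with constant $c=\exp((n-1)\max_I|q_I|)$. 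Theorem \ref{thm-0.2} then produces a unique $\ba$-weighted equilibrium state $\mu_\bq$, which is fully supported and quasi-Bernoulli.

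The task will reduce to finding $\bq^{*}$ with $\mu_{\bq^{*}}(I)=\eta(I)$ for every $I\in\A^n$. To this end I would introduce
\begin{equation*}
F(\bq)=P^\ba(T,\Phi_\bq)-\sum_{I\in\A^n}q_I\,\eta(I).
\end{equation*}
By \eqref{e-0.1}, $\bq\mapsto P^\ba(T,\Phi_\bq)$ is a supremum of affine functions of $\bq$ and hence convex; Theorem \ref{thm-0.3} gives $F\in C^{1}(\R^{\A^n})$ with $\partial F/\partial q_I=\mu_\bq(I)-\eta(I)$. Therefore \emph{any} critical point $\bq^{*}$ of $F$ will automatically satisfy the desired cylinder constraints, and it suffices to produce a minimizer.

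The hard part, I expect, will be to secure the existence of this minimizer, i.e.\ coercivity of $F$ modulo its trivial directions. I would let $V_0\subset\R^{\A^n}$ denote the tangent space to the moment polytope of $T$-invariant probabilities, namely those $\mathbf{v}$ satisfying $\sum_I v_I=0$ together with $\sum_{a\in\A}v_{Ja}=\sum_{a\in\A}v_{aJ}$ for every $J\in\A^{n-1}$. For $\mathbf{v}\in V_0^{\perp}$ the map $\mu\mapsto\int g_{\mathbf{v}}\,d\mu$ is constant on $\M(X,T)$ with common value $\sum_I v_I\eta(I)$, so $F$ is invariant under translations by $V_0^{\perp}$. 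For $\mathbf{v}\notin V_0^{\perp}$, the full support of $\eta$ is to be used to place $(\eta(I))_I$ in the \emph{relative interior} of the moment polytope, yielding $\sup_\mu\int g_{\mathbf{v}}\,d\mu>\int g_{\mathbf{v}}\,d\eta>\inf_\mu\int g_{\mathbf{v}}\,d\mu$; dividing $F(\bq+t\mathbf{v})$ by $|t|$ and letting $|t|\to\infty$ then gives coercivity in the direction $\mathbf{v}$. Restricting $F$ to any linear complement of $V_0^{\perp}$ produces a coercive convex $C^{1}$ function whose minimum exists and descends to a minimizer $\bq^{*}$ on all of $\R^{\A^n}$.

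With $\mu^{*}:=\mu_{\bq^{*}}$, optimality and uniqueness fall out of the variational principle. For any $\mu\in\M(X,T)$ with $\mu(I)=\eta(I)$ for all $I\in\A^n$,
\begin{equation*}
ah_\mu(T)+bh_{\mu\circ\pi^{-1}}(S)+\sum_I q_I^{*}\eta(I)\le P^\ba(T,\Phi_{\bq^{*}})=ah_{\mu^{*}}(T)+bh_{\mu^{*}\circ\pi^{-1}}(S)+\sum_I q_I^{*}\eta(I),
\end{equation*}
so $\mu^{*}$ attains the supremum. If equality holds, then $\mu$ is itself an $\ba$-weighted equilibrium state of $\Phi_{\bq^{*}}$, and the uniqueness clause of Theorem \ref{thm-0.2} forces $\mu=\mu^{*}$. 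The full support and quasi-Bernoulli property of $\mu^{*}$ are then immediate from Theorem \ref{thm-0.2} applied to $\Phi_{\bq^{*}}$.
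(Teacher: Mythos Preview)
Your proof is correct and follows essentially the same route as the paper: both set up the additive potentials $\Phi_\bq$, identify $P^\ba(T,\Phi_\bq)$ with the conjugate $f^*(\bq)$, use Theorem~\ref{thm-0.3} to get $\nabla f^*(\bq)=(\mu_\bq(I))_{I\in\A^n}$, and deduce uniqueness from Theorem~\ref{thm-0.2}. The only difference is in how the critical $\bq^*$ is obtained---the paper invokes Rockafellar's Corollary 26.4.1 (the gradient of a differentiable convex function covers the relative interior of the domain of its conjugate), while you argue directly that $F$ is coercive on a complement of $V_0^\perp$; these are equivalent, and your hands-on argument has the virtue of being self-contained.
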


\begin{thm}
\label{thm-0.5} Under the condition of Theorem \ref{thm-0.4}, for
any $\eta\in \M(X,T)$, there exists a sequence of $\ba$-weighted
Gibbs measures $(\mu_n)_{n=1}^\infty \subset \M(X,T)$ converging  to
$\eta$ in the weak-star topology such that
\begin{equation*}
\label{e-3.1}ah_{\mu_n}(T)+bh_{\mu_n\circ \pi^{-1}}(S)\geq
ah_\eta(T)+bh_{\eta\circ \pi^{-1}}(S).
\end{equation*}
Furthermore,
\begin{equation*}
\label{e-0.}
 \lim_{n\to \infty}ah_{\mu_n}(T)+bh_{\mu_n\circ
\pi^{-1}}(S)= ah_\eta(T)+bh_{\eta\circ \pi^{-1}}(S).
\end{equation*}
\end{thm}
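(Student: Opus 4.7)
The plan is to first establish the conclusion under the additional assumption that $\eta$ is fully supported, and then to reduce the general case to this one by an explicit perturbation and diagonal extraction.

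When $\eta$ is fully supported, set $\mu_n:=\mu(\ba,\eta,n)$, the measure provided by Theorem \ref{thm-0.4}. By that theorem $\mu_n$ is a fully supported quasi-Bernoulli $\ba$-weighted Gibbs measure. Since $\eta$ itself is $T$-invariant and trivially satisfies $\eta(I)=\eta(I)$ for all $I\in \A^n$, it lies in the admissible set of the variational problem defining $\mu_n$, so maximality yields
$$ah_{\mu_n}(T)+bh_{\mu_n\circ \pi^{-1}}(S)\ge ah_\eta(T)+bh_{\eta\circ \pi^{-1}}(S).$$
Moreover $\mu_n$ and $\eta$ agree on every cylinder of length $\le n$, so they assign the same integral to every function depending on only finitely many coordinates; by density of such functions in $C(X)$ one obtains $\mu_n\to \eta$ in the weak-$\star$ topology, and continuity of $\pi$ then gives $\mu_n\circ \pi^{-1}\to \eta\circ \pi^{-1}$ as well. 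Upper semicontinuity of the metric entropy on the full shifts $X$ and $Y$ yields $\limsup_n\bigl(ah_{\mu_n}(T)+bh_{\mu_n\circ \pi^{-1}}(S)\bigr)\le ah_\eta(T)+bh_{\eta\circ \pi^{-1}}(S)$, which combined with the previous lower bound gives convergence of the weighted entropies.

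For an arbitrary $\eta\in \M(X,T)$, let $\nu_0$ denote the $\ba$-weighted equilibrium state of the trivial potential $\Phi\equiv 0$ (equivalently $\phi_n\equiv 1$), which satisfies the almost additivity and bounded distortion hypotheses of Theorem \ref{thm-0.2}. Then $\nu_0$ exists, is fully supported and quasi-Bernoulli, and by construction maximizes the map $\mu\mapsto ah_\mu(T)+bh_{\mu\circ \pi^{-1}}(S)$ over $\M(X,T)$. For $\epsilon\in (0,1)$, set $\eta_\epsilon:=(1-\epsilon)\eta+\epsilon\nu_0$; this measure is $T$-invariant, fully supported, and by concavity of the entropy together with the maximality of $\nu_0$ it satisfies $ah_{\eta_\epsilon}(T)+bh_{\eta_\epsilon\circ \pi^{-1}}(S)\ge ah_\eta(T)+bh_{\eta\circ \pi^{-1}}(S)$. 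Applying the fully supported case to $\eta_\epsilon$ produces a sequence of $\ba$-weighted Gibbs measures approximating $\eta_\epsilon$ weak-$\star$ whose weighted entropies dominate $ah_\eta(T)+bh_{\eta\circ \pi^{-1}}(S)$ and converge to $ah_{\eta_\epsilon}(T)+bh_{\eta_\epsilon\circ \pi^{-1}}(S)$. Choosing $\epsilon_k\downarrow 0$ and, for each $k$, an $n_k$ so large that the measure $\mu_k:=\mu(\ba,\eta_{\epsilon_k},n_k)$ lies within $1/k$ of $\eta_{\epsilon_k}$ in a metric compatible with the weak-$\star$ topology and has weighted entropy within $1/k$ of $ah_{\eta_{\epsilon_k}}(T)+bh_{\eta_{\epsilon_k}\circ \pi^{-1}}(S)$ yields the required sequence after a standard diagonalization, the convergence of the weighted entropies again being forced by upper semicontinuity.

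The one genuine subtlety is the perturbation step: the perturbed measure must simultaneously be fully supported (so that Theorem \ref{thm-0.4} applies), $T$-invariant, and of $\ba$-weighted entropy at least $ah_\eta(T)+bh_{\eta\circ \pi^{-1}}(S)$ (so that the lower entropy bound is not lost in the limit). Taking the mixing partner to be the $\ba$-weighted measure of maximal entropy $\nu_0$ secures all three properties at once; once this is in place the remainder is a routine combination of density of cylinder functions in $C(X)$ and upper semicontinuity of the metric entropy on full shifts.
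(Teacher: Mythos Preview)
Your proof is correct and follows the same two-step strategy as the paper: handle fully supported $\eta$ directly via Theorem~\ref{thm-0.4}, then reduce the general case by convex perturbation with a fully supported reference measure. The paper is terser: it mixes with the Parry measure $\eta_0$, sets $\eta_n=(1-1/n)\eta+(1/n)\eta_0$, and takes $\mu_n'=\mu(\ba,\eta_n,n)$ directly, without diagonalization. Your choice of mixing partner---the $\ba$-weighted equilibrium state $\nu_0$ of the zero potential---is actually cleaner for the inequality clause: since $\nu_0$ maximizes $\mu\mapsto ah_\mu(T)+bh_{\mu\circ\pi^{-1}}(S)$, concavity immediately gives $ah_{\eta_\epsilon}(T)+bh_{\eta_\epsilon\circ\pi^{-1}}(S)\ge ah_\eta(T)+bh_{\eta\circ\pi^{-1}}(S)$, so the per-$n$ lower bound is automatic. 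The Parry measure need not maximize the $\ba$-weighted entropy when $b>0$ (its projection to $Y$ can have strictly smaller entropy than $\eta\circ\pi^{-1}$), so with the paper's choice the pointwise inequality requires a small extra remark; your argument sidesteps this. The diagonal extraction you perform is harmless but not strictly needed: taking $\mu_n=\mu(\ba,\eta_{1/n},n)$ already works.
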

\bigskip

\begin{rem}\label{simultapprox}
{\rm
If we take $\ba=(1,1)$, due to the upper semi-continuity of the entropy, for any $\mu\in\M(X,T)$, Theorem~\ref{thm-0.5} yields a sequence of quasi-Bernoulli measures $(\mu_n)_{n=1}^\infty$ which converges to $\mu$ in the weak-star topology,  such that we have both $\lim_{n\to\infty} h_{\mu_n}(T)= h_{\mu}(T)$ and  $\lim_{n\to\infty} h_{\mu_n\circ\pi^{-1}}(S)=  h_{\mu\circ\pi^{-1}}(S)$. Moreover, one can deduce from Theorem~\ref{thm-0.2} that for any $\ba=(a,b)$ with $a>0$ and $b\geq 0$,  each  invariant quasi-Bernoulli measure is the $\ba$-weighted equilibrium state of some almost additive  potential satisfying the bounded distortion property.
}
\end{rem}
Now we present our results about the multifractal analysis on
self-affine symbolic spaces. In the remainder part of the section,
we always assume that $X=\A^\N$ and $Y=\D^\N$ are two full shifts
and $\pi:\; X\to Y$ is a one-block factor map. Endow $X$ with a
metric $d_\ba$ as follows:
\begin{equation*}
d_{\ba}(x,y)=\max\Big(e^{-|x\land y|/a},\; e^{-|\pi x\land \pi
y|/(a+b)}\Big ),
\end{equation*}
where $|x\land y|=\inf\{k\geq 1:\; x_k\neq y_k\}-1$ and $|\pi x\land
\pi y|=\inf\{k\geq 1:\; \pi x_k\neq \pi y_k\}-1$. The space $X$,
endowed with the metric $d_\ba$,  is called a {\it self-affine full
shift}. Indeed if
\begin{equation}\label{planarset}
e^{-\frac{1}{a}} \cdot \sup _{j\in \D} \# \pi^{-1}\{j\}<1 \quad\mbox { and }\quad e^{-\frac{1}{a+b}}\cdot \# \D<1,
\end{equation}
the space $(X, d_{\ba})$ is Lipschitz equivalent to a planar self-affine set  generated by a linear iterated function system $\{S_i\}_{i\in \A}$ with
$$
S_i(x,y)=\left(e^{-\frac{1}{a}}x+c_i,\; e^{-\frac{1}{a+b}}y+d_{\pi(i)}\right),  \quad i\in \A,
$$
where $(c_i)_{i\in \A}$ and $(d_j)_{j\in \D}$ are chosen so that $S_i([0,1]^2)$'s are rectangles inside $[0,1]^2$ distributed as in  Figure \ref{Fig1}. Such sets belong to a broader class of self-affine sets studied by Lalley and Gatzouras in \cite{LaGa92}.

\begin{figure}
\begin{center}
\includegraphics[height=7cm]{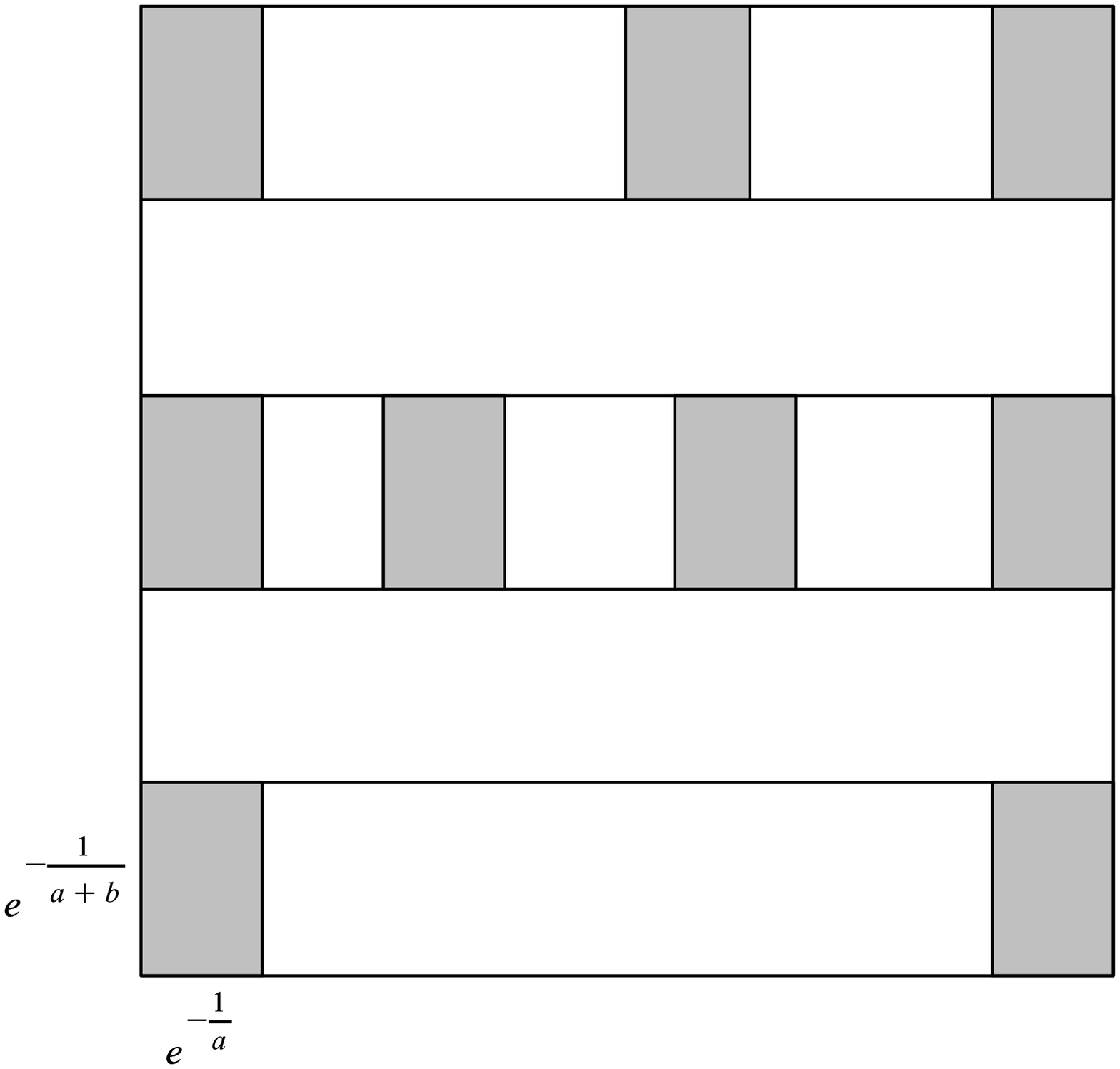}
\end{center}
\caption{}
\label{Fig1}
\end{figure}

For $\mu\in \M(X,T)$, define the  set of generic points of $\mu$ as
\begin{equation}\label{generic}
\mathcal{G}(\mu)=\left\{x\in X:
\lim_{n\to\infty} \frac{S_ng(x)}{n}=\int g d\mu, \ \forall\;
g\in C(X)\right\},
\end{equation}
where  $C(X)$ denotes the collection of real continuous functions on $X$, and $S_ng(x)=\sum_{i=0}^{n-1}g(T^ix)$.

At first, we deal with the Hausdorff dimension of the sets of generic points of invariant measures. When $\ba=(1,0)$, this result is well known (cf. \cite{Bow73,Caj81, PsiSul07,Fan08}).
\begin{thm}\label{thm-0.6}
Let $\mu\in \M(X,T)$. We have $\mathcal{G}(\mu)\neq \emptyset$ and
$\dim_H \mathcal{G}(\mu)=ah_\mu(T)+bh_{\mu\circ \pi^{-1}}(S)$.
\end{thm}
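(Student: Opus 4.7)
The plan has three parts: nonemptiness, the lower bound, and the upper bound, with Theorem~\ref{thm-0.5} and the weighted Ledrappier--Young formula from \cite{KePe96} as the main ingredients.

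\textbf{Nonemptiness.} When $\mu$ is ergodic, the Birkhoff theorem gives $\mu(\mathcal{G}(\mu))=1$, so in particular $\mathcal{G}(\mu)\neq\emptyset$. For an arbitrary invariant $\mu$, I would use the classical saturation construction: take an ergodic decomposition of $\mu$, extract a sequence of ergodic measures whose convex combinations approximate $\mu$ in the weak-star topology, pick generic points of these ergodic measures, and concatenate longer and longer blocks of their orbits with appropriate weights so that the resulting sequence has empirical measures converging to $\mu$.

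\textbf{Lower bound $\dim_H\mathcal{G}(\mu)\geq ah_\mu(T)+bh_{\mu\circ\pi^{-1}}(S)$.} First apply Theorem~\ref{thm-0.5} to obtain a sequence $(\mu_k)$ of $\ba$-weighted Gibbs measures converging weak-star to $\mu$ with $ah_{\mu_k}(T)+bh_{\nu_k}(S)\to ah_\mu(T)+bh_{\mu\circ\pi^{-1}}(S)$, where $\nu_k=\mu_k\circ\pi^{-1}$. By Theorem~\ref{thm-0.2} each $\mu_k$ is quasi-Bernoulli, hence ergodic, and the Gibbs estimates~(\ref{e-2.2}) and~(\ref{e-2.2'}) combined with Shannon--McMillan--Breiman applied to both $\mu_k$ and $\nu_k$ give the local-dimension identity $\dim_H \mu_k=ah_{\mu_k}(T)+bh_{\nu_k}(S)$ (the Ledrappier--Young formula in the present self-affine setting). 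Next, I would construct a Moran-type subset $F\subseteq\mathcal{G}(\mu)$ by concatenating blocks: at stage $k$, select a length $N_k$ and a set $T_k$ of words of length $N_k$ on which Birkhoff averages for $\mu_k$ and the above Shannon--McMillan estimates for $\mu_k$ and $\nu_k$ hold up to precision $\epsilon_k\to 0$. Taking $N_{k+1}\gg\sum_{j\le k}N_j$ guarantees that every concatenation lies in $\mathcal{G}(\mu)$. Katok's entropy formula applied to $(X,T,\mu_k)$ and $(Y,S,\nu_k)$ yields $|T_k|\ge e^{N_k(h_{\mu_k}-\epsilon_k)}$ with projections of cardinality at least $e^{N_k(h_{\nu_k}-\epsilon_k)}$. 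A mass-distribution (Frostman) estimate on $F$ in the anisotropic metric $d_\ba$, assigning mass to each block in proportions dictated by the two Gibbs bounds, then gives $\dim_H F\ge ah_{\mu_k}(T)+bh_{\nu_k}(S)-O(\epsilon_k)$; letting $k\to\infty$ completes the lower bound.

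\textbf{Upper bound $\dim_H\mathcal{G}(\mu)\leq ah_\mu(T)+bh_{\mu\circ\pi^{-1}}(S)$.} For each $x\in\mathcal{G}(\mu)$ and every cylinder $[I]$, $\sigma_n(x)([I])\to\mu([I])$ because cylinders are clopen. Fix $\epsilon>0$, decompose $\mathcal{G}(\mu)$ according to the rate at which these convergences are uniform, and at scale $e^{-n/(a+b)}$ cover each piece by ``rectangle-cylinders'' defined by prescribing $x_1\ldots x_m$ with $m=\lceil na/(a+b)\rceil$ together with $\pi x_{m+1},\ldots,\pi x_n$. An entropy counting using the empirical convergence, together with a standard Shannon--McMillan-type bound applied to $\mu$ and $\mu\circ\pi^{-1}$ along empirical distributions, shows that the number of such rectangles needed is at most $\exp\!\bigl(n(ah_\mu(T)+bh_{\mu\circ\pi^{-1}}(S))+n\cdot o_\epsilon(1)\bigr)$, which yields the upper bound.

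\textbf{Main obstacle.} The core difficulty is the anisotropy of $d_\ba$: balls are rectangles with two different scaling exponents $1/a$ and $1/(a+b)$, so the naive cylinder cover in $\A^\N$ does not give a sharp Hausdorff estimate. In the lower bound, the Frostman computation must employ the two Gibbs bounds~(\ref{e-2.2}) (for $\mu_k$) and~(\ref{e-2.2'}) (for $\nu_k$) simultaneously, so that the exponent produced is exactly $ah_{\mu_k}+bh_{\nu_k}$ and not a lossy combination of the two; this is where the full strength of Theorems~\ref{thm-0.2}--\ref{thm-0.5} (existence of the approximating Gibbs measures with matched $\ba$-weighted entropy and a two-sided Gibbs property for both $\mu_k$ and $\nu_k$) is essential.
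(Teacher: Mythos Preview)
Your lower bound is essentially the paper's approach: apply Theorem~\ref{thm-0.5} to get quasi-Bernoulli approximants $(\mu_k)$, then run a Moran construction by concatenating blocks on which each $\mu_k$ is well-behaved, and produce a Frostman measure on $\mathcal{G}(\mu)$. The paper packages this as a separate result (its Theorem on Moran measures), building the auxiliary measure as $\nu=\bigotimes_p\widetilde\mu_p$ rather than by counting words via Katok, but the mechanism is the same. Note that this construction already gives $\mathcal{G}(\mu)\neq\emptyset$, so your separate saturation argument for nonemptiness is redundant.

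Your upper bound is a genuinely different route. The paper does \emph{not} cover $\mathcal{G}(\mu)$ directly; instead it observes that $\mathcal{G}(\mu)\subset E_\Phi(\Phi_*(\mu))$ for every continuous additive potential $\Phi$, invokes the upper bound $\dim_H E_\Phi(\alpha)\le\inf_q P^\ba(T,q\Phi)-q\alpha$ (proved by a McMullen-type pointwise inequality for the $\ba$-weighted equilibrium state $\mu_q$ of $q\Phi$, combined with Billingsley's lemma), and then takes the infimum over all $\Phi$ and uses Legendre--Fenchel duality between $P^\ba$ and $h^\ba$ to recover $h^\ba_\mu(T)$. Your approach---type-counting the $m$-prefixes with empirical distribution near $\mu$ and the $(n-m)$-suffix projections with empirical distribution near $\mu\circ\pi^{-1}$---is more elementary and self-contained, and avoids invoking the level-set upper bound machinery, but it requires carrying out the combinatorics carefully in the anisotropic metric (in particular, fixing the block length $k$ for the empirical marginals large enough that $H_k(\mu)/k<h_\mu(T)+\epsilon$ and $H_k(\nu)/k<h_\nu(S)+\epsilon$ before decomposing by convergence rate). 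The paper's route buys a direct link to the weighted thermodynamic framework and reuses the McMullen lemma needed anyway for Theorem~\ref{thm-0.7}; your route buys independence from that machinery at the cost of an explicit covering computation.
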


Next we consider the level sets  for Birkhoff averages of asymptotically
additive potentials on $X$. Let ${\bf
\Phi}=(\Phi_1,\ldots,\Phi_d)\in \C_{asa}(X,T)^d$, where $\Phi_i=(\log
\phi_{n,i})_{n=1}^\infty$. For $\alpha=(\alpha_1,\ldots,\alpha_d)\in
\R^d$, define
$$
E_{{\bf \Phi}}(\alpha)=\big\{x\in X: \lim_{n\to\infty} \frac 1n \log
\phi_{n,i}(x)=\alpha_i \mbox{ for } 1\leq i\leq d\big\}.
$$
For $\mu\in \M(X,T)$, write
$${\bf
\Phi}_*(\mu)=((\Phi_1)_*(\mu),\ldots, (\Phi_d)_*(\mu)).$$
 Define
$$L_{\bf \Phi}=\{{\bf \Phi}_*(\mu): \mu\in\mathcal{M}(X,T)\}.$$

\begin{thm}\label{thm-0.7} For $\alpha\in \R^d$,
$E_{\bf \Phi}(\alpha)\neq\emptyset$ if and only if $\alpha\in L_{\bf
\Phi}$. Furthermore for $\alpha\in L_{\bf \Phi}$, we have
\begin{equation*}
\begin{split}
\dim_HE_{\bf \Phi}(\alpha)&=\max\{ah_\mu(T)+bh_{\mu\circ
\pi^{-1}}(S):\; \mu\in \M(X,T),\; {\bf \Phi}_*(\mu)
=\alpha\}\\
&=\inf \{P^{\ba}(T,  \bq\cdot {\bf \Phi})-\alpha\cdot
\bq:\; \bq\in \R^d\},
\end{split}
\end{equation*}
where $\bq\cdot {\bf \Phi}$ denotes the potential $\sum_{i=1}^d
q_i\Phi_i$ for $\bq=(q_1,\ldots,q_d)$, and $\alpha\cdot \bq$ denotes the standard inner product of $\alpha$ and $\bq$. Moreover, if $L_\Phi$ is not reduced to a singleton, then $\{x\in X:\lim_{n\to\infty}{\bf \Phi}_n(x)/n\text{ does not exists}\}$ is of full Hausdorff dimension.
\end{thm}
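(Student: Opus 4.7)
The plan is to combine the dimension formula for generic points (Theorem~\ref{thm-0.6}) with the convex-analytic information supplied by Theorems~\ref{thm-0.3} and~\ref{thm-0.5}. First I would dispose of the non-emptiness characterization. If $x\in E_{\bf\Phi}(\alpha)$, any weak-$*$ accumulation point $\mu$ of the empirical measures $\frac{1}{n}\sum_{k=0}^{n-1}\delta_{T^kx}$ lies in $\M(X,T)$, and because each $\Phi_i\in\C_{asa}(X,T)$ can be uniformly approximated in Ces\`aro mean by an additive potential (a standard consequence of asymptotic additivity), one obtains $(\Phi_i)_*(\mu)=\alpha_i$, so $\alpha\in L_{\bf\Phi}$. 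Conversely, given $\alpha={\bf\Phi}_*(\mu)$, the same Ces\`aro approximation together with the defining property of generic points gives $\mathcal{G}(\mu)\subset E_{\bf\Phi}(\alpha)$, so the set is non-empty by Theorem~\ref{thm-0.6}.

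For the lower bound, the inclusion $\mathcal{G}(\mu)\subset E_{\bf\Phi}(\alpha)$ combined with Theorem~\ref{thm-0.6} gives $\dim_H E_{\bf\Phi}(\alpha)\ge ah_\mu(T)+bh_{\mu\circ\pi^{-1}}(S)$ for every admissible $\mu$. The supremum is attained because $\mu\mapsto{\bf\Phi}_*(\mu)$ is continuous on $\M(X,T)$ (again via Ces\`aro approximation), the constraint set $\{\mu:{\bf\Phi}_*(\mu)=\alpha\}$ is weak-$*$ closed, and $\mu\mapsto ah_\mu(T)+bh_{\mu\circ\pi^{-1}}(S)$ is upper semi-continuous on the full shift. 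For the matching upper bound, I would fix $\bq\in\R^d$ and cover $E_{\bf\Phi}(\alpha)$ by weighted ``rectangles'' of depth $n$ on which $\phi_{n,\bq\cdot{\bf\Phi}}(x)$ is essentially $e^{n\bq\cdot\alpha}$, exploiting the two-scale structure of $d_\ba$ (cylinders have $d_\ba$-diameter $\asymp\max(e^{-n/a},e^{-n/(a+b)})$). A standard multifractal calculation based on the sub-exponential counting provided by \eqref{e-0.5} applied to $\bq\cdot{\bf\Phi}$ yields $\dim_H E_{\bf\Phi}(\alpha)\le P^\ba(T,\bq\cdot{\bf\Phi})-\bq\cdot\alpha$; taking the infimum over $\bq$ gives the Legendre-transform upper bound.

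The equality of the variational expression with the Legendre transform then follows from Fenchel duality: by \eqref{e-0.1}, $\bq\mapsto P^\ba(T,\bq\cdot{\bf\Phi})$ is the Legendre dual of $\alpha\mapsto -\sup\{ah_\mu(T)+bh_{\mu\circ\pi^{-1}}(S):{\bf\Phi}_*(\mu)=\alpha\}$; this latter function is concave and upper semi-continuous on $L_{\bf\Phi}$, so Fenchel's theorem yields the required identity. For $\alpha$ in the interior of $L_{\bf\Phi}$ one can alternatively exhibit a minimizing $\bq$ directly, by approximating ${\bf\Phi}$ by almost additive potentials with bounded distortion through Theorem~\ref{thm-0.5} and appealing to the differentiability in Theorem~\ref{thm-0.3}.

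For the concluding statement, assume $L_{\bf\Phi}$ contains two distinct points $\alpha,\alpha'$ and pick invariant measures $\mu,\mu'$ whose weighted entropies are arbitrarily close to $\dim_HX=\sup_\eta(ah_\eta(T)+bh_{\eta\circ\pi^{-1}}(S))$. A Barreira--Schmeling-type alternating construction, concatenating longer and longer blocks drawn from generic words of $\mu$ and $\mu'$, produces a Cantor-like subset of $\{x:\lim_n{\bf\Phi}_n(x)/n\text{ does not exist}\}$ on which a suitable mass distribution gives Hausdorff dimension arbitrarily close to $\dim_HX$. The main obstacle I anticipate is the upper-bound covering step: the anisotropy of $d_\ba$ forces one to use rectangle-like covers matching the two scales $e^{-n/a}$ and $e^{-n/(a+b)}$ appearing in \eqref{e-0.5}, and the covering must be aligned precisely with the pressure formula so as not to lose a factor in the exponent when summing over cylinders.
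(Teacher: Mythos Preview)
Your outline agrees with the paper in most places: the non-emptiness characterization via empirical measures and $\mathcal G(\mu)\subset E_{\bf\Phi}(\alpha)$, the lower bound from Theorem~\ref{thm-0.6}, and the identification of the variational maximum with the Legendre infimum by Fenchel duality are all exactly what the paper does.

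The substantive divergence is in the upper bound, and here your proposal has a real gap. You propose a direct covering of $E_{\bf\Phi}(\alpha)$ by ``weighted rectangles'' and a standard multifractal summation using \eqref{e-0.5}. But in the metric $d_\ba$ a ball $B(x,e^{-n/a})$ is \emph{not} an $n$-cylinder: it is the set of $y$ with $y_{|n}=x_{|n}$ \emph{and} $\pi y_{|\lfloor n(a+b)/a\rfloor}=\pi x_{|\lfloor n(a+b)/a\rfloor}$, i.e.\ an anisotropic rectangle involving two different depths. A naive covering by $n$-cylinders therefore does not match the $d_\ba$-diameter, and the pressure formula \eqref{e-0.5} alone does not give the right exponent. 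The paper's route is quite different: it takes the $\ba$-weighted Gibbs measure $\mu_\bq$ associated with $\bq\cdot{\bf\widetilde\Phi}$ (an almost-additive approximant of $\bq\cdot{\bf\Phi}$), computes $\mu_\bq(B(x,e^{-n/a}))$ explicitly via the Gibbs property (Lemma~\ref{lem-gibbs}), and proves the pointwise estimate
\[
\limsup_{n\to\infty}\frac{\mu_\bq(B(x,e^{-n/a_1}))}{\exp\big((\bq\cdot{\bf\widetilde\Phi}_{\bc,n}(x)-nP^\ba(T,\bq\cdot{\bf\widetilde\Phi}))/a_1\big)}\ge 1
\]
for \emph{every} $x\in X$ (Lemma~\ref{McMullen}). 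This is the crux: the ratio involves terms of the form $\phi^{(j)}(\tau_j x_{|\ell_{j+1}(n)})^{1/A_{j+1}}/\phi^{(j)}(\tau_j x_{|\ell_j(n)})^{1/A_j}$ at incommensurate depths, and showing the $\limsup$ is nonnegative uses a Kenyon--Peres lemma on sequences $v$ with $v(n+1)-v(n)=O(1/n)$. Billingsley's lemma then converts this into $\dim_HE_{\bf\Phi}(\alpha)\le P^\ba(T,\bq\cdot{\bf\Phi})-\bq\cdot\alpha$. Your covering sketch does not supply a substitute for this step, and you yourself flag it as the main obstacle; it is.

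A smaller point on the divergence set: you say ``pick invariant measures $\mu,\mu'$ whose weighted entropies are arbitrarily close to $\dim_HX$'' with ${\bf\Phi}_*(\mu)\ne{\bf\Phi}_*(\mu')$. There is no a priori reason two near-maximizers of weighted entropy have distinct ${\bf\Phi}_*$-values. The paper (Lemma~\ref{divergence}) takes $\nu_1$ to be the $\ba$-weighted equilibrium state of $0$ (so $h^\ba_{\nu_1}(T)=\dim_HX$ exactly), then builds $\nu_2$ as a quasi-Bernoulli approximant of a convex combination $(1-1/n)\nu_1+(1/n)\mu$ with ${\bf\Phi}_*(\mu)\ne{\bf\Phi}_*(\nu_1)$, using Theorem~\ref{thm-0.5'} to control each factor entropy $h_{\nu_2\circ\tau_{i-1}^{-1}}(T_i)$ separately (not just the weighted sum). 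This coordinatewise control is what makes the subsequent Moran concatenation deliver the right lower bound for $\dim_H$ in the anisotropic metric.
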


The above theorem can be extended in an elaborated way. Let ${\bf \Phi}^{(1)}, {\bf \Phi}^{(2)}\in \C_{asa}(X,T)^d$, where ${\bf \Phi}^{(j)}=\big(\Phi_1^{(j)},\ldots, \Phi_d^{(j)}\big)$ with $\Phi_i^{(j)}=\big(\log \phi_{n,i}^{(j)}\big)_{n=1}^\infty\in \C_{asa}(X,T)$. Let $\bc=(c_1,c_2)\in \R^2$, where $c_1, c_2>0$. Denote for $\alpha
=(\alpha_1,\ldots,\alpha_d)\in \R^d$,
$$
E_{{\bf \Phi}^{(1)}, {\bf \Phi}^{(2)}, \bc}(\alpha)=\Big\{x\in X: \lim_{n\to\infty} \sum_{j=1}^2 \frac{1}{\lfloor c_jn\rfloor } \log
\phi^{(j)}_{\lfloor c_jn\rfloor ,i}(x)=\alpha_i \mbox{ for } 1\leq i\leq d \Big\},
$$
where $\lfloor c_jn\rfloor $ denotes the integral part of $c_jn$.

\begin{thm}\label{thm-0.8} Under the above setting, set ${\bf \Phi}=\sum_{j=1}^2{\bf \Phi}^{(j)}$. Then for $\alpha\in \R^d$,
$$E_{{\bf \Phi}^{(1)}, {\bf \Phi}^{(2)}, \bc}(\alpha)\neq\emptyset\Longleftrightarrow E_{\bf \Phi}(\alpha)\neq \emptyset\Longleftrightarrow\alpha\in L_{\bf
\Phi}.$$ Furthermore for $\alpha\in L_{\bf \Phi}$, we have
\begin{equation*}
\begin{split}
\dim_HE_{{\bf \Phi}^{(1)}, {\bf \Phi}^{(2)}, \bc}(\alpha)&=\dim_HE_{\bf \Phi}(\alpha)\\
&=\max\{ah_\mu(T)+bh_{\mu\circ
\pi^{-1}}(S):\; \mu\in \M(X,T),\; {\bf \Phi}_*(\mu)
=\alpha\}\\
&=\inf \{P^{\ba}(T,  \bq\cdot {\bf \Phi})-\alpha\cdot
\bq:\; \bq\in \R^d\}.
\end{split}
\end{equation*}
Moreover, if $L_\Phi$ is not reduced to a singleton, then $X\backslash \bigcup_{\alpha\in L_{{\bf \Phi}}}E_{{\bf \Phi}^{(1)}, {\bf \Phi}^{(2)}, \bc}(\alpha) $ is of full Hausdorff dimension.
\end{thm}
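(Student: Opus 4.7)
The aim is to prove (a)~$E_{{\bf \Phi}^{(1)}, {\bf \Phi}^{(2)}, \bc}(\alpha)\neq\emptyset\Longleftrightarrow\alpha\in L_{\bf \Phi}$, (b)~the identity $\dim_H E_{{\bf \Phi}^{(1)}, {\bf \Phi}^{(2)}, \bc}(\alpha)=\dim_H E_{\bf \Phi}(\alpha)$ together with the max--inf formulas, and (c)~the full Hausdorff dimension of the divergent complement. Theorem~\ref{thm-0.7} already handles all of these for $E_{\bf \Phi}(\alpha)$, so the plan is to transfer them to the two-scale set. For the $\Longleftarrow$ direction of (a) and the lower bound in (b), I would prove the inclusion $\mathcal{G}(\mu)\subset E_{{\bf \Phi}^{(1)}, {\bf \Phi}^{(2)}, \bc}(\alpha)\cap E_{\bf \Phi}(\alpha)$ for any $\mu\in\M(X,T)$ with ${\bf \Phi}_*(\mu)=\alpha$: approximating each $\Phi_i^{(j)}\in\C_{asa}(X,T)$ by a continuous Birkhoff sum up to error $o(n)$ and using the weak-$*$ convergence $\frac{1}{n}\sum_{k=0}^{n-1}\delta_{T^kx}\to\mu$ for $x\in\mathcal{G}(\mu)$, one obtains $n^{-1}\log\phi_{n,i}^{(j)}(x)\to (\Phi_i^{(j)})_*(\mu)$ for all $i,j$; specializing $n$ to $\lfloor c_jn\rfloor$ and summing over $j$ gives $x\in E_{{\bf \Phi}^{(1)}, {\bf \Phi}^{(2)}, \bc}(\alpha)$, while summing at scale $n$ gives $x\in E_{\bf \Phi}(\alpha)$. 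Combining with Theorem~\ref{thm-0.6} and taking the supremum over admissible $\mu$ yields the lower bound on both dimensions.

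For the $\Longrightarrow$ direction of (a), take $x$ in the level set and set $\mu_m:=m^{-1}\sum_{k<m}\delta_{T^kx}$. I would iterate over subsequences to produce invariant measures $(\mu^{(k)})_{k\ge0}$ with ${\bf \Phi}^{(1)}_*(\mu^{(k)})+{\bf \Phi}^{(2)}_*(\mu^{(k+1)})=\alpha$ for every $k$. Start with $(N_l^{(0)})_l$ along which $\mu_{\lfloor c_1N_l^{(0)}\rfloor}\to\mu^{(0)}$ and $\mu_{\lfloor c_2N_l^{(0)}\rfloor}\to\mu^{(1)}$, and use continuity of $\Phi_*^{(j)}$ on $\M(X,T)$ (inherited from the additive approximations of asymptotically additive potentials) to pass to the limit. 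Iterate with $N_l^{(k+1)}:=\lfloor c_2N_l^{(k)}/c_1\rfloor$: since $\lfloor c_1N_l^{(k+1)}\rfloor$ and $\lfloor c_2N_l^{(k)}\rfloor$ differ by an absolute constant, $\mu_{\lfloor c_1N_l^{(k+1)}\rfloor}\to\mu^{(k)}$, and a further extraction yields $\mu^{(k+1)}$ with the required relation. Ces\`aro-averaging and invoking the affinity of ${\bf \Phi}^{(j)}_*$ gives ${\bf \Phi}^{(1)}_*(\bar\mu_K)+{\bf \Phi}^{(2)}_*(\bar\mu'_K)=\alpha$ with $\bar\mu_K=K^{-1}\sum_{k=0}^{K-1}\mu^{(k)}$ and $\bar\mu'_K=K^{-1}\sum_{k=1}^K\mu^{(k)}$ whose difference has total variation at most $2/K$; any common weak-$*$ accumulation $\bar\mu$ of these Ces\`aro averages satisfies ${\bf \Phi}_*(\bar\mu)=\alpha$, i.e.\ $\alpha\in L_{\bf \Phi}$.

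For the upper bound in (b), I would adapt the Legendre-transform side of the proof of Theorem~\ref{thm-0.7}. For $\bq\in\R^d$, let $\mu_\bq$ be the unique $\ba$-weighted Gibbs measure of $\bq\cdot{\bf \Phi}$ from Theorem~\ref{thm-0.2}. On $E_{{\bf \Phi}^{(1)}, {\bf \Phi}^{(2)}, \bc}(\alpha)$, testing the level-set equation against $\bq/c_1$ on ${\bf \Phi}^{(1)}$ and $\bq/c_2$ on ${\bf \Phi}^{(2)}$ yields
$$n^{-1}\log\bigl[\phi^{(1)}_{\lfloor c_1n\rfloor}(x)^{\bq/c_1}\,\phi^{(2)}_{\lfloor c_2n\rfloor}(x)^{\bq/c_2}\bigr]\ \longrightarrow\ \bq\cdot\alpha.$$
Combined with the Gibbs bounds \eqref{e-2.2}--\eqref{e-2.2'} (invoked at the two scales $\lfloor c_1n\rfloor$ and $\lfloor c_2n\rfloor$, realigned with $d_\ba$-rectangles of diameter $\asymp e^{-n/(a+b)}$), a Frostman/mass-distribution estimate yields $\dim_HE_{{\bf \Phi}^{(1)}, {\bf \Phi}^{(2)}, \bc}(\alpha)\le P^{\ba}(T,\bq\cdot{\bf \Phi})-\bq\cdot\alpha$. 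Infimizing over $\bq$ and invoking the max--inf identity of Theorem~\ref{thm-0.7} completes the dimension identity.

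For (c), when $L_{\bf \Phi}$ is not a singleton, fix $\mu_0$ realizing $\dim_HX=\max\{ah_\mu(T)+bh_{\mu\circ\pi^{-1}}(S):\mu\in\M(X,T)\}$ and $\mu_1$ with ${\bf \Phi}_*(\mu_1)\ne{\bf \Phi}_*(\mu_0)$. Build a Cantor-type subset $F$ of $X$ by concatenating very long $\mu_0$-typical blocks with shorter $\mu_1$-typical ones, choosing block lengths so that at specially selected integers the two-scale averages $\sum_j\lfloor c_jn\rfloor^{-1}\log\phi^{(j)}_{\lfloor c_jn\rfloor,i}(x)$ oscillate between two distinct limit values, so $F\subset X\setminus\bigcup_{\alpha\in L_{\bf \Phi}}E_{{\bf \Phi}^{(1)}, {\bf \Phi}^{(2)}, \bc}(\alpha)$. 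Since the $\mu_0$-typical portion asymptotically exhausts the mass, an $\ba$-weighted mass-distribution estimate (using a quasi-Bernoulli approximant of $\mu_0$ from Theorem~\ref{thm-0.5}) gives $\dim_HF\ge\dim_HX$, so the complement has full Hausdorff dimension. The main obstacle in the whole plan is the upper bound in (b): Theorem~\ref{thm-0.2} is phrased for a single scale $n$, while the level-set condition mixes two scales with weights $1/c_1, 1/c_2$, so a careful covering argument using products of cylinders at the two sub-levels is needed to match the Gibbs estimate with the two-scale constraint.
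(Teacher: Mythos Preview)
Your lower-bound strategy via $\mathcal G(\mu)\subset E_{{\bf\Phi}^{(1)},{\bf\Phi}^{(2)},\bc}(\alpha)$ matches the paper's Lemma~\ref{lem-5.12}, and your construction for~(c) is in the right spirit (the paper uses a Moran concatenation of two quasi-Bernoulli measures found via Lemma~\ref{divergence}). Your argument for the implication $E_{{\bf\Phi}^{(1)},{\bf\Phi}^{(2)},\bc}(\alpha)\neq\emptyset\Rightarrow\alpha\in L_{\bf\Phi}$ via iterated subsequences and Ces\`aro averaging is a genuinely different and rather nice route; the paper never argues this directly, but instead deduces it \emph{a posteriori} from the upper bound: once $\dim_H E\le P^\ba(T,\bq\cdot{\bf\Phi})-\alpha\cdot\bq$ is known for every $\bq$, nonemptiness forces the infimum to be $\ge 0$, which is equivalent to $\alpha\in L_{\bf\Phi}$.

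The substantive gap is in your upper bound~(b), and it is precisely the obstacle you flag at the end. Your plan is to invoke the Gibbs estimates \eqref{e-2.2}--\eqref{e-2.2'} for $\mu_\bq$ ``at the two scales $\lfloor c_1n\rfloor$ and $\lfloor c_2n\rfloor$'' and match them to the level-set constraint via a covering by products of cylinders. This does not work as stated: the $\ba$-weighted Gibbs property of $\mu_\bq$ on a $d_\ba$-ball $B(x,e^{-n/a_1})$ (Lemma~\ref{lem-gibbs}) involves $\exp(\bq\cdot{\bf\Phi}_n(x))$ at the single scale $n$ together with the \emph{induced} potentials $\phi^{(j)}$ on $Y$ at the geometric scales $\ell_j(n)$, $\ell_{j+1}(n)$; it does \emph{not} decompose into pieces controlled by $\bq\cdot{\bf\Phi}^{(1)}_{\lfloor c_1n\rfloor}$ and $\bq\cdot{\bf\Phi}^{(2)}_{\lfloor c_2n\rfloor}$ separately. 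There is no static covering argument that bridges the mismatch between the dynamical scales $\lfloor c_jn\rfloor$ in the level set and the geometric scales $n,\ell_2(n)$ in the ball.

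The paper's device (Lemmas~\ref{McMullen} and~\ref{corlemKePe}) is of a different nature. One forms
\[
f_n(x)=\frac{\mu_\bq(B(x,e^{-n/a_1}))}{\exp\bigl((\bq\cdot{\bf\Phi}_{\bc,n}(x)-nP^\ba(T,\bq\cdot{\bf\Phi}))/a_1\bigr)},
\]
and, after approximating by potentials with bounded distortion, computes $\log f_n(x)/n$ as a finite sum of terms of the form $v(\lfloor\beta n\rfloor)-v(\lfloor\gamma n\rfloor)$, where each sequence $v$ is bounded with $v(n{+}1)-v(n)=O(1/n)$. A Kenyon--Peres type lemma then gives $\limsup_n \log f_n(x)/n\ge 0$ for \emph{every} $x$, which combined with the level-set constraint and Billingsley's lemma yields the pressure upper bound. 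This pointwise $\limsup$ mechanism (going back to McMullen) is the missing ingredient in your proposal; without it, the two-scale upper bound does not go through.
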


The level sets $E_{{\bf \Phi}^{(1)}, {\bf \Phi}^{(2)}, \bc}(\alpha)$ do depend on $\bc$ (see Example \ref{ex-5.7}). However, by Theorem \ref{thm-0.8},   $\dim_HE_{{\bf \Phi}^{(1)}, {\bf \Phi}^{(2)}, \bc}(\alpha)$ does not depends on $\bc$. It is quite interesting. As a natural application,
we shall use Theorem \ref{thm-0.8} to  study the multifractal analysis of
certain measures on $X$. Let $\Phi=(\log
\phi_n)_{n=1}^\infty\in\C_{asa}(X,T)$.
%We say that $\Phi$ satisfies
%the {\it weak bounded distortion property} if ${\rm
%Var}_n(\Phi)=o(n)$, where
%$$
%Var_n(\Phi)=\sup_{\substack{x,y\in X\\x_{|n}=y_{|n}}}|\log
%\phi_n(x)-\log \phi_n(y)|,
%$$
A probability measure $\mu$ is called an {\it $\ba$-weighted weak
Gibbs measure of $\Phi$} if there exists a sequence
$(\kappa_n)_{n=1}^\infty$ of positive numbers with $\lim_{n\to
\infty}(1/n)\log \kappa_n=0$, such that
$$
A(I)/\kappa_n\leq \mu(I)\leq \kappa_n A(I),  \quad I\in \A^n,
$$
where $A(I):=\exp\left( \frac{-nP^{\ba}(T,\Phi)}{a+b} \right)
\frac{\phi(I)^{1/a}} {\psi(\pi I)^{b/(a+b)}}$ is the term in the
right hand side of \eqref{e-2.2}. We recover the usual weak Gibbs
measures when $\ba=(1,0)$ and $\Phi$ is the sequence of Birkhoff
sums associated with a continuous potential over $X$ (cf. \cite{Yur97, Kes01}).  Our last
theorem is the following.

\begin{thm}
\label{thm-0.9}
Let $\Phi=(\log \phi_n)_{n=1}^\infty\in\C_{asa}(X,T)$. Then
 there exists at least one
$\ba$-weighted weak Gibbs measure of $\Phi$. Let $\mu$ be such a
measure. For $\alpha\geq 0$ we define
$$
E_\mu(\alpha)=\Big\{x\in X: \lim_{r\to 0^+}\frac{\log
\mu(B(x,r))}{\log r}=\alpha\Big \}.
$$
Let $\Psi_1=(\log\mu(x_{|n}))_{n=1}^\infty$, $\Psi_2=(\log
\mu\circ\pi^{-1}(\pi x_{|n}))_{n=1}^\infty$, and
$\Psi=a\Psi_1+b\Psi_2$, where $x_{|n}:=x_1\ldots x_n$ for $x=(x_i)_{i=1}^\infty\in X$. Then  $\Psi_1$, $\Psi_2$ and $\Psi$ belong
to $\C_{asa}(X,T)$.
Furthermore,  let $L_\mu=L_{-\Psi}=\{-\Psi_*(\lambda):
\lambda\in\mathcal{M}(X,T)\}$. Then, for all $\alpha\geq 0$,
$E_\mu(\alpha)\neq\emptyset$ if and only if $\alpha\in L_\mu$.
For $\alpha\in L_\mu$, we have
\begin{equation*}
\begin{split}
\dim_HE_\mu(\alpha)&=\sup\{ah_\lambda(T)+bh_{\lambda\circ \pi^{-1}}(S):\;
\lambda\in \M(X,T),\; \Psi_*(\lambda)=-\alpha\}\\
&=\inf \{P^{\ba}(T,  q\Psi)+\alpha q:\; q\in \R\}.
\end{split}
\end{equation*}
\end{thm}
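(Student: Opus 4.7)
The plan has four steps: (i) construct an $\ba$-weighted weak Gibbs measure of $\Phi$ by approximation from genuine Gibbs measures supplied by Theorem~\ref{thm-0.2}; (ii) transfer the asymptotic additivity of $\Phi$ through the weak Gibbs estimate~\eqref{e-2.2} to deduce $\Psi_1,\Psi_2,\Psi\in\C_{asa}(X,T)$; (iii) derive a key factorization $\mu(B(x,r))\approx\mu([x_{|n}])\,\nu([\pi x_{|m}])/\nu([\pi x_{|n}])$ where $n$, $m$ are the appropriate cylinder depths; (iv) recognize $E_\mu(\alpha)$ as an instance of the sets handled by Theorem~\ref{thm-0.8} and read off the dimension formulas.

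For step (i), I approximate $\Phi$ by a sequence $\Phi^{(k)}\in\C_{aa}(X,T)$ with the bounded distortion property (obtained by smoothing the sub-additive approximations from the definition of $\C_{asa}$). Theorem~\ref{thm-0.2} provides a Gibbs measure $\mu_k$ for each $\Phi^{(k)}$, and any weak-star accumulation point $\mu$ of $(\mu_k)_k$ inherits an estimate of the form \eqref{e-2.2} for $\Phi$ itself with an error factor $\kappa_n=e^{o(n)}$ reflecting the approximation rate. For step (ii), taking logarithms in \eqref{e-2.2} writes $\log\mu(x_{|n})$ as a fixed linear combination of $\log\phi_n(x)$, $\log\psi(\pi x_{|n})$ and a linear-in-$n$ term, modulo $\log\kappa_n$. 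The first is asymptotically additive by assumption, and an argument analogous to the proof of Theorem~\ref{thm-0.1} gives $(\log\psi(\pi x_{|n}))_n\in\C_{asa}(X,T)$; the formula~\eqref{e-2.2'} then yields $\Psi_2\in\C_{asa}(X,T)$, and $\Psi=a\Psi_1+b\Psi_2\in\C_{asa}(X,T)$ follows by linearity.

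For step (iii), the metric $d_\ba$ identifies $B(x,e^{-s})$ with $[x_{|n}]\cap\pi^{-1}([\pi x_{|m}])$ where $n=\lceil as\rceil$ and $m=\lceil(a+b)s\rceil$. Summing the weak Gibbs estimate~\eqref{e-2.2} over $m$-cylinders contained in this set, using the asymptotic-additive factorizations of $\phi$ and $\psi$, and applying \eqref{e-2.2'} to rewrite sums of $\phi^{1/a}$ as $\nu$-measures, the algebra collapses to the advertised factorization up to a factor $e^{o(s)}$. Taking logs, dividing by $-s\sim-n/a$, and using $m/n\to(a+b)/a$ to rescale the term in $\log\nu(\pi x_{|m})$ from $n$ to $m$, one obtains
\[
\frac{\log\mu(B(x,e^{-s}))}{\log(e^{-s})}=\frac{-a\log\mu(x_{|n})+a\log\nu(\pi x_{|n})}{n}+\frac{-(a+b)\log\nu(\pi x_{|m})}{m}+o(1).
\]
For step (iv), set $\Phi^{(1)}=-a\Psi_1+a\Psi_2$, $\Phi^{(2)}=-(a+b)\Psi_2$, and $\bc=(1,(a+b)/a)$; these lie in $\C_{asa}(X,T)$ by step (ii), and $\Phi^{(1)}+\Phi^{(2)}=-\Psi$. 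The display above identifies $E_\mu(\alpha)=E_{\Phi^{(1)},\Phi^{(2)},\bc}(\alpha)$, and since $L_{-\Psi}=L_\mu$, Theorem~\ref{thm-0.8} yields both the non-emptiness criterion and the two dimension formulas. The substitution $q\mapsto-q$ in the Legendre transform converts $P^\ba(T,q(-\Psi))-q\alpha$ into $P^\ba(T,q\Psi)+q\alpha$, matching the stated form.

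\emph{Main obstacle.} The decisive step is (iii): one must sum an exponentially large number of weak Gibbs estimates and propagate the sub-exponential errors $\kappa_n$ together with the asymptotic-additivity errors for $(\log\phi_n)_n$ and $(\log\psi_n)_n$ uniformly in $x$ and in the cylinders $[I]\subset B(x,e^{-s})$. Keeping all these errors sub-exponential in $n$, so that they do not overwhelm the ratio $\nu([\pi x_{|m}])/\nu([\pi x_{|n}])$, is the subtle computation and is precisely what downgrades the Gibbs conclusion of Theorem~\ref{thm-0.2} to the weak Gibbs property in this asymptotically-additive setting.
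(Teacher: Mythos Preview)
Your steps (ii)--(iv) are essentially the paper's argument and are correct: once a weak Gibbs measure exists, the ball-measure expansion you derive in (iii) is exactly the paper's formula~\eqref{logmassball1}, and the reduction in (iv) to Theorem~\ref{thm-0.8} with $\Phi^{(1)}=-a\Psi_1+a\Psi_2$, $\Phi^{(2)}=-(a+b)\Psi_2$, $\bc=(1,(a+b)/a)$ is precisely how the paper concludes. The error propagation in (iii) is routine once the weak Gibbs inequalities are in hand (the single factor $\kappa_m$ multiplies the whole sum, and the asymptotic-additivity errors for $\phi$ and $\psi$ are uniform); this is \emph{not} the main obstacle.

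The real gap is in step (i). A weak-star accumulation point of the $\ba$-weighted Gibbs measures $\mu_k$ for $\Phi^{(k)}$ need not be a weak Gibbs measure for $\Phi$. For a fixed $n$-cylinder $I$ you have $\mu(I)=\lim_j\mu_{k_j}(I)$ and $\mu_{k_j}(I)\in[C_{k_j}^{-1},C_{k_j}]\cdot A^{(k_j)}(I)$, where $C_{k_j}$ is the Gibbs constant from Theorem~\ref{thm-0.2} for $\Phi^{(k_j)}$. But $C_k$ depends on the distortion constants of $\Phi^{(k)}$, and if $\Phi$ is genuinely only asymptotically additive these must deteriorate as $k\to\infty$ (otherwise $\Phi$ itself would lie in $\C_{aa}(X,T)$ with bounded distortion). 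The intervals $[C_{k_j}^{-1}A^{(k_j)}(I),\,C_{k_j}A^{(k_j)}(I)]$ therefore open up as $j\to\infty$ rather than pinning down $\mu(I)$, and the limit gives no control on $\mu(I)/A(I)$. Your phrase ``error factor $\kappa_n=e^{o(n)}$ reflecting the approximation rate'' conflates a rate in $k$ with a rate in $n$; no such $\kappa_n$ is produced by this argument.

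The paper builds the weak Gibbs measure by \emph{concatenation} instead (proof of Theorem~\ref{weakGibbs}): it takes H\"older potentials $g_p$ with $\limsup_n\|\log\phi_n-S_ng_p\|_\infty/n\le 2^{-p}$, Gibbs measures $\mu_p$ for $g_p$ with constants $\kappa_p$, chooses block lengths $N_p$ growing fast enough that $\sum_{j\le p+1}\log\kappa_j=o(N_p)$, and defines $\nu$ as the product of the restrictions of $\mu_p$ to successive blocks of lengths $N_1,N_2,\ldots$. Then $\nu(x_{|n})$ factors over the blocks, and one checks directly that the accumulated errors (the $\log\kappa_p$, the pressure differences $|P_\Psi-P_{g_p}|$, and the approximation errors $2^{-p}N_p$) sum to $o(n)$. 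This concatenation, not a weak-star limit, is the substantive construction you are missing, and it is where the difficulty of the theorem actually lies.
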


\begin{rem}
{\rm  It is worth  mentioning that the {\it concatenation of measures} play a crucial role in our geometric results. At first, the computations of Hausdorff dimensions are based on a kind of constructions of Moran measures obtained by the concatenation
of quasi-Bernoulli measures. This method strongly depends on Theorem~\ref{thm-0.5}. In the classical case for which $b=0$, one can construct either Moran measures  by concatenating Markov measures (see e.g. \cite{Caj81}), or Moran sets directly (see for instance \cite{FFW01,Feng03}). This second approach  seems not efficient when $b\neq 0$.

Also, the existence of (weighted) weak Gibbs measures
for a given asymptotically additive potential $\Phi$ is obtained by concatenating  (weighted) Gibbs measures associated with H\"older potentials converging to $\Phi$.}
\end{rem}

\begin{rem}

{\rm

\begin{itemize}

\item[(1)] We mention that \eqref{e-0.5} is obtained independently in \cite{Yay09} for $\Phi=0$.

\item[(2)] Theorem \ref{thm-0.2} has been partially extended  in \cite{Fen09} to the case that $X$ is a subshift satisfying specification. For example, the uniqueness of weighted equilibrium states is proved for  almost additive potentials with the bounded distortion condition. This solves a question of
    Gaztouras and Peres about the uniqueness of invariant measures of maximizing weighted entropy (cf.  \cite[Problem 3]{GaPe96}).

\item[(3)]

Special cases of Theorems~\ref{thm-0.7}
and~\ref{thm-0.9}  have been obtained in
\cite{BaMe08} and \cite{Kin95,BaMe08} respectively when $d=1$ and
under the bounded distortion assumption, except for the endpoints of
the spectra which are not captured by the methods developed in these
papers. Moreover, those methods cannot be extended to the case of
general almost additive potentials. Also, the results on
multifractal analysis of Birkhoff averages and quasi-Bernoulli
measures in those papers are not unified, while it is the case in the self-similar
case $b=0$. The weighted thermodynamic formalism introduced in this
paper makes it possible to have a simple and unified presentation of
the results concerning both questions.

\item[(4)] Reduced to the case $b=0$, Theorems \ref{thm-0.7}-\ref{thm-0.8} cover the previous works
on the multifractal analysis of almost
additive potentials and related measures on symbolic spaces with   the  standard  metric (see \cite{Rand89,Pes97, BaSc00, FFW01,Feng03,Bar09} and references therein).

\item[(5)] Following the works achieved in \cite{Kin95,Ols98,BaMe07} for almost additive potentials satisfying the bounded distortion property, it is possible to conduct
the multifractal analysis of the projections of weak Gibbs measures on the planar self-affine sets described above when conditions \eqref{planarset} hold. We will not discuss such geometrical realizations in this paper.

%Some counterpart of Theorem~\ref{thm-0.9} for geometric
%realizations of weak Gibbs measures on Euclidean spaces will be
%given in Section..., where the higher dimensional case is
%considered. We shall also characterize the special class of weak
%Gibbs measures for which using the box-counting and Large Deviations
%approaches to estimate $\dim_H E_\mu(\alpha)$ yields the correct
%dimension through the classical topological pressure.

\item[(6)] It is worth to point out that  Falconer gave a variational formula for the Hausdorff dimension for
 ``almost all'' self-affine sets under some assumptions \cite{Fal88}, and for this case K\"{a}enm\"{a}ki showed the existence of  ergodic measures of full Hausdorff dimension on the typical self-affine sets \cite{Kae04}. See \cite{JoSi07} for a related result on the multifractal analysis.

\end{itemize}
}
\end{rem}

The paper is organized  as follows. Some definitions and  known
results on sub-additive thermodynamic formalism on subshifts are
given in Section~\ref{S2}. The proofs of Theorems~\ref{thm-0.1}--\ref{thm-0.5} on the weighted
thermodynamic formalism are given in Section~\ref{Proofsth1.1to1.5}. In Section~\ref{S7}, we present the higher
dimensional weighted thermodynamic formalism. Since the proofs of the result are very similar to those used in the 2-dimensional case, we omit them. Then, in Section~\ref{multifractalanalysis} we present and prove the extensions to the higher dimensional case of Theorems~\ref{thm-0.6}--\ref{thm-0.9}. Indeed, for these results, the higher dimensional case is more involved, due to the upper bound estimates for Hausdorff dimensions.

\section{Sub-additive thermodynamical formalism on subshifts}
\label{S2} In this section, we present some definitions and known
results about sub-additive thermodynamical formalism on subshifts.
\subsection{One-sided subshifts over finite alphabets}
\label{S-2.1} Let $p\geq 2$ be an integer and
$\A=\{1,\ldots,p\}$. Denote
$$
\A^\N=\left\{(x_i)_{i=1}^\infty:\; x_i\in \A\mbox { for }i\geq 1\right\}.
$$
Then $\A^\N$ is compact endowed with the product discrete topology (\cite{LiMa95}).
We say that $(X,T)$ is a {\it subshift over $\A$}, if $X$ is a
compact subset of $\A^\N$ and $T(X)\subseteq X$, where $T$ is the
left shift map on $\A^\N$  defined as
$$
T((x_i)_{i=1}^\infty)=(x_{i+1})_{i=1}^\infty,\quad\forall\; (x_i)_{i=1}^\infty\in \A^\N.
$$
In particular, $(X,T)$ is called the {\it full shift over $\A$} if
$X=\A^\N$.  For any $n\in \N$ and $I\in \A^n$, we write
$$
[I]=\{(x_i)_{i=1}^\infty\in \A^\N:\; x_1\ldots x_n=I\}
$$
and call it an {\it $n$-th cylinder} in $\A^\N$.

Let $(X,T)$ and  $(Y,S)$ be two subshifts over finite alphabets $\A$
and  $\D$, respectively. We say that $Y$ is a {\it factor} of $X$,
if there is a continuous surjective map $\pi:\; X\to Y$ such that
$\pi T=S\pi$.  Here $\pi$ is called a {\it factor map}. Furthermore
$\pi$ is called a {\it one-block factor map} if there exists a map
${\pi}:\; \A\to \D$  such that
$$
\pi\left((x_i)_{i=1}^\infty\right)=\left({\pi}(x_i)\right)_{i=1}^\infty,\qquad
\forall \;(x_i)_{i=1}^\infty\in X.
$$
It is well known (see, e.g. \cite[Proposition 1.5.12]{LiMa95}) that  each factor map
$\pi:\; X\to Y$ between two subshifts $X$ and $Y$, will become a
one-block factor map if we enlarge the alphabet for $X$ and  recode $X$
appropriately.

\subsection{Sub-additive thermodynamical formalism}
For the reader's convenience we recall some definitions. Let $(X,T)$ be a subshift over a finite alphabet $\A$. A sequence
$\Phi=(\log \phi_n)_{n=1}^\infty$  is called a {\it sub-additive
potential} on $X$ and write $\Phi\in \C_s(X,T)$, if  each $\phi_n$ is a non-negative continuous
function on $X$ and there exists $c>0$ such that
\begin{equation*}
\label{e-1.1}
\phi_{n+m}(x)\leq c \phi_n(x)\phi_m(T^nx),\quad \forall \; x\in X,\; n,m\in \N.
\end{equation*}
More generally,   $\Phi=(\log \phi_n)_{n=1}^\infty$ is said to be an {\it asymptotically sub-additive potential} and  write $\Phi\in \C_{ass}(X,T)$ if for any $\varepsilon>0$, there exists a sub-additive potential $\Psi=(\log \psi_n)_{n=1}^\infty$ on $X$ such that
$$
\limsup_{n\to \infty}\frac{1}{n}\sup_{x\in X} |\log \phi_n(x)-\log \psi_n(x)|\leq \varepsilon,
$$
where we take the convention $\log 0-\log 0=0$.
Furthermore $\Phi$ is called an {\it asymptotically additive potential} and  write $\Phi\in \C_{asa}(X,T)$
if both $\Phi$ and $-\Phi$ are asymptotically sub-additive, where $-\Phi$ denotes $(\log (1/\phi_n))_{n=1}^\infty$.

Let $\M(X,T)$ denote the set of $T$-invariant Borel
probability measures on $X$ endowed with the weak-star topology. For
$\mu\in \M(X,T)$, let $h_\mu(T)$ denote the measure-theoretic
entropy of $\mu$ with respect to $T$, and  write
\begin{equation}
\label{e-2.1T2}
 \Phi_*(\mu)=\lim_{n\to \infty}\frac{1}{n}\int_X \log
\phi_n(x)\; d\mu(x).
\end{equation}
The existence of the limit (which may take value $-\infty$) in (\ref{e-2.1T2}) follows from the
sub-additivity of $\Phi$. The following lemma will be useful.

\begin{lem}[\cite{FeHu09a}]
\label{lem-2.1}
Let $\Phi=(\log \phi_n)_{n=1}^\infty\in \C_{ass}(X,T)$.
Then we have the following properties.
\begin{itemize}
\item[(i)] Let $\mu\in \M(X,T)$.   The limit $\lambda_\Phi(x):=\lim_{n\to \infty}\frac{1}{n}\log \phi_n(x)$
exists (which may take value $-\infty$) for $\mu$-a.e.~$x\in X$, and $\int \lambda_\Phi(x)\;d\mu(x)=\Phi_*(\mu)$. When $\mu$ is ergodic,
$\lambda_\Phi(x)=\Phi_*(\mu)$ for $\mu$-a.e. $x\in
X$.

\item[(ii)] The map $\Phi_*:\mathcal{M}(X,T)\rightarrow \mathbb{R}\cup
\{-\infty\}$ is upper semi-continuous,  and there is $C\in
\mathbb{R}$ such that  for all $\mu\in
\mathcal{M}(X,T)$, $\lambda_\Phi(x)\le C$ $\mu$-a.e and $\Phi_*(\mu)\le C$.
 If  $\Phi\in \C_{asa}(X,T)$,  $\Phi_*$ is continuous on $\mathcal{M}(X,T)$.

\item[(iii)] $\Phi\in \C_{asa}(X,T)$ if and only if for any $\varepsilon >0$, there exists a continuous function $g$ on $X$ such that
\begin{equation*}
\label{e-1504f}
\limsup_{n\to \infty}\frac{1}{n}\sup_{x\in X}|\log \phi_n(x)-S_ng(x)|\leq \varepsilon,
\end{equation*}
where $S_ng(x):=\sum_{j=0}^{n-1}g(T^jx)$.
\end{itemize}
\end{lem}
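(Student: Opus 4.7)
The plan is to prove the three parts in order, using Kingman's subadditive ergodic theorem as the foundation and exploiting the uniform approximation structure built into $\C_{ass}$ and $\C_{asa}$.

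For part (i), I would first dispose of the case of a genuine subadditive potential $\Psi \in \C_s(X,T)$: Kingman's theorem provides $\lambda_\Psi(x) := \lim_n \frac{1}{n}\log \psi_n(x)$ $\mu$-a.e., with $\int \lambda_\Psi \, d\mu = \Psi_*(\mu)$, and $\lambda_\Psi$ constant a.e.\ if $\mu$ is ergodic (the required integrability on the positive side comes from $\phi_n(x) \leq (c \sup_X \phi_1)^n$, which gives a uniform upper bound $\frac{1}{n}\log \phi_n \leq C$). For $\Phi \in \C_{ass}$ I would pick subadditive approximants $\Psi_k$ with $\limsup_n \frac{1}{n} \sup_x |\log \phi_n - \log \psi_{n,k}| \leq 1/k$, apply the above to each $\Psi_k$ to obtain $\lambda_{\Psi_k}$, and observe that the triangle inequality yields $|\lambda_{\Psi_k} - \lambda_{\Psi_l}| \leq 1/k+1/l$ $\mu$-a.e.\ together with $|\Phi_*(\mu) - (\Psi_k)_*(\mu)| \leq 1/k$. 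Hence $(\lambda_{\Psi_k})$ is Cauchy pointwise $\mu$-a.e., its limit coincides with $\lim_n \frac{1}{n}\log \phi_n$, and (Fatou/dominated, using the uniform upper bound $C$) integrates to $\Phi_*(\mu)$. Ergodicity is inherited from Kingman's constancy.

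For part (ii), I would exhibit a subadditive $\Psi$ as an infimum: integrating $\psi_{n+m} \leq c\, \psi_n \cdot \psi_m \circ T^n$ against a $T$-invariant $\mu$ shows that $a_n(\mu) := \int \log \psi_n\, d\mu + \log c$ is subadditive in $n$, so by Fekete $\Psi_*(\mu) = \inf_n a_n(\mu)/n$. Since $\psi_n \geq 0$ is continuous, $\log \psi_n$ is upper semi-continuous and bounded above, hence $\mu \mapsto \int \log \psi_n \, d\mu$ is u.s.c.\ on $\M(X,T)$ (monotone approximation by bounded u.s.c.\ integrands). An infimum of u.s.c.\ functions is u.s.c., so $\Psi_*$ is u.s.c. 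For $\Phi \in \C_{ass}$, the estimate $|\Phi_* - (\Psi_k)_*| \leq 1/k$ uniform in $\mu$ presents $\Phi_*$ as a uniform limit of u.s.c.\ functions, hence u.s.c., and the upper bound $C$ is inherited. If $\Phi \in \C_{asa}$, applying the same argument to $-\Phi$ shows $-\Phi_*$ is also u.s.c., whence $\Phi_*$ is both u.s.c.\ and l.s.c., i.e.\ continuous.

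For part (iii), the easy direction ($\Leftarrow$) is immediate: if for each $\varepsilon$ such a continuous $g$ exists, then $(S_n g)$ is a (trivially sub-additive) additive approximation of $\Phi$, and $(S_n(-g))$ approximates $-\Phi$, so both lie in $\C_{ass}$, i.e.\ $\Phi \in \C_{asa}$. For the converse, given $\Phi \in \C_{asa}$ (so $\phi_n > 0$ continuous) and $\varepsilon > 0$, I would take $g_m(x) := \frac{1}{m}\log \phi_m(x)$ for $m$ large. Using a subadditive approximation $\Psi$ of $\Phi$ and a subadditive approximation $\widetilde\Psi$ of $-\Phi$, each with error at most $\varepsilon/10$, and iterating the subadditivity $\psi_{km} \leq c^{k-1}\prod_{i=0}^{k-1}\psi_m \circ T^{im}$ on translates $T^s x$ for $0 \leq s < m$ and averaging, one controls $\frac{1}{n}|\log \phi_n(x) - S_n g_m(x)|$ uniformly in $x$ by $\varepsilon$ for all sufficiently large $n$. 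This Fekete-style averaging argument, which simultaneously balances the two-sided bounds coming from $\Psi$ and $\widetilde\Psi$ and converts an a.e.\ Birkhoff-type statement into a uniform one, is the main technical obstacle; the rest of the proof reduces to careful bookkeeping of the error terms $(k-1)\log c$, boundary residues $r < m$, and the $\varepsilon/10$ approximation slacks.
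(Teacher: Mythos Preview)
The paper does not supply its own proof of this lemma; it is quoted verbatim from \cite{FeHu09a}, so there is nothing to compare your argument against here. Your outline is the standard route and is correct: Kingman's theorem handles genuinely sub-additive $\Psi$, the uniform approximation built into $\C_{ass}$ transfers (i) and (ii) to $\Phi$ (your Cauchy argument for $(\lambda_{\Psi_k})$ and the Fekete/infimum representation of $\Psi_*$ are exactly what is needed), and the two-sided approximation for $\C_{asa}$ gives continuity in (ii) and the ``if'' direction in (iii). For the ``only if'' in (iii), your choice $g_m=\frac{1}{m}\log\phi_m$ together with the translate-averaging trick (decompose $n=km+r$, apply sub-additivity of $\Psi$ and of $\widetilde\Psi$ along the $m$-blocks $T^{s+im}x$, average over $s=0,\dots,m-1$ to turn block sums into $S_n g_m$, and absorb the $O(m)$ boundary terms and $(k-1)\log c$ into $o(n)$) is precisely the argument in \cite{FeHu09a}; the bookkeeping you allude to is routine once the two-sided sub-additive bounds are in hand.
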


\begin{rem}
\label{re-2.2}
{\rm
According to Lemma \ref{lem-2.1}(iii), for $\mu\in \M(X,T)$, the set $\G(\mu)$ of generic points of $\mu$ defined as in \eqref{generic} is just equal to
$$\left\{x\in X:\; \lim_{n\to \infty} \frac{\log \phi_n(x)}{n}=\Phi_*(\mu),\quad \forall \; \Phi=(\log \phi_n)_{n=1}^\infty \in \C_{asa}(X,T)\right\}.$$
}
\end{rem}

For $\Phi=(\log \phi_n)_{n=1}^\infty\in \C_{ass}(X,T)$, and a compact
set $K\subseteq X$,  define
\begin{equation} \label{e-2.1T1}
P_n(T,\Phi, K)=\sum_{I\in \A^n,\; [I]\cap K\neq \emptyset}
\sup_{x\in [I]\cap K}\phi_n(x). \end{equation}
 and
\begin{equation}\label{e-2.1T4}
P(T, \Phi,K)=\limsup_{n\to \infty}\frac{1}{n}\log P_n(T,\Phi, K).
\end{equation}

The following variational principle was
proved in \cite{CFH08} when $\Phi\in \C_s(X,T)$. As pointed in \cite{FeHu09a}, it holds also for
$\Phi\in \C_{ass}(X,T)$.

\begin{pro}
\label{pro-1.1} Let $P(T,\Phi,X)$ be defined as above. Then for any
$\Phi \in \C_{ass}(X,T)$, we have the following variational
principle:
\begin{equation}
\label{e-1.2}
P(T,\Phi,X)=\sup\{\Phi_*(\mu)+h_\mu(T):\; \mu\in \M(X,T)\}.
\end{equation}
\end{pro}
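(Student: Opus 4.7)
The plan is to reduce to the sub-additive case $\Phi \in \C_s(X,T)$ (essentially the variational principle of Cao--Feng--Huang proved in \cite{CFH08}) by approximation, and to sketch the sub-additive case itself for completeness. The reduction step is clean: given $\Phi \in \C_{ass}(X,T)$ and $\varepsilon > 0$, pick $\Psi = (\log \psi_n) \in \C_s(X,T)$ with $\limsup_{n \to \infty} \frac{1}{n} \sup_{x \in X} |\log \phi_n(x) - \log \psi_n(x)| \leq \varepsilon$. A termwise comparison of $P_n$ in normalized log-form yields $|P(T,\Phi,X) - P(T,\Psi,X)| \leq \varepsilon$, and the same estimate gives $|\Phi_*(\mu) - \Psi_*(\mu)| \leq \varepsilon$ uniformly in $\mu \in \M(X,T)$. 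Applying the sub-additive variational principle to $\Psi$ and letting $\varepsilon \to 0$ closes this half.

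For the sub-additive case itself, the upper bound follows from a Jensen-type computation. Let $\alpha = \{[i] : i \in \A\}$ and $\alpha_n = \bigvee_{i=0}^{n-1} T^{-i} \alpha$. Since $\log \phi_n(x) \leq \log \sup_{[I]} \phi_n$ for $x \in [I]$, and by concavity of $\log$,
$$H_\mu(\alpha_n) + \int_X \log \phi_n \, d\mu \leq \sum_{I \in \A^n} \mu([I]) \log \frac{\sup_{[I]} \phi_n}{\mu([I])} \leq \log P_n(T,\Phi,X).$$
Divide by $n$, pass to the limit, and use that $\alpha$ is a generator together with $\lim_n \frac{1}{n} \int \log \phi_n \, d\mu = \Phi_*(\mu)$; the convention $\log 0 - \log 0 = 0$ and the uniform bound in Lemma~\ref{lem-2.1}(ii) handle vanishing of $\phi_n$.

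For the lower bound I would adapt the classical Misiurewicz-type construction: for each $n$, pick $x_I \in [I]$ with $\phi_n(x_I) \geq (1 - 1/n) \sup_{[I]} \phi_n$, form
$$\sigma_n = \frac{1}{P_n(T,\Phi,X)} \sum_{I \in \A^n} \phi_n(x_I) \delta_{x_I}, \qquad \mu_n = \frac{1}{n} \sum_{k=0}^{n-1} \sigma_n \circ T^{-k},$$
and let $\mu \in \M(X,T)$ be any weak-$*$ subsequential limit. A direct computation shows $H_{\sigma_n}(\alpha_n) + \int \log \phi_n \, d\sigma_n \geq \log P_n(T,\Phi,X) + O(1)$. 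Splitting an $\alpha_N$-refinement of $\mu_n$ into $\lfloor N/n \rfloor$ blocks of length $n$ via concavity of the entropy functional, combined with sub-additivity of $\Phi$, yields $h_\mu(T) + \Phi_*(\mu) \geq P(T,\Phi,X)$ upon letting $N \to \infty$ and then $n \to \infty$. The main obstacle is precisely this lower bound: one must track the block-boundary defects in the entropy-refinement step and invoke the upper semi-continuity of $\Phi_*$ from Lemma~\ref{lem-2.1}(ii) to transfer $\Phi_*$ through the weak-$*$ limit, which is nontrivial because $\Phi_*$ is generally only upper semi-continuous and may take the value $-\infty$.
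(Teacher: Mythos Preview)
The paper does not itself prove this proposition; it quotes it as established in \cite{CFH08} for $\Phi\in\C_s(X,T)$ and extended to $\Phi\in\C_{ass}(X,T)$ in \cite{FeHu09a}. Your reduction from the asymptotically sub-additive to the sub-additive case by $\varepsilon$-approximation is exactly the argument indicated in \cite{FeHu09a}, and your sketch of the sub-additive case follows the scheme of \cite{CFH08}: the upper bound via the concavity inequality is correct as written.

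Two points in your lower-bound sketch need repair. First, the block decomposition is reversed: in the Misiurewicz argument one fixes a \emph{small} parameter $m$ and splits $\alpha_n$ (with $n$ large) into $\lfloor n/m\rfloor$ blocks of length $m$, obtaining $\tfrac{1}{n}H_{\sigma_n}(\alpha_n)\le \tfrac{1}{m}H_{\mu_n}(\alpha_m)+o_n(1)$; one then sends $n\to\infty$ along a subsequence with $\mu_n\to\mu$ (here $H_{(\cdot)}(\alpha_m)$ is weak-$*$ continuous since cylinders are clopen) and only afterwards lets $m\to\infty$. Your ``$\lfloor N/n\rfloor$ blocks of length $n$'' with $N\to\infty$ first does not yield the needed inequality. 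Second, Lemma~\ref{lem-2.1}(ii) gives upper semi-continuity of $\Phi_*$ on $\M(X,T)$, but the quantity you must control is $\tfrac{1}{n}\int\log\phi_n\,d\sigma_n$ for the \emph{non-invariant} $\sigma_n$; the relevant statement (a separate lemma in \cite{CFH08}) is that if $\tfrac{1}{n}\sum_{k=0}^{n-1}\sigma_n\circ T^{-k}\to\mu$ then $\limsup_n\tfrac{1}{n}\int\log\phi_n\,d\sigma_n\le\Phi_*(\mu)$. Its proof uses sub-additivity to dominate $\log\phi_n$ by an average of translates of $\log\phi_m$, so that after integrating against $\sigma_n$ one gets $\tfrac{1}{m}\int\log\phi_m\,d\mu_n+O(1/m)$; one then passes $n\to\infty$ using only that $\log\phi_m$ is upper semi-continuous, and finally $m\to\infty$. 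With these two corrections your outline matches \cite{CFH08} exactly.
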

We call   $P(T,\Phi):=P(T,\Phi,X)$ the {\it topological pressure of
$\Phi$}.
\begin{rem}
{\rm When $\Phi=(\log \phi_n)_{n=1}^\infty$ is an additive potential,
i.e., $$\phi_n(x)=\exp\left(\sum_{i=0}^{n-1}\phi(T^ix)\right)$$ for
a continuous function $\phi$ on $X$,
 the above proposition comes to  the Ruelle-Walters variational principle for additive
 topological pressures (see e.g. \cite{Rue73, Rue78, Wal75}).}
\end{rem}

We say that $\mu\in \M(X,T)$ is an {\it equilibrium state} of $\Phi$
if the supremum in (\ref{e-1.2}) is attained at $\mu$.  Note that
$\Phi_*(\cdot)$ is  upper semi-continuous on $\M(X,T)$ (cf. Lemma~\ref{lem-2.1}(ii)), and so is $h_{(\cdot)}(T)$ for subshifts.
Hence $\Phi$ has at least one equilibrium state. In the following,
we consider the case when $\Phi$ has a unique equilibrium state.

We say that
$\Phi=(\log \phi_n)_{n=1}^\infty$ is {\it almost additive} if
$\phi_n$ is positive and continuous on $X$  for each $n$ and  there
is a constant $c>0$ such that
\begin{equation*}
\label{e-taa}
\frac{1}{c} \phi_n(x)\phi_m(T^nx)\leq \phi_{n+m}(x)\leq c
\phi_n(x)\phi_m(T^nx),\quad \forall \; x\in X,\; n,m\in \N.
\end{equation*}
For convenience, we denote by  $\C_{aa}(X,T)$ the collection of
almost-additive potentials on $X$.  Clearly $\C_{aa}(X,T)\subset \C_{asa}(X,T)$.

For $\Phi=(\log \phi_n)_{n=1}^\infty\in \C_{ass}(X,T)$, we say that $\Phi$ has the {\it bounded distortion property} if there exists a constant $c>0$ such that
$$
\frac{1}{c}\phi_n(y)\leq \phi_n(x)\leq c\phi_n(y)\quad \mbox{whenever $x,y\in X$ are in the same $n$-th cylinder}.
$$

\begin{pro}
\label{pro-1.2} Let $(X,T)$ be a full shift or mixing subshift of
finite type. Let $\Phi=(\log \phi_n)_{n=1}^\infty \in \C_{aa}(X,T)$.
Assume that $\Phi$ has the bounded distortion property. Then $\Phi$
has a unique equilibrium state $\mu$. Furthermore, there exists a
constant $c>0$ such that for any $n\in \N$ and
$x=(x_i)_{i=1}^\infty\in X$,
 \begin{equation*}
\label{e-1.3}
c^{-1}\leq \frac{\mu([x_1\ldots x_n])}{\exp(-n P(T,\Phi))\;\phi_n(x)}\leq c.
\end{equation*}
\end{pro}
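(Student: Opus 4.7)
The plan is to construct an invariant measure $\mu$ satisfying the claimed Gibbs bound, verify that it is an equilibrium state, and then prove uniqueness via a Jensen-type comparison. First I would introduce the weights $w_n(I) = \sup_{x \in [I]\cap X} \phi_n(x)$ for admissible words $I \in \A^n$, and the partition functions $Z_n = \sum_I w_n(I)$. Combining the almost additivity of $\Phi$ with the bounded distortion property yields, for any admissible concatenation $IJ$,
$$w_{n+m}(IJ) \approx w_n(I)\, w_m(J).$$
(In the mixing SFT case one inserts a connecting word of bounded length, which only affects the multiplicative constant.) Summing gives $Z_{n+m}\approx Z_n Z_m$, hence $P^\infty := \lim_n n^{-1} \log Z_n$ exists and in fact $Z_n \approx e^{n P^\infty}$; comparing with \eqref{e-2.1T1}--\eqref{e-2.1T4} and using Proposition~\ref{pro-1.1}, one gets $P^\infty = P(T,\Phi)$.

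Next I would define Borel probabilities $\mu_n$ by placing mass $w_n(I)/Z_n$ on an arbitrary representative of $[I]$ for each $I \in \A^n$. For fixed $J \in \A^k$ and $n \geq k$, summing over the admissible extensions of $J$ and invoking the approximate multiplicativity of the weights gives
$$\mu_n([J]) \approx \frac{w_k(J)\, Z_{n-k}}{Z_n} \approx w_k(J)\, e^{-k P(T,\Phi)}.$$
Letting $\mu$ be any weak-star limit of the Ces\`aro averages $\frac{1}{N}\sum_{n=0}^{N-1}\mu_n \circ T^{-n}$ produces a $T$-invariant measure in $\M(X,T)$ satisfying, by the bounded distortion property, $\mu([x_1\cdots x_n]) \approx e^{-n P(T,\Phi)}\phi_n(x)$. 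A direct entropy computation using this Gibbs bound together with bounded distortion shows $h_\mu(T)+\Phi_*(\mu)=P(T,\Phi)$, so $\mu$ is an equilibrium state; mixing of the shift combined with the uniform Gibbs estimate further implies that $\mu$ is ergodic (in fact mixing).

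For uniqueness, let $\nu$ be any other equilibrium state. Writing $h_\nu(T)+\Phi_*(\nu)$ as a limit of $\frac{1}{n}[H_\nu(\xi^n)+\int \log \phi_n\, d\nu]$ for the partition $\xi^n$ into $n$-th cylinders, and using bounded distortion to replace $\int \log \phi_n\, d\nu$ by $\sum_I \nu([I]) \log w_n(I) + O(1)$, the equilibrium identity rewrites as the asymptotic saturation of Jensen's inequality
$$\sum_{I \in \A^n} \nu([I])\log \frac{w_n(I)}{\nu([I])} = \log Z_n + o(n).$$
A quantitative refinement of Jensen's inequality, exploiting the approximate multiplicativity of the $w_n$ together with the Shannon--McMillan--Breiman theorem applied to $\nu$, forces $\nu([I])\approx w_n(I)/Z_n$ uniformly in $I$ and $n$. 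Thus $\nu$ satisfies the same Gibbs bound as $\mu$, so $\mu$ and $\nu$ are mutually absolutely continuous with uniformly bounded densities, and ergodicity of $\mu$ gives $\mu=\nu$.

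The main obstacle is precisely this last upgrade, from the average-sense saturation of Jensen's inequality to a uniform two-sided cylinder bound on $\nu$. One must rule out sub-exponential deviations of $\nu([I])$ from $w_n(I)/Z_n$ on sets of non-negligible $\nu$-mass, and the natural route is to show that any such deviation would produce an entropy loss of order $n$ in $H_\nu(\xi^n)+\int \log \phi_n\, d\nu$, contradicting the equilibrium identity; here the approximate multiplicativity of the $w_n$ plays the same role that H\"older regularity plays in the classical Ruelle--Perron--Frobenius argument. Once the uniform Gibbs property of every equilibrium state is available, the remainder of the proof is routine.
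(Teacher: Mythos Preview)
The paper does not prove Proposition~\ref{pro-1.2}; it is quoted from \cite{FeLa02,Fen04,Bar06,Mum06}. The closest proof in the paper is that of Proposition~\ref{pro-1.4}, the conditional analogue, and your construction of the Gibbs measure (weights $w_n(I)$, approximate multiplicativity, weak-star limit followed by Ces\`aro averaging, then checking the Gibbs bound and the equilibrium identity) matches that argument almost line by line. So the existence half of your proposal is correct and standard.

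The uniqueness half, however, contains a real gap. You attempt to show that \emph{every} equilibrium state $\nu$ already satisfies the uniform Gibbs bound, by upgrading the Jensen saturation
\[
\sum_{I\in\A^n}\nu([I])\log\frac{w_n(I)}{\nu([I])}=\log Z_n+o(n)
\]
to a pointwise estimate $\nu([I])\approx w_n(I)/Z_n$. This step does not follow: the asymptotic saturation of Jensen only controls the Kullback--Leibler distance at scale $o(n)$, which is far too weak to force uniform two-sided bounds on individual cylinders (sub-exponentially many cylinders can carry $\nu$-mass off by any fixed factor without affecting the $o(n)$ term). Neither the approximate multiplicativity of $w_n$ nor Shannon--McMillan--Breiman bridges this gap, and your own description of the obstacle is accurate.

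The route taken in the cited references, and reproduced in the paper for Proposition~\ref{pro-1.4}, avoids this entirely. One never shows that $\nu$ is Gibbs. Instead, assuming $\nu\neq\mu$ (WLOG ergodic), mutual singularity gives, for each $\varepsilon>0$ and large $n$, a union $F_n$ of $n$-cylinders with $\mu(F_n)<\varepsilon$ and $\nu(F_n)>1-\varepsilon$. Using only the Gibbs property of $\mu$ one rewrites $n\Phi_*(\nu)$ as $\sum_I\nu(I)\log\mu(I)+nP(T,\Phi)+O(1)$, so the equilibrium identity for $\nu$ becomes
\[
0\le \sum_{I\in\A^n}\big(-\nu(I)\log\nu(I)+\nu(I)\log\mu(I)\big)+O(1),
\]
and the elementary inequality $\sum_i(-p_i\log p_i+p_i\log a_i)\le s\log\sum_i a_i-s\log s$ (with $s=\sum p_i$) applied separately on $F_n$ and $X\setminus F_n$ bounds the right-hand side by $\nu(F_n)\log\mu(F_n)+O(1)\to-\infty$. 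Replacing your Jensen-upgrade attempt by this Bowen argument closes the gap and completes the proof.
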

 Proposition \ref{pro-1.2}  was first proved in
\cite{FeLa02, Fen04} for  special almost additive potentials given by
$$\phi_n(x)=\|M(x)M(Tx)\ldots M(T^{n-1}x)\|,\quad n\in \N,$$
  where $M$ is
a H\"{o}lder continuous function taking values in the set of
$d\times d$ positive matrices. It was completed into the present
form by Barreira \cite{Bar06} and Mummert \cite{Mum06}
independently. We remark that Proposition \ref{pro-1.2} extends the
classical theory about equilibrium states for additive continuous
 potentials with the bounded distortion property (cf. Bowen \cite{Bow75}).

\subsection{Relativized sub-additive thermodynamic formalism}
Let $\pi:\; X\to Y$ be a one-block factor map between two subshifts
$(X,T)$ and $(Y,S)$.  The following relativized variational
principle  was proved in \cite{CaZh08} for sub-additive potentials
under a general random setting by using an idea in
\cite{CFH08}. It does hold for $\Phi\in \C_{ass}(X,T)$ by modifying the proof in \cite{CaZh08} slightly. This extends the relativized variational principle  of Ledrappier and Walters
\cite{LeWa77} for additive potentials.

\begin{pro}\label{pro-1.3}
Let $\Phi\in \C_{ass}(X,T)$ and $\nu\in \M(Y,S)$. Then
\begin{equation}
\label{e-1.4}
\sup\{\Phi_*(\mu)+h_\mu(T)-h_\nu(S)\}=\int_Y P(T,\Phi,\pi^{-1}(y))\; d\nu(y),
\end{equation}
where the supremum is taken over the set of $\mu\in \M(X,T)$ such
that $\mu\circ \pi^{-1}=\nu$, $P(T,\Phi,\pi^{-1}(y))$ is defined as
in \eqref{e-2.1T4}.
\end{pro}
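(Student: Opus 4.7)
The plan is an approximation argument reducing the asymptotically sub-additive case to the sub-additive case already established in \cite{CaZh08}. Fix $\varepsilon>0$; by the definition of $\C_{ass}(X,T)$ there exists a sub-additive potential $\Psi=(\log\psi_n)_{n=1}^\infty\in\C_s(X,T)$ with $\limsup_{n\to\infty}n^{-1}\sup_{x\in X}|\log\phi_n(x)-\log\psi_n(x)|\leq\varepsilon$. Hence for some $N$ and all $n\geq N$, $|\log\phi_n(x)-\log\psi_n(x)|\leq 2\varepsilon n$ uniformly in $x\in X$. From this single uniform bound I derive matching comparisons on both sides of \eqref{e-1.4} for $\Phi$ versus $\Psi$, and then invoke the known sub-additive case.

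On the variational side, integrating the inequality against any $\mu\in\M(X,T)$ with $\mu\circ\pi^{-1}=\nu$ and letting $n\to\infty$ in \eqref{e-2.1T2} gives $|\Phi_*(\mu)-\Psi_*(\mu)|\leq 2\varepsilon$, uniformly in $\mu$; consequently
$$
\Big|\sup_{\mu\circ\pi^{-1}=\nu}\{\Phi_*(\mu)+h_\mu(T)-h_\nu(S)\}-\sup_{\mu\circ\pi^{-1}=\nu}\{\Psi_*(\mu)+h_\mu(T)-h_\nu(S)\}\Big|\leq 2\varepsilon.
$$
On the fiber-pressure side, the pointwise bound $\phi_n\leq e^{2\varepsilon n}\psi_n$ (and its reverse) shows that for every compact $K\subseteq X$, eventually $P_n(T,\Phi,K)\leq e^{2\varepsilon n}P_n(T,\Psi,K)$ and symmetrically, so $|P(T,\Phi,K)-P(T,\Psi,K)|\leq 2\varepsilon$ uniformly in $K$. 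Specializing to $K=\pi^{-1}(y)$ and integrating against $\nu$,
$$
\Big|\int_Y P(T,\Phi,\pi^{-1}(y))\,d\nu(y)-\int_Y P(T,\Psi,\pi^{-1}(y))\,d\nu(y)\Big|\leq 2\varepsilon.
$$
Combining these two comparisons with \eqref{e-1.4} applied to $\Psi$ shows that the two sides of \eqref{e-1.4} for $\Phi$ agree up to $4\varepsilon$; letting $\varepsilon\to 0$ concludes.

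Two auxiliary points deserve verification. First, $\{\mu\in\M(X,T):\mu\circ\pi^{-1}=\nu\}$ is non-empty: this is standard using the surjectivity of $\pi$, compactness of $\M(X,T)$, and averaging the push-backs of $\nu$ along finite orbits. Second, the map $y\mapsto P(T,\Phi,\pi^{-1}(y))$ must be $\nu$-measurable for the right-hand integral in \eqref{e-1.4} to be defined; the corresponding fact for sub-additive $\Psi$ is part of the framework in \cite{CaZh08}, and measurability is preserved by uniform limits, so it transfers to $\Phi$.

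The crux of the argument (rather than a serious obstacle) is that the \emph{same} approximating $\Psi$ must control both comparisons simultaneously: uniformly in $\mu$ on the variational side and uniformly in $y$ on the fiber-pressure side. This is precisely what the uniform-in-$x$ control built into the definition of $\C_{ass}(X,T)$ provides, which is why no new ingredient beyond the sub-additive result of \cite{CaZh08} is needed.
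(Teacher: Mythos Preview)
Your approximation argument is correct, and in fact the paper does not give its own proof of this proposition: it simply cites the sub-additive case from \cite{CaZh08} and asserts that the extension to $\C_{ass}(X,T)$ holds ``by modifying the proof in \cite{CaZh08} slightly.'' Your approach is a clean alternative to that suggestion---rather than re-entering the proof of \cite{CaZh08}, you use the sub-additive result as a black box and pass to the asymptotically sub-additive case by the uniform approximation built into the definition of $\C_{ass}$. The only care needed, which you handle implicitly, is that for $n$ large the zero sets of $\phi_n$ and $\psi_n$ coincide (forced by the convention $\log 0-\log 0=0$ and the finiteness of $\sup_x|\log\phi_n-\log\psi_n|$), so the comparison $|P(T,\Phi,K)-P(T,\Psi,K)|\le 2\varepsilon$ and the comparison $|\Phi_*(\mu)-\Psi_*(\mu)|\le 2\varepsilon$ remain valid even when these quantities equal $-\infty$. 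Your route has the advantage of requiring no inspection of the argument in \cite{CaZh08}; the paper's suggested route would presumably replace each appearance of sub-additivity in that proof by the approximate version, which is more laborious but equivalent in effect.
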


By the upper semi-continuity of $\Phi_*(\cdot)$ and $h_{(\cdot)}(T)$
on $\M(X,T)$, the supremum in (\ref{e-1.4}) is attainable. Any
measure $\mu\in \M(X,T)$ for which the supremum in  (\ref{e-1.4}) is
attained at $\mu$ is called a {\it conditional equilibrium state of
$\Phi$ with respect to $\nu$}.

\section{Weighted thermodynamic formalism}\label{Proofsth1.1to1.5}
\subsection{The proof of Theorem \ref{thm-0.1}}\label{S3}

 Throughout this
section, we assume that $X$ is a subshift over $\A$, $Y$ a subshift
over $\D$ and $\pi:\; X\to Y$  a one-block factor map. The following
lemma plays a key role in the proof of Theorem \ref{thm-0.1}.
\begin{lem}
\label{lem-3.1} Let $\Phi=(\log \phi_n(x))_{n=1}^\infty\in
\C_{ass}(X,T)$ and $\nu\in \M(Y, S)$. Then we have
\begin{equation}
\label{e-T5} \sup\{\Phi_*(\mu)+h_\mu(T)-h_\nu(S):\;\mu\in \M(X,T),
\;\mu\circ \pi^{-1}=\nu\}=\Psi_*(\nu),
\end{equation}
where $\Psi=(\log \psi_n)_{n=1}^\infty\in \C_{ass}(Y,S)$ is defined
by
\begin{equation*}
\label{e-1.4''} \psi_n(y)=\sum_{I\in \A^n:\; [I]\cap \pi^{-1}(y)\neq
\emptyset} \sup_{x\in [I]\cap\pi^{-1}(y)}\phi_n(x).
\end{equation*}
\end{lem}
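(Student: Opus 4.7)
The plan is to reduce \eqref{e-T5} to the relativized variational principle (Proposition~\ref{pro-1.3}) applied to $\Phi$ and $\nu$, and then identify the resulting integral with $\Psi_*(\nu)$ via the sub-additive ergodic theorem of Lemma~\ref{lem-2.1}(i). The first observation is that by comparing the definition of $\psi_n(y)$ with \eqref{e-2.1T1},
$$
P_n(T,\Phi,\pi^{-1}(y)) \;=\; \psi_n(y),
$$
so $P(T,\Phi,\pi^{-1}(y)) = \limsup_{n\to\infty}(1/n)\log \psi_n(y)$. Proposition~\ref{pro-1.3} then rewrites the left-hand side of~\eqref{e-T5} as $\int_Y \limsup_n (1/n)\log \psi_n(y)\,d\nu(y)$, and it remains to identify this integral with $\Psi_*(\nu)$.

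The second step is to verify $\Psi \in \C_{ass}(Y,S)$. I first assume $\Phi$ is sub-additive with constant $c$ and argue that $\Psi$ is sub-additive with the same constant. Any $I\in\A^{n+m}$ with $[I]\cap\pi^{-1}(y)\neq\emptyset$ factors uniquely as $I=I_1I_2$ with $I_1\in\A^n$, $I_2\in\A^m$, and, because $\pi$ is a one-block map and $\pi T=S\pi$, we have $[I_1]\cap\pi^{-1}(y)\neq\emptyset$, $[I_2]\cap\pi^{-1}(S^ny)\neq\emptyset$, and $T^n\big([I_1I_2]\cap\pi^{-1}(y)\big)\subseteq [I_2]\cap\pi^{-1}(S^ny)$. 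Combining $\phi_{n+m}(x)\leq c\,\phi_n(x)\phi_m(T^nx)$ with these inclusions, passing to suprema and factoring the sum gives $\psi_{n+m}(y)\leq c\,\psi_n(y)\,\psi_m(S^ny)$. For general $\Phi\in\C_{ass}(X,T)$, given $\varepsilon>0$, I would fix a sub-additive approximation $\widetilde{\Phi}=(\log\widetilde\phi_n)$ on $X$ with $\limsup_n (1/n)\sup_x|\log\phi_n(x)-\log\widetilde\phi_n(x)|\leq\varepsilon$; define the associated $\widetilde\psi_n$ and $\widetilde\Psi$. Then $\widetilde\Psi$ is sub-additive by the previous argument, and a uniform pointwise ratio bound $e^{-\varepsilon_n n}\widetilde\phi_n\leq \phi_n\leq e^{\varepsilon_n n}\widetilde\phi_n$ (with $\limsup\varepsilon_n\leq\varepsilon$) propagates through the sup over $[I]\cap\pi^{-1}(y)$ and the sum over $I$ to yield $\sup_y|\log\psi_n(y)-\log\widetilde\psi_n(y)|\leq\varepsilon_n n$. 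This shows $\Psi\in\C_{ass}(Y,S)$.

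Finally, applying Lemma~\ref{lem-2.1}(i) to the asymptotically sub-additive potential $\Psi$ on $(Y,S)$ and the measure $\nu$, the limit $\lim_n(1/n)\log\psi_n(y)=\lambda_\Psi(y)$ exists $\nu$-a.e., so the $\limsup$ coincides $\nu$-a.e.\ with this limit, and $\int_Y \lambda_\Psi\,d\nu=\Psi_*(\nu)$. Assembling the three steps proves~\eqref{e-T5}.

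The main obstacle I expect is the verification that $\Psi$ inherits asymptotic sub-additivity from $\Phi$: while the $c$-factor sub-additive inequality transfers cleanly in the sub-additive case, in the asymptotic case one must track the uniformity of the approximation error simultaneously through the sup over $[I]\cap\pi^{-1}(y)$ and the sum over $I\in\A^n$ (whose cardinality is at most $|\A|^n$, hence only contributes a sub-exponential factor). Everything else is a direct application of Proposition~\ref{pro-1.3} and Lemma~\ref{lem-2.1}.
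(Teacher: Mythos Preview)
Your proof is correct and follows essentially the same route as the paper: identify $\psi_n(y)=P_n(T,\Phi,\pi^{-1}(y))$, apply Proposition~\ref{pro-1.3}, check $\Psi\in\C_{ass}(Y,S)$, and invoke Lemma~\ref{lem-2.1}(i) to convert the $\nu$-a.e.\ $\limsup$ into $\Psi_*(\nu)$. You supply more detail than the paper does on the asymptotic sub-additivity of $\Psi$ (the paper simply asserts it is ``direct to check''), and your argument there is sound: the pointwise ratio bound $e^{-\varepsilon_n n}\le \phi_n/\widetilde\phi_n\le e^{\varepsilon_n n}$ passes through both the supremum and the sum as a common multiplicative factor, so the cardinality of the index set is irrelevant---your closing remark that $|\A|^n$ ``only contributes a sub-exponential factor'' is a slip (it is exponential) but also unnecessary, since no cardinality estimate is needed at all.
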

\begin{proof} By Proposition \ref{pro-1.3}, the left-hand
side of \eqref{e-T5} equals $\int P(T, \Phi, \pi^{-1}(y))\;
d\nu(y)$. However by \eqref{e-2.1T4}-\eqref{e-2.1T1},
$$
P(T,\Phi,\pi^{-1}(y))=\limsup_{n\to \infty}\frac{1}{n}\log
P_n(T,\Phi,\pi^{-1}(y))$$ and $$P_n(T,\Phi,\pi^{-1}(y))=\sum_{I\in
\A^n:\; [I]\cap \pi^{-1}(y)}\sup_{x\in [I]\cap
\pi^{-1}(y)}\phi_n(x).
$$
Clearly $\psi_n(y)=P_n(T,\Phi,\pi^{-1}(y))$. It is direct to check
that $\Psi=(\log \psi_n)_{n=1}^\infty\in \C_{ass}(Y,S)$. Hence by
Lemma \ref{lem-2.1},
$$\Psi_*(\nu)=\int \limsup_{n\to \infty}\frac{1}{n}\log \psi_n(y)\; d\nu(y)
=\int P(T,\Phi,\pi^{-1}(y))\; d\nu(y).$$ This finishes the proof of
the lemma.
\end{proof}

\begin{proof}[Proof of Theorem \ref{thm-0.1}] Clearly
we have
\begin{equation}
\label{e-3.1T}
\begin{split}
\sup & \{\Phi_*(\mu)+a h_\mu(T)+b
h_{\mu\circ \pi^{-1}}(S):\;\mu\in \M(X,T)\}\\
&=\sup \{\Phi_*(\mu)+a h_\mu(T)+b h_{\nu}(S):\;\nu\in \M(Y,S),
\;\mu\in \M(X,T),\;\mu\circ \pi^{-1}=\nu\}\\
&=\sup\{A(\nu)+(a+b)h_\nu(S): \; \nu\in \M(Y,S)\},
\end{split}
\end{equation}
where
$A(\nu):=a\sup\{\frac{1}{a}\Phi_*(\mu)+h_\mu(T)-h_{\nu}(S):\;\mu\in
\M(X,T),\; \mu\circ \pi^{-1}=\nu\}$.

By Lemma \ref{lem-3.1}, we have $A(\nu)=a\Psi_*(\nu)$, where
$\Psi=(\log \psi_n)_{n=1}^\infty\in \C_{ass}(Y,S)$ is defined as
\begin{equation*}
\label{e-1.4''} \psi_n(y)=\sum_{I\in \A^n:\; [I]\cap \pi^{-1}(y)\neq
\emptyset} \sup_{x\in [I]\cap\pi^{-1}(y)}\phi_n(x)^{1/a}.
\end{equation*}
Hence by \eqref{e-3.1T} and Proposition \ref{pro-1.1}, we have
\begin{equation}
\label{e-3.2T}
\begin{split}
\sup & \{\Phi_*(\mu)+a h_\mu(T)+b
h_{\mu\circ \pi^{-1}}(S):\;\mu\in \M(X,T)\}\\
&=\sup\{a\Psi_*(\nu)+(a+b)h_\nu(S): \; \nu\in \M(Y,S)\}\\
&=(a+b)\sup\left\{\frac{a}{a+b}\Psi_*(\nu)+h_\nu(S): \; \nu\in
\M(Y,S)\right\}\\
&=(a+b)P\left(S, \frac{a}{a+b}\Psi\right)=\lim_{n\to
\infty}\frac{a+b}{n}\log \sum_{J\in \D^n}\sup_{y\in [J]\cap
Y}\psi_n(y)^{\frac{a}{a+b}}.
\end{split}
\end{equation}
This proves the first part of Theorem \ref{thm-0.1}. The second part
follows directly from \eqref{e-3.2T} and \eqref{e-3.1T}.
\end{proof}

\subsection{The proof of Theorem \ref{thm-0.2}}\label{S4}

Throughout this section, we assume that  $X=\A^\N$ and $Y=\D^\N$ are
two full shifts over finite alphabets, and $\pi:\; X\to Y$ is a
one-block factor map. To prove Theorem \ref{thm-0.2}, we need some
auxiliary results.

\begin{lem}
\label{lem-1.4}
Assume that
 $\Phi\in \C_{aa}(X,T)$ and that $\Phi$ satisfies the bounded distortion property. Let $\nu\in \M(Y,S)$.
 Then $\int_Y P(T,\Phi,\pi^{-1}(y))\; d\nu(y)=\Psi_*(\nu)$, where $\Psi=(\log \psi_n)_{n=1}^\infty\in \C_{aa}(Y,S)$ is given by
 \begin{equation*}
 \label{e-4.1T}
 \psi_n(y)=\sum_{I\in \A^n:\; \pi I=y_1\ldots y_n} \sup_{x\in [I]}\phi_n(x),\quad \forall \; y=(y_i)_{i=1}^\infty\in Y.
 \end{equation*}
 Furthermore
 \begin{equation*}
\label{e-1.4'}
\sup\{\Phi_*(\mu)+h_\mu(T)-h_\nu(S)\}=\Psi_*(\nu),
\end{equation*}
where the supremum is taken over the set of $\mu\in \M(X,T)$ such that $\mu\circ \pi^{-1}=\nu$.
\end{lem}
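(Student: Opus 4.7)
The plan is to deduce Lemma~\ref{lem-1.4} from Lemma~\ref{lem-3.1} (which already gives the analogous identity for a potential $\widetilde{\Psi}$ built from $\sup_{x\in [I]\cap \pi^{-1}(y)}\phi_n(x)$) by exploiting the bounded distortion property together with the full shift / one-block structure to replace $\widetilde{\Psi}$ by the simpler potential $\Psi$ described in the statement, in which the supremum is taken over the whole cylinder $[I]$. First I would observe that because $X=\A^\N$ and $Y=\D^\N$ are full shifts and $\pi$ is a one-block factor map, for every $I\in \A^n$,
$$[I]\cap \pi^{-1}(y)\neq\emptyset\iff \pi I=y_1\ldots y_n,$$
and in that case the intersection is automatically nonempty. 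The bounded distortion hypothesis then yields
$$\sup_{x\in [I]\cap \pi^{-1}(y)}\phi_n(x)\approx \sup_{x\in [I]}\phi_n(x),$$
with comparison constants independent of $n$, $I$ and $y$. Summing over the admissible $I$ gives $\widetilde{\psi}_n(y)\approx \psi_n(y)$, hence $|\log \widetilde{\psi}_n-\log \psi_n|$ is uniformly bounded in $n$ and $y$, and Lemma~\ref{lem-2.1}(i) yields $\widetilde{\Psi}_*(\nu)=\Psi_*(\nu)$ for every $\nu\in \M(Y,S)$.

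Next I would verify that $\Psi=(\log \psi_n)_{n=1}^\infty$ belongs to $\C_{aa}(Y,S)$. Positivity follows from the positivity of $\phi_n$, and continuity is automatic since $\psi_n(y)$ depends only on $y_1\ldots y_n$, hence is locally constant on $n$-cylinders. For almost-additivity, fix $y\in Y$ and let $y'=S^ny$. Each word $K\in \A^{n+m}$ with $\pi K=y_1\ldots y_{n+m}$ factors uniquely as $K=IJ$ with $I\in \A^n$, $J\in \A^m$, $\pi I=y_1\ldots y_n$ and $\pi J=y'_1\ldots y'_m$. Picking any $x_{IJ}\in [IJ]$, the almost-additivity of $\Phi$ and bounded distortion give
$$\sup_{x\in [IJ]}\phi_{n+m}(x)\approx \phi_{n+m}(x_{IJ})\approx \phi_n(x_{IJ})\,\phi_m(T^nx_{IJ})\approx \Bigl(\sup_{[I]}\phi_n\Bigr)\Bigl(\sup_{[J]}\phi_m\Bigr),$$
uniformly in $I,J,n,m$. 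Summing over $I$ and $J$ independently yields $\psi_{n+m}(y)\approx \psi_n(y)\,\psi_m(S^ny)$, so $\Psi\in \C_{aa}(Y,S)$.

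Finally, by Proposition~\ref{pro-1.3},
$$\sup\{\Phi_*(\mu)+h_\mu(T)-h_\nu(S):\mu\circ \pi^{-1}=\nu\}=\int_Y P(T,\Phi,\pi^{-1}(y))\,d\nu(y),$$
and Lemma~\ref{lem-3.1} identifies this integral with $\widetilde{\Psi}_*(\nu)$, which by the first step equals $\Psi_*(\nu)$. This simultaneously proves $\int_Y P(T,\Phi,\pi^{-1}(y))\,d\nu(y)=\Psi_*(\nu)$ and the ``Furthermore'' statement. The main technical point I expect is the passage from the $y$-dependent supremum over $[I]\cap \pi^{-1}(y)$ to the much cleaner supremum over $[I]$: both the full shift / one-block hypothesis (which gives the clean description of admissible $I$) and the bounded distortion property (which gives the $\approx$ comparison) are essential here, and without them one cannot reduce $\widetilde{\Psi}$ to a potential measurable with respect to the $n$-cylinders of $Y$.
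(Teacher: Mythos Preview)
Your proof is correct and follows exactly the approach the paper has in mind: the paper's own proof is the single sentence ``It follows directly from Lemma~\ref{lem-3.1} and the bounded distortion property of $\Phi$,'' and you have simply spelled out the details of that deduction (the full-shift/one-block identification $[I]\cap\pi^{-1}(y)\neq\emptyset\iff \pi I=y_1\ldots y_n$, the bounded distortion comparison of the two suprema, and the verification that $\Psi\in\C_{aa}(Y,S)$). One very minor remark: the equality $\widetilde\Psi_*(\nu)=\Psi_*(\nu)$ follows immediately from the definition \eqref{e-2.1T2} once $|\log\widetilde\psi_n-\log\psi_n|$ is uniformly bounded, so invoking Lemma~\ref{lem-2.1}(i) is not really needed there.
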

\begin{proof}
It follows directly from Lemma \ref{lem-3.1} and the bounded
distortion property of $\Phi$.
\end{proof}

\begin{pro}\label{pro-1.4} Assume that
 $\Phi\in \C_{aa}(X,T)$ and $\Phi$ satisfies the bounded distortion property. Let $\nu\in \M(Y,S)$ so that $\nu$ has the quasi-Bernoulli property. Then $\Phi$ has a unique conditional equilibrium state $\mu$ with respect to $\nu$. Furthermore there is a constant $c>0$ such that
\begin{equation}
\label{e-1.5} c^{-1}\leq \frac{\mu(I)}{ \nu(\pi I)
\phi(I)/\psi(\pi I)}\leq c, \quad \forall\; n\in \N,\; I\in \A^n, \;
J\in \D^n,
\end{equation}
where $\phi(I):=\sup_{x\in [I]}\phi_n(x)$ for $I\in \A^n$ and
$\psi(J):=\sum_{I\in \A^n:\; \pi I=J}\phi(I)$ for $J\in \D^n$.
\end{pro}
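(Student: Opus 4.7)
My plan is to produce a concrete candidate $\mu$ as a weak-$*$ accumulation point of Cesàro-averaged Gibbs-weighted measures, verify that it satisfies \eqref{e-1.5} and the equilibrium identity via a Shannon--McMillan computation, and derive uniqueness from a martingale-ratio estimate together with ergodicity.

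\textbf{Construction and Gibbs estimate.} For each $N\in\N$, pick $x^I\in[I]$ for every $I\in\A^N$ and set
\begin{equation*}
\widetilde\mu_N=\sum_{I\in\A^N}\frac{\nu([\pi I])\phi(I)}{\psi(\pi I)}\,\delta_{x^I},\qquad \mu_N=\frac{1}{N}\sum_{k=0}^{N-1}T^k_*\widetilde\mu_N;
\end{equation*}
$\widetilde\mu_N$ is a probability measure (its total mass telescopes to $\sum_{J\in\D^N}\nu(J)=1$) and $\mu_N$ is asymptotically $T$-invariant, so any weak-$*$ accumulation point $\mu$ belongs to $\M(X,T)$. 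Fix $n$ and $I_0\in\A^n$. Words $I\in\A^N$ contributing to $T^{-k}[I_0]$ decompose as $I=J_1I_0J_2$ with $|J_1|=k$, $|J_2|=N-n-k$. Combining (i) the quasi-Bernoulli property of $\nu$, (ii) the almost additivity plus bounded distortion of $\phi$, and (iii) the induced almost multiplicativity $\psi(JJ')\asymp\psi(J)\psi(J')$ (obtained from (ii) via the one-block hypothesis on $\pi$, which guarantees a unique splitting of $I\in\A^{n+m}$ with $\pi I=JJ'$ as $I=I'I''$ with $\pi I'=J$), the Gibbs weight factorizes as
\begin{equation*}
\frac{\nu(\pi I)\phi(I)}{\psi(\pi I)}\asymp \frac{\nu(\pi J_1)\phi(J_1)}{\psi(\pi J_1)}\cdot\frac{\nu(\pi I_0)\phi(I_0)}{\psi(\pi I_0)}\cdot\frac{\nu(\pi J_2)\phi(J_2)}{\psi(\pi J_2)}
\end{equation*}
with constants independent of $N,k,I_0$. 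Summing the outer factors (each sum equals $1$) gives $\widetilde\mu_N(T^{-k}[I_0])\asymp\nu(\pi I_0)\phi(I_0)/\psi(\pi I_0)$; Cesàro averaging in $k$, letting $N\to\infty$, and passing to the weak-$*$ limit yields \eqref{e-1.5}. An analogous summation over $I$ with $\pi I=J$ gives $\mu(\pi^{-1}[J])=\nu(J)$ for all cylinders, hence $\mu\circ\pi^{-1}=\nu$.

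\textbf{Equilibrium identity and ergodicity of $\mu$.} Take logarithms in \eqref{e-1.5}, divide by $n$ and let $n\to\infty$: Shannon--McMillan--Breiman for $\mu$ and $\nu$ (applied on each ergodic component) together with Lemma~\ref{lem-2.1}(i) for $\Phi$ and $\Psi$ yield $h_\mu(T)-h_\nu(S)=\Psi_*(\nu)-\Phi_*(\mu)$, i.e.\ $\Phi_*(\mu)+h_\mu(T)-h_\nu(S)=\Psi_*(\nu)$, so by Lemma~\ref{lem-1.4} the measure $\mu$ is a conditional equilibrium state of $\Phi$ with respect to $\nu$. Moreover the same factorization used above gives $\mu(II')\asymp\mu(I)\mu(I')$, so $\mu$ is quasi-Bernoulli and in particular ergodic.

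\textbf{Uniqueness.} Let $\mu'$ be any other conditional equilibrium state with marginal $\nu$; by passing to an ergodic component (itself a conditional equilibrium state) we may assume $\mu'$ ergodic. The Gibbs lower bound \eqref{e-1.5} yields
\begin{equation*}
\frac{\mu'([x_{|n}])}{\mu([x_{|n}])}\le c\cdot\frac{\mu'([x_{|n}])\,\psi_n(\pi x)}{\nu([\pi x_{|n}])\,\phi_n(x)},
\end{equation*}
and the right-hand exponent tends, for $\mu'$-a.e.\ $x$, to $-h_{\mu'}(T)+h_\nu(S)-\Phi_*(\mu')+\Psi_*(\nu)=0$ by Shannon--McMillan--Breiman for $\mu'$ and $\nu$, Lemma~\ref{lem-2.1}(i) for $\Phi$ and $\Psi$, and the equilibrium identity for $\mu'$. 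Thus $\limsup_n \mu'([x_{|n}])/\mu([x_{|n}])\le c$ $\mu'$-a.e. Interpreting $f_n(x):=\mu'([x_{|n}])/\mu([x_{|n}])$ as a $\mu$-martingale on the cylinder filtration, the bounded limsup rules out a singular part of $\mu'$ with respect to $\mu$, giving $\mu'\ll\mu$ with density $\le c$. This density is $T$-invariant, and ergodicity of $\mu$ forces it to be $\mu$-a.e.\ constant and equal to $1$; hence $\mu'=\mu$.

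\textbf{Main obstacle.} The decisive technical point is (iii), namely the almost multiplicativity of $\psi$ with constants controlled purely by those of $\phi$ and $\nu$. The one-block hypothesis on $\pi$ is essential here: it provides a bijective correspondence between $\pi^{-1}(JJ')\cap\A^{n+m}$ and pairs in $(\pi^{-1}(J)\cap\A^n)\times(\pi^{-1}(J')\cap\A^m)$, allowing the sum defining $\psi(JJ')$ to split as a product; without this combinatorial splitting the factorization collapses. Boundary terms ($k\in\{N-n,\dots,N-1\}$) in the Cesàro average contribute only an $O(n/N)$ error that is absorbed in the limit, but care is needed so that all distortion constants in (i)--(iii) remain uniform throughout the three-way factorization.
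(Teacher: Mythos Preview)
Your construction of $\mu$ and the verification that it is a conditional equilibrium state run along the same lines as the paper's proof (with only a cosmetic difference in the order of averaging and passing to the limit). One small slip: summing the Gibbs estimate over $\{I:\pi I=J\}$ yields only $\mu\circ\pi^{-1}([J])\asymp\nu([J])$, not equality; as in the paper you must invoke that $\mu\circ\pi^{-1}$ and $\nu$ are both ergodic (quasi-Bernoulli) and mutually equivalent, hence equal.

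The uniqueness argument, however, has a genuine gap. From Shannon--McMillan--Breiman and the equilibrium identity for $\mu'$ you correctly obtain
\[
\frac{1}{n}\log\frac{\mu'([x_{|n}])\,\psi_n(\pi x)}{\nu([\pi x_{|n}])\,\phi_n(x)}\longrightarrow 0\quad\text{for $\mu'$-a.e.\ }x,
\]
but this only says the right-hand side of your inequality is $c\cdot e^{o(n)}$; it does \emph{not} give $\limsup_n f_n(x)\le c$. Subexponential growth of $f_n=\mu'([x_{|n}])/\mu([x_{|n}])$ is entirely compatible with $f_n\to\infty$ $\mu'$-a.e., and hence with $\mu'\perp\mu$, so the martingale step collapses. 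The paper closes this gap by Bowen's argument: assuming $\mu'\neq\mu$ ergodic (hence mutually singular), one picks cylinder unions $F_n$ with $\mu(F_n)<\varepsilon$ and $\mu'(F_n)>1-\varepsilon$, rewrites $n\Phi_*(\mu')$ via the Gibbs relation as $\sum_I\mu'(I)\log\mu(I)-\sum_J\nu(J)\log\nu(J)+n\Psi_*(\nu)+O(1)$, and then uses the elementary inequality $\sum_i(-p_i\log p_i+p_i\log a_i)\le (\sum_ip_i)\log(\sum_ia_i)+O(1)$ on $F_n$ and its complement separately to force the nonnegative quantity $n(\Phi_*(\mu')+h_{\mu'}(T)-h_\nu(S)-\Psi_*(\nu))$ to be $\le \mu'(F_n)\log\mu(F_n)+O(1)\to-\infty$, a contradiction.
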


\begin{proof}
We first construct  $\mu\in \M(X,T)$ such that $\mu\circ \pi^{-1}=\nu$ and  $\mu$ satisfies (\ref{e-1.5}). Here we adopt  an idea from  \cite{FeLa02}.
Since   $\Phi\in \C_{aa}(X,T)$ and $\Phi$ satisfies the bounded distortion property, it is direct to check that
$\phi$ and $\psi$ are quasi-Bernoulli in the sense that
$$
\phi(I_1I_2)\approx \phi(I_1)\phi(I_2),\quad I_1, I_2\in
\A^*=\bigcup_{n\geq 1}\A^n,
$$
and
$$
\psi(J_1J_2)\approx \psi(J_1)\psi(J_2),\quad J_1, J_2\in
\D^*=\bigcup_{n\geq 1}\D^n,
$$
where for two families of positive numbers $(a_n)$ and $(b_n)$,  we write $(a_n)\approx (b_n)$ if $a_n/b_n$ is bounded from below and above by some positive constants.

For each integer $n>0$, let ${\mathcal B}_n$ be the $\sigma$-algebra generated by the cylinders
 $[I]$ in $X$, $I\in \A^n$. We define a sequence of probability measures $(\mu_n)_{n=1}^\infty$ on  ${\mathcal B}_n$ by
\begin{equation*}
\label{e-1.6} \mu_n(I)=\nu(\pi I) \phi(I)/\psi(\pi I), \qquad
\forall \ I\in \A^n.
\end{equation*}
 Then there is a subsequence $(\mu_{n_k})_{k\geq 1}$ converging in the weak-star
 topology to a probability measure $\widetilde{\mu}$. We claim that
 $\widetilde{\mu}$ satisfies (\ref{e-1.5}). To see this,  for any $I\in \A^n$  and $p > n$, we have
\begin{eqnarray*}
 \mu_{p}(I) &=& \sum_{I_1\in \A^{p-n}}\mu_{p}(II_1)=
 \sum_{I_1\in \A^{p-n}}\nu(\pi (II_1))\phi(II_1)/\psi(\pi(II_1))\\
 &\approx &
 \frac{\nu([\pi I])\phi(I)}{\psi(\pi I)}  \sum_{I_1\in \A^{p-n}} \frac{\nu(\pi I_1)\phi(I_1)}{\psi(\pi I_1)}
 =  \nu(\pi I)\phi(I)/\psi(\pi I).
 \end{eqnarray*}
Letting $p  = n_k \uparrow  \infty$, we obtain
$\widetilde{\mu}(I)\approx \nu(\pi I)\phi(I)/\psi(\pi I)$, as
desired.

 Let  $\mu$ be  a limit point
 of the sequence  $\frac{1}{n}\left(\widetilde{\mu}+\widetilde{\mu}\circ T^{-1}
 +\ldots +\widetilde{\mu}\circ T^{-(n-1)}\right)$ in the weak-star
 topology. Then $\mu\in \M(X,T)$ (cf. \cite[Theorem 6.9]{Wal82}).  Note that for any $I\in \A^n$   and $p\geq 0$,
\begin{eqnarray*}
  \widetilde{\mu}\circ T^{-p}(I) & = & \sum _{I_1\in \A^p}\widetilde{\mu}(I_1I) \approx
 \sum_{I_1\in \A^{p}}\nu(\pi(I_1I))\phi(I_1I)/\psi(\pi(I_1I))\\
 &\approx &
 \frac{\nu(\pi I)\phi(I)}{\psi(\pi I)} \sum_{I_1\in \A^{p}}\frac{\nu(\pi I_1)\phi(I_1)}{\psi(\pi I_1)}
 =  \nu(\pi I)\phi(I)/\psi(\pi I).
 \end{eqnarray*}
Hence we have $\mu(I) \approx \nu(\pi I)\phi(I)/\psi(\pi I)$. It
is clear that $\mu$ is quasi-Bernoulli.  Hence $\mu$ is ergodic
(cf. \cite[Theorem 1.5(iv)]{Wal82}). Also, by construction, we have $\mu\circ\pi^{-1}(\pi I)\approx \nu(\pi I)$ ($I\in \bigcup_{n\ge 1}\A^n$). Since both $\mu\circ\pi^{-1}$ and $\nu$ are ergodic, we have $\mu\circ\pi^{-1}=\nu$.

Next we show that $\mu$ is a conditional equilibrium state of $\Phi$ with respect to $\nu$.
Write for $n\in \N$,
$$
t_n=-\left(\sum_{I\in \A^n} \mu(I)\log
\mu(I)\right)+\left(\sum_{J\in \D^n}\nu(J)\log \nu(J)\right).
$$
Then $(t_n)_{n\ge 1}$ is sub-additive in the sense that $t_{n+m}\leq t_n+t_m$ for any $n, m\in \N$ (cf. \cite[Lemma 1]{DoSe02}),
and hence
\begin{equation}\label{hmu}
h_\mu(T)-h_\nu(S)=\lim_{n\to \infty} t_n/n=\inf_{n\in \N}t_n/n.
\end{equation}

For two families of real numbers $\{a_i\}_{i\in \I}$ and  $\{b_i\}_{i\in \I}$, we write  $a_i=b_i+O(1)$ if there is a constant $c>0$ such that
$|a_i-b_i|\leq c$ for each $i\in \I$.
By the quasi-Bernoulli property of $\mu$ and $\nu$, we have
\begin{equation*}
\begin{split}
&\int \log \phi_n(x)\; d\mu(x)+t_n\\
&\mbox{}\quad =O(1)+ \left(\sum_{I\in \A^n} \mu(I)\log \phi(I)-\mu(I) \log \mu(I)\right)\\
&\mbox{}\qquad\qquad\qquad\qquad \qquad\qquad+\sum_{J\in \D^n}\nu(J)\log \nu(J)\\
&\mbox{}\quad =O(1)+\sum_{J\in \D^n}\nu(J) \sum_{I\in \A^n:\; \pi
I=J}
\frac{\mu(I)}{\nu(J)}\log \frac{\phi(I)\nu(J)}{\mu(I)}\\
&\mbox{}\quad =O(1)+\sum_{J\in \D^n} \nu(J) \sum_{I\in \A^n:\; \pi
I=J}
\frac{\mu(I)}{\nu(J)} \log \psi(J) \quad (\mbox{by \eqref{e-1.5}})\\
&\mbox{}\quad =O(1)+\sum_{J\in \D^n} \nu(J)  \log \psi(J),\\
\end{split}
\end{equation*}
Dividing both sides by $n$ and letting $n\to \infty$,
we obtain
$$
\Phi_*(\mu)+h_\mu(T)-h_\nu(S)=\Psi_*(\nu).
$$
Hence by Lemma \ref{lem-1.4},  $\mu$ is a conditional equilibrium
state of $\Phi$ with respect to $\nu$.

In the end, we prove that  $\mu$ is the unique  conditional equilibrium state of $\Phi$ with respect to $\nu$.
Here we adopt an idea due to Bowen (cf. \cite[p. 34--36]{Bow75}). Assume that $\mu'\neq \mu$ is another   conditional equilibrium state of $\Phi$ with respect to $\nu$.
That is, $\mu'\circ \pi^{-1}=\nu$ and
\begin{equation}
\label{e-another}
\Phi_*(\mu')+h_{\mu'}(T)-h_\nu(S)=\Psi_*(\nu).
\end{equation}
Without loss of generality we may assume that $\mu'$ is ergodic
(otherwise, we may consider the  ergodic decomposition of $\mu'$). Then $\mu'$ and $\mu$ are totally singular to each other. Hence for each $\varepsilon>0$ and  sufficiently large  $n$, there exists a  set $F_n$ which is the union of some $n$-th cylinders in $X$, such that
\begin{equation}
\label{e-1.7}
\mu(F_n)<\varepsilon \quad \mbox{ and }\quad  \mu'(F_n)>1-\varepsilon.
\end{equation}
 It is direct to check that
$$
\Big|n\Psi_*(\nu)-\sum_{J\in \D^n} \nu(J)  \log \psi(J)\Big|=O(1)
\quad \mbox{and}
$$
$$
\mbox{for }\lambda\in\{\mu,\mu'\},\ \Big|n\Phi_*(\lambda)-\sum_{I\in
\A^n} \lambda(I)  \log \phi(I)\Big|=O(1).
$$
Hence for $\lambda\in\{\mu,\mu'\}$ we have
\begin{equation*}
\begin{split}
n\Phi_*(\lambda)&= \sum_{I\in \A^n} \lambda(I)  \log \phi(I)+O(1)\\
&= \sum_{I\in \A^n} \lambda(I)  \log \frac{\mu(I)\psi(\pi I)}{\nu(\pi I)}+O(1)\\
&= \left(\sum_{I\in \A^n} \lambda(I) \log \mu(I)\right)+
\left(\sum_{J\in \D^n}\nu(J)\log \frac{\psi(J)}{\nu(J)}\right)+O(1)\\
&= \left(\sum_{I\in \A^n} \lambda(I) \log \mu(I)\right)-
\left(\sum_{J\in \D^n}\nu(J)\log \nu(J)\right)+n\Psi_*(\nu)+O(1).\\
\end{split}
\end{equation*}
Hence, by \eqref{e-another} and applying \eqref{hmu} to $\mu'$  we have
\begin{equation*}
\label{}
\begin{split}
0&\leq n\Phi_*(\mu')-\left(\sum_{I\in \A^n} \mu'(I)\log  \mu'(I)\right)+
\left(\sum_{J\in \D^n}\nu(J)\log \nu(J)\right)-n\Psi_*(\nu)\\
&= \sum_{I\in \A^n} \Big[-\mu'(I)\log \mu'(I)+\mu'(I)\log
\mu(I)\Big]
+O(1)\\
&= \sum_{[I]\subset F_n} \Big[-\mu'(I)\log \mu'(I)+\mu'(I)\log
\mu(I)\Big]
\\
&\qquad+ \sum_{[I]\subset X\backslash F_n} \Big[-\mu'(I)\log  \mu'(I)+\mu'(I)\log  \mu(I)\Big]+O(1)\\
&\leq  \mu'(F_n)\log  \mu(F_n)+\mu'(X\backslash F_n)\log  \mu(X\backslash F_n)+2\sup_{0\leq s\leq 1}(-s\log s)+O(1),\\
\end{split}
\end{equation*}
where for the last inequality, we use the elementary inequality (cf.
\cite[Lemma 1.24]{Bow75})
$$
\sum_{i=1}^k (-p_i\log p_i+p_i\log a_i)\leq s \log \sum_{i=1}^k a_i -s\log s, \quad   s:=\sum_{i=1}^kp_i, \; p_i\geq 0.
$$
 It leads to a contradiction since by (\ref{e-1.7}),  $\mu'(F_n)\log  \mu(F_n)\to -\infty$ as $\varepsilon\to 0$.
This finishes the proof of Proposition \ref{pro-1.4}.
\end{proof}

\begin{proof}[Proof of Theorem \ref{thm-0.2}]
Assume that $\Phi=(\log \phi_n)_{n=1}^\infty\in \C_{aa}(X,T)$  satisfies the
bounded distortion property. Let $\ba=(a,b)\in \R^2$ so that $a>0$
and $b\geq 0$.

Write $\phi(I)=\sup_{x\in [I]} \phi_n(x)$ for $I\in \A^n$ and
$\psi(J)=\sup_{I\in \A^n:\; \pi I=J}\phi(I)^{1/a}$ for $J\in \D^n$.
Define $\Psi=(\log \psi_n)_{n=1}^\infty$ by
$\psi_n(y)=\psi(y_1\ldots y_n)$.  By the assumption on $\Phi$,  we
have $\Psi\in \C_{aa}(Y,S)$. By Theorem \ref{thm-0.1}
%and the bounded
%distortion property of $\Phi$,
we have
\begin{equation*}
%\label{e-4.1T1}
 P^\ba(T,\Phi)=(a+b)P\Big(S,\frac{a}{a+b}\Psi\Big)=\lim_{n\to
\infty}\frac{a+b}{n} \log \sum_{J\in \D^n}\Big(\sum_{I\in \A^n:\;
\pi I=J}\phi(I)^{\frac 1a} \Big)^{\frac{a}{a+b}}.
\end{equation*}

Let $\mu$ be  an $\ba$-weighted equilibrium state of $\Phi$ and
$\nu=\mu\circ \pi^{-1}$. By Theorem \ref{thm-0.1}, $\nu$ is an
equilibrium state of $\frac{a}{a+b}\Psi$ and $\mu$ is a conditional
equilibrium state of $\frac{1}{a}\Phi$ with respect to $\nu$. Since
$\frac{a}{a+b}\Psi\in \C_{aa}(Y,S)$ and satisfies the bounded
distortion property, by Proposition \ref{pro-1.2}, $\nu$ is unique
and it satisfies the Gibbs property:
$$
\nu(J)\approx \exp\left(-nP\left(S,
\frac{a}{a+b}\Psi\right)\right)
\psi(J)^{\frac{a}{a+b}}=\exp\left(\frac{-nP^\ba(T,\Phi)}{a+b}\right)
\psi(J)^{\frac{a}{a+b}}
$$
for $n\in \N$ and $J\in \D^n$. This proves \eqref{e-2.2'}. Since
$\nu$ is quasi-Bernoulli, applying Proposition \ref{pro-1.4} to the
potential $\frac{1}{a}\Phi$, we see that $\mu$ is unique and
satisfies the Gibbs property:
$$
\mu(I)\approx \phi(I)^{\frac{1}{a}}\nu(\pi I)/\psi(\pi I)\approx
\exp\left( \frac{-nP^{\ba}(T,\Phi)}{a+b} \right)
\frac{\phi(I)^{1/a}} {\psi(\pi I)^{b/(a+b)}}
$$
for $n\in \N$ and $I\in \A^n$. This proves \eqref{e-2.2}. Note that
\eqref{e-2.3} follows directly from  \eqref{e-2.2} and
\eqref{e-2.2'}. This finishes the proof of Theorem \ref{thm-0.2}.
\end{proof}

\subsection{The proof of Theorem \ref{thm-0.3}}\label{S5} To prove
Theorem \ref{thm-0.3}, we need the following result which is just based on classical convex analysis.
\begin{pro}[\cite{FeHu09a}, Proposition 2.3]
\label{pro-2.3} Let $Z$ be a compact convex subset of a topological
vector space which satisfies the first axiom of countability (i.e.,
there is a countable base at each point)  and $U\subseteq \R^d$ a
non-empty open set. Suppose $f:\; U\times Z\to \R\cup\{-\infty\}$ is
a map satisfying the following  conditions:
\begin{itemize}
\item[(i)] $f(\bq,z)$ is convex in $\bq$;
\item[(ii)] $f(\bq,z)$ is affine in $z$;
\item[(iii)] $f$ is upper semi-continuous over $U\times Z$;
\item[(iv)] $g(\bq):=\sup_{z\in Z}f(\bq,z)>-\infty$ for any $\bq\in U$.
\end{itemize}
For each $\bq\in U$, denote $ \I(\bq):=\{z\in Z:\;
f(\bq,z)=g(\bq)\}. $ Then
\begin{equation*}
\label{e-1504}
\partial g(\bq)=\bigcup_{z\in \I(\bq)}\partial f(\bq,z),
\end{equation*}
where $\partial f(\bq,z)$ denotes the subdifferential of
$f(\cdot,z)$ at $\bq$.
\end{pro}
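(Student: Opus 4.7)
The plan is to prove the two inclusions separately. The inclusion $\bigcup_{z\in \I(\bq)}\partial f(\bq,z)\subseteq \partial g(\bq)$ is immediate from the definitions: for $z\in\I(\bq)$ and $p\in\partial f(\bq,z)$, combining $g(\bq')\geq f(\bq',z)\geq f(\bq,z)+\langle p,\bq'-\bq\rangle$ with $f(\bq,z)=g(\bq)$ yields $p\in\partial g(\bq)$.

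For the reverse inclusion, I first record that $\I(\bq)$ is nonempty, closed, and convex: the supremum is attained by upper semi-continuity of $f(\bq,\cdot)$ on the compact set $Z$, and convexity follows from the affineness of $f$ in $z$. Since the subdifferential is a local notion, after shrinking $U$ to a small convex open ball around $\bq$ I may assume that $U$ itself is convex. Given $p\in\partial g(\bq)$, introduce for each $\bq'\in U$ the set
\[
E_{\bq'}:=\{z\in Z:\ f(\bq',z)\geq g(\bq)+\langle p,\bq'-\bq\rangle\},
\]
which is closed (by upper semi-continuity) and convex (by affineness). Any $z^*$ in the intersection $\bigcap_{\bq'\in U}E_{\bq'}$ must satisfy $f(\bq,z^*)=g(\bq)$ (take $\bq'=\bq$ and combine with the trivial bound $f(\bq,z^*)\leq g(\bq)$), hence $z^*\in\I(\bq)$; the defining inequalities then give $p\in\partial f(\bq,z^*)$. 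Thus it suffices to produce one such $z^*$, and by compactness of $Z$ this reduces to checking the finite intersection property of the family $\{E_{\bq'}\}_{\bq'\in U}$.

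The crucial step is a minimax argument. Suppose toward contradiction that $\bq'_1,\ldots,\bq'_k\in U$ satisfy $\bigcap_{i=1}^k E_{\bq'_i}=\emptyset$; then the lower semi-continuous function $z\mapsto \max_i \bigl[g(\bq)+\langle p,\bq'_i-\bq\rangle - f(\bq'_i,z)\bigr]$ is strictly positive on the compact set $Z$, hence bounded below by some $\delta>0$. I apply Sion's minimax theorem to the function
\[
G(\alpha,z):=\sum_{i=1}^k \alpha_i\bigl[f(\bq'_i,z)-g(\bq)-\langle p,\bq'_i-\bq\rangle\bigr]
\]
on the compact convex sets $\Delta_k$ (the probability simplex in $\R^k$) and $Z$: $G$ is linear in $\alpha$, and affine and upper semi-continuous in $z$. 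Since $\max_z \min_\alpha G\leq -\delta$, Sion's theorem yields $\alpha^*\in\Delta_k$ with $G(\alpha^*,z)<0$ for every $z\in Z$. Setting $\bq^\star:=\sum_i\alpha_i^*\bq'_i\in U$ and using convexity of $f(\cdot,z)$, one obtains $f(\bq^\star,z)\leq \sum_i\alpha_i^* f(\bq'_i,z)<g(\bq)+\langle p,\bq^\star-\bq\rangle$ for every $z$, hence $g(\bq^\star)\leq g(\bq)+\langle p,\bq^\star-\bq\rangle$. Combined with the reverse inequality from $p\in\partial g(\bq)$ this forces equality; but then the supremum defining $g(\bq^\star)$ would have to be attained at some $z\in Z$ (again by upper semi-continuity on compact $Z$), contradicting the strict inequality $f(\bq^\star,z)<g(\bq)+\langle p,\bq^\star-\bq\rangle$.

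The main obstacle is the minimax step: one must upgrade a pointwise failure of FIP into a single convex combination $\bq^\star$ that violates the subgradient inequality for $g$, and this is exactly what Sion's theorem accomplishes by exploiting simultaneously the affineness of $f$ in $z$ (to get concavity of $G(\alpha,\cdot)$) and its convexity in $\bq$ (to control $f(\bq^\star,z)$ by $\sum_i\alpha_i^* f(\bq'_i,z)$). The first-countability hypothesis on the ambient topological vector space is used only in background continuity/compactness arguments underlying (iii) and the attainment statements, and is not invoked explicitly in the skeleton above.
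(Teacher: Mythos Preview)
The paper does not give its own proof of this proposition; it is quoted verbatim from \cite{FeHu09a} with the comment that it ``is just based on classical convex analysis.'' So there is no proof in the paper to compare against.

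Your argument is correct. The easy inclusion is immediate, and for the hard inclusion you reduce, via the finite intersection property of the closed convex sets $E_{\bq'}$, to a minimax step that Sion's theorem handles cleanly. Two small remarks. First, in the final contradiction you can bypass the detour through attainment: since $\sup_z G(\alpha^*,z)\le -\delta$, you get directly
\[
g(\bq^\star)=\sup_{z\in Z} f(\bq^\star,z)\le g(\bq)+\langle p,\bq^\star-\bq\rangle-\delta,
\]
which already contradicts $p\in\partial g(\bq)$. Second, your observation about first-countability is right: nothing in your skeleton uses it, and upper semi-continuity together with compactness of $Z$ suffices for all attainment statements; presumably in \cite{FeHu09a} the hypothesis is there to allow sequential arguments.
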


\begin{proof}[Proof of Theorem \ref{thm-0.3}]
 In Proposition \ref{pro-2.3}, we let $U=\R^d$,
$Z=\M(X,T)$, and define $f:\; U\times Z\to \R$ by
$$f(\bq, \mu)=\sum_{i=1}^d q_i(\Phi_i)_*(\mu)+ah_\mu(T)+bh_{\mu\circ
\pi^{-1}}(S),\qquad \bq=(q_1,\ldots,q_d).
$$
Set $g(\bq)=\sup_{z\in Z}f(\bq,z)=P^\ba(T,\sum_{i=1}^dq_i\Phi_i)$.
 Since $\Phi_i\in \C_{aa}(X,T)$, $\mu\mapsto
(\Phi_i)_*(\mu)$ is continuous on $\M(X,T)$ (see Lemma \ref{lem-2.1}(ii)). Thus, $f$ and $g$ satisfy the assumptions (i)-(iv) in
Proposition \ref{pro-2.3}. However by Theorem \ref{thm-0.2},
$\I(\bq)=\{\mu_\bq\}$ is a  singleton for each $\bq\in \R^d$. By
Proposition \ref{pro-2.3}, $\nabla
g(\bq)=((\Phi_1)_*(\mu_\bq),\ldots,(\Phi_d)_*(\mu_\bq))$. Since $g$
is convex and differentiable on $\R^d$, it is $C^1$ on $\R^d$ (see,
e.g.  \cite[Corollary 25.5.1]{Roc-book}). This finishes the proof of Theorem
\ref{thm-0.3}.

\end{proof}
\subsection{The proofs of Theorems \ref{thm-0.4} and \ref{thm-0.5}}
\label{S6}

\begin{proof}[Proof of Theorem \ref{thm-0.4}] Fix $n\in \N$. Denote by
$\Omega_n$ the collection of probability vectors
$\bp=(p(\omega))_{\omega\in \A^n}$ in $\R^{\A^n}$ satisfying
$$
\sum_{\varepsilon\in \A} p(\varepsilon x_2\ldots x_n )=\sum_{\varepsilon\in
\A}p(x_2\ldots x_n\varepsilon) \quad \mbox { for any word }x_2\ldots
x_n\in \A^{n-1}.
$$
It is clear that $\Omega_n$ is a convex compact subset of
$\R^{\A^n}$. In fact, $\Omega_n$ is the image of the following map
$$\eta\in \M(X,T)\mapsto (\eta(I))_{I\in \A^n}.$$
(cf. \cite[p. 232]{FFW01}).
Define a function $f:\; \Omega_n\to \R$ by
\begin{equation}
\label{e-3.2} f(\bp)=\sup\{ah_\eta(T)+bh_{\eta\circ \pi^{-1}}(S):\;
\eta\in \M(X,T): \;  (\eta(I))_{I\in \A^n}=\bp\}.
\end{equation}
The following properties of $f$ can be checked directly.
\begin{lem}
The map $f:\Omega_n\to \R$ is concave, bounded  and upper semi-continuous.
\end{lem}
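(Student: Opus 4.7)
The plan is to verify the three properties separately, each following from a standard tool: affinity of entropy gives concavity, trivial bounds on entropy give boundedness, and the upper semi-continuity of entropy on subshifts together with the weak-star compactness of $\M(X,T)$ gives upper semi-continuity of $f$.

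For boundedness, I would first observe that $\Omega_n$ is nonempty and indeed equals $\{(\eta(I))_{I\in \A^n}:\eta\in \M(X,T)\}$, so the supremum in \eqref{e-3.2} is taken over a nonempty set. Since $X$ and $Y$ are subshifts over finite alphabets $\A$ and $\D$, we have the uniform bounds $0\le h_\eta(T)\le \log\#\A$ and $0\le h_{\eta\circ\pi^{-1}}(S)\le \log\#\D$ for every $\eta\in \M(X,T)$, hence $0\le f(\bp)\le a\log\#\A+b\log\#\D$ for every $\bp\in \Omega_n$.

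For concavity, given $\bp_1,\bp_2\in \Omega_n$ and $t\in[0,1]$, I would pick, for any $\varepsilon>0$, measures $\eta_i\in \M(X,T)$ with $(\eta_i(I))_{I\in \A^n}=\bp_i$ and $ah_{\eta_i}(T)+bh_{\eta_i\circ\pi^{-1}}(S)\ge f(\bp_i)-\varepsilon$. Setting $\eta=t\eta_1+(1-t)\eta_2\in \M(X,T)$, one has $(\eta(I))_{I\in \A^n}=t\bp_1+(1-t)\bp_2$, and the key fact that measure-theoretic entropy is affine on $\M(X,T)$ (see e.g.\ Walters \cite{Wal82}) — applied both to $(X,T)$ and to $(Y,S)$ after noting $\eta\circ \pi^{-1}=t(\eta_1\circ\pi^{-1})+(1-t)(\eta_2\circ\pi^{-1})$ — gives
\[
ah_\eta(T)+bh_{\eta\circ\pi^{-1}}(S)=t\bigl(ah_{\eta_1}(T)+bh_{\eta_1\circ\pi^{-1}}(S)\bigr)+(1-t)\bigl(ah_{\eta_2}(T)+bh_{\eta_2\circ\pi^{-1}}(S)\bigr).
\]
Taking the supremum over such pairs and letting $\varepsilon\to 0$ yields $f(t\bp_1+(1-t)\bp_2)\ge tf(\bp_1)+(1-t)f(\bp_2)$.

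Upper semi-continuity will be the main (though still routine) step. Given $\bp_k\to \bp$ in $\Omega_n$, pick $\eta_k\in \M(X,T)$ realising $\bp_k$ with $ah_{\eta_k}(T)+bh_{\eta_k\circ\pi^{-1}}(S)\ge f(\bp_k)-1/k$. By weak-star compactness of $\M(X,T)$, after passing to a subsequence $\eta_k\to \eta$ for some $\eta\in \M(X,T)$. Since cylinders are clopen, the coordinate maps $\eta\mapsto \eta(I)$ ($I\in \A^n$) are continuous, so $(\eta(I))_{I\in \A^n}=\bp$. As $\pi$ is continuous, $\eta_k\circ\pi^{-1}\to \eta\circ\pi^{-1}$ weak-star as well. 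Using the upper semi-continuity of the entropy map on both subshifts $(X,T)$ and $(Y,S)$, together with $a,b\ge 0$ and the subadditivity of $\limsup$, I obtain
\[
\limsup_{k\to\infty}f(\bp_k)\le \limsup_{k\to\infty}\bigl(ah_{\eta_k}(T)+bh_{\eta_k\circ\pi^{-1}}(S)\bigr)\le ah_\eta(T)+bh_{\eta\circ\pi^{-1}}(S)\le f(\bp).
\]
The only subtle point in the whole argument is justifying that the constraint $(\eta(I))_{I\in \A^n}=\bp$ is preserved under weak-star limits and convex combinations, which as noted above follows from the fact that cylinder sets are clopen in a subshift.
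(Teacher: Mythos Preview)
Your argument is correct and is precisely the routine verification the paper has in mind: the authors do not give a proof at all, writing only that ``the following properties of $f$ can be checked directly,'' and your three steps (trivial entropy bounds for boundedness, affinity of entropy for concavity, weak-star compactness plus upper semi-continuity of entropy on expansive systems for upper semi-continuity of $f$) are exactly the standard ingredients. One cosmetic point: in the upper semi-continuity step, first pass to a subsequence along which $f(\bp_k)$ converges to $\limsup_k f(\bp_k)$, and only then extract a weak-star convergent subsequence of the $\eta_k$; otherwise the displayed $\limsup$ is a priori only along the subsequence.
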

Extend $f$ to a function on $\R^{\A^n}$ by
$$f(\bp)=-\infty\quad\mbox{  for }\bp\in \R^{\A^n}
\backslash \Omega_n$$ and define $f^*:\R^{\A^n}\to \R$ by
\begin{equation}
\label{e-3.3'} f^*(\bq)=\sup\left\{f(\bp)+\bp\cdot\bq:\;
\bp\in \R^{\A^n}\right\} =\sup\left\{f(\bp)+\bp\cdot\bq:\;
\bp\in \Omega_n\right\},
\end{equation}
where $\bp\cdot\bq$ denotes the standard inner product of $\bp$ and $\bq$ in $\R^{\A^n}$. Since $f$ is a bounded upper semi-continuous concave
function on $\Omega_n$,  we obtain
\begin{equation}
\label{e-3.3} f(\bp)=\inf \left\{ f^*(\bq)- \bp\cdot\bq:\;
\bq\in \R^{\A^n} \right\},\qquad \bp\in \Omega_n
\end{equation}
by using the  duality principle in convex analysis
(cf. \cite[Theorem 12.2]{Roc-book}). By (\ref{e-3.2}) and (\ref{e-3.3'}), we have
\begin{lem}
\label{lem-3.3} For $\bq=(q(I))_{I\in \A^n}\in \R^{\A^n}$,
\begin{equation*}
\begin{split}
f^*(\bq)&=\sup\left\{
\left(\sum_{I\in \A^n} q(I)\int \chi_{[I]} \; d\eta\right)+ah_\eta(T)+bh_{\eta\circ \pi^{-1}}(S):\; \eta\in \M(X,T)\right\} \\
&=P^{\ba}\left(T,\sum_{\omega\in \A^n}q(I)
\Phi_{I}\right),
\end{split}
\end{equation*}
where $\chi_{[I]}$ denotes the indicator function of
$[I]$, and $\Phi_{I}$ denotes the additive potential
$\left(\sum_{i=0}^{m-1}\chi_{[I]}(T^ix)\right)_{m=1}^\infty$.
Furthermore denote by $\mu$ the $\ba$-weighted equilibrium state of\\
$\sum_{I\in \A^n}q(I) \Phi_{I}$ and let
$\bp=(\mu(I))_{I\in \A^n}$. Then $\bp\in
\mbox{ri}(\Omega_n)$  and $f(\bp)=ah_\mu(T)+bh_{\mu\circ
\pi^{-1}}(S)$, where $\mbox{ri}(A)$ denotes the relative interior of a convex set $A$.
\end{lem}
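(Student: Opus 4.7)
My plan is to split the statement into three tasks: (i) establishing the two equalities for $f^*(\bq)$; (ii) showing $\bp \in \mathrm{ri}(\Omega_n)$; (iii) identifying $f(\bp)$ with $ah_\mu(T)+bh_{\mu\circ\pi^{-1}}(S)$. I will handle them in that order, using Theorem~\ref{thm-0.2} for the last two.

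For (i), I would start from the definition \eqref{e-3.3'} of $f^*$ together with the definition \eqref{e-3.2} of $f$, and use the fact (recalled in the paragraph defining $\Omega_n$) that $\eta\mapsto(\eta(I))_{I\in\A^n}$ maps $\M(X,T)$ onto $\Omega_n$. When $\bp=(\eta(I))_I$, the linear term $\bp\cdot\bq$ equals $\sum_{I\in\A^n} q(I)\int\chi_{[I]}\,d\eta$, so the iterated supremum (over $\bp\in\Omega_n$, then over $\eta$ with $(\eta(I))_I=\bp$) collapses to a single supremum over $\eta\in\M(X,T)$; this gives the first equality. For the second equality, $\Phi_I$ is the additive potential generated by the locally constant (hence continuous) function $\chi_{[I]}$, so $(\Phi_I)_*(\eta)=\int\chi_{[I]}\,d\eta$; linearity yields $\bigl(\sum_I q(I)\Phi_I\bigr)_*(\eta)=\sum_I q(I)\int\chi_{[I]}\,d\eta$, and the identity follows immediately from the defining formula \eqref{e-0.1} of $P^\ba$.

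For (ii) and (iii), I would apply Theorem~\ref{thm-0.2} to the potential $\sum_I q(I)\Phi_I$. This is additive, generated by the function $g:=\sum_I q(I)\chi_{[I]}$, which depends only on the first $n$ coordinates. A direct estimate shows that for $x,y$ in the same $m$-th cylinder with $m\geq n$, one has $g(T^i x)=g(T^i y)$ for $0\leq i\leq m-n$, and therefore $|S_m g(x)-S_m g(y)|\leq 2(n-1)\|g\|_\infty$, which is exactly the bounded distortion property. Theorem~\ref{thm-0.2} then provides a unique $\ba$-weighted equilibrium state $\mu$, which is quasi-Bernoulli and hence fully supported on cylinders: $\mu(I)>0$ for every $I\in\A^n$. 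Since $\mathrm{ri}(\Omega_n)$ is precisely the set of strictly positive vectors in $\Omega_n$, this gives $\bp\in\mathrm{ri}(\Omega_n)$.

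Finally, for the equality $f(\bp)=ah_\mu(T)+bh_{\mu\circ\pi^{-1}}(S)$: the inequality $\geq$ holds because $\mu$ belongs to the feasible set defining $f(\bp)$. For the converse, any admissible $\eta$ satisfies $\int\chi_{[I]}\,d\eta=\mu(I)$ for each $I\in\A^n$, so the linear term $\sum_I q(I)\int\chi_{[I]}\,d\eta$ matches its value at $\mu$; the weighted equilibrium inequality for $\mu$ then reduces to $ah_\mu(T)+bh_{\mu\circ\pi^{-1}}(S)\geq ah_\eta(T)+bh_{\eta\circ\pi^{-1}}(S)$, and taking the supremum over admissible $\eta$ closes the loop. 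I expect the only subtle point to be the bounded distortion verification in step (ii), which is what licenses the appeal to Theorem~\ref{thm-0.2}; everything else is careful bookkeeping of the suprema.
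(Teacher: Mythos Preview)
Your proof is correct and follows the same approach as the paper. The paper itself is quite terse here, deriving the lemma directly ``by \eqref{e-3.2} and \eqref{e-3.3'}'' without spelling out the details; you have simply made explicit the collapse of the iterated supremum, the verification that the locally constant potential $\sum_I q(I)\chi_{[I]}$ has bounded distortion (so Theorem~\ref{thm-0.2} applies), and the equilibrium comparison giving $f(\bp)=ah_\mu(T)+bh_{\mu\circ\pi^{-1}}(S)$.
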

By  Lemma \ref{lem-3.3} and Theorem \ref{thm-0.3}, $f^*$ is
differentiable on $\R^{\A^n}$. Hence by Corollary 26.4.1 in
\cite{Roc-book} and (\ref{e-3.3}), for any $\bp\in
\mbox{ri}(\Omega_n)$, there
exists  $\bq\in \R^{\A^n}$ such that
$$
\nabla f^*(\bq)=\bp.
$$
 It is easy to check that  $\mbox{ri}(\Omega_n)$ consists of the strictly positive vectors in $\Omega_n$.
 However, by Lemma \ref{lem-3.3} and Theorem \ref{thm-0.3},
$$
\nabla f^*(\bq)=\left(\mu(I)\right)_{I\in \A^n},
$$
where $\mu=\mu_{\bq}$ is the $\ba$-weighted equilibrium state of
$\sum_{I\in \A^n}q(I) \Phi_{I}$. By Theorem
\ref{thm-0.2}, $\mu_{\bq}$ is quasi-Bernoulli. Thus for each
positive vector $\bp$ in $\Omega_n$, there exists a quasi-Bernoulli
measure $\mu_{\bq}$ such that
$\left(\mu_\bq(I)\right)_{I\in \A^n}=\bp$. By  Lemma
\ref{lem-3.3}, we do have
\begin{equation*}
\begin{split}
&ah_{\mu_{\bq}}(T)+bh_{\mu_{\bq}\circ \pi^{-1}}(S)=f(\bp)\\
&\mbox{}\qquad = \sup\{ah_\eta(T)+bh_{\eta\circ \pi^{-1}}(S):\;
\eta\in \M(X,T): \;  (\eta(I))_{I\in \A^n}=\bp\}.
\end{split}
\end{equation*}
Furthermore, the measure $\mu$ which attains the supremum  is
unique, because each such a measure is a $\ba$-weighted equilibrium
state of $\sum_{I\in \A^n}q(I) \Phi_{I}$. This
finishes the proof of Theorem \ref{thm-0.4}.
\end{proof}

\begin{proof}[Proof of Theorem \ref{thm-0.5}] First assume that
$\eta$ is fully supported. Let $\mu_n= \mu(\ba, \eta,n)$ as in
Theorem \ref{thm-0.4}. Then the sequence $(\mu_n)_{n=1}^\infty$ is
desired in Theorem \ref{thm-0.5}.

Now consider the general case. Let $\eta_n=(1-1/n)\eta+(1/n)\eta_0$,
where $\eta_0$ denotes the Parry measure on $X$. Clearly, $\eta_n$
is fully supported. Denote $\mu_n'=\mu(\ba, \eta_n,n)$. Then
$(\mu_n')_{n=1}^\infty$ is desired.
\end{proof}

\section{Higher dimensional weighted thermodynamic formalism}
\label{S7}

 In this section, we present the higher dimensional
versions of our main results. Since the proofs are  essentially
identical to those in the two dimensional case, we just omit them.

Let $k\geq 2$. Assume that $(X_i, T_i)$ ($i=1,\ldots, k$) are
subshifts over finite alphabets $\A_i$ such that $X_{i+1}$ is a factor of $X_i$ with a one-block
factor map $\pi_i: \;X_i\to X_{i+1}$ for $i=1,\ldots, k-1$. For
convenience, we use $\pi_0$ to denote the identity map on $X_1$.
Define $\tau_i:\;X_1\to X_{i+1}$ by $\tau_i=\pi_i\circ\pi_{i-1}\circ
\cdots \circ \pi_0$ for $i=0,1,\ldots,k-1$.

Let $\ba=(a_1,\ldots, a_k)\in \R^k$ so that  $a_1> 0$ and $a_{i}\geq
0$ for $i>1$.  For  $\Phi\in \C_{ass}(X_1,T_1)$. We  define the {\it
$\ba$-weighted topological pressure of $\Phi$}  as
$$
P^{\ba}(T_1,\Phi) =\sup\left\{ \Phi_*(\mu)+h^\ba_\mu(T_1):\; \mu\in \M(X_1,T_1)\right\},
$$
where $h^\ba_\mu(T_1)$ is the $\ba$-weighted topological entropy defined as
$$
h^\ba_\mu(T_1)=\sum_{i=1}^{k}a_ih_{\mu\circ \tau_{i-1}^{-1}}(T_i).
$$
Clearly the supremum is attainable. Each measure $\mu$ which attains
the supremum  is called an $\ba$-weighted equilibrium state of
$\Phi$.

For $i=1,\ldots, k-1$, we define $\theta_i:\; \C_{ass}(X_i, T_i)\to
\C_{ass}(X_{i+1}, T_{i+1})$ by $(\log \phi_n)_{n=1}^\infty\mapsto
(\log \psi_n)_{n=1}^\infty$, where
$$
\psi_n(y)=\left(\sum_{I\in \A_{i}^n:\; [I]\cap \pi_{i}^{-1}(y)\neq
\emptyset} \sup_{x\in [I]\cap
\pi_{i}^{-1}(y)}\phi_n(x)^{1/A_i}\right)^{A_i}
$$
for $y \in X_{i+1}$, with $A_i=a_1+\cdots+a_i$. In particular, let ${\mathcal S}_{ass}$ denote
the collection of asymptotically sub-additive additive (scalar) sequences $(\log
c_n)_{n=1}^\infty$ (a sequence $(\log
c_n)_{n=1}^\infty$, where $c_n\geq 0$, is called {\it asymptotically sub-additive} if, for any $\varepsilon>0$, there exists a sequence $(d_n)_{n=1}^\infty$, so that $0\leq d_{n+m}\leq d_nd_m$ and $\limsup_{n\to \infty}\frac{1}{n}|\log c_n-\log d_n|<\varepsilon$).  Let $ \theta_k:\; \C_{ass}(X_k,T_k)\to {\mathcal
S}_{ass} $
 be defined as $(\log \phi_n)_{n=1}^\infty\mapsto (\log c_n)_{n=1}^\infty$,
where
$$
c_n=\left(\sum_{I\in \A_{k}^n}\sup_{x\in
[I]}\phi_n(x)^{1/{A_k}}\right)^{A_k}.
$$

As an extension of Theorem \ref{thm-0.1}, we have
\begin{thm}\label{thm-0.1'}
\begin{itemize}
\item[(i)] $P^{\ba}(T_1,\Phi)=
\lim_{n\to \infty}(1/n)\log c_n$, where $(c_n)_{n=1}^\infty=\theta_k\circ\cdots\circ \theta_1(\Phi)$.
\item[(ii)] For any $1\leq i\leq k-1$,
$P^{\ba}(T_1,\Phi)=P^{(\sum_{j=1}^i a_j,\; a_{i+1},\ldots,
a_k)}(T_{i+1},\; \theta_{i}\circ \cdots \circ\theta_1(\Phi))$.
\item[(iii)] $\mu\in \M(X_1,T_1)$ is an $\ba$-weighted equilibrium
state of $\Phi$ if and only if $\mu\circ \tau_{k-1}^{-1}$ is an
equilibrium state of $\frac{\theta_{k-1}\circ \cdots \circ
\theta_1(\Phi)}{a_1+\cdots+a_k}$ and, for $i=k-2,k-3,\ldots,0$,
$\mu\circ \tau_{i}^{-1}$ is a conditional equilibrium state of
$\frac{\theta_i\circ \cdots\circ
\theta_1(\Phi)}{a_1+\cdots+a_{i+1}}$ with respective to $\mu\circ
\tau_{i+1}^{-1}$.
\end{itemize}
\end{thm}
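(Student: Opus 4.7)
The plan is to proceed by induction on $k$, with the case $k=2$ being precisely Theorem~\ref{thm-0.1}. The inductive heart of the argument is the one-step reduction
\[
P^\ba(T_1,\Phi)=P^{(A_2,a_3,\ldots,a_k)}(T_2,\theta_1(\Phi)),\qquad A_j:=a_1+\cdots+a_j,
\]
which is exactly part (ii) for $i=1$. Once this is established, the full statements (i), (ii), and (iii) follow by iterating the reduction $k-1$ times so as to successively peel off one layer of the tower $X_1\to X_2\to\cdots\to X_k$ and replace $\Phi$ by $\theta_i\circ\cdots\circ\theta_1(\Phi)$ at stage $i$.

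To prove the one-step reduction, I would first split the supremum defining $P^\ba(T_1,\Phi)$ according to the value $\nu=\mu\circ\pi_1^{-1}\in\M(X_2,T_2)$. For $i\geq 2$ and $\mu$ with $\mu\circ\pi_1^{-1}=\nu$, the factor measure $\mu\circ\tau_{i-1}^{-1}=\nu\circ(\pi_{i-1}\circ\cdots\circ\pi_2)^{-1}$ depends only on $\nu$, so once $\nu$ is fixed only the summand $a_1h_\mu(T_1)$ of $h^\ba_\mu(T_1)$ varies with $\mu$. Applying Lemma~\ref{lem-3.1} to $\Phi/a_1\in\C_{ass}(X_1,T_1)$ (and multiplying the resulting identity by $a_1$) gives
\[
\sup_{\mu\circ\pi_1^{-1}=\nu}\bigl\{\Phi_*(\mu)+a_1h_\mu(T_1)\bigr\}=(\theta_1\Phi)_*(\nu)+a_1h_\nu(T_2),
\]
since $\theta_1(\Phi)=(\log\widetilde\psi_n^{\,a_1})$ with $\widetilde\psi_n(y)=\sum_{I}\sup_{x\in[I]\cap\pi_1^{-1}(y)}\phi_n(x)^{1/a_1}$. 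Regrouping the $a_1h_\nu(T_2)$ just produced with the term $a_2h_\nu(T_2)$ already present yields exactly $P^{(A_2,a_3,\ldots,a_k)}(T_2,\theta_1(\Phi))$, proving the reduction.

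Iterating $k-1$ times reduces $P^\ba(T_1,\Phi)$ to $P^{(A_k)}(T_k,\theta_{k-1}\circ\cdots\circ\theta_1(\Phi))$; applying the sub-additive variational principle (Proposition~\ref{pro-1.1}) together with the cylinder formula~\eqref{e-2.1T4} converts this into $\lim_n(1/n)\log c_n$ with $(\log c_n)=\theta_k\circ\cdots\circ\theta_1(\Phi)$, which is (i). The same scheme proves (iii): the one-step reduction also yields the equivalence that $\mu$ is an $\ba$-weighted equilibrium state of $\Phi$ if and only if $\mu\circ\pi_1^{-1}$ is an $(A_2,a_3,\ldots,a_k)$-weighted equilibrium state of $\theta_1(\Phi)$ and $\mu$ is a conditional equilibrium state of $\Phi/a_1$ relative to $\mu\circ\pi_1^{-1}$, exactly mirroring the $k=2$ characterization of Theorem~\ref{thm-0.1}; applied inductively this produces the chain in (iii).

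The main obstacle is the bookkeeping of exponents: one must verify that at stage $i$ of the iteration the cumulative first coefficient of the weight vector is $A_{i+1}$ (rather than $a_{i+1}$), so that the divisor in (iii) is indeed $1/A_{i+1}$ and the exponent $1/A_{i+1}$ inside $\theta_{i+1}$ is consistent with the residual weight used in the inductive step. Along the way one also checks that $\theta_i$ does send $\C_{ass}(X_i,T_i)$ into $\C_{ass}(X_{i+1},T_{i+1})$, which follows from the sub-additive estimate embedded in the proof of Lemma~\ref{lem-3.1} applied to the potential with exponent $1/A_i$.
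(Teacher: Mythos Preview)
Your proposal is correct and is exactly the approach the paper has in mind: the authors omit the proof of Theorem~\ref{thm-0.1'} entirely, stating only that it is ``essentially identical'' to the two-dimensional case, and your inductive reduction via Lemma~\ref{lem-3.1} is precisely that identical argument carried out layer by layer. The bookkeeping you flag---that the cumulative first weight at stage $i$ is $A_i$, matching the exponent $1/A_i$ built into the definition of $\theta_i$---is the only point requiring care, and you have it right.
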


In the remaining part of this section,  we assume that $X_i$ is the
full shift over  $\A_i$ for each $i\in \{1,\ldots, k\}$. For
$i=1,\ldots, k-1$, we redefine $\theta_i:\; \C_{asa}(X_i, T_i)\to
\C_{asa}(X_{i+1}, T_{i+1})$ by $(\log \phi_n)_{n=1}^\infty\mapsto
(\log \psi_n)_{n=1}^\infty$, where
\begin{equation*}
\label{e-theta}
\psi_n(y)=\left(\sum_{I\in \A_{i}^n:\; [I]\cap \pi_{i}^{-1}(y)\neq
\emptyset} \sup_{x\in
[I]}\phi_n(x)^{1/A_i}\right)^{A_i}
\end{equation*}
for $y \in X_{i+1}$. In particular, let ${\mathcal S}_{asa}$ denote
the collection of asymptotically additive  (scalar) sequences $(\log
c_n)_{n=1}^\infty$.  Let $ \theta_k:\; \C_{asa}(X_k,T_k)\to {\mathcal
S}_{asa} $
 be defined as $(\log \phi_n)_{n=1}^\infty\mapsto (\log c_n)_{n=1}^\infty$,
where
\begin{equation*}
\label{e-c}
c_n=\left(\sum_{I\in \A_{k}^n}\sup_{x\in
[I]}\phi_n(x)^{1/A_k}\right)^{A_k}.
\end{equation*}

For  $\Phi=(\log \phi_n)_{n=1}^\infty\in \C_{asa}(X_1,T_1)$, write
  \begin{equation}
  \label{e-not}
  \begin{split}
    &(\log \phi_n^{(i)})_{n=1}^\infty:=\theta_i\circ\cdots\circ \theta_1(\Phi),\quad i=1,\ldots, k,\\
    &(\log \phi_n^{(0)})_{n=1}^\infty:=(\log \phi_n)_{n=1}^\infty \mbox{ and }\\
    &\phi^{(i)}(J):=\sup\{\phi_n^{(i)}(y):\; y\in [J]\}
  \end{split}
  \end{equation}
 for any $n$-th cylinder $[J]\subset X_{i+1}$, $i=0,\ldots, k-1$. Then, we define the {\it $\ba$-weighted potential associated with $\Phi$}  by
 \begin{equation}\label{phia}
\Phi^\ba= (\log \phi^\ba_n)_{n=1}^\infty,\text{ where } \phi^\ba_n(x)=\phi^{(0)}(x_{|n})^{1/A_1}\prod_{i=1}^{k-1} \phi^{(i)}(\tau_i(x_{|n}))^{1/A_{i+1}- 1/A_i},
\end{equation}
where $A_i=a_1+\cdots +a_i$. Since there exists a sequence $(g^{(p)})_{p\ge 1}$ of H\"older potentials such that $\lim_{p\to 0}\limsup_{n\to\infty}\|\Phi_n-S_ng^{(p)}\|_\infty/n=0$ (see Lemma~\ref{lem-2.1}(iii)), it is easily seen that all the potentials $(\log \phi^{(i)}(\tau_{i-1}(\cdot_{|n}))_{n=1}^\infty$ and $(\log \phi^\ba_n)_{n=1}^\infty$ belong to $\C_{asa}(X,T)$.

As an analogue of Theorems \ref{thm-0.2}-\ref{thm-0.5}, we have
\begin{thm}\label{thm-0.2'}
\begin{itemize}
\item[(i)]
Let $\Phi=(\log \phi_n)_{n=1}^\infty\in \C_{asa}(X_1,T_1)$.  Then $$P^{\ba}(T_1,\Phi)=\lim_{n\to \infty}(1/n)\log c_n,$$ where $(\log c_n)_{n=1}^\infty=\theta_k\circ\cdots\circ \theta_1(\Phi)$.
\item[(ii)]
Assume   $\Phi\in \C_{aa}(X_1,T_1)$ and $\Phi$ has the bounded distortion property. Then
there is a unique $\ba$-weighted equilibrium state $\mu$ of $\Phi$. The measure $\mu$ is fully supported and quasi-Bernoulli, and it satisfies the following Gibbs property
\begin{equation}\label{defmu4'}
\mu(I)\approx\exp \Big (\frac{-nP}{A_k}\Big) \phi_n^\ba(I), \quad I\in \A_1^n,
\end{equation}
where $P=P^{\ba}(T_1,\Phi)$.
Consequently, for $i=2,\ldots,k$,
\begin{equation}\label{defmu'}
\mu_i(\tau_{i-1}I)\approx \exp \Big (\frac{-nP}{A_k}\Big) \phi^{(i-1)}(\tau_{i-1}I)^{1/A_i}\prod_{j=i}^{k-1} \phi^{(j)}(\tau_jI)^{1/A_{j+1}- 1/A_j},\quad I\in \A_1^n,
\end{equation}
where $\mu_i:=\mu\circ \tau_{i-1}^{-1}$. Furthermore,
\begin{equation*}
\phi_n(x)\exp(-nP)\approx \prod_{i=1}^{k} \mu_i (\tau_{i-1}x_{|n})^{a_{i}} \quad \mbox{ for }x\in X_1,\; n\geq 1,
\end{equation*}

\end{itemize}
\end{thm}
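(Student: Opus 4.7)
I follow the same scheme as for Theorems~\ref{thm-0.1}--\ref{thm-0.2}, replacing the single factor step by an iterated descent through the chain $\pi_1,\ldots,\pi_{k-1}$. Part~(i) is essentially a restatement of Theorem~\ref{thm-0.1'}(i); the only subtlety is that the $\theta_i$ of this section is defined with $\sup_{x\in[I]}$ rather than $\sup_{x\in[I]\cap\pi_i^{-1}(y)}$. To reconcile them, I would first verify that each $\theta_i$ maps $\C_{asa}(X_i,T_i)$ into $\C_{asa}(X_{i+1},T_{i+1})$: approximating $\Phi$ by a Birkhoff sum $(S_ng^{(p)})_{n\ge 1}$ via Lemma~\ref{lem-2.1}(iii), the two versions of $\theta_i$ differ by a multiplicative constant uniform in $n$, so their asymptotic logarithms coincide and the limit in the formula $P^{\ba}(T_1,\Phi)=\lim_n(1/n)\log c_n$ is unchanged.

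For part~(ii), my first task is to propagate the almost additive plus bounded distortion hypothesis along the chain: by induction on $i$, I claim that $(\log\phi^{(i)}_n)_{n=1}^\infty\in\C_{aa}(X_{i+1},T_{i+1})$ and has the bounded distortion property. The bounded distortion of $\phi^{(i-1)}$ makes the defining $\sup$ over cylinders of $X_i$ independent, up to a uniform multiplicative constant, of the chosen representative, while the fibre-product structure inside $\A_i^{n+m}$ of the words lying above a concatenated word in $X_{i+1}$ yields the sub- and super-multiplicativity of $\phi^{(i)}$. Bounded distortion of $\phi^{(i)}$ follows from the fact that the sum defining it depends, up to such a uniform constant, only on the $n$-word of $X_{i+1}$. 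This is precisely the role played by $\Psi\in\C_{aa}(Y,S)$ in the proof of Theorem~\ref{thm-0.2}.

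With these regularity properties secured, Theorem~\ref{thm-0.1'}(iii) identifies any $\ba$-weighted equilibrium state $\mu$ of $\Phi$ through its projections $\mu_j=\mu\circ\tau_{j-1}^{-1}$: $\mu_k$ is the equilibrium state on $X_k$ of $\theta_{k-1}\circ\cdots\circ\theta_1(\Phi)/A_k$, and for $j=k-1,\ldots,1$ the measure $\mu_j$ is the conditional equilibrium state of $\theta_{j-1}\circ\cdots\circ\theta_1(\Phi)/A_j$ with respect to $\mu_{j+1}$. I then descend through the tower. Proposition~\ref{pro-1.2} provides a unique, fully supported, quasi-Bernoulli $\mu_k$ with
$$\mu_k(\tau_{k-1}I)\approx\exp(-nP/A_k)\,\phi^{(k-1)}(\tau_{k-1}I)^{1/A_k},$$
which is exactly (\ref{defmu'}) for $i=k$. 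Inductively, for $j=k-1,\ldots,1$, since $\mu_{j+1}$ is already known to be quasi-Bernoulli from the previous step, Proposition~\ref{pro-1.4} applies to the almost-additive, bounded-distortion potential $\theta_{j-1}\circ\cdots\circ\theta_1(\Phi)/A_j$ on $X_j$ with factor map $\pi_j$ and base measure $\mu_{j+1}$, and yields a unique, quasi-Bernoulli, conditional equilibrium state $\mu_j$ with
$$\mu_j(\tau_{j-1}I)\approx\mu_{j+1}(\tau_jI)\,\frac{\phi^{(j-1)}(\tau_{j-1}I)^{1/A_j}}{\phi^{(j)}(\tau_jI)^{1/A_j}},$$
using that the corresponding ``$\psi$'' of Proposition~\ref{pro-1.4} is precisely $\phi^{(j)}(\cdot)^{1/A_j}$ by the definition of $\theta_j$. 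In particular $\mu=\mu_1$ is unique, fully supported and quasi-Bernoulli.

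Telescoping this last display from $j=k-1$ down to $j=1$ and substituting the Gibbs estimate for $\mu_k$, the intermediate $\mu_{j+1}$ factors cancel, and collecting the remaining powers of $\phi^{(j)}(\tau_jI)$ produces precisely the exponents $1/A_{j+1}-1/A_j$ appearing in the definition (\ref{phia}) of $\phi_n^{\ba}$; this establishes (\ref{defmu4'}). Stopping the telescoping at an intermediate level $i$ instead yields (\ref{defmu'}) for all $2\le i\le k$, and the final displayed identity $\phi_n(x)\exp(-nP)\approx\prod_{i=1}^k\mu_i(\tau_{i-1}x_{|n})^{a_i}$ is a direct algebraic rearrangement using $a_i=A_i-A_{i-1}$. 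I expect no conceptually new obstacle beyond the two-dimensional proof: the main technical burden is the careful bookkeeping of exponents and multiplicative constants through the $k-1$ levels of induction, and the verification that $\theta_i$ preserves $\C_{aa}$ together with the bounded distortion property at each step.
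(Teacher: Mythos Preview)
Your proposal is correct and matches the paper's approach exactly: the authors explicitly omit the proof, stating that it is ``essentially identical to those in the two dimensional case,'' and your inductive descent through the factor chain---applying Proposition~\ref{pro-1.2} at the bottom level $X_k$ and Proposition~\ref{pro-1.4} at each intermediate step, then telescoping the resulting recursion $\mu_j(\tau_{j-1}I)\approx\mu_{j+1}(\tau_jI)\,\phi^{(j-1)}(\tau_{j-1}I)^{1/A_j}/\phi^{(j)}(\tau_jI)^{1/A_j}$---is precisely that generalization of the proof of Theorem~\ref{thm-0.2}. The reconciliation of the two definitions of $\theta_i$ via H\"older approximation is also what the paper uses (see the sentence following \eqref{phia}).
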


A Borel probability measure $\mu$ (not necessarily invariant) on $X$ satisfying \eqref{defmu4'} is called {\it an $\ba$-weighted Gibbs measure for $\Phi$}.

\begin{thm}
\label{thm-0.3'} Let $\Phi_1,\ldots, \Phi_d\in \C_{aa}(X,T)$ satisfy
the bounded distortion property. Then the map $Q:\R^d \to \R$
defined as
$$(q_1,\ldots, q_d)\mapsto P^{\ba}\left(T,\sum_{i=1}^dq_i\Phi_i\right),$$
is $C^1$ over $\R^d$ with
$$\nabla Q(q_1,\ldots, q_d)=((\Phi_1)_*(\mu_\bq),\ldots,
(\Phi_d)_*(\mu_\bq)),$$
 where
$\mu_{\bq}$ is the unique $\ba$-weighted equilibrium state of
$\sum_{i=1}^dq_i\Phi_i$.
\end{thm}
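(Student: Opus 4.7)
My plan is to reproduce the argument used for Theorem~\ref{thm-0.3}, replacing the two-dimensional uniqueness result Theorem~\ref{thm-0.2} by its higher-dimensional counterpart Theorem~\ref{thm-0.2'}(ii), and invoking the same convex-analytic machinery (Proposition~\ref{pro-2.3} together with Rockafellar's regularity result for convex functions that are everywhere differentiable). The proof splits into three steps: set up the correct bivariate functional, verify the hypotheses of Proposition~\ref{pro-2.3}, then upgrade differentiability to $C^1$ regularity using convexity.

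First I would define $f:\R^d\times \M(X_1,T_1)\to\R\cup\{-\infty\}$ by
\[
f(\bq,\mu)=\sum_{i=1}^d q_i(\Phi_i)_*(\mu)+h_\mu^\ba(T_1)=\sum_{i=1}^d q_i(\Phi_i)_*(\mu)+\sum_{i=1}^k a_i h_{\mu\circ\tau_{i-1}^{-1}}(T_i),
\]
so that $g(\bq):=\sup_\mu f(\bq,\mu)=P^\ba(T_1,\sum_i q_i\Phi_i)$. To apply Proposition~\ref{pro-2.3}, I need $f$ convex in $\bq$ (it is affine in $\bq$, hence convex); affine in $\mu$ (each $(\Phi_i)_*$ is affine, and each entropy term is affine on $\M(X_1,T_1)$); upper semi-continuous on the product $\R^d\times \M(X_1,T_1)$; and $g>-\infty$ everywhere. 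Upper semi-continuity of each $(\Phi_i)_*$ on $\M(X_1,T_1)$ follows from Lemma~\ref{lem-2.1}(ii), while upper semi-continuity of $\mu\mapsto h_{\mu\circ\tau_{i-1}^{-1}}(T_i)$ is inherited from the upper semi-continuity of the entropy on subshifts via the continuity (in the weak-$*$ topology) of the pushforward $\mu\mapsto \mu\circ\tau_{i-1}^{-1}$, which is immediate since $\tau_{i-1}$ is continuous. Finiteness of $g$ is clear since the potentials are continuous and the entropy is bounded.

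Next I would invoke Theorem~\ref{thm-0.2'}(ii): for each $\bq\in\R^d$, the sum $\sum_{i=1}^d q_i\Phi_i$ lies in $\C_{aa}(X_1,T_1)$ and satisfies the bounded distortion property (both classes are closed under finite real linear combinations), hence admits a \emph{unique} $\ba$-weighted equilibrium state $\mu_\bq$. Thus the set $\I(\bq)=\{\mu\in\M(X_1,T_1):f(\bq,\mu)=g(\bq)\}$ is the singleton $\{\mu_\bq\}$. Since $f$ is linear in $\bq$, the subdifferential $\partial f(\cdot,\mu_\bq)$ at $\bq$ is the singleton $\{((\Phi_1)_*(\mu_\bq),\ldots,(\Phi_d)_*(\mu_\bq))\}$, and Proposition~\ref{pro-2.3} then yields
\[
\partial g(\bq)=\partial f(\bq,\mu_\bq)=\bigl\{((\Phi_1)_*(\mu_\bq),\ldots,(\Phi_d)_*(\mu_\bq))\bigr\}.
\]
In particular $g$ is differentiable at every $\bq\in\R^d$ with the claimed gradient.

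Finally, $g$ is convex on $\R^d$ as a supremum of affine functions of $\bq$, and we just showed it is differentiable everywhere. By \cite[Corollary~25.5.1]{Roc-book}, any convex function which is finite and differentiable on an open set is automatically $C^1$ there, so $Q=g\in C^1(\R^d)$. No step is really hard; the only point that requires mild care is the upper semi-continuity of $\mu\mapsto h_{\mu\circ\tau_{i-1}^{-1}}(T_i)$ on $\M(X_1,T_1)$, which, as noted above, reduces to the standard upper semi-continuity of the entropy on a subshift transported through a continuous factor map. The conceptual heart of the argument is the uniqueness input from Theorem~\ref{thm-0.2'}(ii), since singletonness of $\I(\bq)$ is precisely what converts Proposition~\ref{pro-2.3} into honest differentiability.
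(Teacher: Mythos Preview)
Your proof is correct and follows precisely the paper's approach for Theorem~\ref{thm-0.3} (the paper omits the proof of Theorem~\ref{thm-0.3'}, declaring it essentially identical to the two-dimensional case). The only minor sharpening worth noting is that Lemma~\ref{lem-2.1}(ii) actually gives \emph{continuity} (not merely upper semi-continuity) of each $(\Phi_i)_*$ on $\M(X,T)$ since $\Phi_i\in\C_{aa}\subset\C_{asa}$, and continuity is what is needed for joint upper semi-continuity of $(\bq,\mu)\mapsto\sum_i q_i(\Phi_i)_*(\mu)$ when the $q_i$ can take either sign.
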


\begin{thm}\label{thm-0.4'}
 For each fully supported measure $\eta\in \M(X_1,T_1)$ and each $n\in \N$, there is a unique measure
 $\mu=\mu(\ba, \eta,n)$ in $\M(X_1,T_1)$ attaining the following supremum
$$
\sup\left\{h^\ba_\mu(T_1):\;
\mu(I)=\eta(I) \mbox{ for all $n$-th cylinder }
[I]\in X_1\right\}.
$$
Furthermore $\mu(\ba, \eta,n)$ is the $\ba$-weighted equilibrium
state of certain $\Phi\in \C_{aa}(X_1,T_1)$ with the bounded
distortion property, and hence $\mu(\ba, \eta,n)$ is a  fully
supported quasi-Bernoulli measure on $X_1$.
\end{thm}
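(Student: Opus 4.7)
The plan is to replay the proof of Theorem~\ref{thm-0.4} verbatim in the higher dimensional setting, with the two dimensional weighted entropy $ah_\eta(T)+bh_{\eta\circ\pi^{-1}}(S)$ replaced by $h^\ba_\mu(T_1)=\sum_{i=1}^{k}a_i h_{\mu\circ\tau_{i-1}^{-1}}(T_i)$, and with Theorems~\ref{thm-0.2'} and~\ref{thm-0.3'} invoked in place of Theorems~\ref{thm-0.2} and~\ref{thm-0.3}. Fix $n\ge 1$ and let $\Omega_n\subset \R^{\A_1^n}$ denote the compact convex image of $\M(X_1,T_1)$ under the affine evaluation $\eta\mapsto (\eta(I))_{I\in\A_1^n}$. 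Define $f:\Omega_n\to\R$ by
\[
f(\bp)=\sup\{h^\ba_\mu(T_1):\,\mu\in\M(X_1,T_1),\ (\mu(I))_{I\in\A_1^n}=\bp\}.
\]

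I would first verify that $f$ is bounded, concave and upper semi-continuous. Boundedness is immediate from $h_{\mu\circ\tau_{i-1}^{-1}}(T_i)\le\log\#\A_i$. Concavity comes from the affinity of $\mu\mapsto(\mu(I))_I$ combined with the affinity of measure-theoretic entropy under convex combinations of invariant measures: convex combinations of approximate maximizers on two fibers produce an admissible measure on the intermediate fiber whose weighted entropy is the matching convex combination. For upper semi-continuity, given $\bp_m\to\bp$ with maximizers $\mu_m$, any weak-star limit point $\mu$ still satisfies $(\mu(I))_I=\bp$ (because $\chi_{[I]}$ is continuous on the subshift), and each $h_{(\cdot)\circ\tau_{i-1}^{-1}}(T_i)$ is upper semi-continuous on $\M(X_1,T_1)$ (by continuity of $\tau_{i-1}$ and upper semi-continuity of entropy on $(X_i,T_i)$), so $\limsup h^\ba_{\mu_m}(T_1)\le h^\ba_\mu(T_1)\le f(\bp)$. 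Extending $f$ by $-\infty$ off $\Omega_n$, its conjugate $f^*(\bq):=\sup\{f(\bp)+\bp\cdot\bq:\bp\in\R^{\A_1^n}\}$ is directly identified with
\[
f^*(\bq)=P^{\ba}\Big(T_1,\sum_{I\in\A_1^n}q(I)\Phi_I\Big),
\]
where $\Phi_I=\bigl(\sum_{i=0}^{m-1}\chi_{[I]}(T_1^ix)\bigr)_{m=1}^\infty$ is an additive potential enjoying the bounded distortion property.

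By Theorem~\ref{thm-0.3'}, $f^*$ is $C^1$ on $\R^{\A_1^n}$ with $\nabla f^*(\bq)=(\mu_\bq(I))_{I\in\A_1^n}$, where $\mu_\bq$ is the unique $\ba$-weighted equilibrium state of $\sum_I q(I)\Phi_I$ delivered by Theorem~\ref{thm-0.2'}(ii). The duality principle (cf.~\cite[Theorem~12.2 and Corollary~26.4.1]{Roc-book}) then yields $f(\bp)=\inf\{f^*(\bq)-\bp\cdot\bq:\bq\in\R^{\A_1^n}\}$ and that the image of $\nabla f^*$ equals $\mathrm{ri}(\Omega_n)$. The main subtlety, as in the two dimensional case, is to recognize that $\mathrm{ri}(\Omega_n)$ consists precisely of the strictly positive vectors in $\Omega_n$, so that $(\eta(I))_I$ lies in $\mathrm{ri}(\Omega_n)$ whenever $\eta$ is fully supported; one then selects $\bq\in\R^{\A_1^n}$ with $\nabla f^*(\bq)=(\eta(I))_I$ and sets $\mu(\ba,\eta,n):=\mu_\bq$, $\Phi:=\sum_I q(I)\Phi_I$. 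Uniqueness of the maximizer is forced because any other measure attaining the supremum would also be an $\ba$-weighted equilibrium state of $\Phi$, contradicting the uniqueness clause of Theorem~\ref{thm-0.2'}(ii). Finally, the quasi-Bernoulli property and full support of $\mu(\ba,\eta,n)$ follow at once from Theorem~\ref{thm-0.2'}(ii) and the Gibbs estimate~\eqref{defmu4'}.
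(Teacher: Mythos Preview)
Your proposal is correct and follows exactly the approach the paper intends: the authors explicitly omit the proof of Theorem~\ref{thm-0.4'}, stating that it is ``essentially identical'' to that of Theorem~\ref{thm-0.4}, and your write-up carries out precisely this translation, substituting $h^\ba_\mu(T_1)$ for the two-dimensional weighted entropy and invoking Theorems~\ref{thm-0.2'} and~\ref{thm-0.3'} in place of Theorems~\ref{thm-0.2} and~\ref{thm-0.3}. The convex-analytic skeleton (the set $\Omega_n$, the concave function $f$, the identification of $f^*$ with the weighted pressure of locally constant potentials, Rockafellar's duality and Corollary~26.4.1, and the recognition of $\mathrm{ri}(\Omega_n)$ as the strictly positive vectors) matches the paper's proof of Theorem~\ref{thm-0.4} step for step.
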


\begin{thm}\label{thm-0.5'}
For any $\eta\in \M(X_1,T_1)$, there exists $(\mu_n)_{n=1}^\infty\subset
\M(X_1,T_1)$ converging  to $\eta$ in the weak-star topology such
that for each $n$,   $\mu_n$ is quasi-Bernoulli  and
\begin{equation*}
h^\ba_{\mu_n}(T_1)\ge h^\ba_{\mu}(T_1).
%\sum_{i=1}^ka_ih_{\mu_n\circ \tau_{i-1}^{-1}}(T_i)\geq
%\sum_{i=1}^ka_ih_{\eta\circ \tau_{i-1}^{-1}}(T_i).
\end{equation*}
Furthermore,
$$
\lim_{n\to \infty}h^\ba_{\mu_n}(T_1)=h^\ba_{\mu}(T_1).
%\lim_{n\to \infty}\sum_{i=1}^{k}a_ih_{\mu_n\circ
%\tau_{i-1}^{-1}}(T_i)=\sum_{i=1}^ka_ih_{\eta\circ
%\tau_{i-1}^{-1}}(T_i).
$$
\end{thm}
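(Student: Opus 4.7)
The plan is to adapt the strategy of the two-dimensional proof of Theorem~\ref{thm-0.5}, using Theorem~\ref{thm-0.4'} in place of Theorem~\ref{thm-0.4}. Assume first that $\eta$ is fully supported on $X_1$, and set $\mu_n:=\mu(\ba,\eta,n)$ as produced by Theorem~\ref{thm-0.4'}. Since $\mu_n$ and $\eta$ coincide on every $n$-th cylinder in $\A_1^n$, summing over refining cylinders shows that they also agree on every $m$-th cylinder for all $m\le n$. As the span of cylinder indicators is dense in $C(X_1)$ (a continuous function on a subshift is uniformly continuous, hence approximable by functions depending only on finitely many coordinates), this forces $\mu_n\to\eta$ in the weak-star topology. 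Theorem~\ref{thm-0.4'} asserts that $\mu_n$ is quasi-Bernoulli and that it maximises $h^\ba_\mu(T_1)$ subject to these cylinder constraints; since $\eta$ itself satisfies them, one immediately gets $h^\ba_{\mu_n}(T_1)\ge h^\ba_\eta(T_1)$.

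To upgrade this into $\lim_{n\to\infty} h^\ba_{\mu_n}(T_1)=h^\ba_\eta(T_1)$, I would invoke upper semi-continuity of $h^\ba_{\cdot}(T_1)$ on $\M(X_1,T_1)$. For each $i\in\{1,\ldots,k\}$, the pushforward $\mu\mapsto\mu\circ\tau_{i-1}^{-1}$ is continuous into $\M(X_i,T_i)$ since $\tau_{i-1}$ is continuous, and the measure-theoretic entropy is upper semi-continuous on the invariant measures of a subshift over a finite alphabet; hence each composition $\mu\mapsto h_{\mu\circ\tau_{i-1}^{-1}}(T_i)$ is upper semi-continuous on $\M(X_1,T_1)$, and so is the non-negative linear combination $h^\ba_\cdot(T_1)=\sum_{i=1}^k a_i h_{\cdot\circ\tau_{i-1}^{-1}}(T_i)$. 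Combined with $\mu_n\to\eta$ and the lower bound just obtained, this yields the desired convergence of the entropies in the fully supported case.

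For a general $\eta\in\M(X_1,T_1)$, I would regularise by a convex combination with a fully supported reference measure of maximal weighted entropy. Applying Theorem~\ref{thm-0.2'} to the zero potential $\Phi\equiv 0$ (which trivially lies in $\C_{aa}(X_1,T_1)$ with bounded distortion), we obtain its unique $\ba$-weighted equilibrium state $\eta_0$; then $\eta_0$ is fully supported and $h^\ba_{\eta_0}(T_1)=\sup_{\lambda\in\M(X_1,T_1)} h^\ba_\lambda(T_1)$. Set $\eta_n:=(1-1/n)\eta+(1/n)\eta_0$ and $\mu'_n:=\mu(\ba,\eta_n,n)$. Each $\mu'_n$ is quasi-Bernoulli and matches $\eta_n$ on $n$-th cylinders, so for every fixed $m$ the $m$-th cylinder masses of $\mu'_n$ agree with those of $\eta_n$ once $n\ge m$; combined with $\eta_n\to\eta$ weak-star, this forces $\mu'_n\to\eta$. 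Since entropy is affine on invariant measures,
$$
h^\ba_{\eta_n}(T_1)=(1-1/n)\,h^\ba_\eta(T_1)+(1/n)\,h^\ba_{\eta_0}(T_1)\ge h^\ba_\eta(T_1),
$$
whence $h^\ba_{\mu'_n}(T_1)\ge h^\ba_{\eta_n}(T_1)\ge h^\ba_\eta(T_1)$, and a final application of the upper semi-continuity established above delivers $\lim_n h^\ba_{\mu'_n}(T_1)=h^\ba_\eta(T_1)$. The one delicate point, compared with the naive adaptation from the unweighted setting, is the choice of $\eta_0$: a Parry-type measure on $X_1$ need not satisfy $h^\ba_{\eta_0}(T_1)\ge h^\ba_\eta(T_1)$ uniformly in $\eta$ when $k\ge 2$ and some $a_i>0$ for $i\ge 2$, because simultaneously maximising the entropies of all the factor pushforwards under one-block maps is not automatic; using the $\ba$-weighted measure of maximal entropy from Theorem~\ref{thm-0.2'} instead circumvents this and is the only place where the argument requires genuine input beyond a direct transcription of the proof of Theorem~\ref{thm-0.5}.
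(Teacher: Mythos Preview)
Your argument is correct and follows the same two–step strategy as the paper: first treat the fully supported case via Theorem~\ref{thm-0.4'}, then reduce the general case to it by convexly perturbing $\eta$ with a fully supported reference measure. The paper omits the proof of Theorem~\ref{thm-0.5'} entirely, referring back to the proof of Theorem~\ref{thm-0.5}, where the reference measure is taken to be the Parry measure on $X$.

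Your one substantive deviation is the choice of the reference measure $\eta_0$: you take the $\ba$-weighted equilibrium state of the zero potential, i.e.\ the measure of maximal weighted entropy, rather than the Parry measure. This is a genuine improvement. With the Parry measure one only gets $h^\ba_{\mu_n'}(T_1)\ge h^\ba_{\eta_n}(T_1)=(1-1/n)h^\ba_\eta(T_1)+(1/n)h^\ba_{\eta_0}(T_1)$, and when the fibres of the one-block maps have unequal cardinalities and the weights $a_i$ for $i\ge 2$ are large relative to $a_1$, the Parry measure need not maximise $h^\ba$, so the pointwise inequality $h^\ba_{\mu_n'}(T_1)\ge h^\ba_\eta(T_1)$ for \emph{every} $n$ is not guaranteed by the paper's choice. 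Your choice secures this inequality for all $n$. The limit statement $\lim_n h^\ba_{\mu_n'}(T_1)=h^\ba_\eta(T_1)$, which is what is actually used downstream (cf.\ Remark~\ref{simultapprox2} and the proof of Theorem~\ref{thm-0.6'}), goes through with either choice of $\eta_0$, since upper semi-continuity of $h^\ba$ together with $h^\ba_{\mu_n'}(T_1)\ge h^\ba_{\eta_n}(T_1)\to h^\ba_\eta(T_1)$ already forces the limit.
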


\begin{rem}\label{simultapprox2}
{\rm
If we take $\ba=(1,\ldots,1)$, due to the upper semi-continuity of the entropy, for any $\mu\in\M(X,T)$, Theorem~\ref{thm-0.5'} yields a sequence of quasi-Bernoulli measures $(\mu_n)_{n=1}^\infty$ which converges to $\mu$ in the weak-star topology,  such that we have both $\lim_{n\to\infty} h_{\mu_n}(T)= h_{\mu}(T)$ and  $\lim_{n\to\infty} h_{\mu_n\circ\pi^{-1}}(S)=  h_{\mu\circ\pi^{-1}}(S)$. Moreover, one can deduce from Theorem~\ref{thm-0.2'} that for any $\ba=(a_1,\ldots,a_k)$ with $a_1>0$ and $a_i\geq 0$ for $i\geq 2$,  each  invariant quasi-Bernoulli measure is the $\ba$-weighted equilibrium state of some almost additive  potential satisfying the bounded distortion property.
}
\end{rem}

\begin{de}
{\rm We say that two almost additive potentials $\Phi=(\log \phi_n)_{n=1}^\infty$ and $\Psi=(\log \psi_n)_{n=1}^\infty$ are {\it cohomologous} if $\sup_n\|\log \phi_n-\log \psi_n\|_\infty<\infty$. If there exists $C\in\mathbb{R}$ such that $\log \psi_n=C n$, we say that $\Phi$ is {\it cohomologous to a constant}.
}
\end{de}
The following proposition is a direct consequence of Theorem~\ref{thm-0.2'}.
\begin{pro}\label{cohom}
Let $\Phi,\;\Psi\in \C_{aa}(X,T)$ satisfy the bounded distortion property. Then,  $\Phi$ and $\Psi$ share the same $\ba$-weighted equilibrium state if and only if $\Phi-\Psi$ is cohomologous to a constant.
\end{pro}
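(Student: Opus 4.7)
The plan is to treat the two implications separately, with the weighted Gibbs property of Theorem~\ref{thm-0.2'}(ii) providing the bridge in the non-trivial direction.

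For the \emph{sufficiency}, I would assume that $\Phi-\Psi$ is cohomologous to a constant $C\in\R$, i.e.\ $\sup_n\|\log\phi_n-\log\psi_n-Cn\|_\infty<\infty$. Dividing by $n$ and integrating against an arbitrary invariant measure $\eta\in\M(X_1,T_1)$, one immediately deduces $\Phi_*(\eta)=\Psi_*(\eta)+C$. Consequently
$$\Phi_*(\eta)+h^\ba_\eta(T_1)=\Psi_*(\eta)+h^\ba_\eta(T_1)+C,$$
so $P^\ba(T_1,\Phi)=P^\ba(T_1,\Psi)+C$ and the same invariant measures attain the supremum in the variational definition of the $\ba$-weighted pressure for $\Phi$ and $\Psi$. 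By the uniqueness part of Theorem~\ref{thm-0.2'}(ii), the $\ba$-weighted equilibrium states of $\Phi$ and $\Psi$ therefore coincide.

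For the \emph{necessity}, let $\mu$ be the common unique $\ba$-weighted equilibrium state of $\Phi$ and $\Psi$, and write $\mu_i:=\mu\circ\tau_{i-1}^{-1}$. The key observation is that the last display of Theorem~\ref{thm-0.2'}(ii) yields the uniform two-sided estimates
$$\phi_n(x)\exp\bigl(-nP^\ba(T_1,\Phi)\bigr)\approx \prod_{i=1}^{k}\mu_i(\tau_{i-1}x_{|n})^{a_i}\approx \psi_n(x)\exp\bigl(-nP^\ba(T_1,\Psi)\bigr),$$
where the middle product depends only on $\mu$ (hence is common to both Gibbs bounds). Taking logarithms, rearranging, and setting $C:=P^\ba(T_1,\Phi)-P^\ba(T_1,\Psi)$, I obtain
$$\sup_{n\geq 1}\;\sup_{x\in X_1}\bigl|\log\phi_n(x)-\log\psi_n(x)-Cn\bigr|<\infty,$$
which is precisely the statement that $\Phi-\Psi$ is cohomologous to the constant $C$.

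The only non-trivial ingredient is the uniform Gibbs comparison of Theorem~\ref{thm-0.2'}(ii); once one accepts that the ``$\approx$'' bound holds uniformly in both $n\in\N$ and $x\in X_1$, the argument reduces to elementary manipulation of the definitions of $\Phi_*$, $P^\ba(T_1,\cdot)$, and cohomology. Since $\Phi$ and $\Psi$ both satisfy the bounded distortion property, $\Phi-\Psi$ is itself almost additive with bounded distortion, so the cohomology class in the statement is well defined, and no further regularity needs to be checked.
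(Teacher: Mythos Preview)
Your proof is correct and is precisely the argument the paper has in mind: the paper states just before Proposition~\ref{cohom} that it ``is a direct consequence of Theorem~\ref{thm-0.2'}'', and you have spelled out exactly that deduction, using the uniform Gibbs estimate $\phi_n(x)\exp(-nP^\ba(T_1,\Phi))\approx\prod_{i=1}^k\mu_i(\tau_{i-1}x_{|n})^{a_i}$ twice (once for $\Phi$, once for $\Psi$) and comparing. Note that the block labeled ``Proof of Proposition~\ref{cohom}'' in the paper is in fact the proof of Theorem~\ref{strictconv} (it concerns the map $Q$ and uses Proposition~\ref{cohom} as an input), so there is no separate detailed proof of the proposition in the paper to compare against beyond the one-line remark you have fleshed out.
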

Next theorem is reminiscent from Sections 4.6 and 4.7 of \cite{Rue78}.
\begin{thm}\label{strictconv}
Let $\Phi_1,\ldots, \Phi_d\in \C_{aa}(X,T)$ satisfy
the bounded distortion property. Let $V$ be the vector subspace of those $\bq$ such that $\sum_{i=1}^dq_i\Phi_i$ is  cohomolohous to a constant. The map $Q$ defined in Theorem~\ref{thm-0.3'} is strictly convex if and only if $V=\{{\bf 0}\}$. Moreover, $Q$ is affine on any affine subspace of $\mathbb{R}^d$ parallel to $V$. In particular, if $d=1$ and $Q$ is not strictly convex, it is affine.
\end{thm}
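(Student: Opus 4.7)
The plan is to combine three inputs: $Q$ is a supremum of functions affine in $\bq$ and so is convex; Theorem~\ref{thm-0.2'}(ii) guarantees a unique $\ba$-weighted equilibrium state $\mu_\bq$ for each $\bq\in\R^d$; and Proposition~\ref{cohom} characterizes equality of equilibrium states through cohomology. Throughout, write $f(\bq,\mu)=\sum_{i=1}^d q_i(\Phi_i)_*(\mu)+ah_\mu(T)+bh_{\mu\circ\pi^{-1}}(S)$, so that $Q(\bq)=\sup_{\mu\in\M(X,T)}f(\bq,\mu)=f(\bq,\mu_\bq)$ and $f$ is affine in $\bq$ for each fixed $\mu$. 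Note that almost additivity and the bounded distortion property are preserved under linear combinations, so $\sum_{i=1}^d r_i\Phi_i$ lies in $\C_{aa}(X,T)$ with the bounded distortion property for every $\bq\in \R^d$, and the above results apply.

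For the forward implication I argue the contrapositive. Suppose $Q$ is not strictly convex: there exist $\bq_0\neq\bq_1$ and $t\in(0,1)$ with $Q(\bq_t)=tQ(\bq_0)+(1-t)Q(\bq_1)$, where $\bq_t=t\bq_0+(1-t)\bq_1$. Combining $f(\bq_j,\mu_{\bq_t})\leq Q(\bq_j)$ for $j=0,1$ with the affineness of $f$ in its first argument gives
\[
Q(\bq_t)=f(\bq_t,\mu_{\bq_t})=tf(\bq_0,\mu_{\bq_t})+(1-t)f(\bq_1,\mu_{\bq_t})\leq tQ(\bq_0)+(1-t)Q(\bq_1)=Q(\bq_t),
\]
so equality holds throughout and $f(\bq_j,\mu_{\bq_t})=Q(\bq_j)$ for $j=0,1$. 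Thus $\mu_{\bq_t}$ is an equilibrium state simultaneously for $\sum_{i=1}^dq_{0,i}\Phi_i$ and $\sum_{i=1}^d q_{1,i}\Phi_i$, so the uniqueness clause of Theorem~\ref{thm-0.2'}(ii) yields $\mu_{\bq_0}=\mu_{\bq_t}=\mu_{\bq_1}$. Proposition~\ref{cohom} then implies $\sum_{i=1}^d(q_{1,i}-q_{0,i})\Phi_i$ is cohomologous to a constant, i.e.\ $\bq_1-\bq_0\in V\setminus\{0\}$.

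For the reverse implication, pick $\bq^*\in V\setminus\{0\}$ and $C\in\R$ such that $\sum_{i=1}^dq^*_i\Phi_i$ is cohomologous to the constant potential $(Cn)_{n=1}^\infty$. Then $\bigl(\sum_{i=1}^dq^*_i\Phi_i\bigr)_*(\mu)=C$ for every $\mu\in\M(X,T)$, and therefore for all $\bq\in\R^d$ and $t\in\R$,
\[
Q(\bq+t\bq^*)=\sup_{\mu\in\M(X,T)}\bigl(f(\bq,\mu)+tC\bigr)=Q(\bq)+tC,
\]
so $Q$ is not strictly convex. Fixing a basis $\bq^{(1)},\ldots,\bq^{(r)}$ of $V$ with associated constants $C_1,\ldots,C_r$ and iterating this identity yields $Q\bigl(\bq_0+\sum_{j=1}^r t_j\bq^{(j)}\bigr)=Q(\bq_0)+\sum_{j=1}^r t_jC_j$ for every $\bq_0\in\R^d$, which shows $Q$ is affine on each affine subspace parallel to $V$. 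When $d=1$, a failure of strict convexity forces $V=\R$, so $Q$ is affine on all of $\R$.

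The delicate step is the passage from the single affineness identity $Q(\bq_t)=tQ(\bq_0)+(1-t)Q(\bq_1)$ to the coincidence $\mu_{\bq_0}=\mu_{\bq_1}$; this rests essentially on the uniqueness clause of Theorem~\ref{thm-0.2'}(ii). All the remaining manipulations are elementary convex-analytic identities together with the invariance of $\ba$-weighted pressure under addition of potentials cohomologous to a constant, via Proposition~\ref{cohom}.
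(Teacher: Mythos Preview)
Your proof is correct and follows the same strategy as the paper: reduce the failure of strict convexity to the coincidence of $\ba$-weighted equilibrium states along a segment, then apply Proposition~\ref{cohom}. The only difference is that the paper invokes the $C^1$-differentiability of $Q$ (Theorem~\ref{thm-0.3'}) to identify $\mu_\bq$ as the equilibrium state at every point of the segment, while you reach the same conclusion by a direct convexity argument showing that $\mu_{\bq_t}$ realizes the supremum at both endpoints---slightly more elementary but otherwise equivalent.
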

An immediate corollary is
\begin{cor}
Let $\Phi_1,\ldots, \Phi_d\in \C_{aa}(X,T)$ satisfy
the bounded distortion property. Let ${\bf \Phi}=(\Phi_1,\dots,\Phi_d)$. The convex set $ \big \{\big( (\Phi_1)_*(\mu),\dots, (\Phi_d)_*(\mu)\big):\mu\in\M(X,T) \big\}$ is reduced to a singleton if and only if each $\Phi_i$ is cohomologous to a constant.
\end{cor}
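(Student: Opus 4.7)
The plan is to verify both implications separately. For the easy direction, I would first recall that if $\Phi_i=(\log\phi_{n,i})_{n=1}^\infty$ is cohomologous to a constant $c_i$, then by definition $\sup_n\|\log\phi_{n,i}-nc_i\|_\infty<\infty$; dividing by $n$ and integrating against any $\mu\in\M(X,T)$ yields $(\Phi_i)_*(\mu)=c_i$ uniformly in $\mu$. Hence the convex set in question reduces to the single point $(c_1,\ldots,c_d)$.

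For the converse, suppose the set equals $\{\alpha\}$ with $\alpha=(\alpha_1,\ldots,\alpha_d)$. The strategy is to reduce to the one-dimensional case of Theorem~\ref{strictconv} by treating each $\Phi_i$ in isolation. Fix $i$ and define $R_i:\R\to\R$ by $R_i(t)=P^\ba(T,t\Phi_i)$. Using the variational characterization \eqref{e-0.1} together with the hypothesis that $(\Phi_i)_*(\mu)=\alpha_i$ is independent of $\mu$, we get
$$R_i(t)=\sup_{\mu\in\M(X,T)}\bigl[t(\Phi_i)_*(\mu)+h^\ba_\mu(T)\bigr]=t\alpha_i+P^\ba(T,0),$$
so $R_i$ is affine, and in particular not strictly convex. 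Applying Theorem~\ref{strictconv} to the one-element family $\{\Phi_i\}$, the associated subspace $V_i\subset\R$ must be nonzero, hence equal to $\R$; this is exactly the statement that $\Phi_i$ is cohomologous to a constant.

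The argument is essentially a direct coordinate-wise consequence of Theorem~\ref{strictconv}, so there is no genuine obstacle once that theorem is in hand. The only point requiring care is the derivation of the affineness of $R_i$, which however is immediate from the linearity of the variational expression in $t$ once $(\Phi_i)_*$ is known to be constant on $\M(X,T)$.
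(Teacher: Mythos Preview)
Your proof is correct and follows the intended route: the paper states the corollary as an ``immediate'' consequence of Theorem~\ref{strictconv} without giving any argument, and your coordinate-wise reduction to the case $d=1$ of that theorem is a clean way to make this explicit. The only cosmetic difference is that one could alternatively argue globally---observing via Theorem~\ref{thm-0.3'} that $\nabla Q$ is constant, hence $Q$ is affine on all of $\R^d$, hence $V=\R^d$ by Theorem~\ref{strictconv}---but this yields exactly the same conclusion with no added content.
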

%\begin{rem}\label{approxstrictconv}
%Suppose that $\Phi$ possesses the bounded distorsion property, and $Q$ is not strictly convex. We can consider an H\"older potential $\varphi=(\varphi_1,\dots,\varphi_q): X\to \mathbb{R}^d$ taking values in $V$ and such that $\langle {\bf q}|{\bf \widetilde \Phi}\rangle$  is not cohomologous to a constant for any ${\bf q}\in V\setminus \{0\}$, where ${\bf \widetilde \Phi}=(S_n\varphi)_{n=1}^\infty$. Then, for any $\varepsilon>0$ and ${\bf q}\in \R^d\setminus \{0\}$, $\langle {\bf q}|{\bf  \Phi}+\varepsilon {\bf \widetilde \Phi}\rangle$ is not cohomologous to a constant, hence the associated function $Q$ is strictly convex.
%\end{rem}

\begin{proof}[Proof of Proposition \ref{cohom}]
 Suppose that $Q$ is affine on a non-trivial segment $[\bq, \bq']$. For every $t\in [0,1]$ we have
\begin{eqnarray*}Q(\bq+t(\bq'-\bq))&=&Q(\bq)+t \nabla Q(\bq)\cdot(\bq'-\bq)\\
&=&
Q(\bq)+t \sum_{i=1}^d(q_i'-q_i)(\Phi_i)_*(\mu_\bq).
\end{eqnarray*}
 Since $Q({\bf q})= \sum_{i=1}^dq_i(\Phi_i)_*(\mu_\bq)+\sum_{i=1}^ka_ih_{\mu\circ\tau_{i-1}^{-1}}(T_i)$, we have
$$
Q(\bq+t(\bq'-\bq))=\sum_{i=1}^d(q_i+t (q_i'-q_i))(\Phi_i)_*(\mu_\bq)+ \sum_{i=1}^ka_ih_{\mu\circ\tau_{i-1}^{-1}}(T_i).
$$
Consequently, $\mu_\bq$ is the unique $\ba$-weighted equilibrium state of $\sum_{i=1}^d(q_i+t (q_i'-q_i))\Phi_i$, for each $t\in [0,1]$. Due to Proposition~\ref{cohom}, this implies that $\sum_{i=1}^d(q'_i-q_i)\Phi_i$ is cohomologous to a constant, hence $V\neq\{{\bf 0}\}$.

Conversely, assume that $V\neq\{{\bf 0}\}$. Then, the same argument as above can be used to prove that $Q$ is affine on any affine subspace of $\mathbb{R}^d$ parallel to $V$.
\end{proof}

\section{Multifractal analysis on higher dimensional self-affine symbolic spaces}\label{multifractalanalysis}

Let $k\geq 2$. Assume that $(X_i, T_i)$ ($i=1,\ldots,k$) are  full shifts over $\A_i$ such that $X_{i+1}$ is a factor of $X_i$ with a one-block
factor map $\pi_i: \;X_i\to X_{i+1}$ for $i=1,\ldots, k-1$. For
convenience, we use $\pi_0$ to denote the identity map on $X_1$.
Define $\tau_i:\;X_1\to X_{i+1}$ by $\tau_i=\pi_i\circ\pi_{i-1}\circ
\cdots \circ \pi_0$ for $i=0,1,\ldots,k-1$. We simply write $(X,T)$ for $(X_1,T_1)$.

For $x=(x_i)_{i=1}^\infty \in X$ and $n\ge 1$, $x_{|n}$ denotes  the word $x_1\cdots x_n$.

We endow the set $X$ with a ``self-affine'' metric as follows. We fix $\ba=(a_1,\ldots, a_k)\in \R^k$ with $a_1> 0$ and $a_{i}\geq 0$ for $i>1$, and we define  the ultrametric distance
$$
d_\ba(x,y)=\max\big (e^{-|\tau_{i-1}(x)\land \tau_{i-1}(y)|/(a_1+\cdots+a_{i})}:1\le i\le k\big).
$$
For $1\le i\le k$ and $n\in \N$,  let
$$\ell_i(n)=\min \{p\in\N: p\ge (a_1+\cdots+a_i)n/a_1\},$$
 and by convention set $\ell_0(n)=0$. It is easy to check that

\begin{lem}\label{lem-simple}
In $(X,d_\ba)$,  the closed ball centered at $x$ of radius $e^{-n/a_1}$ is given by
\begin{equation*}\label{Bxr}
B(x,e^{-n/a_1})= \left\{y\in X:  \tau_{i-1}(y)\in \tau_{i-1}({x}_{|\ell_i(n)}) \mbox{ for all } 1\leq i\leq k\right\}.
\end{equation*}
\end{lem}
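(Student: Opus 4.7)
The plan is simply to unfold the definition of the ultrametric $d_\ba$, rewrite the inequality $d_\ba(x,y)\le e^{-n/a_1}$ as a finite conjunction of conditions on common prefix lengths, and then translate each such condition into cylinder membership using the fact that each $\tau_{i-1}$ is a composition of one-block factor maps.

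More precisely, first I would observe that since $d_\ba$ is defined as a maximum over $1\le i\le k$, $y\in B(x,e^{-n/a_1})$ iff for every such $i$,
$$e^{-|\tau_{i-1}(x)\wedge \tau_{i-1}(y)|/(a_1+\cdots+a_i)}\le e^{-n/a_1},$$
that is, $|\tau_{i-1}(x)\wedge \tau_{i-1}(y)|\ge (a_1+\cdots+a_i)n/a_1$. Second, since the left-hand side is an element of $\N\cup\{\infty\}$, this real lower bound can be replaced by its ceiling, which by definition of $\ell_i(n)$ is exactly $\ell_i(n)$. Third, the inequality $|\tau_{i-1}(x)\wedge\tau_{i-1}(y)|\ge \ell_i(n)$ means that $\tau_{i-1}(x)$ and $\tau_{i-1}(y)$ share their first $\ell_i(n)$ symbols. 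Because $\tau_{i-1}=\pi_{i-1}\circ\cdots\circ\pi_0$ is a composition of one-block factor maps, the word formed by the first $\ell_i(n)$ symbols of $\tau_{i-1}(y)$ equals $\tau_{i-1}$ applied letterwise to $y_{|\ell_i(n)}$, so the condition is precisely $\tau_{i-1}(y)\in [\tau_{i-1}(x_{|\ell_i(n)})]$, which is the statement in the lemma.

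There is essentially no obstacle in this lemma; it is a straightforward unpacking of definitions. The only minor points to check are that $a_1+\cdots+a_i>0$ for all $i$ (which is guaranteed by $a_1>0$), and that the passage from a real lower bound to $\ell_i(n)$ is legitimate because the common prefix length is integer-valued. For $i=1$, one also notes that $\ell_1(n)=n$, so the $i=1$ clause reduces to the familiar condition $y_{|n}=x_{|n}$, which already forces $y$ into an $n$-cylinder and makes the remaining clauses refinements via factor words on $X_2,\ldots,X_k$.
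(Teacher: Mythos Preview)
Your proposal is correct and is precisely the intended argument: the paper does not give a proof of this lemma, merely stating that it is easy to check, and your unpacking of the definition of $d_\ba$ followed by the integer ceiling observation and the one-block factor property is exactly what that check amounts to.
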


The following result  estimates the value  of an $\ba$-weighted Gibbs measure on a ball in  $(X,d_\ba)$.

\begin{lem}
\label{lem-gibbs}
Let $\Phi=(\log \phi_n)_{n=1}^\infty\in \C_{aa}(X,T)$ satisfy the bounded distortion property. Let $\mu$ denote the $\ba$-weighted Gibbs measure of $\Phi$.
Then we have the following estimate:
$$
 \mu(B(x,e^{-n/a_1}))\approx \exp \Big (\frac{-n P^\ba(T, \Phi)}{a_1}\Big)\phi_n(x)^{1/a_1} \prod_{j=1}^{k-1}\frac{\phi^{(j)}(\tau_j(x_{|\ell_{j+1}(n)}))^{1/A_{j+1}}}{\phi^{(j)}(\tau_j(x_{|\ell_{j}(n)}))^{1/A_{j}}},
$$
where $\phi^{(j)}$,  $j=0,\ldots,k-1$, are defined as in \eqref{e-not}, and $A_j=a_1+\cdots+a_j$.
\end{lem}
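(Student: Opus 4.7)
I would prove the estimate by induction on the tower length $k\ge 1$. The base case $k=1$ is immediate from \eqref{defmu4'}: the ball reduces to $[x_{|n}]$, the product over $j$ is empty, and $A_1=a_1$. For $k\ge 2$, Lemma~\ref{lem-simple} decomposes
\[
B(x,e^{-n/a_1})=\bigsqcup_{I\in\mathcal{I}_n}[I],\qquad m:=\ell_k(n),
\]
where $\mathcal{I}_n\subset\mathcal{A}_1^m$ consists of words $I$ whose images $\tau_{i-1}(I)$ match $\tau_{i-1}(x)$ on the first $\ell_i(n)$ letters for every $i=1,\ldots,k$. Comparison of the Gibbs estimates \eqref{defmu4'} and \eqref{defmu'} (with $i=2$) for $\mu$ and $\mu_2:=\mu\circ\tau_1^{-1}$ gives the telescoping identity
\[
\mu([I])\approx \mu_2(\tau_1 I)\,\frac{\phi^{(0)}(I)^{1/A_1}}{\phi^{(1)}(\tau_1 I)^{1/A_1}}\qquad (I\in\mathcal{A}_1^m).
\]

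Summing $\mu(B(x,e^{-n/a_1}))$ by grouping $I\in\mathcal{I}_n$ according to $J=\tau_1(I)$, the inner sum over lifts $I=x_{|n}\cdot I'$ with $\tau_1(I')=J_{n+1..m}$ is evaluated using almost additivity of $\phi^{(0)}$ and the identity $\sum_{I'}\phi^{(0)}(I')^{1/A_1}=\phi^{(1)}(J_{n+1..m})^{1/A_1}$ (from the definition of $\theta_1$ for one-block factor maps between full shifts), and almost additivity of $\phi^{(1)}$ together with the constraint $J_{1..n}=\tau_1(x_{|n})$ turns $\phi^{(1)}(J_{n+1..m})$ into $\phi^{(1)}(J)/\phi^{(1)}(\tau_1(x_{|n}))$, which cancels the denominator in the telescoping identity. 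This yields
\[
\mu(B(x,e^{-n/a_1}))\approx \frac{\phi^{(0)}(x_{|n})^{1/A_1}}{\phi^{(1)}(\tau_1(x_{|n}))^{1/A_1}}\,\mu_2\bigl(B^{(2)}(\tau_1 x,e^{-n'/A_2})\bigr),
\]
where $B^{(2)}$ is the analogous self-affine ball in $(X_2,d_{\ba'})$ associated with the shorter tower $X_2\to\cdots\to X_k$, weight vector $\ba'=(A_2,a_3,\ldots,a_k)$, and scale $n'=\ell_2(n)$.

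Using Theorems~\ref{thm-0.1'} and~\ref{thm-0.2'}, $\mu_2$ is the $\ba'$-weighted Gibbs measure of $\theta_1(\Phi)\in\C_{aa}(X_2,T_2)$; the $\theta$-operators of the shorter tower satisfy $\theta'_{j'}=\theta_{j'+1}$, so the iterated images of $\theta_1(\Phi)$ are precisely $\phi^{(j'+1)}$; the pressure is unchanged; and the weights and scales translate via $A'_{j'}=A_{j'+1}$ and $\ell'_{j'}(n')=\ell_{j'+1}(n)+O(1)$. Applying the inductive hypothesis to $\mu_2(B^{(2)})$ and substituting produces the formula, with the prefactor $1/\phi^{(1)}(\tau_1(x_{|n}))^{1/A_1}$ joining the new leading factor $\phi^{(1)}(\tau_1(x_{|\ell_2(n)}))^{1/A_2}$ to form the $j=1$ ratio of the target product, while $n'P/A_2=nP/a_1+O(1)$ gives the exponent $-nP/a_1$. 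The main technical obstacle is precisely this re-indexing across the pushforward: verifying that $\sum_{J\in\tau_1(\mathcal{I}_n)}\mu_2([J])$ really equals $\mu_2$ of a genuine self-affine ball in the reduced tower, matching iterated potentials and scales, and absorbing all $O(1)$ discrepancies from the ceilings in $\ell_i(n)$ into $\approx$; this is granted by the almost additivity and bounded distortion of every $\phi^{(j)}$.
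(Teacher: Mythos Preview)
Your inductive argument is correct, but the paper takes a more direct route that avoids the re-indexing across the pushforward. Instead of passing to the shorter tower, the paper writes the ball as $B=\{y:\tau_{i-1}(y)\in\tau_{i-1}([I_1\cdots I_i]),\ 1\le i\le k\}$ with $I_i=x_{\ell_{i-1}(n)+1}\cdots x_{\ell_k(n)}$, and uses the quasi-Bernoulli property of $\mu$ (guaranteed by Theorem~\ref{thm-0.2'}(ii)) to factorize in one stroke:
\[
\mu(B)\approx\prod_{i=1}^k\mu_i(\tau_{i-1}I_i),\qquad \mu_i=\mu\circ\tau_{i-1}^{-1}.
\]
Then it substitutes the Gibbs estimate \eqref{defmu'} for each $\mu_i(\tau_{i-1}I_i)$ separately and regroups the resulting double product, observing that the diagonal terms telescope into the claimed ratio $\prod_{j}\phi^{(j)}(\tau_j(x_{|\ell_{j+1}(n)}))^{1/A_{j+1}}/\phi^{(j)}(\tau_j(x_{|\ell_j(n)}))^{1/A_j}$. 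No induction, no inner summation over lifts, no identification of a reduced-tower ball.

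What each approach buys: the paper's method is shorter and keeps all the algebra in a single step, but it relies on having the full family of pushforward Gibbs estimates \eqref{defmu'} already established. Your induction uses only the $i=2$ case of \eqref{defmu'} together with the invariance of the pressure under $\theta_1$ (Theorem~\ref{thm-0.1'}(ii)), and it makes explicit the recursive self-affine structure of the ball under $\tau_1$; the cost is the bookkeeping you flag at the end (matching $\ell'_{j'}(n')$ with $\ell_{j'+1}(n)$ up to $O(1)$, which is harmless by almost additivity). Both are valid; the paper's is the quicker write-up.
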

\begin{proof}
 Let $x=(x_i)_{i=1}^\infty\in X$ and  $n\ge 1$.  For $i=1,\ldots, k$, write $I_i=x_{\ell_{i-1}(n)+1}\cdots x_{\ell_{k}(n)}$. Let $B$ denote $B(x,e^{-n/a_1})$.
 By Lemma \ref{lem-simple},
 $B=\{y: \forall\ 1\le i\le k, \ \tau_{i-1}(y)\in \tau_{i-1}([I_{1}\dots I_{i}])\}$.  Since $\mu$ is quasi-Bernoulli (cf. Theorem \ref{thm-0.2'}(ii)),
 we have $\mu(B)\approx \prod_{i=1}^{k}\mu_{i}  (\tau_{i-1}I_i)$, where $\mu_i=\mu\circ \tau_{i-1}^{-1}$. Let us transform this expression by using \eqref{defmu'}. Since each word $I_i$ is of length  $\ell_i(n)-\ell_{i-1}(n)$ and by construction $\ell_{k}(n)/A_k-n/a_1=O(1/n)$,  \eqref{defmu'} yields
\begin{eqnarray*}
 \mu(B)&\approx&\exp \Big (\frac{-\ell_{k}(n) P^\ba(T, \Phi)}{A_k}\Big)\prod_{i=1}^{k} \phi^{(i-1)}(\tau_{i-1}I_i)^{1/A_i}\prod_{j=i}^{k-1} \phi^{(j)}(\tau_jI_i)^{1/A_{j+1}- 1/A_j}
 \\
 &\approx&\exp \Big (\frac{-n P^\ba(T, \Phi)}{a_1}\Big) \Big (\prod_{i=0}^{k-1} \phi^{(i-1)}(\tau_{i-1}I_i)^{1/A_i}\Big ) \prod_{j=1}^{k-1} \prod_{i=1}^j\phi^{(j)}(\tau_jI_i)^{1/A_{j+1}- 1/A_j}
\\
& \approx & \exp \Big (\frac{-n P^\ba(T, \Phi)}{a_1}\Big) \phi^{(0)}(I_1)^{1/a_1} \prod_{j=1}^{k-1}\frac{\phi^{(j)}(\tau_j(I_1\cdots I_{j+1}))^{1/A_{j+1}}}{\phi^{(j)}(\tau_j(I_1\cdots I_{j}))^{1/A_{j}}}\\
&\approx &\exp \Big (\frac{-n P^\ba(T, \Phi)}{a_1}\Big)\phi_n(x)^{1/a_1} \prod_{j=1}^{k-1}\frac{\phi^{(j)}(\tau_jx_{|\ell_{j+1}(n)})^{1/A_{j+1}}}{\phi^{(j)}(\tau_jx_{|\ell_{j}(n)})^{1/A_{j}}}.
\end{eqnarray*}
This finishes the proof of the lemma.
\end{proof}

Recall that the weighted entropy of $\mu\in \M(X,T)$ has been defined in Section~\ref{S7} as $h^\ba_{\mu}(T)=\sum_{i=1}^ka_ih_{\mu\circ\tau_{i-1}^{-1}}(T_i)$. The following Ledrappier-Young type formula was  proved by Kenyon and Peres in \cite[ Lemma 3.1]{KePe96} under a slight different setting.
 \begin{pro}\label{dimmu1}
Suppose that $\mu\in \mathcal{M}(X,T)$ is ergodic. Then we have
$$
\dim_H\mu=h^\ba_{\mu}(T).
$$
\end{pro}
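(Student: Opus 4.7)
My plan is to establish the pointwise dimension formula
\[
d_\mu(x):=\lim_{r\to 0^+}\frac{\log\mu(B(x,r))}{\log r}=h^\ba_\mu(T)\quad\text{for }\mu\text{-a.e. }x,
\]
and then invoke the standard Billingsley/Young principle that a.e.\ constant local dimension of an ergodic measure equals its Hausdorff dimension. Because $d_\ba$ is an ultrametric, it suffices to work along the scale sequence $r_n=e^{-n/a_1}$. By Lemma~\ref{lem-simple}, every such ball equals
\[
B(x,e^{-n/a_1})=\bigcap_{i=1}^{k}C_i(n),\qquad C_i(n):=\tau_{i-1}^{-1}\big([\tau_{i-1}(x)_{|\ell_i(n)}]\big),
\]
a decreasing intersection of pullback cylinder sets, with $C_i(n)\in\tau_{i-1}^{-1}\mathcal{B}_{X_i}$ and the pullback $\sigma$-algebras nested as $\tau_{k-1}^{-1}\mathcal{B}_{X_k}\subset\cdots\subset\tau_0^{-1}\mathcal{B}_{X_1}=\mathcal{B}_X$.

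Setting $D_i(n):=\bigcap_{j\geq i}C_j(n)$, so that $D_1(n)=B(x,e^{-n/a_1})$, $D_k(n)=C_k(n)$, and $D_{i+1}(n)\in\tau_i^{-1}\mathcal{B}_{X_{i+1}}$, I telescope
\[
-\log\mu(B(x,e^{-n/a_1}))=-\log\mu_k\big([\tau_{k-1}(x)_{|\ell_k(n)}]\big)+\sum_{i=1}^{k-1}\Big(-\log\frac{\mu(D_i(n))}{\mu(D_{i+1}(n))}\Big),
\]
where $\mu_i:=\mu\circ\tau_{i-1}^{-1}$ is ergodic since $\mu$ is. The classical Shannon--McMillan--Breiman theorem applied to $(X_k,T_k,\mu_k)$ handles the leading term, giving $-\ell_k(n)^{-1}\log\mu_k([\tau_{k-1}(x)_{|\ell_k(n)}])\to h_{\mu_k}(T_k)$ $\mu$-a.e., and since $\ell_k(n)/n\to A_k/a_1$ this contributes $A_kh_{\mu_k}(T_k)$ to the target limit of $-(a_1/n)\log\mu(B(x,e^{-n/a_1}))$. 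For each ratio, writing $D_{i+1}(n)=\tau_i^{-1}E_i(n)$ with $E_i(n)\in\mathcal{B}_{X_{i+1}}$, the ratio equals
\[
\frac{\mu_i\big([\tau_{i-1}(x)_{|\ell_i(n)}]\cap \pi_i^{-1}E_i(n)\big)}{\mu_{i+1}(E_i(n))},
\]
which is a conditional probability (of the cylinder $[\tau_{i-1}(x)_{|\ell_i(n)}]$ at the point $\tau_{i-1}(x)$) relative to a sub-$\sigma$-algebra of $\pi_i^{-1}\mathcal{B}_{X_{i+1}}$. Applying the relativized Shannon--McMillan--Breiman theorem (Ledrappier--Walters, Bogensch\"utz--Crauel) to the ergodic factor $\pi_i:(X_i,T_i,\mu_i)\to(X_{i+1},T_{i+1},\mu_{i+1})$ yields the conditional information rate $h_{\mu_i}(T_i)-h_{\mu_{i+1}}(T_{i+1})$ $\mu_i$-a.e., so the $i$-th summand contributes $A_i\bigl(h_{\mu_i}(T_i)-h_{\mu_{i+1}}(T_{i+1})\bigr)$ (with the convention $h_{\mu_{k+1}}(T_{k+1})=0$). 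An Abel summation gives
\[
A_k h_{\mu_k}(T_k)+\sum_{i=1}^{k-1}A_i\bigl(h_{\mu_i}(T_i)-h_{\mu_{i+1}}(T_{i+1})\bigr)=\sum_{i=1}^k a_ih_{\mu_i}(T_i)=h^\ba_\mu(T),
\]
completing the proof of the pointwise dimension formula and hence of the statement.

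The main obstacle is the legitimate identification of the ratio $\mu(D_i(n))/\mu(D_{i+1}(n))$ with the conditional probability $\mu_i([\tau_{i-1}(x)_{|\ell_i(n)}]\mid\pi_i^{-1}\mathcal{B}_{X_{i+1}})(\tau_{i-1}(x))$ at the correct exponential rate, because the conditioning set $E_i(n)$ and the conditioned cylinder shrink simultaneously as $n\to\infty$. I would address this by a two-sided sandwiching: replace $E_i(n)$ by atoms of $\pi_i^{-1}\alpha_{i+1}^{m}$ for $m$ slightly coarser and finer than $\ell_{i+1}(n)$, then combine the martingale convergence theorem (for the conditional probabilities as $m\to\infty$) with the $\mu_i$-a.e.\ convergence of the relativized SMB to conclude that both bounds share the common exponential rate $h_{\mu_i}(T_i)-h_{\mu_{i+1}}(T_{i+1})$. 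The one-block property of each $\pi_i$ is what makes the pullback cylinder structure compatible with the atoms of $\pi_i^{-1}\mathcal{B}_{X_{i+1}}$ in this shrinking limit, preventing any systematic loss of information along the chain of factors.
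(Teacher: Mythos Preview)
The paper does not give its own proof of this proposition: it simply attributes the result to Kenyon and Peres \cite[Lemma~3.1]{KePe96} (``under a slight different setting''). So there is no internal proof to compare against; the relevant comparison is with the Kenyon--Peres argument.

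Your proposal follows precisely the Kenyon--Peres strategy: describe $B(x,e^{-n/a_1})$ as a nested intersection of pull-back cylinders of lengths $\ell_1(n)\le\cdots\le\ell_k(n)$ (your Lemma~\ref{lem-simple}), telescope $\log\mu(B)$ along the chain of factors, use SMB for the coarsest factor and the relative SMB theorem for each intermediate step, and finish with an Abel summation $\sum_i A_i\bigl(h_{\mu_i}(T_i)-h_{\mu_{i+1}}(T_{i+1})\bigr)=\sum_i a_ih_{\mu_i}(T_i)$. This is exactly how Kenyon--Peres prove their Lemma~3.1 (stated there for integer expansion ratios on tori, but their proof is symbolic and transfers verbatim to the present setting).

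You have also correctly located the one non-formal point: the ratio $\mu(D_i(n))/\mu(D_{i+1}(n))$ is conditioning on the atom of a \emph{finite} sub-$\sigma$-algebra $\pi_i^{-1}\alpha_{i+1}^{\ell_{i+1}(n)}\vee\cdots$ that is refining as $n\to\infty$, rather than on the full tail $\pi_i^{-1}\mathcal{B}_{X_{i+1}}$, so the relative SMB theorem does not apply literally. Kenyon--Peres handle exactly this by the sandwiching you sketch (martingale convergence in $m$ for the conditional probability $\mu_i\bigl([\cdot_{|\ell_i(n)}]\,\big|\,\pi_i^{-1}\alpha_{i+1}^m\bigr)$, combined with the a.e.\ convergence from relative SMB, noting that $\ell_{i+1}(n)\ge \ell_i(n)$ so the conditioning is always at least as fine as the cylinder being measured). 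Your description of that step is accurate; the one-block hypothesis is indeed what makes $[\tau_{i-1}(x)_{|\ell_i(n)}]$ measurable with respect to $\pi_i^{-1}\alpha_{i+1}^{\ell_i(n)}$ joined with the canonical $X_i$-partition, so no information leaks between scales. In short, your outline is correct and is the same proof the paper defers to.
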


\subsection{Multifractal analysis of asymptotically additive potentials}\label{results}
Recall that the generic set $\G(\mu)$ of a measure $\mu\in\M(X,T)$  has been defined in \eqref{generic}, and that an equivalent definition invoking asymptotically additive potentials is given in Remark \ref{re-2.2}. We have the following high dimensional extension of Theorem~\ref{thm-0.6}.
\begin{thm}\label{thm-0.6'}
Let $\mu\in \M(X,T)$. We have $\G(\mu)\neq\emptyset$ and $\dim_H \mathcal{G}(\mu)=h^\ba_{\mu}(T)$.
\end{thm}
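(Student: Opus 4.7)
The plan is to prove Theorem~\ref{thm-0.6'} in two complementary steps, splitting into the lower bound together with non-emptiness, and then the upper bound.

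\emph{Lower bound and non-emptiness.} I would apply Theorem~\ref{thm-0.5'} to get a sequence $(\nu_p)_{p\ge 1}$ of $\ba$-weighted Gibbs measures (hence ergodic, fully supported, and quasi-Bernoulli) converging to $\mu$ in the weak-star topology with $h^\ba_{\nu_p}(T)\to h^\ba_\mu(T)$. Following the concatenation strategy alluded to in Remark~\ref{simultapprox2}, I would construct by induction a Cantor subset $K\subset X$ and a probability measure $\lambda$ supported on $K$: at stage $p$, every admissible prefix produced at the end of stage $p-1$ is extended by a block of length $n_p$ whose conditional law is $\nu_p$ restricted to cylinders of length $n_p$. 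Fixing a countable dense family $\{g_m\}_{m\ge 1}\subset C(X)$ and choosing the $n_p$ large enough relative to the moduli of continuity of $g_1,\dots,g_p$ and to the Gibbs constants of $\nu_p$, Birkhoff's theorem applied inside each block guarantees that $\lambda$-almost every $x\in K$ satisfies $\lim_n n^{-1}S_ng_m(x)=\int g_m\,d\mu$ for every $m$, whence by density $K\subset\mathcal{G}(\mu)$ and in particular $\mathcal{G}(\mu)\neq \emptyset$. To estimate $\dim_H\lambda$, I would compute $\lambda(B(x,e^{-n/a_1}))$ at a $\lambda$-typical $x$ via Lemma~\ref{lem-simple}: its logarithm decomposes as a sum over factors of log-measures of cylinders of length $\ell_i(n)$ under $\lambda\circ\tau_{i-1}^{-1}$, and the Gibbs formula~\eqref{defmu'} applied to the constituent $\nu_p$'s (combined with Shannon--McMillan--Breiman across each factor, as in the proof of Proposition~\ref{dimmu1}) yields a local lower dimension at least $h^\ba_{\nu_p}(T)-o(1)$ along scales arising from stage $p$. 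Letting $p\to\infty$ and invoking the mass distribution principle gives $\dim_H\mathcal{G}(\mu)\ge h^\ba_\mu(T)$.

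\emph{Upper bound.} Let $\varepsilon>0$. Since the map $\eta\mapsto h^\ba_\eta(T)=\sum_{i=1}^ka_ih_{\eta\circ\tau_{i-1}^{-1}}(T_i)$ is upper semi-continuous on the compact metrizable space $\M(X,T)$, there exist $g_1,\dots,g_d\in C(X)$ and $\delta>0$ such that any $\eta\in\M(X,T)$ with $\max_j|\int g_j\,d\eta-\int g_j\,d\mu|<\delta$ satisfies $h^\ba_\eta(T)<h^\ba_\mu(T)+\varepsilon$. Then
\begin{equation*}
\mathcal{G}(\mu)\subset A_\delta:=\Big\{x\in X:\limsup_n \max_{1\le j\le d}\big|\tfrac{1}{n}S_ng_j(x)-\textstyle\int g_j\,d\mu\big|<\delta\Big\}.
\end{equation*}
A Misiurewicz/Katok-type counting on each factor $X_i$ shows that $\tau_{i-1}(A_\delta)$ is covered, for all large $n$, by at most $\exp(\ell_i(n)(h_{\mu\circ\tau_{i-1}^{-1}}(T_i)+\varepsilon))$ cylinders of length $\ell_i(n)$. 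Since by Lemma~\ref{lem-simple} a ball $B(x,e^{-n/a_1})$ is exactly the set of $y$ whose images $\tau_{i-1}(y)$ lie in prescribed cylinders of lengths $\ell_i(n)$, taking conditional refinements from level $i-1$ to level $i$ yields a cover of $A_\delta$ by at most $\prod_{i=1}^k\exp\big((\ell_i(n)-\ell_{i-1}(n))(h_{\mu\circ\tau_{i-1}^{-1}}(T_i)+\varepsilon)\big)$ such balls. As $(\ell_i(n)-\ell_{i-1}(n))/n\to a_i/a_1$, this count is bounded by $\exp\big((n/a_1)(h^\ba_\mu(T)+C\varepsilon)\big)$ for a constant $C$ depending only on $\ba$, giving $\dim_H\mathcal{G}(\mu)\le h^\ba_\mu(T)+C\varepsilon$ and the conclusion on letting $\varepsilon\to 0$.

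The main obstacle is the upper bound in this anisotropic setting: the metric $d_\ba$ has balls that are fibered products of cylinders at $k$ different scales $\ell_i(n)$ in the factors, and the covering must be counted factor by factor with the correct incremental weights $(\ell_i(n)-\ell_{i-1}(n))/n\to a_i/a_1$ while respecting the consistency between successive factor maps $\tau_i$. The lower bound is technical too, but the ingredients of Theorem~\ref{thm-0.5'}, the Gibbs formula~\eqref{defmu'}, and the product decomposition of balls from Lemma~\ref{lem-simple} are tailored precisely to make the Moran concatenation yield a clean local-dimension formula of Ledrappier--Young type.
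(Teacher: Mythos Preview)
Your lower bound is essentially the paper's: the paper invokes Theorem~\ref{thm-0.5'} to get quasi-Bernoulli approximants with converging factor entropies and then applies the Moran construction (Theorem~\ref{Moran}) verbatim, which is precisely the concatenation you sketch.

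Your upper bound, however, takes a genuinely different route and has a real gap. The paper does \emph{not} count balls directly: it observes $\mathcal G(\mu)\subset E_\Phi(\Phi_*(\mu))$ for every additive $\Phi$, bounds $\dim_H E_\Phi(\alpha)\le \inf_q P^\ba(T,q\Phi)-q\alpha$ via Lemma~\ref{upperboundstep1} (which rests on a weighted Gibbs measure and the McMullen-type Lemma~\ref{McMullen}), and then uses Legendre--Fenchel duality between $P^\ba$ and $h^\ba$ to conclude $\inf_\Phi\big(P^\ba(T,\Phi)-\Phi_*(\mu)\big)=h^\ba_\mu(T)$.

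In your covering argument, two points do not follow from what you wrote. First, the $g_j$'s are chosen only so that closeness of $\int g_j\,d\eta$ to $\int g_j\,d\mu$ controls the \emph{weighted} entropy $h^\ba_\eta$; a Misiurewicz--Katok cylinder count on each factor $X_i$ requires control of each $h_{\eta\circ\tau_{i-1}^{-1}}(T_i)$ separately, so you must instead pick test functions (for each $i$) that witness the upper semi-continuity of the $i$th factor entropy. Second, and more seriously, the ``conditional refinement'' step is not a consequence of the unconditional Katok bound. Knowing that $\tau_{i-1}(A_\delta)$ meets at most $\exp(\ell_i(n)(h_i+\varepsilon))$ cylinders of length $\ell_i(n)$ does not by itself bound, for a fixed $\ell_{i-1}(n)$-prefix, the number of admissible extensions to length $\ell_i(n)$. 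What is actually needed is an \emph{increment} argument: if $x\in A_\delta$ then the empirical measure of $\tau_{i-1}(x)$ is $\delta$-close to $\mu\circ\tau_{i-1}^{-1}$ at both times $\ell_{i-1}(n)$ and $\ell_i(n)$, hence the empirical measure of the block $T_i^{\ell_{i-1}(n)}\tau_{i-1}(x)$ over the window of length $\ell_i(n)-\ell_{i-1}(n)$ is $O(\delta)$-close to $\mu\circ\tau_{i-1}^{-1}$ (with a constant depending on $\ba$), and only then does a Katok count on $X_i$ give at most $\exp\big((\ell_i(n)-\ell_{i-1}(n))(h_i+\varepsilon')\big)$ extensions. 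With these two fixes your approach can be made to work, but as written the key anisotropic step is asserted rather than proved; the paper's pressure/duality route sidesteps this bookkeeping entirely.
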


The proof of Theorem \ref{thm-0.6'} will be given in Sect.~\ref{s-5.4}.
Next we consider level sets associated with Birkhoff averages of asymptotically  additive potentials on $X$.

For ${\bf\Phi}=(\Phi_1,\dots,\Phi_d)\in \C_{asa}(X,T)^d$,
where $\Phi_i=(\log \phi_{n,i})_{n=1}^\infty=:(\Phi_{n,i})_{n=1}^\infty$, and
 $\alpha=(\alpha_1,\ldots,\alpha_d)\in
\R^d$, define
\begin{equation}
\label{e-alpha}
E_{{\bf \Phi}}(\alpha)=\Big\{x\in X: \lim_{n\to\infty} \frac {
\Phi_{n,i}(x)}{n}=\alpha_i \mbox{ for } 1\leq i\leq d\Big\}.
\end{equation}
Denote ${\bf \Phi}_n(x)=(\Phi_{n,1}(x),\ldots, \Phi_{n,d}(x))$. Then the set in the right hand side of \eqref{e-alpha} can be simply written as  $\Big\{x\in X: \lim_{n\to\infty} \frac {
{\bf\Phi}_{n}(x)}{n}=\alpha\Big\}$.
For $\mu\in \M(X,T)$, write ${\bf
\Phi}_*(\mu)=((\Phi_1)_*(\mu),\ldots, (\Phi_d)_*(\mu))$ and define $L_{\bf\Phi}=\big \{{\bf \Phi}_*(\mu):\mu\in\M (X,T)\big\}$.

Let $\{{\bf \Phi}^{(j)}\}_{1\le j\le r}$ be a family of elements of  $\C_{asa}(X,T)^d$.  Let $\bc=(c_1,\dots,c_r)$ be a real vector with positive entries. For $\alpha\in
\R^d$, define
$$
E_{\{{\bf \Phi}^{(j)}\},\bc}(\alpha)=\Big\{x\in X: \lim_{n\to\infty} \sum_{j=1}^r\frac {{\bf \Phi}^{(j)}_{\lfloor c_j n\rfloor }}{\lfloor c_jn\rfloor }(x)=\alpha\Big\},
$$
where $\lfloor y\rfloor$ stands for the integer part of $y\in \R$. It is clear that  $E_{\{{\bf\Phi}^{(j)}\},\bc}(\alpha)=E_{\{{\bf\Phi}^{(j)}\},\lambda\bc}(\alpha)$ for any $\lambda>0$, and in particular, $E_{\{{\bf\Phi}^{(j)}\},\bc}(\alpha)=E_{\bf\Phi}(\alpha)$ if $r=1$. It is remarkable that the Hausdorff dimension of the set $E_{\{{\bf \Phi}^{(j)}\},\bc}(\alpha)$ does not depend on~$\bc$ when $r\ge 2$, as shown in  the following result,  of which the proof will be given in Sect.~\ref{s-5.5}.
\begin{thm}\label{thm-0.7'} Let ${\bf \Phi}=\sum_{j=1}^r {\bf \Phi}^{(j)}$.
\begin{enumerate}
\item For $\alpha\in \R^d$, the following assertions are equivalent.
\begin{itemize}
\item[(i)] $\alpha\in L_{\bf\Phi}$;
\item [(ii)]$E_{\{{\bf \Phi}^{(j)}\},\bc}(\alpha)\neq\emptyset$;
\item [(iii)]$\inf \left\{P^{\ba}(T,  \bq\cdot {\bf \Phi})-\alpha\cdot
\bq:\; \bq\in \R^d\right\}\ge 0$;
\item [(iv)]$\inf \left\{P^{\ba}(T,  \bq\cdot {\bf \Phi})-\alpha\cdot
\bq:\; \bq\in \R^d\right\}>-\infty$;
\end{itemize}
Furthermore for $\alpha\in L_{\bf \Phi}$, we have
\begin{equation*}
\begin{split}
\dim_H E_{\{{\bf \Phi}^{(j)}\},\bc}(\alpha)&=\max\left\{h^\ba_{\mu}(T):\; \mu\in \M(X,T),\; {\bf \Phi}_*(\mu)=\alpha\right\}\\
&=\inf \left\{P^{\ba}(T,  \bq\cdot {\bf \Phi})-\alpha\cdot
\bq:\; \bq\in \R^d\right\}.
\end{split}
\end{equation*}
\item Suppose that $L_{\bf \Phi}$ is not  a singleton. Then the set $X\setminus \bigcup_{\alpha\in L_{\bf \Phi}}E_{\{{\bf \Phi}^{(j)}\},\bc}(\alpha)$ is of full Hausdorff dimension.
\end{enumerate}
\end{thm}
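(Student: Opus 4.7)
The plan is to handle part (1) by first establishing the equivalences (i)--(iv) from the variational principle and convex duality, then proving the lower bound on $\dim_H E_{\{{\bf \Phi}^{(j)}\},\bc}(\alpha)$ by projecting onto generic points of ergodic measures, and finally proving the matching upper bound via a Gibbs-type mass distribution estimate derived from Lemma~\ref{lem-gibbs}. The implication (i)$\Rightarrow$(iii) is immediate from the variational definition of $P^\ba(T,\bq\cdot{\bf \Phi})$, (iii)$\Rightarrow$(iv) is trivial, and (iv)$\Rightarrow$(i) will follow by a standard convex-analysis argument exploiting the $C^1$-regularity and variational representation of $\bq \mapsto P^\ba(T,\bq\cdot{\bf \Phi})$ (Theorem~\ref{thm-0.3'}) together with the upper semi-continuity of $h^\ba_\mu(T)$ on the compact set $\M(X,T)$. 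For (i)$\Rightarrow$(ii), given $\alpha\in L_{\bf \Phi}$ pick an ergodic $\mu$ with ${\bf \Phi}_*(\mu)=\alpha$ (existence via ergodic decomposition and the linearity of ${\bf \Phi}_*$); by Lemma~\ref{lem-2.1}(i) each $\Phi^{(j)}_N(x)/N\to \Phi^{(j)}_*(\mu)$ for $\mu$-a.e.\ $x$, and summing these limits along $N=\lfloor c_jn\rfloor$ yields $x\in E_{\{{\bf \Phi}^{(j)}\},\bc}(\alpha)$. For (ii)$\Rightarrow$(i), for $x$ in the level set I would approximate each $\Phi^{(j)}$ by a continuous $g^{(j)}_\varepsilon$ via Lemma~\ref{lem-2.1}(iii), extract a common weak-$\star$ subsequential limit $\nu_\varepsilon$ of the empirical measures $\mu^{(j)}_{n_k}$ (or of a convex combination weighted by $c_j$), pass to the limit, and then let $\varepsilon\to 0$ to produce $\nu\in\M(X,T)$ with ${\bf \Phi}_*(\nu)=\alpha$.

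For the lower bound $\dim_H E_{\{{\bf \Phi}^{(j)}\},\bc}(\alpha)\ge F(\alpha):=\max\{h^\ba_\mu(T):{\bf \Phi}_*(\mu)=\alpha\}$, the maximum is attained by upper semi-continuity. When the maximizer $\mu^*$ is ergodic, the Birkhoff argument above gives $\mathcal{G}(\mu^*)\subset E_{\{{\bf \Phi}^{(j)}\},\bc}(\alpha)$ and Theorem~\ref{thm-0.6'} yields $\dim_H\mathcal{G}(\mu^*)=h^\ba_{\mu^*}(T)=F(\alpha)$. When $\mu^*$ is not ergodic, I would instead use Theorem~\ref{thm-0.5'} to produce quasi-Bernoulli measures $(\mu_n)$ with ${\bf \Phi}_*(\mu_n)\to\alpha$ and $h^\ba_{\mu_n}(T)\to F(\alpha)$, and construct a Moran-type measure by concatenating blocks generic for each $\mu_n$; the controlled rate of concatenation guarantees that the support lies in $E_{\{{\bf \Phi}^{(j)}\},\bc}(\alpha)$ and that the weighted entropy of the resulting measure, combined with the Ledrappier--Young formula (Proposition~\ref{dimmu1}) and the Gibbs ball estimate (Lemma~\ref{lem-gibbs}), gives the dimension lower bound $h^\ba_{\mu_n}(T)$; letting $n\to\infty$ completes the bound. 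The identity $F(\alpha)=G(\alpha):=\inf_\bq(P^\ba(T,\bq\cdot{\bf \Phi})-\alpha\cdot\bq)$ is then Legendre duality applied to the convex and $C^1$ function $Q(\bq)=P^\ba(T,\bq\cdot{\bf \Phi})$, whose gradient at $\bq$ equals ${\bf \Phi}_*(\mu_\bq)$ (Theorem~\ref{thm-0.3'}).

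The main obstacle is the upper bound $\dim_H E_{\{{\bf \Phi}^{(j)}\},\bc}(\alpha)\le G(\alpha)$. For each $\bq$ I would associate to $\bq\cdot{\bf \Phi}\in\C_{asa}(X,T)$ an $\ba$-weighted weak Gibbs measure $\mu_\bq$, built by approximating $\bq\cdot{\bf \Phi}$ by H\"older (hence almost additive with bounded distortion) potentials and passing to a weak-$\star$ limit of their Gibbs measures (as in Theorem~\ref{thm-0.9}); Lemma~\ref{lem-gibbs} then gives, up to sub-exponential errors,
\begin{equation*}
\log\mu_\bq(B(x,e^{-n/a_1}))=-\frac{nP^\ba(T,\bq\cdot{\bf \Phi})}{a_1}+\frac{(\bq\cdot{\bf \Phi})_n(x)}{a_1}+\sum_{j=1}^{k-1}\Big(\frac{\log(\bq\cdot{\bf \Phi})^{(j)}(\tau_jx_{|\ell_{j+1}(n)})}{A_{j+1}}-\frac{\log(\bq\cdot{\bf \Phi})^{(j)}(\tau_jx_{|\ell_{j}(n)})}{A_{j}}\Big)+o(n).
\end{equation*}
The difficulty is that the defining constraint of $E_{\{{\bf \Phi}^{(j)}\},\bc}(\alpha)$ involves \emph{different} time scales $\lfloor c_jn\rfloor$, whereas $(\bq\cdot{\bf \Phi})_n(x)$ on the right-hand side uses the single scale $n$; a priori the constraint $\sum_j\Phi^{(j)}_{\lfloor c_jn\rfloor}(x)/\lfloor c_jn\rfloor\to\alpha$ does not determine $\Phi^{(j)}_n(x)/n$. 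I would resolve this by working with simultaneous continuous approximants $g^{(j)}_\varepsilon$ of each $\Phi^{(j)}$, passing to a subsequence $n_k$ along which every empirical measure on scales $\{\lfloor c_jn_k\rfloor\}_j$ and $\{n_k\}$ converges weak-$\star$, and exploiting that the constraint forces the $\bq$-weighted sum of the limits to equal $\bq\cdot\alpha$ up to $O(\|\bq\|\varepsilon)$, regardless of the relative scales; combined with an analogous control of the telescoping terms (whose sizes are $O(n)$ with explicit coefficients that average out because $\ell_j(n)/A_j\approx n/a_1$), this yields $\liminf_{r\to 0}\log\mu_\bq(B(x,r))/\log r\le P^\ba(T,\bq\cdot{\bf \Phi})-\bq\cdot\alpha$. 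The mass distribution principle, together with $\varepsilon\to 0$ and then optimization over $\bq$, gives the desired upper bound.

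Part (2) is proved by a concatenation argument: if $\alpha,\beta\in L_{\bf \Phi}$ are distinct, select fully supported $\mu_\alpha,\mu_\beta\in\M(X,T)$ with ${\bf \Phi}_*(\mu_\alpha)=\alpha$, ${\bf \Phi}_*(\mu_\beta)=\beta$ and weighted entropies arbitrarily close to the maximal value $h^\ba_{\rm max}$ (using Theorem~\ref{thm-0.4'} combined with Theorem~\ref{thm-0.5'}), approximate them by quasi-Bernoulli measures, and concatenate alternating long blocks generic for each. With block lengths increasing fast enough the partial averages $\sum_j\Phi^{(j)}_{\lfloor c_jn\rfloor}/\lfloor c_jn\rfloor$ oscillate between $\alpha$ and $\beta$, so the support lies in the complement of $\bigcup_\alpha E_{\{{\bf \Phi}^{(j)}\},\bc}(\alpha)$; a standard estimate using Lemma~\ref{lem-gibbs} on each block then shows the resulting Moran measure has dimension arbitrarily close to $h^\ba_{\rm max}=\dim_H X$.
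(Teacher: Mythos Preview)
Your overall architecture is sound and largely parallel to the paper's, but the upper bound argument for part (1) has a genuine gap that your proposed fix does not close.

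\textbf{The upper bound.} You correctly identify the difficulty: the level set $E_{\{{\bf \Phi}^{(j)}\},\bc}(\alpha)$ constrains the averages at scales $\lfloor c_jn\rfloor$, while the Gibbs ball estimate for $\mu_\bq(B(x,e^{-n/a_1}))$ involves $(\bq\cdot{\bf\Phi})_n(x)$ at scale $n$ together with telescoping terms at scales $\ell_i(n)$. Your proposed resolution --- extract a subsequence $n_k$ along which the empirical measures at every relevant scale converge weak-$\star$ --- does not work. Along such a subsequence the limits $\nu_j$ at scales $\lfloor c_jn_k\rfloor$, the limit $\nu_0$ at scale $n_k$, and the limits at scales $\ell_i(n_k)$ can all be \emph{different} invariant measures. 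The level-set constraint only yields $\sum_j{\bf\Phi}^{(j)}_*(\nu_j)=\alpha$; it says nothing about $\sum_j{\bf\Phi}^{(j)}_*(\nu_0)$, which is what governs the leading term in the ball mass. Likewise, the telescoping terms do not ``average out because $\ell_j(n)/A_j\approx n/a_1$'': at a fixed $n$, each ratio $\phi^{(j)}(\tau_j x_{|\ell_{j+1}(n)})^{1/A_{j+1}}/\phi^{(j)}(\tau_j x_{|\ell_j(n)})^{1/A_j}$ can be of order $e^{\pm Cn}$.

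The paper resolves this with a different, purely analytic device (Lemma~\ref{McMullen} via Lemma~\ref{corlemKePe}, going back to Kenyon--Peres): if $v_1,\dots,v_m$ are bounded real sequences with $v_j(n+1)-v_j(n)=O(n^{-1})$, then for any positive $\beta_j,\gamma_j$ one has $\limsup_{n\to\infty}\sum_{j}\bigl(v_j(\lfloor\beta_jn\rfloor)-v_j(\lfloor\gamma_jn\rfloor)\bigr)\ge 0$. After approximating each ${\bf\Phi}^{(j)}$ by an almost additive potential with bounded distortion, one sets $u^{(j)}(m)=-(\bq\cdot\widetilde{\bf\Phi}^{(j)})_m(x)/(a_1m)$ and $\tilde u^{(i)}(m)=\log\tilde\phi^{(i)}(\tau_i(x_{|m}))/(a_1m)$; these satisfy the hypotheses of the lemma precisely because of almost additivity and bounded distortion. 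Writing
\[
\frac{\log f_n(x)}{n}=\sum_{j=1}^r\bigl(u^{(j)}(\lfloor c_jn\rfloor)-u^{(j)}(n)\bigr)+\sum_{i=1}^{k-1}\bigl(\tilde u^{(i)}(\ell_{i+1}(n))-\tilde u^{(i)}(\ell_i(n))\bigr)+O(n^{-1}),
\]
where $f_n(x)=\mu_\bq(B(x,e^{-n/a_1}))\exp\bigl(-(\bq\cdot\widetilde{\bf\Phi}_{\bc,n}(x)-nP^\ba)/a_1\bigr)$, the lemma gives $\limsup_n f_n(x)^{1/n}\ge 1$ for \emph{every} $x\in X$. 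This is the missing idea; no appeal to weak-$\star$ limits of empirical measures is made or needed.

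\textbf{Minor simplifications.} For the lower bound you split into ergodic and non-ergodic maximizers and redo a Moran construction in the second case. This is unnecessary: Theorem~\ref{thm-0.6'} already gives $\dim_H\G(\mu)=h^\ba_\mu(T)$ for \emph{every} $\mu\in\M(X,T)$, and the elementary inclusion $\G(\mu)\subset E_{\{{\bf\Phi}^{(j)}\},\bc}(\alpha)$ whenever ${\bf\Phi}_*(\mu)=\alpha$ (Lemma~\ref{lem-5.12}) then gives the lower bound directly. Relatedly, your claim in (i)$\Rightarrow$(ii) that one can always pick an \emph{ergodic} $\mu$ with ${\bf\Phi}_*(\mu)=\alpha$ is false in general; working with $\G(\mu)$ for arbitrary $\mu$ avoids this. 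Finally, the paper obtains (ii)$\Rightarrow$(i) not directly but as a consequence of the upper bound (if $E\neq\emptyset$ then $0\le\dim_H E\le\inf_\bq(\cdots)$, which is (iii)); your direct attempt via mixed empirical-measure limits would face the same multi-scale obstruction as above.
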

\begin{rem}\label{dimX}
If we take $r=1$ and ${\bf\Phi}=0$, we find that the Hausdorff dimension of $(X,d_\ba)$ is $P^\ba(T,0)$. This extends the result of \cite{KePe96} which holds for special choices of $\ba$.
\end{rem}
%%%%%%
%%%%%%
%Also, let $aff(L_\Phi)$ be the smallest affine subspace of $\mathbb{R}^d$ containing $L_\Phi$.

%\begin{thm}\label{thm-0.8'}
%Suppose that ${\bf \Psi}(X)\subseteq L_{\bf \Phi}$ and ${\bf
%\Psi}(X)\cap {\rm ri}(L_{\bf \Phi})\neq \emptyset$.  Then, the set
%$E_{\bf \Phi}({\bf \Psi})$ is not empty and
%\begin{equation*}
%\begin{split}
%\dim_H &E_{\bf \Phi}({\bf \Psi})\cap {\bf \Psi}^{-1}({\rm ri}(L_{\bf \Phi}))\\
%&= \sup_{\alpha\in {\rm ri}(L_{\bf \Phi})\cap {\bf \Psi}(X)}\dim_H E_{\bf \Phi}(\alpha)\\
%&=\sup\left\{\sum_{i=1}^{k}a_ih_{\mu\circ \tau_{i-1}^{-1}}(T_i):\; \mu\in \M(X,T),\;
%{\bf \Phi}_*(\mu)\in {\rm ri}(L_{\bf \Phi})\cap {\bf \Psi}(X)\right\}.
%\end{split}
%\end{equation*}
%If, moreover, $d=1$ then
%$$
%\dim_H E_{\bf \Phi}({\bf
%\Psi})=\max\left\{\sum_{i=1}^{k}a_ih_{\mu\circ \tau_{i-1}^{-1}}(T_i):\; \mu\in
%\M(X,T),\; {\bf \Phi}_*(\mu)\in {\bf \Psi}(X)\right\}.
%$$
%\end{thm}

\begin{ex}
\label{ex-5.7}
{\rm
Generally, the level sets  $E_{\{{\bf \Phi}^{(j)}\},\bc}(\alpha)$ depend on ${\bf c}$. For example, let $X=\{0,1\}^\N$, and let $g\in C(X)$ be given by
$g(x)=x_1$ for $x=(x_i)_{i=1}^\infty\in X$.   Set $\Phi^{(1)}=(S_ng)_{n=1}^\infty$ and $\Phi^{(2)}=(-S_ng)_{n=1}^\infty$. Then
$E_{\{{\bf \Phi}^{(j)}\}_{j=1}^2, (1,1)}(0)=X$, however $E_{\{{\bf \Phi}^{(j)}\}_{j=1}^2, (1,2)}(0)\neq X$ (it is easy to check that $x=0^11^20^41^8\cdots 0^{2^{2n}}1^{2^{2n+1}}\cdots\not\in E_{\{{\bf \Phi}^{(j)}\}_{j=1}^2, (1,2)}(0)$).
}
\end{ex}

\subsection{Application to the multifractal analysis of $\ba$-weighted weak Gibbs measures}

As we have seen in Theorem~\ref{thm-0.2'}, $\ba$-weighted Gibbs measures are naturally associated with almost additive potentials satisfying the bounded distortion property; this extends the classical Gibbs measures. Now we show that the notion of weak Gibbs measure associated with a continuous potential defined on $X$ in the classical thermodynamic formalism \cite{Kes01} also has a natural extension in the $\ba$-weighted thermodynamical formalism.

\begin{de}
{\rm
Let $\Phi\in \C_{asa}(X,T)$. A fully supported Borel probability measure $\mu$ (not necessarily to be shift invariant) on $X$ is called an  {\it  $\ba$-weighted weak Gibbs measure
associated with $\Phi$} if
\begin{equation}\label{defmu4}
\mu(I)\approx_n\exp \Big (\frac{-nP}{A_k}\Big) \phi_n^\ba(I), \quad I\in \A^n,
\end{equation}
where
$P=P^\ba(T_1,\Phi)$, $A_k=a_1+\cdots+a_k$,  $\Phi^\ba=(\log \phi^\ba_n)\in \C_{asa}(X_1,T_1)$ is defined as in \eqref{phia}, and
$\approx_n$ means that  there exists a sequence of positive numbers $(\kappa_n)_{n=1}^\infty$ with $\lim_{n\to \infty}(1/n)\log \kappa_n=0$, such that
the ratio between the left and right hand sides of  $\approx_n$ lies in $(\kappa_n^{-1}, \kappa_n)$.
}

\end{de}

\begin{rem}
It is not hard to see that if $\mu$ satisfies (\ref{defmu4}), then for $i=2,\ldots,k$,
\begin{equation}\label{defmu2}
\mu_i(\tau_{i-1}I)\approx_n \exp \Big (\frac{-nP}{A_k}\Big) \phi^{(i-1)}(\tau_{i-1}I)^{1/A_i}\prod_{j=i}^{k-1} \phi^{(j)}(\tau_jI)^{1/A_{j+1}- 1/A_j},\quad I\in \A^n,
\end{equation}
where $\mu_i=\mu\circ \tau_{i-1}^{-1}$, and $\phi^{(j)}$,  $j=0,\ldots k-1$, are defined as in \eqref{e-not}. Furthermore, $\mu$ satisfies \eqref{defmu4} if and only if
\begin{equation*}
    \label{equ}
    \phi_n(x)\exp(-nP)\approx_n \prod_{i=1}^{k} \mu_i (\tau_{i-1}x_{|n})^{a_{i}}, \quad  \;x\in X,\; n\geq 1,
    \end{equation*}
\end{rem}

The following result, which will be proved in Sect.~\ref{s-5.6},  shows the existence of $\ba$-weighted weak Gibbs measure for any asymptotically additive potential on $X$.

\begin{thm}\label{weakGibbs}
Let $\Phi=(\log\phi_n)_{n=1}^\infty\in \C_{asa}(X,T)$. Then there exists at least an  $\ba$-weighted weak Gibbs measure $\mu$ associated with $\Phi$.

Furthermore, for each $1\le i\le k$, the potential ${\bf \Psi}^{(i)}_\mu:=\big (\log \mu_i(\tau_{i-1}(x_{|n}))\big )_{n=1}^\infty$ belongs to $\C_{asa}(X,T)$, and for every point $x=(x_i)_{i=1}^\infty\in X$ and  $B=B(x,e^{-n/a_1})$, we have
\begin{equation}\label{logmassball1}
\log \mu(B)={\bf \Psi}^{(1)}_{\mu,n}(x)+\sum_{i=2}^k {\bf \Psi}^{(i)}_{\mu,\ell_i(n)}(x)- {\bf \Psi}^{(i)}_{\mu,\ell_{i-1}(n)}(x)+c(x,n),
\end{equation}
where $(c(x,n))_{n\geq 1}$ is a sequence satisfying $\lim_{n\to\infty} c(x,n)/n=0$.  If moreover,  $\Phi\in\C_{aa}(X,T)$ and satisfies the bounded distortion property, then $c(x,n)$ can be taken bounded independently of $x$ and $n$, and \eqref{logmassball1} takes the form
\begin{equation}\label{massball1}
\mu(B)\approx \prod_{i=1}^{k}\mu_i  (\tau_{i-1}(I_{i})),
\end{equation}
where $I_i=x_{\ell_{i-1}(n)+1}\cdots x_{\ell_i(n)}$.
\end{thm}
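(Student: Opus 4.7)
The plan is in three parts. First, I build the weak Gibbs measure $\mu$ by concatenation of Gibbs measures associated with H\"older potentials approximating $\Phi$, as suggested in the remark after Theorem~\ref{thm-0.9}. By Lemma~\ref{lem-2.1}(iii), pick for each $p\ge 1$ a H\"older continuous $g^{(p)}$ on $X$ with $\limsup_{n\to\infty}(1/n)\sup_{x\in X}|\log \phi_n(x)-S_n g^{(p)}(x)|\le 2^{-p}$. Each $\Phi^{(p)}:=(S_ng^{(p)})_{n\ge 1}$ is H\"older and additive, hence belongs to $\C_{aa}(X,T)$ with the bounded distortion property, and by Theorem~\ref{thm-0.2'}(ii) it admits a unique $\ba$-weighted equilibrium state $\mu^{(p)}$ satisfying \eqref{defmu4'}. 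Choose rapidly growing integers $0=N_0<N_1<N_2<\cdots$ and define $\mu$ inductively on cylinders: for $I=I_1I_2\cdots I_p$ of length $N_p$ with $I_q\in \A_1^{N_q-N_{q-1}}$, set
$$
\mu([I])=\mu^{(1)}([I_1])\prod_{q=2}^p\frac{\mu^{(q)}([I_1\cdots I_q])}{\mu^{(q)}([I_1\cdots I_{q-1}])},
$$
and for $n\in[N_p,N_{p+1})$ and $I\in \A_1^n$ with prefix $I_1\cdots I_p$, multiply the above by $\mu^{(p+1)}([I])/\mu^{(p+1)}([I_1\cdots I_p])$. This is a consistent family and hence extends to a Borel probability measure $\mu$ on $X$. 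The estimate $|\log \phi_n-S_ng^{(q)}|\le n\cdot 2^{-q}$ propagates through the $\theta_j$ operators of Section~\ref{S7}, so the $\ba$-weighted potentials attached to $\Phi$ and $\Phi^{(q)}$ differ by a factor $e^{O(n\cdot 2^{-q})}$, and $|P^\ba(T,\Phi)-P^\ba(T,\Phi^{(q)})|\le 2^{-q}$. Combining these bounds with \eqref{defmu4'} for each $\mu^{(q)}$ and telescoping the product shows $\mu([I])\approx_n e^{-nP/A_k}\phi^\ba_n(I)$ up to an overall error $\exp(O(\sum_{q\le p+1}N_q\cdot 2^{-q}))$, which is $e^{o(n)}$ uniformly in $I\in \A_1^n$ provided the $N_p$ grow fast enough. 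This establishes \eqref{defmu4}.

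Second, for the asymptotic additivity of each $\Psi^{(i)}_\mu$, take logarithms in \eqref{defmu2}: uniformly in $x$,
$$
\log \mu_i(\tau_{i-1}(x_{|n}))=-\tfrac{nP}{A_k}+\tfrac{1}{A_i}\log \phi^{(i-1)}(\tau_{i-1}(x_{|n}))+\sum_{j=i}^{k-1}\bigl(\tfrac{1}{A_{j+1}}-\tfrac{1}{A_j}\bigr)\log\phi^{(j)}(\tau_j(x_{|n}))+o(n),
$$
with $P:=P^\ba(T,\Phi)$. The potential $(\log \phi^{(j)}_n)_{n\ge 1}=\theta_j\circ\cdots\circ\theta_1(\Phi)$ lies in $\C_{asa}(X_{j+1},T_{j+1})$ by the construction of the $\theta_j$'s in Section~\ref{S7}, and replacing $\phi^{(j)}_n(y)$ by its supremum over the $n$-th cylinder containing $y$ costs only an $o(n)$ additive error uniformly (approximate $\log\phi^{(j)}_n$ by a Birkhoff sum of a continuous function on $X_{j+1}$ via Lemma~\ref{lem-2.1}(iii), and use uniform continuity on compact cylinders). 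Since linear combinations of asymptotically additive potentials are asymptotically additive and $o(n)$-perturbations preserve this class, $\Psi^{(i)}_\mu\in \C_{asa}(X,T)$ follows.

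Third, for the ball formula I first treat the case $\Phi\in\C_{aa}(X,T)$ with bounded distortion. Then $\mu$ and each $\mu_i$ are quasi-Bernoulli by Theorem~\ref{thm-0.2'}(ii), and Lemma~\ref{lem-gibbs} gives
$$
\mu(B(x,e^{-n/a_1}))\approx e^{-nP/a_1}\phi_n(x)^{1/a_1}\prod_{j=1}^{k-1}\frac{\phi^{(j)}(\tau_j(x_{|\ell_{j+1}(n)}))^{1/A_{j+1}}}{\phi^{(j)}(\tau_j(x_{|\ell_j(n)}))^{1/A_j}}.
$$
Using \eqref{defmu4'} and \eqref{defmu'} to substitute for the $\phi^{(j)}$-factors in terms of $\mu$ and the $\mu_i$'s, a telescoping algebraic computation (in which the exponentials $-nP/a_1$ and $-\ell_k(n)P/A_k$ differ only by $O(1)$) identifies the right-hand side, up to a bounded multiplicative constant, with
$$
\mu(x_{|n})\prod_{i=2}^k\frac{\mu_i(\tau_{i-1}(x_{|\ell_i(n)}))}{\mu_i(\tau_{i-1}(x_{|\ell_{i-1}(n)}))},
$$
which is exactly \eqref{logmassball1} with $c(x,n)=O(1)$. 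The quasi-Bernoulli property of each $\mu_i$ then replaces each ratio by $\mu_i(\tau_{i-1}(I_i))$ up to a bounded factor, yielding \eqref{massball1}. For the general $\C_{asa}$ case, the same algebraic identity is carried out with every $\approx$ weakened to $\approx_n$ (as provided by \eqref{defmu4} and \eqref{defmu2}): the relation $\mu(B)\approx_n\prod_{i=1}^k\mu_i(\tau_{i-1}(I_i))$ holds up to a subexponential factor, which gives $c(x,n)/n\to 0$.

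The main technical obstacle is the first step: choosing the breakpoints $N_p$ so that the accumulated errors from (i)~the approximation $\Phi^{(p)}\to\Phi$, (ii)~the bounded distortion constants of $\mu^{(p)}$ (which depend on the H\"older norm of $g^{(p)}$), and (iii)~the perturbations of $\mu$ at each gluing level $n=N_p$ all remain $e^{o(n)}$ uniformly in $I\in\A_1^n$. Taking $N_{p+1}$ much larger than a suitable explicit function of $N_p$, the H\"older norm of $g^{(p+1)}$, and $2^p$ is sufficient.
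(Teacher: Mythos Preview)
Your construction is correct, but it differs from the paper's in two notable ways.

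\textbf{Existence of the weak Gibbs measure.} The paper first observes that $P^\ba(T,\Phi)/A_k$ equals the classical pressure $P(T,\Phi^\ba)$ of the potential $\Phi^\ba$ defined in \eqref{phia}, so that \eqref{defmu4} is nothing but the classical weak Gibbs property for $\Phi^\ba\in\C_{asa}(X,T)$. It then builds the weak Gibbs measure for $\Phi^\ba$ by a pure \emph{product} construction: choose H\"older $g_p$ approximating $\Phi^\ba$, let $\mu_p$ be the ordinary (not $\ba$-weighted) Gibbs state of $g_p$, restrict $\mu_p$ to blocks of length $N_p$, and set $\nu=\bigotimes_p\widetilde\mu_p$. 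The additivity of Birkhoff sums makes the error analysis short. By contrast, you stay on the $\ba$-weighted side throughout: you take the $\ba$-weighted equilibrium states $\mu^{(p)}$ of the approximants and glue them by a telescoping conditional-ratio formula. This works, but it forces you to control the propagation of the approximation error through all the operators $\theta_1,\ldots,\theta_{k-1}$ and to keep track, for each $q$, of two Gibbs constants (one for $J_q$ and one for $J_{q-1}$). The paper's reduction sidesteps all of that.

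\textbf{The ball formula in the general case.} For $\Phi\in\C_{aa}$ with bounded distortion your argument coincides with the paper's (Lemma~\ref{lem-gibbs} plus quasi-Bernoulli). For general $\Phi\in\C_{asa}$, the paper does not ``replace every $\approx$ by $\approx_n$'' as you propose; instead it compares $\mu$ with the $\ba$-weighted equilibrium state $\mu_{p_\varepsilon}$ of a sufficiently good H\"older approximant $g_{p_\varepsilon}$, applies \eqref{massball1} to $\mu_{p_\varepsilon}$ (where it is already proved), and shows $\|{\bf\Psi}^{(i)}_{\mu,n}-{\bf\Psi}^{(i)}_{\mu_{p_\varepsilon},n}\|_\infty\le n\varepsilon$ together with $|\log\mu(B)-\log\mu_{p_\varepsilon}(B)|\le 2\ell_k(n)\varepsilon$ for $n$ large. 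Your direct route is also valid---the weak Gibbs property plus $\Phi^\ba\in\C_{asa}$ does give a ``weak quasi-Bernoulli'' estimate $\mu(J_1\cdots J_k)/\prod_i\mu(J_i)=e^{o(\ell_k(n))}$ uniformly, which then sums over $\mathcal C_n(B)$---but you should state that uniformity explicitly, since the sum in \eqref{ballmass} has exponentially many terms.
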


\begin{rem}
{\rm
\begin{itemize}
\item
[(1)] We recover the usual weak Gibbs measures when $\ba=(1,0\dots,0)$ and $\Phi$ is the sequence of Birkhoff sums associated with a continuous potential over $X$ \cite{Yur97, Kes01}.

 \item
 [(2)] By using \eqref{defmu4} and \eqref{defmu2}, from any $(1,0,\dots,0)$-weighted weak Gibbs measure $\mu$ one can build an asymptotically additive potential of which $\mu$ is an $\ba$-weighted weak Gibbs measure.

 \end{itemize}
 }
\end{rem}

We have the following result on the multifractal analysis of $\ba$-weighted weak Gibbs measures.
\begin{thm}\label{thm-0.9'}
Let $\mu$ be an $\ba$-weighted weak Gibbs measure associated with some asymptotically additive potential. For $\alpha\in\mathbb{R}_+$ we define
$$
E_\mu(\alpha)=\Big\{x\in X: \lim_{r\to 0^+}\frac{\log
\mu(B(x,r))}{\log r}=\alpha\Big \}.
$$
Let ${\bf \Psi}_\mu=\sum_{i=1}^k a_i{\bf \Psi}^{(i)}_\mu$. Let $L_\mu=L_{-{\bf \Psi}_\mu}=\{-({\bf \Psi}_\mu)_*(\lambda):
\lambda\in\mathcal{M}(X,T)\}$. Then, for all $\alpha\geq 0$,
$E_\mu(\alpha)\neq\emptyset$ if and only if $\alpha\in L_\mu$.
For $\alpha\in L_\mu$, we have
\begin{equation*}
\begin{split}
\dim_HE_\mu(\alpha)&=\max\left\{h^\ba_{\mu}(T):\;
\lambda\in \M(X,T),\; ({\bf \Psi}_\mu)_*(\lambda)=-\alpha\right\}\\
&=\inf \left\{P^{\ba}(T, q{\bf \Psi}_\mu)+\alpha q:\; q\in \R\right\}.
\end{split}
\end{equation*}
\end{thm}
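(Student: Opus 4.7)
The plan is to reduce the local-dimension level set $E_\mu(\alpha)$ to a multi-scale level set of the form $E_{\{{\bf \Phi}^{(j)}\},\bc}(\alpha)$ treated in Theorem~\ref{thm-0.7'}, and read off both the existence criterion and the two formulas from the conclusions of that theorem.

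\emph{Step 1 (reduction to adapted radii).} I would first show that for every $x\in X$,
$$\lim_{r\to 0^+}\frac{\log \mu(B(x,r))}{\log r}=\alpha\ \Longleftrightarrow\ \lim_{n\to\infty}\frac{\log \mu(B(x,e^{-n/a_1}))}{-n/a_1}=\alpha.$$
This is a squeezing argument: for $r\in(e^{-n/a_1},e^{-(n-1)/a_1}]$, monotonicity gives $\log \mu(B(x,e^{-n/a_1}))\le \log \mu(B(x,r))\le \log \mu(B(x,e^{-(n-1)/a_1}))$ while $|\log r|\in[(n-1)/a_1,n/a_1]$; consecutive terms of both numerator and denominator have ratios tending to $1$, so the two limits coincide when either exists.

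\emph{Step 2 (rewriting via \eqref{logmassball1}).} By Theorem~\ref{weakGibbs},
$$\log \mu(B(x,e^{-n/a_1}))={\bf \Psi}^{(1)}_{\mu,n}(x)+\sum_{i=2}^k\bigl({\bf \Psi}^{(i)}_{\mu,\ell_i(n)}(x)-{\bf \Psi}^{(i)}_{\mu,\ell_{i-1}(n)}(x)\bigr)+c(x,n),$$
with $c(x,n)/n\to 0$. Since $\ell_i(n)=A_in/a_1+O(1)$, dividing by $-n/a_1$ rewrites the condition $x\in E_\mu(\alpha)$ as
$$\lim_{n\to\infty}\Bigl[-a_1\tfrac{{\bf \Psi}^{(1)}_{\mu,n}(x)}{n}-\sum_{i=2}^k\Bigl(A_i\tfrac{{\bf \Psi}^{(i)}_{\mu,\ell_i(n)}(x)}{\ell_i(n)}-A_{i-1}\tfrac{{\bf \Psi}^{(i)}_{\mu,\ell_{i-1}(n)}(x)}{\ell_{i-1}(n)}\Bigr)\Bigr]=\alpha.$$

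\emph{Step 3 (identification with a multi-scale level set and conclusion).} I would then package the $2k-1$ terms on the left as a family $\{{\bf \Phi}^{(j)}\}_{1\le j\le 2k-1}\subset \C_{asa}(X,T)$ with positive scales $\bc=(c_j)$: namely ${\bf \Phi}^{(1)}=-a_1{\bf \Psi}^{(1)}_\mu$ with $c_1=1$, and for each $i\in\{2,\ldots,k\}$ the pair $-A_i{\bf \Psi}^{(i)}_\mu$ at scale $A_i/a_1$ and $A_{i-1}{\bf \Psi}^{(i)}_\mu$ at scale $A_{i-1}/a_1$. Then $E_\mu(\alpha)=E_{\{{\bf \Phi}^{(j)}\},\bc}(\alpha)$ and the telescoping sum
$$\sum_{j=1}^{2k-1}{\bf \Phi}^{(j)}=-a_1{\bf \Psi}^{(1)}_\mu-\sum_{i=2}^k a_i{\bf \Psi}^{(i)}_\mu=-{\bf \Psi}_\mu$$
is the ``total potential'' of Theorem~\ref{thm-0.7'}. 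The mismatch between $\ell_i(n)=\lceil A_in/a_1\rceil$ and $\lfloor A_in/a_1\rfloor$ (at most $1$) is absorbed using Lemma~\ref{lem-2.1}(iii) to replace each ${\bf \Psi}^{(i)}_\mu$ by a Birkhoff sum of a continuous approximant, which is insensitive to an index shift by $O(1)$. Theorem~\ref{thm-0.7'} then gives $E_\mu(\alpha)\ne\emptyset$ iff $\alpha\in L_{-{\bf \Psi}_\mu}=L_\mu$, and for $\alpha\in L_\mu$,
$$\dim_H E_\mu(\alpha)=\max\{h^\ba_\lambda(T):\lambda\in\M(X,T),\ ({\bf \Psi}_\mu)_*(\lambda)=-\alpha\}=\inf_{q\in\R}\bigl\{P^\ba(T,-q{\bf \Psi}_\mu)-\alpha q\bigr\},$$
and the change of variable $q\mapsto -q$ in the infimum yields $\inf_{q}\{P^\ba(T,q{\bf \Psi}_\mu)+\alpha q\}$, as claimed.

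\emph{Main obstacle.} The delicate point is Step~3: matching the prescribed indices $\ell_i(n)$ against the $\lfloor c_jn\rfloor$ in the definition of $E_{\{{\bf \Phi}^{(j)}\},\bc}$, and confirming that the pointwise-only error $c(x,n)/n=o(1)$ in \eqref{logmassball1}---which is not uniform in $x$---still yields a genuine \emph{pointwise} (not merely $\mu$-a.e.) set-theoretic equality with a multi-scale level set to which Theorem~\ref{thm-0.7'} applies. Once this reduction is executed carefully, the theorem is a direct consequence of Theorem~\ref{thm-0.7'}.
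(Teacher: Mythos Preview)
Your approach is essentially identical to the paper's: both reduce $E_\mu(\alpha)$ to a multi-scale level set $E_{\{{\bf \Phi}^{(j)}\},\bc}(\alpha)$ with $\sum_j {\bf\Phi}^{(j)} = -{\bf \Psi}_\mu$ via \eqref{logmassball1} and then invoke Theorem~\ref{thm-0.7'}. Your flagged obstacle is not a real difficulty: the error $c(x,n)=o(n)$ in \eqref{logmassball1} holds for \emph{every} $x$ (not merely $\mu$-a.e.), so the set-theoretic identification is genuinely pointwise, and the $O(1)$ discrepancy between $\ell_i(n)$ and $\lfloor (A_i/a_1)n\rfloor$ is absorbed because each ${\bf \Psi}^{(i)}_\mu\in\C_{asa}(X,T)$.
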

\begin{proof}
This result is just a corollary of Theorem~\ref{thm-0.7'}. Indeed, thanks to Theorem~\ref{thm-0.9'}(2) we can write
\begin{eqnarray*}
\frac{\log
\mu(B(x,e^{-n/a_1}))}{-n/a_1}&=&-a_1\frac{{\bf \Psi}^{(1)}_{\mu,n}(x)}{n}-a_1\sum_{i=2}^k \frac{{\bf \Psi}^{(i)}_{\mu,\ell_i(n)}(x)}{n}- \frac{{\bf \Psi}^{(i)}_{\mu,\ell_{i-1}(n)}(x)}{n}+o(1)\\
&=&-a_1\frac{{\bf \Psi}^{(1)}_{\mu,n}(x)}{n}-a_1\sum_{i=2}^k  \frac{b_i{\bf \Psi}^{(i)}_{\mu,\lfloor b_in\rfloor}(x)}{\lfloor b_in\rfloor }- \frac{b_{i-1}{\bf \Psi}^{(i)}_{\mu,\lfloor b_{i-1}n\rfloor }(x)}{\lfloor b_{i-1}n\rfloor }+o(1),
\end{eqnarray*}
with $b_i=(a_1+\cdots +a_i)/a_1$. Thus, any set $E_\mu(\alpha)$ takes the form $E_{\{{\bf \Phi}^{(j)}\},{\bc}}(\alpha)$, with $\sum_{j=1}^r {\bf \Phi}^{(j)}=-{\bf\Psi}$.
\end{proof}
\subsection*{More geometric applications}
%$$
%E_{(\mu_1,\cdots,\mu_k),\bc}((\alpha_1,\dots,\alpha_i))=\displaystyle \bigcap_{i=1}^k\Big\{x\in X: \lim_{r\to 0^+}\frac{\log
%\mu_i(B(\tau_{i-1}(x),r^{\lambda_i}))}{\log r}=\beta_i\Big \},
%$$
%where $\lambda_i>0$ and $\beta\in\R_+^k$.
A parallelepiped is a subset of $X$ of the form
$$
R(I_1,\dots,I_k)=\bigcap_{i=1}^k\tau_{i-1}^{-1}(I_i), \text{ with $I_i\in \bigcup_{n\ge 0}\A_i^n$}.
$$
If we fix $0\le\lambda_1\le \dots \le \lambda_k$ and set
$$
R_n(\lambda_1,\dots,\lambda_k,x)= R\Big(x_{|\lfloor\lambda_1 n\rfloor },\dots,  \tau_{i-1}(x_{|\lfloor\lambda_i n\rfloor }),\cdots,\tau_{k-1}(x_{|\lfloor\lambda_k n\rfloor })\Big),
$$
then
\begin{equation*}
\log \mu(R_n(\lambda_1,\dots,\lambda_k,x))=\sum_{i=1}^k {\bf \Psi}_{\mu,\lfloor\lambda_in\rfloor }^{(i)}(x)- {\bf \Psi}_{\mu,\lfloor\lambda_{i-1}n\rfloor }^{(i)}(x)+o(n),
\end{equation*}
with the convention $\lambda_0=0$. Consequently, Theorem~\ref{thm-0.7'} makes it also possible to compute the Hausdorff dimension of the sets
\begin{equation*}\label{rectangles}
\bigcap_{m=1}^M\left\{x\in X: \lim_{n\to \infty}\frac{\log
\mu\big  (R_n(\lambda^{(m)}_1,\dots,\lambda^{(m)}_k,x))}{-n}=\beta_m\right \},
\end{equation*}
where $\beta\in\R_+^M$ and each $(\lambda^{(m)}_i)_{1\le i\le m}$ satisfies $0\le\lambda^{(m)}_1\le \dots \le \lambda^{(m)}_k$.

\subsection{Moran measures}
Recall that the lower Hausdorff dimension of a Borel positive measure $\nu$ on $X$ is defined as $\underline{\dim}_H(\mu)=\inf\{\dim_H E: \nu(E)>0\}$. Equivalently, $\underline{\dim}_H(\mu)=\text{ess\,inf}_\nu\liminf_{r\to 0^+} \frac{\log(\nu(B(x,r)))}{\log (r)}$ (cf.  \cite{Fan94}). Recall also Remark \ref{re-2.2}. The main result in this subsection is the following.
\begin{thm}\label{Moran}
Let $(\mu_p)_{p\ge 1}\subset \M(X,T)$ be a sequence of invariant quasi-Bernoulli measures. Suppose that $(\mu_p)_{p\ge 1}$ converges in the weak-star topology to a measure $\mu$ and, moreover, $(h_{\mu_p\circ \tau_{i-1}^{-1}}(T_i))_{p\ge 1}$ converges to a limit $h_i$ for all $1\le i\le k$. Then there exists a probability measure $\nu$ of lower Hausdorff dimension larger than or equal to $\sum_{i=1}^ka_i h_i$ such that $\nu(\mathcal{G}(\mu))>0$. Consequently, $\dim_H\mathcal{G}(\mu)\geq  \sum_{i=1}^ka_i h_i$.
\end{thm}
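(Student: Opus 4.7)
The strategy is to construct, by a Moran-type concatenation of the $\mu_p$'s, a Borel probability measure $\nu$ on $X$ satisfying $\nu(\mathcal{G}(\mu))=1$ and $\underline{\dim}_H\nu\ge \sum_{i=1}^k a_ih_i$, so that $\dim_H\mathcal{G}(\mu)\ge\sum_{i=1}^k a_ih_i$ follows from the standard mass distribution principle. Since each $\mu_p$ is quasi-Bernoulli and the factor maps $\tau_{i-1}$ are one-block, each push-forward $\mu_{p,i}:=\mu_p\circ\tau_{i-1}^{-1}$ is quasi-Bernoulli on $X_i$, hence ergodic. Fix a countable dense family $(f_m)_{m\ge 1}\subset C(X)$. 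Birkhoff and Shannon--McMillan--Breiman applied simultaneously to $\mu_p$ and to the $\mu_{p,i}$'s produce $n_p^*\in\N$ and $E_p\subset X$ with $\mu_p(E_p)\ge 1-2^{-p}$ such that, for every $x\in E_p$ and $n\ge n_p^*$, the averages $n^{-1}S_nf_m(x)$ are within $1/p$ of $\int f_m\,d\mu_p$ for $m\le p$, and $-n^{-1}\log\mu_{p,i}([\tau_{i-1}(x)_{|n}])$ is within $1/p$ of $h_i^{(p)}:=h_{\mu_{p,i}}(T_i)$ for each $1\le i\le k$; by hypothesis $h_i^{(p)}\to h_i$. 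I would then choose integers $N_p\ge n_p^*$ growing so fast that, setting $m_p=N_1+\cdots+N_p$ and $A_i=a_1+\cdots+a_i$, both $m_{p-1}/N_p\to 0$ and $N_{p+1}\ge (A_k/a_1)m_p$ hold; the latter will accommodate the ``overflow'' in the ball decomposition. Define $\nu$ on $m_p$-cylinders by
\[
\nu([I])=\prod_{j=1}^p\mu_j([I^{(j)}]),\qquad I=I^{(1)}\cdots I^{(p)},\;|I^{(j)}|=N_j,
\]
which determines a Borel probability on $X$; the push-forward $\nu\circ\tau_{i-1}^{-1}$ has the analogous concatenation structure with the measures $\mu_{j,i}$.

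The property $\nu(\mathcal{G}(\mu))=1$ would follow from Borel--Cantelli applied to $\sum_p\mu_p(E_p^c)<\infty$ combined with the product structure of $\nu$: for $\nu$-a.e.\ $x$, $T^{m_{j-1}}x\in E_j$ for all large $j$. Splitting $n^{-1}S_nf(x)$ into complete block sums plus a boundary term of relative size $o(1)$, and using both the weak-$*$ convergence $\int f\,d\mu_j\to\int f\,d\mu$ and $N_j/m_p\to 0$ for each fixed $j$, an Abel summation gives $n^{-1}S_nf(x)\to\int f\,d\mu$ for every $f=f_m$.

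For the dimension estimate, since $m_{p-1}/m_p\to 0$ forces $m_{p+1}/m_p\to 1$, it is enough to work at the scales $r=e^{-m_p/a_1}$. By Lemma~\ref{lem-simple},
\[
B(x,e^{-m_p/a_1})=\bigcap_{i=1}^k\tau_{i-1}^{-1}([\tau_{i-1}(x)_{|\ell_i(m_p)}]),
\]
and the choice $N_{p+1}\ge(A_k/a_1)m_p$ ensures $\ell_k(m_p)\le m_{p+1}$, so the product structure of $\nu$ yields
\[
\nu(B(x,e^{-m_p/a_1}))=\Big(\prod_{j=1}^p\mu_j(x^{(j)})\Big)\cdot\mu_{p+1}(\widetilde B),
\]
with $\widetilde B=\bigcap_{i=2}^k\tau_{i-1}^{-1}([\tau_{i-1}(T^{m_p}x)_{|\ell_i(m_p)-m_p}])$. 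Because $\mu_{p+1}$ and its projections $\mu_{p+1,i}$ are quasi-Bernoulli, the cylinder-splitting underlying the proof of Lemma~\ref{lem-gibbs} factorises $\mu_{p+1}(\widetilde B)\approx\prod_{i=2}^k\mu_{p+1,i}([J_i])$ for cylinders $J_i$ of length $\ell_i(m_p)-\ell_{i-1}(m_p)\sim(a_i/a_1)m_p$. Using $x\in E_j$ for all $j\le p+1$ and the SMB bounds then gives
\[
-\log\nu(B(x,e^{-m_p/a_1}))=\sum_{j=1}^p N_jh_1^{(j)}+\sum_{i=2}^k\frac{a_im_p}{a_1}h_i^{(p+1)}+o(m_p)=\frac{m_p}{a_1}\sum_{i=1}^k a_ih_i+o(m_p),
\]
where the first sum is handled by Ces\`aro averaging of $h_1^{(j)}\to h_1$. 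Dividing by $-\log r=m_p/a_1$ yields $\liminf_{r\to 0}\log\nu(B(x,r))/\log r\ge\sum_i a_ih_i$ for $\nu$-a.e.\ $x$.

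The main obstacle is the ball-measure estimate of the last display: $\widetilde B$ is \emph{not} a ball in $(X,d_\ba)$, as it lacks the $i=1$ (finest-level) constraint, so Lemma~\ref{lem-gibbs} cannot be quoted verbatim. What is needed, and what the argument relies on, is the quasi-Bernoulli cylinder-splitting at the heart of that lemma, now applied to the single measure $\mu_{p+1}$ on a cylinder subject to the nested factor constraints $\tau_{i-1}(z_{|\ell_i(m_p)-m_p})$ being fixed for $i=2,\ldots,k$; this splitting is available for any quasi-Bernoulli measure together with its one-block push-forwards, and the rapid growth $N_{p+1}\ge (A_k/a_1)m_p$ is precisely what confines the entire ball's contribution to the single concatenation stage indexed by $p+1$.
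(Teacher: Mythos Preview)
Your overall architecture---concatenate the $\mu_p$'s along rapidly growing blocks, use Borel--Cantelli/cylinder-saturation for $\mathcal G(\mu)$, and estimate $\nu(B(x,r))$ via the quasi-Bernoulli splitting---is exactly the paper's. But there is a genuine gap in the dimension step.

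You write ``since $m_{p-1}/m_p\to 0$ forces $m_{p+1}/m_p\to 1$, it is enough to work at the scales $r=e^{-m_p/a_1}$.'' This implication is false: $m_{p-1}/m_p\to 0$ means $m_p/m_{p-1}\to\infty$, hence (shifting the index) $m_{p+1}/m_p\to\infty$ as well; indeed your own growth condition $N_{p+1}\ge (A_k/a_1)m_p$ already gives $m_{p+1}/m_p\ge 1+A_k/a_1>1$ for every $p$. Since $\underline\dim_H\nu$ is defined through a $\liminf$ over \emph{all} $r\to 0$, and the gap ratio $\log r_p/\log r_{p+1}=m_p/m_{p+1}$ tends to $0$, controlling $\nu(B(x,e^{-m_p/a_1}))$ alone gives nothing at intermediate scales: for $r$ close to $e^{-m_{p+1}/a_1}$ the trivial bound $\nu(B(x,r))\le \nu(B(x,e^{-m_p/a_1}))$ only yields $\log\nu(B(x,r))/\log r\gtrsim (m_p/m_{p+1})\sum_ia_ih_i\to 0$. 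So the reduction to the block-boundary scales is invalid, and this is precisely the delicate part of the argument.

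The paper handles this by fixing an \emph{arbitrary} $n$, setting $t(n)=\max\{p:L_p\le n\}$ (their $L_p$ is your $m_p$), and writing the ball $B(x,e^{-n/a_1})$ as a nested product over at most the blocks numbered $1,\dots,t(n)$ together with partial pieces of blocks $t(n)+1$ and $t(n)+2$; the condition $N'_{p+1}\ge (A_k/a_1)L_p$ guarantees $\ell_k(n)<L_{t(n)+2}$, so the ball never reaches beyond block $t(n)+2$. One then controls each partial piece using the quasi-Bernoulli property of $\mu_{t(n)+1}$, $\mu_{t(n)+2}$ and the SMB-type estimates for $\Psi^{(p)}_i=(\log\mu_{p,i}(\tau_{i-1}(\cdot_{|n})))_n$, with explicit $o(n)$ error terms coming from the growth assumptions on $N'_p$. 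Your sketch essentially carries out the special case $n=L_p$ (so only block $p+1$ is partially used), which is the easiest of the configurations; the real content is the uniform estimate over $L_{t(n)}\le n<L_{t(n)+1}$.

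Two smaller points: (i) your Borel--Cantelli step ``$T^{m_{j-1}}x\in E_j$'' needs $E_j$ to be an $N_j$-cylinder set for the event to depend only on the $j$-th block; this is why the paper passes to $\mathcal G_p=\{I\in\A_1^{N'_p}:[I]\cap\mathcal G(p,N_p,\varepsilon_p)\neq\emptyset\}$ and works with the product set $\mathcal G=\otimes_p\mathcal G_p$ (getting $\nu(\mathcal G)>0$, not $=1$). (ii) To apply the SMB bound to the partial block of length $\ell_i(m_p)-\ell_{i-1}(m_p)$ inside block $p+1$, you also need that length to exceed the threshold $n_{p+1}^*$, which must be built into the choice of $N_p$; the paper absorbs this into its growth conditions \eqref{controlmoran1}--\eqref{controlmoran2}.
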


\noindent
{\it Proof.} For each $p\ge 1$ and $1\le i\le k$ let us define
$\mu_{p,i}=\mu_p\circ\tau_{i-1}^{-1}$ and $\Psi^{(p)}_i:=\Psi^{\mu_p}_{i}=\left(\log \mu_{p,i}(\tau_{i-1}(x_{|n})\right)_{n=1}^\infty$. Notice that each $\Psi^{(p)}_{n,i}:=\log \mu_{p,i}(\tau_{i-1}(\cdot_{|n}))$ is locally constant over $n$-cylinders, and $h_{p,i}:=h_{\mu_{p,i}}(T_{i})=-(\Psi^{(p)}_{i})_*(\mu_p)$. Recall that as a part of our assumptions we have $\lim_{p\to\infty}h_{p,i}=h_i$ for each $1\le i\le k$.

Let $\widetilde \C$ be a countable set of additive potentials satisfying the bounded distortion property and such that for each $\Phi\in \C_{asa}(X,T)$ we can find a sequence $(\Phi^{(m)})_{m\ge 1}\subset \widetilde\C$ such that $\lim_{m\to\infty}\limsup_{n\to\infty} \|\Phi^{(m)}_n-\Phi_n\|_\infty/n=0$; the existence of such a set follows from Lemma~\ref{lem-2.1}(iii) and the separability of $C(X)$. For each $m,p\ge 1$ let $\alpha_{m,p}=\Phi^{(m)}_*(\mu_p)$. Since $\Phi^{(m)}_*(\cdot)$ is continuous over $\M(X,T)$ (cf. Lemma~\ref{lem-2.1}(ii)), and $\lim_{p\to \infty}\mu_p=\mu$,  we have  $\lim_{p\to\infty} \alpha_{m,p}=\Phi^{(m)}_*(\mu):=\alpha_m$.

For each $m\ge 1$, we denote as $c_m$ the constant associated with $\Phi^{(m)}$ in \eqref{BDP}.

The following proposition is a direct consequence of Kingman's sub-additive ergodic theorem applied for every $p\ge 1$ to each element of the families $\widetilde \C$ and $\{\Psi^{(p)}_i: 1\le i\le k\}$ and the ergodic measure $\mu_p$.
%%%
\begin{pro}For $p,N\in\mathbb{N}$ and $\varepsilon>0$, let
\begin{eqnarray*}
\mathcal{G}_1(p,N,\varepsilon)&=&\bigcap_{n\ge N} \bigcap_{i=1}^k\Big \{x\in X: \Big |\frac{\Psi^{(p)}_{n,i}(x)}{-n}-h_{p,i}\Big |\le \varepsilon\Big \},\\
%%%
\mathcal{G}_2(p,N,\varepsilon)&=&\bigcap_{n\ge N} \bigcap_{m=1}^p \Big \{x\in X: \Big |\frac{\Phi^{(m)}_{n}(x)}{n}-\alpha_{m,p}\Big |\le \varepsilon\Big \},
\end{eqnarray*}
and
$$
\mathcal{G}(p,N,\varepsilon)=\mathcal{G}_1(p,N,\varepsilon)\cap \mathcal{G}_2(p,N,\varepsilon).
$$
Then for all $p\in\mathbb{N}$ and $\varepsilon_p>0$, there exists an integer $N_p\ge 1$ such that
$$
\quad \mu_p(\G(p,N_p, \varepsilon_p))\ge 1-2^{-p}.
$$
\end{pro}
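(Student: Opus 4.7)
The plan is to invoke Kingman's subadditive ergodic theorem (and its additive Birkhoff specialization) for each of the finitely many potentials under consideration. First I would note that $\mu_p$ is ergodic by virtue of being quasi-Bernoulli (cf.\ \cite[Theorem 1.5(iv)]{Wal82}). For each $1\le i\le k$, the sequence $\Psi^{(p)}_i=(\log \mu_{p,i}(\tau_{i-1}(x_{|n})))_{n=1}^\infty$ defines a sub-additive potential on $(X,T)$: if $c$ denotes the quasi-Bernoulli constant of $\mu_{p,i}$ on $X_i$, then
$$
\log\mu_{p,i}(\tau_{i-1}(x_{|n+m}))\le \log\mu_{p,i}(\tau_{i-1}(x_{|n}))+\log\mu_{p,i}(\tau_{i-1}((T^nx)_{|m}))+\log c.
$$
Kingman's theorem then yields pointwise convergence $\Psi^{(p)}_{n,i}(x)/n \to (\Psi^{(p)}_i)_*(\mu_p)=-h_{p,i}$ for $\mu_p$-almost every $x\in X$.

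Similarly, for each $1\le m\le p$, the potential $\Phi^{(m)}\in\widetilde\C$ is additive with bounded distortion, and the Birkhoff ergodic theorem gives $\Phi^{(m)}_n(x)/n\to \alpha_{m,p}$ for $\mu_p$-almost every $x$. Since only finitely many potentials are involved (exactly $k+p$ of them), the intersection
$$
E_p := \bigcap_{i=1}^k\left\{x\in X: \frac{\Psi^{(p)}_{n,i}(x)}{-n}\to h_{p,i}\right\} \cap \bigcap_{m=1}^p\left\{x\in X: \frac{\Phi^{(m)}_n(x)}{n}\to \alpha_{m,p}\right\}
$$
has full $\mu_p$-measure.

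To upgrade a.e.\ convergence to the uniform control built into $\mathcal{G}(p,N,\varepsilon_p)$, I would observe that the family $\{\mathcal{G}(p,N,\varepsilon_p)\}_{N\ge 1}$ is non-decreasing in $N$ (larger $N$ drops more stringent constraints), and that $E_p\subseteq \bigcup_{N\ge 1}\mathcal{G}(p,N,\varepsilon_p)$. Indeed, for each $x\in E_p$ and each of the $k+p$ indices, the desired $\varepsilon_p$-inequality holds for all $n$ beyond some threshold; taking the maximum of these finitely many thresholds yields an $N(x)$ with $x\in \mathcal{G}(p,N(x),\varepsilon_p)$. Continuity of the probability measure $\mu_p$ from below then gives
$$
\lim_{N\to\infty}\mu_p(\mathcal{G}(p,N,\varepsilon_p))\ge \mu_p(E_p)=1,
$$
so it suffices to pick $N_p$ large enough that $\mu_p(\mathcal{G}(p,N_p,\varepsilon_p))\ge 1-2^{-p}$.

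The only non-routine step is verifying the sub-additivity of $\Psi^{(p)}_i$ used in Kingman's theorem, which reduces to the displayed inequality above coming from the quasi-Bernoulli property of $\mu_{p,i}$; the potential conceptual obstacle of handling infinitely many potentials at once does not arise because $p$ is fixed and only the first $p$ elements of $\widetilde\C$ are involved. This finite-ness is precisely what permits the use of elementary continuity of measure in place of a full-strength Egorov-type argument.
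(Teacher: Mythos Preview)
Your argument is correct and follows exactly the route the paper indicates: the proposition is stated there as ``a direct consequence of Kingman's sub-additive ergodic theorem applied for every $p\ge 1$ to each element of the families $\widetilde \C$ and $\{\Psi^{(p)}_i: 1\le i\le k\}$ and the ergodic measure $\mu_p$,'' and you have simply spelled out the details (ergodicity of $\mu_p$, sub-additivity of $\Psi^{(p)}_i$ from the quasi-Bernoulli property, and continuity of measure from below).
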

%%%
Let $(\varepsilon_p)_{p\ge 1}$ be a decreasing sequence converging to 0. With the notations in the previous proposition, for each $p$ we choose any $N'_p\ge N_p$. A precise choice of the integers $N'_p$ will be given later. Let $\mathcal{F}_p$ denote the $\sigma$-algebra generated by $\left\{[I]: I\in \A_1^{N'_p}\right\}$. We define
$$
\G_p=\Big \{I\in \A_1^{N'_p}: [I]\cap \G(p,N_p,\varepsilon_p)\neq\emptyset\Big \}.
$$
Then we denote by $\widetilde \mu_p$ the restriction of $\mu_p$ to $\mathcal{F}_p$ and define
\begin{equation*}
\begin{split}
\nu_p&=\otimes_{l=1}^p \widetilde\mu_l\text{ on $\Big(X,\otimes_{l=1}^p \mathcal{F}_p\Big )$}, \quad p\ge 1,\\
\displaystyle \nu&=\otimes_{p=1}^\infty \widetilde\mu_p\text{ on $\Big(X,\otimes_{p=1}^\infty \mathcal{F}_p\Big )$},
\end{split}
\end{equation*}
and
$$
\G:=\otimes_{p\ge 1}\G_p=\{I_1I_2\cdots I_p\cdots\in X_1: \forall\ p\ge 1, \ I_p\in  \G_p)\}.
$$
By construction, we have
$$
\nu(\G)=\prod_{p\ge 1} \widetilde \mu_p(\G_p)\ge \prod_{p\ge 1} \mu_p(\G(p,N_p,\varepsilon_p))=\prod_{p\ge 1}(1-2^{-p})>0.
$$
To conclude, it is enough to show that we can choose the sequence $(N'_p)_{p\ge 1}$ such that
\begin{eqnarray}
\label{gmu2}&\G\subset \G(\mu)\mbox{ and }\\
\label{dimnu2}& \displaystyle \liminf_{n\to\infty}\frac{\log\nu (B(x,e^{-n/a_1}))}{-n/a_1}\ge \sum_{i=1}^ka_ih_i \quad\mbox{ for all } x\in \G.
\end{eqnarray}
Then, $\nu:=\displaystyle \frac{\nu_{|\G}}{\nu(\G)}$ is desired.

\medskip

Let us establish \eqref{gmu2} and \eqref{dimnu2}.

\noindent{\it  Proof of \eqref{gmu2}.} We choose $N'_1=N_1$ and require that the sequence $(N'_p)_{p\ge 1}$ satisfies
\begin{equation}\label{controlmoran1}
M_p:=(p+1)\max_{1\le m\le p+1}\log (c_m)+\max_{1\le m\le p+1}\max_{1\le l\le N_{p+1}}\|\Phi^{(m)}_l\|_\infty=o\Big (\sum_{l=1}^{p}N'_l\Big)
\end{equation}
as $p\to\infty$. Then, for every $p\ge 1$ let
\begin{equation*}
\label{e-taa2}
L_p=\sum_{i=1}^pN'_i.
\end{equation*}

Due to the density of $\widetilde{\mathcal C}$, it is enough to prove that for each $m\ge 1$ and $x\in \G$ we have
\begin{equation}
\label{e-taa1}
\lim_{n\to\infty} \frac{\Phi^{(m)}_{n}(x)}{n}=\Phi^{(m)}_*(\mu) \;(:=\alpha_m).
\end{equation}
Fix $m\geq 1$ and $x\in \G$. For $n\ge N_1$, let $t(n)=\max\{p: L_p\le n\}$.  For all $n> L_{m+1}$, write
$$
\Phi^{(m)}_n(x)= \Phi^{(m)}_{L_m}(x)+\sum_{p=m+1}^{t(n)}\Phi^{(m)}_{N'_{p}}(\sigma^{L_{p-1}} x)+ \Phi^{(m)}_{n-L_{t(n)}}(\sigma^{L_{t(n)}} x).
$$
By construction, for $m+1\le p\le t(n)$ we have $\sigma^{L_{p-1}} x_{|N_{p}'}\in \G_{p}$. Consequently, there exists $x'\in\G(p,N_p,\varepsilon_p)$ such that $x'_{|N'_p}=\sigma^{L_{p-1}} x_{|N_p'}$ and thus
\begin{equation*}
%\begin{split}
|\Phi^{(m)}_{N'_p}(\sigma^{L_{p-1}} x)-N'_p\alpha_{m,p}|\le
|\Phi^{(m)}_{N'_p}(\sigma^{L_{p-1}} x)-\Phi^{(m)}_{N'_p}(x')|+ |\Phi^{(m)}_{N'_p}(x')-N'_p\alpha_{m,p}|\le \log (c_m)+ N'_p\varepsilon_p.
\end{equation*}
This yields
$$
\Big |\alpha_m\Big (\sum_{p=m+1}^{t(n)}N'_p\Big )-\Big (\sum_{p=m+1}^{t(n)}\Phi^{(m)}_{N'_p}(\sigma^{L_{p-1}} x)\Big )\Big |\le t(n) \log (c_m)+ \sum_{p=m+1}^{t(n)}N'_p(|\alpha_{m,p}-\alpha_m|+\varepsilon_p).
$$
Also, if $n-L_{t(n)}\le N_{t(n)+1}$, we have $ |\Phi^{(m)}_{n-L_{t(n)}}(\sigma^{L_{t(n)}} x)|\le \max_{1\le l\le N_{t(n)+1}}\|\Phi^{(m)}_l\|_\infty$, and if  $n-L_{t(n)}>N_{t(n)+1}$, then $[\sigma^{L_{t(n)}}x_{|n-L_{t(n)}}]\cap \G({t(n)+1},N_{t(n)+1},\varepsilon_{t(n)+1})\neq\emptyset$.  By the same argument as above we get
$$
\Big|\alpha_m(n-L_{t(n)})-\Phi^{(m)}_{n-L_{t(n)}}(\sigma^{L_{t(n)}} x)\Big|\le \log (c_m)+(n-L_{t(n)})(|\alpha_{m,t(n)+1}-\alpha_m|+\varepsilon_{t(n)+1}).
$$
It follows that
\begin{multline*}
 |\Phi^{(m)}_n(x)-n\alpha_m|\le |\Phi^{(m)}_{L_m}(x)|+M_{t(n)}\\
 + \Big (\sum_{p=m+1}^{t(n)}N'_p(|\alpha_{m,p}-\alpha_m|+\varepsilon_p)\Big )+(n-L_{t(n)})(|\alpha_{m,t(n)+1}-\alpha_m|+\varepsilon_{t(n)+1}).
 \end{multline*}
Due to our choice for $(N'_p)_{p\ge 1}$ and the fact that both $|\alpha_{m,p}-\alpha_m|$ and $\varepsilon_p$ tend to 0 as $p$ tends to $\infty$, as well as
 $M_{t(n)}=o(n)$, we obtain \eqref{e-taa1}.
This proves \eqref{gmu2}.

\medskip

\noindent
{\it Proof of \eqref{dimnu2}.} For each $p\ge 1$, since $\mu_p$ is quasi-Bernoulli, we can fix $\kappa_p>1$ such that \eqref{quasib} holds for $\mu=\mu_p$ and with the constant sequence $c=\kappa_p$.

We need additional properties for $(N'_p)_{p\ge 1}$.

The first one is that
$$
\displaystyle N'_{p+1}\ge \frac{a_1+\dots +a_k}{a_1} \Big (\sum_{i=1}^pN'_p\Big)= \frac{a_1+\dots +a_k}{a_1}L_p.
$$
The second one is
\begin{equation}\label{controlmoran2}
\sum_{l=1}^{p+2}\log (\kappa_{p})+\max_{1\le i\le k}\max_{j\in\{p+1,p+2\}}( h_iN_j+\max_{1\le n\le N_{j}}\|\Psi^{(j)}_{n, i}\|_\infty)=o(L_p)\text{ as $p\to\infty$}.
\end{equation}

 Fix $x=(x_i)_{i=1}^\infty\in \G$ and $n\ge N'_1$.  For $i=1,\ldots,k$, we use  $U_i$ to denote the word $x_{\ell_{i-1}(n)+1}\cdots x_{\ell_i(n)}$.
 Then by Lemma \ref{lem-simple},
  $$B(x,e^{-n/a_1})=\{y\in X: \;\forall\ 1\le i\le k, \ \tau_{i-1}(y)\in \tau_{i-1}(U_{1}\dots U_{i})\}.$$
  Write  $B=B(x,e^{-n/a_1})$
  for simplicity. Since $N'_{p+1}\ge (a_1+\dots+a_k) L_p/a_1$, there are only two cases to be distinguished: either $L_{t(n)}\le n <\ell_{k}(n)< L_{t(n)+1}$ or $L_{t(n)}\le n <L_{t(n)+1}=L_{t(\ell_{k}(n))}\le \ell_{k}(n)$. We deal with the second case and leave the easier first case to the reader.

Let $i_0$ be the unique $2\le i\le k$ such that $\ell_{i-1}(n)<L_{t(n)+1}\le \ell_{i}(n)$.
Let
$$
\mathcal{C}_n(B)= \left\{(J_1,\dots,J_k)\in \prod_{i=1}^{k}\A_1^{\ell_i(n)-\ell_{i-1}(n)}: \ \forall\ 1\le i\le k,\ \tau_{i-1}(J_i)=\tau_{i-1}(U_i)\right\}.
$$
We have
\begin{equation}\label{ballmass}
\nu(B)=\sum_{(J_1,\dots,J_k)\in \mathcal{C}_n(B)}\nu(J_1\cdots J_k).
\end{equation}
Write $J_1(=U_1)=\widetilde J_1\widehat J_1$  with $\widetilde J_1\in \A_1^{L_{t(n)}}$ and  $\widehat J_1\in \A_1^{n-L_{t(n)}}$, and write $J_{i_0}=\widetilde J_{i_0}\widehat J_{i_0}$, with $\widetilde J_{i_0}\in \A_1^{L_{t(n)+1}-\ell_{i_0-1}(n)}$ and $\widehat J_{i_0}\in \A_1^{\ell_{i_0}(n)-L_{t(n)+1}}$. This yields, by definition of $\nu_{t(n)}$ and $\nu$,
$$
%\begin{multline*}
\nu(B)=\sum_{(J_1,\ldots,J_k)\in \C_n(B)}\nu_{t(n)}(\widetilde J_1)\cdot  \mu_{t(n)+1}(\widehat J_1 J_2\cdots J_{i_0-1} \widetilde J_{i_0})
 \cdot  \mu_{t(n)+2}(\widehat J_{i_0} J_{i_0+1}\cdots J_k).
 %\end{multline*}
$$
Now, by using the quasi-Bernoulli properties of $\mu_{t(n)+1}$ and $\mu_{t(n)+2}$ we get
\begin{multline*}
 \nu(B)\approx \sum_{(J_1,\ldots,J_k)\in \mathcal{C}_n(B)}\nu_{t(n)}(\widetilde J_1)\cdot\mu_{t(n)+1}(\widehat J_1 )\cdot  \prod_{i=2}^{i_0-1}\mu_{t(n)+1}(J_i )\\
\cdot   \mu_{t(n)+1}(\widetilde J_{i_0}) \cdot \mu_{t(n)+2}(\widehat J_{i_0})\cdot
 %%%
\prod_{i=i_0+1}^k \mu_{t(n)+2}(J_i),
\end{multline*}
where $\approx$ means the expressions  on its left and right hand sides differ from each other by a multiplicative constant belonging to $[\max (\kappa_{t(n)+1},\kappa_{t(n)+2})^{-k},\max (\kappa_{t(n)+1},\kappa_{t(n)+2})^k]$.

Accordingly, write $U_1=\widetilde U_1\widehat U_1$  with $\widetilde U_1\in \A_1^{L_{t(n)}}$ and  $\widehat U_1\in \A_1^{n-L_{t(n)}}$, and write $U_{i_0}=\widetilde U_{i_0}\widehat U_{i_0}$, with $\widetilde U_{i_0}\in \A_1^{L_{t(n)+1}-\ell_{i_0-1}(n)}$ and $\widehat U_{i_0}\in \A_1^{\ell_{i_0}(n)-L_{t(n)+1}}$.
Remembering the definition of $\mathcal{C}_n(B)$ we get
%%%%%
$$
\nu(B)\approx \prod_{i=1}^6T_i,
$$
where
\begin{equation*}
\begin{split}
T_1&=\nu_{t(n)}(\widetilde U_1),\ T_2=\mu_{t(n)+1}(\widehat U_1),\\
 T_3&= \prod_{i=2}^{i_0-1} \mu_{t(n)+1,i}(\tau_{i-1}(U_i)), \ T_4=\mu_{t(n)+1,i_0}(\tau_{i_0-1}(\widetilde U_{i_0})),\\
T_5&=\mu_{t(n)+2,i_0}(\tau_{i_0-1}(\widehat U_{i_0})) ,\ T_6= \prod_{i=i_0+1}^{k} \mu_{t(n)+2,i}(\tau_{i-1}(U_i)).
\end{split}
\end{equation*}
%\begin{multline*}
% \nu(B)\approx \nu_{s_{p_n}}([\widetilde I_{1,0}\times \cdots\times \widetilde I_{k,0}])\mu_{p_{n}+1}([\widehat I_{1,0}\times\cdots \widehat I_{k,0}])
%  \prod_{j=1}^{j_0-1} \mu_{p_{n}+1,j}([I_{1,j}\times \cdots\times I_{k-j,j}] )\\
%\cdot   \mu_{p_{n}+1,j_0}([\widetilde I_{1,j_0}\times \cdots\times \widetilde I_{k-j_0,j_0}]) \mu_{p_{n}+2,j_0}([\widehat I_{1,j_0}\times \cdots\times \widehat  I_{k-j_0,j_0}])
%%%%
% \prod_{j=j_0+1}^{k-1} \mu_{p_{n}+2,j}([I_{1,j}\times \cdots\times I_{k-j,j}] ).
%\end{multline*}
Let us write $\widetilde U_1= K_{1}\cdots K_{t(n)}$ with $K_{p}\in \A_1^{N'_p}$, for $1\le p\le t(n)$. By construction
$$
T_1=\prod_{p=1}^{t(n)}\mu_{p}(K_p).
$$
%where $\approx$ means here that the expressions  on its left and right hand sides differ from each other by a multiplicative constant belonging to $\Big [\prod_{p=1}^{p_n}\widetilde c_p^{-1}, \prod_{p=1}^{p_n}\widetilde c_p\Big ]$.
Now, we notice that  $x_{|\ell_{k}(n)}= K_1\cdots K_{t(n)} \widehat U_1U_2\cdots U_{i_0-1}\widetilde U_{i_0}\widehat U_{i_0} U_{i_0+1}
\cdots U_k$. Since $x\in\G$, we have $K_p$ belongs to $\G_p$ for $1\leq p\leq t(n)$. This  yields
\begin{eqnarray*}
\Big |\log T_1+h_1\Big (\sum_{p=1}^{t(n)}N'_p\Big )\Big|&=&\Big |\sum_{p=1}^{t(n)}\log \mu_p(K_{p})+h_1\Big (\sum_{p=1}^{t(n)}N'_p\Big )\Big|\\
&\le &R_1:=\sum_{p=1}^{t(n)}N'_p (|h_1-h_{p,1}|+\varepsilon_p).
\end{eqnarray*}
%%%

To control $T_2$, we notice that if $n-L_{t(n)}\le N_{t(n)+1}$ then $\widehat U_1\in \bigcup_{l=1}^{N_{t(n)+1}}\A_1^l$,  hence $|\log (T_2)|\le \max_{1\le l\le N_{t(n)+1}}\|\Psi_{l,1}^{(t(n)+1)}\|_\infty$, and $h_1(n-L_{t(n)})\le h_1N_{t(n)+1}$. If $N_{t(n)+1}<n-L_{t(n)} \le N'_{t(n)+1}$, since $[\widehat U_1]= [x_{L(n)+1}\cdots  x_{n}]$, we have  $[\widehat U_1]\cap \G(t(n)+1,N_{t(n)+1},\varepsilon_{t(n)+1})\neq\emptyset$, and since the mapping $\Psi^{(t(n)+1)}_{n-L_{t(n)},1}$ is constant over $[\widehat U_1]$ we obtain\begin{eqnarray*}
|\log T_2+h_1(n-L_{t(n)})|&\le&|\log T_2 +h_{t(n)+1,1}(n-L_{t(n)})|+(n-L_{t(n)})|h_1-h_{t(n)+1,1}|\\
&\le &  (n-L_{t(n)})(|h_1-h_{t(n)+1,1}|+\varepsilon_{t(n)+1}).
\end{eqnarray*}
In all cases,
\begin{eqnarray*}
|\log T_2+h_1(n-L_{t(n)})|&\le&R_2:=n(|h_1-h_{t(n)+1,1}|+\varepsilon_{t(n)+1})\\
&\mbox{}&\qquad
+h_1N_{t(n)+1}+\max_{1\le l\le N_{t(n)+1}}\|\Psi_{l,1}^{(t(n)+1)}\|_\infty.
\end{eqnarray*}
%%%

To control $T_3$ we proceed as follows. Fix $2\le i\le i_0-1$. Let $\overline U_i=[x_{L_{t(n)}+1}\cdots x_{\ell_{i-1}(n)}]$. By using the quasi-Bernoulli property of $\mu_{t(n)+1}$, which holds with the constant $\kappa_{t(n)+1}$, we can get
\begin{equation}\label{quasibmoran}
\Big |\log \mu_{t(n)+1}(U_i) - \big (\log \mu_{t(n)+1}(\overline U_iU_i)-\log \mu_{t(n)+1}(\overline U_i)\big )\Big |\le \log (\kappa_{t(n)+1}).
\end{equation}
Let $N\in\{\ell_{i-1}(n)- L_{t(n)}, \ell_{i}(n)-L_{t(n)}\}$, and set $U= \overline U_i$ if $N= \ell_{i-1}(n)- L_{t(n)}$ and $U=\overline U_iU_i$ otherwise.
If $N\le N_{t(n)+1}$, we have $|\log \mu_{t(n)+1,i}(\tau_{i-1}(U))|\le  \max_{1\le l\le N_{t(n)+1}}\|\Psi_{l,i}^{(t(n)+1)}\|_\infty$, and
$h_i N\le  h_iN_{t(n)+1}$. If $N_{t(n)+1}<N\le N'_{t(n)+1}$, since $[U]=[x_{L(n)+1}\cdots  x_{L(n)+N}]$, we have $[U]\cap \G(t(n)+1,N_{t(n)+1},\varepsilon_{t(n)+1})\neq\emptyset$, and since the mapping $\Psi^{(t(n)+1)}_{N,i}$ is constant over $[U]$ we obtain
\begin{eqnarray*}
&&|\log\mu_{t(n)+1,i}(\tau_{i-1}(U))+h_i N|\\
&&\quad \le |\log\mu_{t(n)+1,i}(\tau_{i-1}(U))+h_{t(n)+1,i}N|+N |h_i-h_{t(n)+1,i}|\\
&&\quad \le   N(|h_i-h_{t(n)+1,i}|+\varepsilon_{t(n)+1}),
\end{eqnarray*}
hence (using that $N\le \ell_i(n)$)
\begin{equation*}
\begin{split}
&\big |\log\mu_{t(n)+1,i}(\tau_{i-1}(\overline U_iU_i)- \log\mu_{t(n)+1,i}(\tau_{i-1}(\overline U_i))+h_i (\ell_i(n)-\ell_{i-1}(n))\big |\\
&\quad \le  r_i:=2 \big (\ell_i(n) (|h_i-h_{t(n)+1,i}|+\varepsilon_{t(n)+1})
+h_iN_{t(n)+1}+\max_{1\le l\le N_{t(n)+1}}\|\Psi_{l,i}^{(t(n)+1)}\|_\infty\big ).
\end{split}
\end{equation*}
Combining this with \eqref{quasibmoran} we get
\begin{eqnarray*}
\Big |\log T_3+\sum_{i=2}^{i_0-1}h_{i}(\ell_{i}(n)-\ell_{i-1}(n))\Big |\le R_3:=\sum_{i=2}^{i_0-1}\left( r_i+\log (\kappa_{t(n)+1})\right).
\end{eqnarray*}
%%%
By using the same arguments as for $T_2$ and $T_3$ we obtain
$$
\begin{cases}
|\log T_4+h_{i_0}(L_{t(n)+1}-\ell_{i_0-1}(n))|&\le R_4,\\
|\log T_5+h_{i_0}(\ell_{i_0}(n)-L_{t(n)+1})|&\le R_5, \\
\displaystyle \Big |\log T_6+\sum_{i=i_0+1}^{k}h_{i}(\ell_{i}(n)-\ell_{i-1}(n))\Big |&\le R_6,
\end{cases}
$$
with
\begin{eqnarray*}
R_4&=&2\big (L_{t(n)+1}(|h_{i_0}-h_{t(n)+1,i_0}|+\varepsilon_{t(n)+1})+h_{i_0+1}N_{t(n)+1}\\
&\mbox{ }&\quad+\max_{1\le l\le N_{t(n)+1}}\|\Psi_{l,i_0}^{(t(n)+1)}\|_\infty+\log (\kappa_{t(n)+1})\big );\\
R_5&=&\ell_{i_0}(n)(|h_{i_0}-h_{t(n)+2,i_0}|+\varepsilon_{t(n)+2})+h_{i_0}N_{t(n)+2}+\max_{1\le l\le N_{t(n)+2}}\|\Psi_{l,i_0}^{(t(n)+2)}\|_\infty;
\\
R_6&=&2 \sum_{i=i_0+1}^{k} \Big(\ell_{i}(n)(|h_{i}-h_{t(n)+2,i}|+\varepsilon_{t(n)+2})+ h_{i}N_{t(n)+2}\\
&&\mbox{ \hskip 1.5cm}+\max_{1\le l\le N_{t(n)+2}}\|\Psi_{l,i}^{(t(n)+2)}\|_\infty+ \log (\kappa_{t(n)+2})\Big).
\end{eqnarray*}
All the previous estimates yield, by construction of $(\varepsilon_p)_{p\ge 1}$, $(N'_p)_{p\ge 1}$ and the convergence of $h_{p,i}$ to $h_i$ as $p\to\infty$,
\begin{eqnarray*}
\left |\log (\nu(B))+\sum_{i=1}^{k}h_{i}(\ell_{i}(n)-\ell_{i-1}(n))\right|&\le& k\sum_{p=1}^{t(n)+2}\log (\kappa_p) +\sum_{i=1}^6 R_i \\
&=&o(L_{t(n)}+\ell_{k}(n))=o(n).
\end{eqnarray*}
Since $\displaystyle \lim_{n\to\infty}\frac{a_1}{n}\sum_{i=1}^{k}h_{i}(\ell_{i}(n)-\ell_{i-1}(n))=\displaystyle\sum_{i=1}^{k}a_ih_i$, we get $\displaystyle\lim_{n\to \infty}\frac{\log(\nu(B(x,e^{-n/a_1}))}{-n/a_1}=\sum_{i=1}^{k}a_ih_i$.
This finishes the proof of Theorem~\ref{Moran}. \eproof

\subsection{Proof of Theorem~\ref{thm-0.6'}}
 \label{s-5.4}
 By Theorem~\ref{thm-0.5'}, there exists a sequence of invariant quasi-Bernoulli measures  $(\mu_p)_{p\ge 1}$ converging to $\mu$ in the weak-star topology,  such that $h_{\mu_p\circ\tau_{i-1}^{-1}}(T_i)$ converges to $h_{\mu\circ \tau_{i-1}^{-1}}(T_i)$ for each $1\le i\le k$, as $p\to\infty$ (use the same argument as in Remark~\ref{simultapprox}). Then, the lower bound for $\dim_H \mathcal{G}(\mu)$ is a direct consequence of Theorem~\ref{Moran}. For the upper bound, we notice that $\mathcal{G}(\mu)\subset \bigcap_{\Phi\in \C(X,T)}E_\Phi(\Phi_*(\mu))$, where $\Phi\in \C(X,T)$ means $\Phi=(S_n\varphi)_{n=1}^\infty$ for some $\varphi\in C(X)$. Thus, by using Lemma~\ref{upperboundstep1} whose proof is independent of the present one, we obtain
\begin{equation*}
\begin{split}
\dim_H  \mathcal{G}(\mu)&\le \inf_{\Phi\in \C(X,T)} \dim_H E_\Phi(\Phi_*(\mu))\\
&\le \inf_{\Phi\in \C(X,T)}\inf _{q\in \R}P^{\ba}(T, q\Phi)-q\Phi_*(\mu)\\
&=\inf _{q\in \R}\inf_{\Phi\in \C(X,T)}P^{\ba}(T,   q\Phi)-q\Phi_*(\mu)\\
&=\inf_{\Phi\in \C(X,T)}P^{\ba}(T, \Phi)-\Phi_*(\mu).
\end{split}
\end{equation*}
 Now we note that, on the one hand, the $\ba$-weighted topological pressure is the Legendre-Fenchel  transform of the $\ba$-weighted entropy defined on the compact convex set $\M(X,T)$ of $C(X)^*$ endowed with the weak-star topology, and on the other hand, the $\ba$-weighted entropy is upper semi-continuous. Hence we have $\inf_{\Phi\in \C(X,T)}P^{\ba}(T, \Phi)-\Phi_*(\mu)=h^\ba_{\mu}(T)$ by mimicking the proof of Theorem 3.12 in \cite{Rue78}.  This yields the conclusion. \eproof

%%%%%%%%%%%%%%%%%%%%
\subsection{Proof of Theorem~\ref{thm-0.7'}}
\label{s-5.5}
We first prove Theorem~\ref{thm-0.7'}(1).
 For $\alpha\in L_{\bf\Phi}$ let
 \begin{equation*}\label{fphi}
 f_{\bf \Phi}(\alpha)= \max\{h^\ba_{\mu}(T):\; \mu\in \M(X,T),\; {\bf \Phi}_*(\mu)=\alpha\}.
 \end{equation*}
 Since the mapping $\mu\in\mathcal{M}(X,T)\mapsto \sum_{i=1}^ka_ih_{\mu\circ\tau_{i-1}^{-1}}(T_i)$ is upper semi-continuous and affine, the equality $f_{\bf \Phi}(\alpha)= \inf \left\{P^{\ba}(T,  \bq\cdot {\bf \Phi})-\alpha\cdot \bq:\; \bq\in \R^d\right\}$ for $\alpha\in L_{\bf\Phi}$ is obtained by exactly the same arguments as those used to prove Theorem~5.2{\rm (iii)} in \cite{FeHu09a}; one just replaces the usual entropy by the $\ba$-weighted one.  Similarly, the proof of  the equivalence between {\rm (i)}, {\rm (iii)} and {\rm (iv)} follow the same lines as that of Theorem 5.2 {\rm (ii)} in \cite{FeHu09a}.

Consequently, to conclude it only remains to show that
\begin{eqnarray}
\label{check1}
&&E_{\{{\bf\Phi}^{(j)}\},\bc}(\alpha)\neq\emptyset\text{ and }\dim_H E_{\{{\bf\Phi}^{(j)}\},\bc}(\alpha)\ge \
f_{\bf \Phi}(\alpha) \text{ if $\alpha\in L_{\bf \Phi}$;}\\
&&\label{check2}\dim_HE_{\{{\bf\Phi}^{(j)}\},\bc}(\alpha)\le \inf \left\{P^{\ba}(T,  \bq\cdot {\bf \Phi})-\alpha\cdot \bq:\; \bq\in \R^d\right\} \text{ if $E_{\{{\bf\Phi}^{(j)}\},\bc}(\alpha)\neq\emptyset$},
\end{eqnarray}
since these properties clearly yield the equivalence of {\rm (i)} and {\rm (ii)}, as well as the value of $\dim_H E_{\{{\bf\Phi}^{(j)}\},\bc}(\alpha)$.

Assertion \eqref{check1} is an immediate consequence of Theorem~\ref{thm-0.6'} and the following lemma.
\begin{lem}\label{lem-5.12}
Let $\alpha=(\alpha_1,\dots,\alpha_d)\in L_{\bf\Phi}$ and $\mu\in\M(X,T)$ such that ${\bf\Phi}_*(\mu)=\alpha$. We have $\G(\mu)\subset E_{\{{\Phi}^{(j)}\},\bc}( \alpha)$.
\end{lem}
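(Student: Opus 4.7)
The plan is to reduce the claim to an elementary consequence of the definition of $\G(\mu)$ once it is restated via asymptotically additive potentials (Remark~\ref{re-2.2}), together with the trivial stability of a convergent sequence under passage to a subsequence $n\mapsto \lfloor c_jn\rfloor$.

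First, I would fix $x\in \G(\mu)$ and observe that, since each component $\Phi^{(j)}_i=(\log \phi^{(j)}_{n,i})_{n=1}^\infty$ belongs to $\C_{asa}(X,T)$, Remark~\ref{re-2.2} applies and gives
\begin{equation*}
\lim_{n\to\infty}\frac{\Phi^{(j)}_{n,i}(x)}{n}=(\Phi^{(j)}_i)_*(\mu)
\end{equation*}
for every $1\le i\le d$ and $1\le j\le r$. Note that the right-hand side is a finite real number, because Lemma~\ref{lem-2.1}(iii) combined with the characterization of $\C_{asa}$ guarantees $|(\Phi^{(j)}_i)_*(\mu)-\int g\,d\mu|\le\varepsilon$ for a suitable continuous $g$ and arbitrary $\varepsilon>0$.

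Next, because $c_j>0$ forces $\lfloor c_jn\rfloor\to\infty$ as $n\to\infty$, the convergence above persists along the subsequence:
\begin{equation*}
\lim_{n\to\infty}\frac{\Phi^{(j)}_{\lfloor c_jn\rfloor,i}(x)}{\lfloor c_jn\rfloor}=(\Phi^{(j)}_i)_*(\mu).
\end{equation*}
Summing over $j=1,\ldots,r$ and using that $\mathbf{\Phi}=\sum_{j=1}^r\mathbf{\Phi}^{(j)}$, i.e.\ $\log\phi_{n,i}=\sum_{j=1}^r\log\phi^{(j)}_{n,i}$, I obtain by linearity of $(\cdot)_*(\mu)$ on finite-valued asymptotically additive potentials that
\begin{equation*}
\lim_{n\to\infty}\sum_{j=1}^r\frac{\Phi^{(j)}_{\lfloor c_jn\rfloor,i}(x)}{\lfloor c_jn\rfloor}=\sum_{j=1}^r(\Phi^{(j)}_i)_*(\mu)=(\Phi_i)_*(\mu)=\alpha_i
\end{equation*}
for each $1\le i\le d$, where the last equality uses the hypothesis $\mathbf{\Phi}_*(\mu)=\alpha$. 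This is precisely the membership condition defining $E_{\{\mathbf{\Phi}^{(j)}\},\bc}(\alpha)$, so $x\in E_{\{\mathbf{\Phi}^{(j)}\},\bc}(\alpha)$.

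There is no serious obstacle here; the only slightly delicate point is ensuring that $(\Phi^{(j)}_i)_*(\mu)\in\R$ (not $-\infty$) so that $\sum_j$ and $\lim_n$ may be exchanged without ambiguity, which is handled by the asymptotic additivity and Lemma~\ref{lem-2.1}(iii). Everything else is routine.
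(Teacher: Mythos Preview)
Your proof is correct and follows essentially the same route as the paper: both arguments use Remark~\ref{re-2.2} to conclude that $\lim_{n\to\infty}\Phi^{(j)}_{n,i}(x)/n=(\Phi^{(j)}_i)_*(\mu)$ for each $i,j$ whenever $x\in\G(\mu)$, then pass to the indices $\lfloor c_jn\rfloor\to\infty$ and sum over $j$ to obtain $\alpha_i$. Your extra remark that $(\Phi^{(j)}_i)_*(\mu)\in\R$ is a nice clarification the paper leaves implicit.
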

\noindent
\begin{proof}[Proof of Lemma \ref{lem-5.12}] By definition of ${\bf \Phi}$, we have $\alpha_i=\sum_{j=1}^r ({\Phi}^{(j)}_{i})_*(\mu)$ for each $1\le i\le d$. Moreover, by the definition of $\G(\mu)$,  we have $\G(\mu)\subset E_{{\Phi}^{(j)}_i}( ({\Phi}^{(j)}_{i})_*(\mu))$ for each $1\le j\le r$ and $1\le i\le d$, hence for each $x\in\G(\mu)$ we have $\lim_{n\to\infty}\sum_{j=1}^r \frac{{ \Phi}^{(j)}_{\lfloor c_jn\rfloor ,i}(x)}{\lfloor c_jn\rfloor }=\alpha_i$ for each $1\le i\le d$. This yields $\G(\mu)\subset E_{\{{\Phi}^{(j)}\},\bc}( \alpha)$. \end{proof}

\smallskip

Now we establish \eqref{check2}.  Define the following sequence of functions
\begin{equation}\label{Psichapeau}
{\bf \Phi}_{\bc,n}=n \sum_{j=1}^r \frac{{\bf \Phi}^{(j)}_{\lfloor c_jn\rfloor }}{\lfloor c_jn\rfloor }.
\end{equation}
We have the following lemma, which yields \eqref{check2}.
\begin{lem}\label{upperboundstep1}
Fix $\alpha\in\mathbb{R}^d$ and suppose that $E_{\{{\bf\Phi}^{(j)}\},\bc}(\alpha)\neq\emptyset$. For every $\varepsilon>0$ and $\bq\in\R^d$, we have $
\dim_H E_{\{{\Phi}^{(j)}\},\bc}( \alpha,\varepsilon)\le P^{\ba}(T,\bq\cdot{\bf \Phi})-\alpha\cdot\bq+(4|\bq|+a_1)\varepsilon$, where $E_{\{{\Phi}^{(j)}\},\bc}( \alpha,\varepsilon)=\{x\in X: \limsup_{n\to\infty}|{\bf \Phi}_{\bc,n}(x)/n-\alpha|\le \varepsilon\}$. Consequently, if $E_{\{{\bf\Phi}^{(j)}\},\bc}(\alpha)\neq\emptyset$, then $ \dim_H E_{\{{\Phi}^{(j)}\},\bc}( \alpha)\le \inf_{\bq\in\R^d} P^{\ba}(T,\bq\cdot{\bf \Phi})-\alpha\cdot\bq$, i.e.,  \eqref{check2} holds.
\end{lem}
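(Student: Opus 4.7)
Fix $\bq \in \R^d$ and $\varepsilon > 0$. My aim is to show that $\mathcal{H}^s(E_{\{\Phi^{(j)}\},\bc}(\alpha, \varepsilon)) < \infty$ for $s = P - \bq\cdot\alpha + (4|\bq|+a_1)\varepsilon$, where $P := P^\ba(T, \bq\cdot{\bf\Phi})$; this immediately yields $\dim_H E_{\{\Phi^{(j)}\},\bc}(\alpha,\varepsilon) \le s$. My first step is to approximate each $\Phi^{(j)} \in \C_{asa}(X,T)$ by the Birkhoff sum of a continuous function: by Lemma~\ref{lem-2.1}(iii), I pick $g^{(j)} \in C(X)$ with $\limsup_n n^{-1} \sup_x |\Phi^{(j)}_n(x)-S_ng^{(j)}(x)|$ as small as desired, so that $\bq\cdot{\bf\Phi}_{\bc,n}(x)/n$ and $\sum_j \bq\cdot S_{\lfloor c_jn\rfloor}g^{(j)}(x)/\lfloor c_jn\rfloor$ agree within $|\bq|\varepsilon$ uniformly in $x$ for all $n$ large. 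Next I introduce the filtration $E_N := \{x: |{\bf\Phi}_{\bc,n}(x)/n-\alpha| \le 2\varepsilon \text{ for all } n \ge N\}$, so $E_{\{\Phi^{(j)}\},\bc}(\alpha,\varepsilon) = \bigcup_N E_N$ and it suffices to bound $\dim_H E_N$ for every $N$.

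For fixed $N$ and $n \ge N$, the ultrametric property of $d_\ba$ lets me extract a pairwise disjoint family $\{B_i = B(x_i, e^{-n/a_1})\}$, with $x_i \in E_N$, still covering $E_N$. Since $|\bq\cdot{\bf\Phi}_{\bc,n}(x_i)-n\bq\cdot\alpha| \le 2n|\bq|\varepsilon$, a Markov-type trick gives
\begin{equation*}
\sum_i (\mathrm{diam}\, B_i)^s = \sum_i e^{-ns/a_1} \le e^{(n/a_1)(2|\bq|\varepsilon - \bq\cdot\alpha)}\sum_i e^{(\bq\cdot{\bf\Phi}_{\bc,n}(x_i)-ns)/a_1}.
\end{equation*}
The heart of the proof is then the weighted partition-function estimate
\begin{equation*}
\sum_i e^{\bq\cdot{\bf\Phi}_{\bc,n}(x_i)/a_1} \le e^{n(P+2|\bq|\varepsilon)/a_1 + o(n)},
\end{equation*}
which will be the main obstacle. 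To prove it I plan to combine two ingredients: (i) the identity $(\bq\cdot{\bf\Phi}_\bc)_*(\mu) = \bq\cdot{\bf\Phi}_*(\mu)$ for every $\mu \in \M(X,T)$, which is immediate from $(1/m)\int \Phi^{(j)}_m\, d\mu \to (\Phi^{(j)})_*(\mu)$ as $m\to\infty$ along any subsequence (in particular $m = \lfloor c_jn\rfloor$), so that $\bq\cdot{\bf\Phi}_\bc$ and $\bq\cdot{\bf\Phi}$ induce the same weighted pressure via the variational principle; and (ii) the representation $P = \lim_n n^{-1} \log c_n$ with $(\log c_n) = \theta_k \circ \cdots \circ \theta_1(\bq\cdot{\bf\Phi})$ from Theorem~\ref{thm-0.2'}(i). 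After Step~1, $\bq\cdot{\bf\Phi}_{\bc,n}(x_i)$ is, up to an $n|\bq|\varepsilon$ error, a function of the nested prefixes $(x_{i,|\lfloor c_jn\rfloor})_j$, each of which is determined by $B_i$ (up to $o(n)$, by uniform continuity of the $g^{(j)}$'s, after enlarging $n$ by a bounded factor if some $c_j$ exceeds $A_k/a_1$). The sum over $B_i$ can then be organized as iterated sup/sum operations over the fibers of $\pi_1,\pi_2,\ldots,\pi_{k-1}$, matching exactly the Hölder-type structure of the operators $\theta_1,\ldots,\theta_k$ and yielding the claimed bound.

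Combining the two estimates yields $\sum_i (\mathrm{diam}\, B_i)^s \le e^{(n/a_1)(P-\bq\cdot\alpha+4|\bq|\varepsilon-s+o(1))} = e^{-n\varepsilon+o(n)}$ for the stated $s$, where the extra $a_1\varepsilon$ in $s$ is precisely what absorbs the $o(n)$ term. Hence $\mathcal{H}^s(E_N) = 0$ as $n \to \infty$, so $\dim_H E_N \le s$, proving the first assertion. For the consequence stated at the end of the lemma, I note that $E_{\{\Phi^{(j)}\},\bc}(\alpha) \subset E_{\{\Phi^{(j)}\},\bc}(\alpha,\varepsilon)$ for every $\varepsilon>0$ whenever the former is non-empty; applying the first assertion, then letting $\varepsilon \downarrow 0$ and taking the infimum over $\bq \in \R^d$, gives $\dim_H E_{\{\Phi^{(j)}\},\bc}(\alpha) \le \inf_{\bq}[P^\ba(T,\bq\cdot{\bf\Phi})-\bq\cdot\alpha]$, which is \eqref{check2}.
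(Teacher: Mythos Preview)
Your covering argument breaks down at the ``partition-function estimate''
\[
\sum_i e^{\bq\cdot{\bf\Phi}_{\bc,n}(x_i)/a_1}\le e^{n(P+2|\bq|\varepsilon)/a_1+o(n)},
\]
summed over a disjoint cover of $E_N$ by balls of radius $e^{-n/a_1}$. This bound is \emph{false} in general for each fixed $n$. Already the case $\bq=0$, $r=1$, $c_1=1$, ${\bf\Phi}=0$ exposes it: then $E_N=X$, the left side is the number of $d_\ba$-balls of radius $e^{-n/a_1}$ needed to cover $X$, which is $\approx e^{n\,\overline{\dim}_B X/a_1}$, while the right side is $e^{n\,\dim_H X/a_1+o(n)}$ (since $P^\ba(T,0)=\dim_H X$). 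For a generic Bedford--McMullen carpet one has $\overline{\dim}_B X>\dim_H X$, so the inequality fails by an exponential factor. The source of the error is the sentence ``the sum over $B_i$ can then be organized as iterated sup/sum operations over the fibers of $\pi_1,\ldots,\pi_{k-1}$, matching exactly the H\"older-type structure of the operators $\theta_1,\ldots,\theta_k$'': it does not match. The operators $\theta_i$ and the formula $c_n$ all live at a \emph{single} depth $n$, whereas a $d_\ba$-ball is an ``approximate rectangle'' specified by prefixes of \emph{different} depths $n=\ell_1(n)<\ell_2(n)<\cdots<\ell_k(n)$ at the successive factors. Likewise, ${\bf\Phi}_{\bc,n}$ involves yet other depths $\lfloor c_jn\rfloor$. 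These depth mismatches are precisely the self-affine obstruction, and no single-scale covering can absorb them.

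The paper avoids a covering argument altogether. It approximates ${\bf\Phi}$ by an almost-additive ${\bf\widetilde\Phi}$ with bounded distortion, lets $\mu_\bq$ be the unique $\ba$-weighted equilibrium state of $\bq\cdot{\bf\widetilde\Phi}$, and applies Billingsley's lemma (Lemma~\ref{up}) to $\mu_\bq$. The crucial input is Lemma~\ref{McMullen}: for \emph{every} $x\in X$,
\[
\limsup_{n\to\infty}\left(\frac{\mu_{\bq}(B(x,e^{-n/a_1}))}{\exp\big((\bq\cdot{\bf\widetilde\Phi}_{\bc,n}(x)-nP^\ba(T,\bq\cdot{\bf\widetilde\Phi}))/a_1\big)}\right)^{1/n}\ge 1.
\]
Using Lemma~\ref{lem-gibbs}, the logarithm of this ratio is a finite sum of differences $v(\lfloor\beta_j n\rfloor)-v(\lfloor\gamma_j n\rfloor)$ of bounded sequences with $v(n{+}1)-v(n)=O(1/n)$, and the Kenyon--Peres lemma (Lemma~\ref{corlemKePe}) gives the $\limsup\ge 0$. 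The point is that the good $n$'s form a subsequence that \emph{depends on $x$}; this is exactly what a uniform single-scale cover cannot exploit, but Billingsley's lemma can, since it only needs $\liminf_{r\to 0}\log\mu_\bq(B(x,r))/\log r$ to be small at each $x$ individually.
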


\noindent
{\it Proof of Lemma \ref{upperboundstep1}}. Since $E_{\{{\bf\Phi}^{(j)}\},\bc}(\alpha)=E_{\{{\bf\Phi}^{(j)}\},\lambda\bc}(\alpha)$ for all $\lambda>0$, without loss of generality we assume that $c_j>1$ for all $j$.

Fix $\varepsilon>0$ and $\bq\in\R^d$. For each $1\le j\le r$, choose ${\bf \widetilde \Phi}^{(j)}\in \C_{aa}(X,T)^d$ such that each of its components satisfies the bounded distortion property and
$$\sup_{1\le i\le d}\limsup_{n\to\infty}\|{\bf \widetilde \Phi}^{(j)}_{i,n}-{\bf\Phi}^{(j)}_{i,n}\|_\infty/n\le \varepsilon/r.$$
 Then we define ${\bf\widetilde\Phi}=\sum_{j=1}^r{\bf \widetilde \Phi}^{(j)}$ and the sequence of functions
\begin{equation*}
{\bf \widetilde\Phi}_{\bc,n}=n \sum_{j=1}^r \frac{{\bf \widetilde \Phi}^{(j)}_{\lfloor c_jn\rfloor }}{\lfloor c_jn\rfloor }\quad (n\ge 1).
\end{equation*}
Endow the space $\R^d$ with the norm $|(z_1,\dots,z_d)|=\max_{1\le i\le d}|z_i|$. By construction we have $\limsup_{n\to\infty}\|{\bf \widetilde\Phi}_{\bc,n}-{\bf\Phi}_{\bc,n}\|_\infty/n\le \varepsilon$ so
\begin{equation*}\label{approx}
E_{\{{\bf\Phi}^{(j)}\},\bc}(\alpha,\varepsilon)\subset E_{\{{\bf\widetilde \Phi}^{(j)}\},\bc}( \alpha,2\varepsilon)=\{x\in X: \limsup_{n\to\infty}|{\bf \widetilde\Phi}_{\bc,n}(x)/n-\alpha|\le 2\varepsilon\}.
\end{equation*}
%\begin{equation}\label{approx}
%E_\Phi(\alpha)\subset \bigcap_{i=1}^dE_{\widetilde \Phi_i}(\widetilde \alpha_i,\|q\|\varepsilon),
%\end{equation}
%where where $ \|q\|=\sum_{i=1}^d |q_i|$, $\widetilde \alpha_i=\widetilde\Phi_{i*}(\nu)$ and
%$$
%E_{\widetilde \Phi_i}(\alpha_i,\|q\|\varepsilon)=\{x: |\widetilde\Phi_{i,n}(x)/n-\alpha_i|\le  \|q\|\varepsilon\}.
%$$
The definition of the $\ba$-weighted topological pressure implies
\begin{equation}\label{controlP}
|P^{\ba}(T, \bq\cdot{\bf\widetilde \Phi})-P^{\ba}(T, \bq\cdot{\bf\Phi})|\le |\bq|\varepsilon.
\end{equation}
Let us denote by $\mu_\bq$ the unique $\ba$-weighted equilibrium state of $\bq\cdot{\bf\widetilde\Phi}$ (see Theorem~\ref{thm-0.2'}).
The following key property holds.
%It is  which must be compared with a property first observed in \cite{McM84} for the scaling properties of Bernoulli measures on Sierpinski carpets. The proof is given in the end of this section.

\begin{lem}\label{McMullen}
For all $x\in X$, we have $\limsup_{n\to\infty}f_n(x)^{1/n}\ge 1$, where
$$
f_n(x)=\displaystyle \frac{\mu_{\bq}(B(x,e^{-n/a_1}))}{\exp\big ((\bq\cdot  {\bf \widetilde\Phi}_{\bc,n}(x)-nP^{\ba}(T,\bq\cdot{\bf \widetilde\Phi}))/{a_1}\big )}.
$$
\end{lem}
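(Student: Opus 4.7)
The plan is to apply Lemma~\ref{lem-gibbs} to $\mu_\bq$. By Theorem~\ref{thm-0.2'}, $\mu_\bq$ is the $\ba$-weighted Gibbs measure of the potential $\bq\cdot{\bf\widetilde\Phi}\in\C_{aa}(X,T)$, which satisfies the bounded distortion property since each ${\bf\widetilde\Phi}^{(j)}$ does (bounded distortion being preserved under linear combinations). Setting $P:=P^{\ba}(T,\bq\cdot{\bf\widetilde\Phi})$, Lemma~\ref{lem-gibbs} yields
\begin{equation*}
\log\mu_\bq(B(x,e^{-n/a_1}))=-\frac{nP}{a_1}+\frac{\bq\cdot{\bf\widetilde\Phi}_n(x)}{a_1}+\sum_{j=1}^{k-1}\left[\frac{\log\widetilde\phi^{(j)}(\tau_j x_{|\ell_{j+1}(n)})}{A_{j+1}}-\frac{\log\widetilde\phi^{(j)}(\tau_j x_{|\ell_j(n)})}{A_j}\right]+O(1).
\end{equation*}
Plugging this into $\log f_n(x)$, the $-nP/a_1$ terms cancel and we obtain
\begin{equation*}
\log f_n(x)=\frac{\bq\cdot{\bf\widetilde\Phi}_n(x)-\bq\cdot{\bf\widetilde\Phi}_{\bc,n}(x)}{a_1}+\sum_{j=1}^{k-1}\left[\frac{\log\widetilde\phi^{(j)}(\tau_j x_{|\ell_{j+1}(n)})}{A_{j+1}}-\frac{\log\widetilde\phi^{(j)}(\tau_j x_{|\ell_j(n)})}{A_j}\right]+O(1).
\end{equation*}

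Since $\ell_i(n)/A_i=n/a_1+O(1/n)$, dividing by $n$ and collecting terms, it suffices to show
\begin{equation*}
\limsup_{n\to\infty}\left\{\sum_{i=1}^{r}\bigl[H_i(n)-H_i(\lfloor c_i n\rfloor)\bigr]+\sum_{j=1}^{k-1}\bigl[G_j(\ell_{j+1}(n))-G_j(\ell_j(n))\bigr]\right\}\ge 0,
\end{equation*}
where $H_i(m):=\bq\cdot{\bf\widetilde\Phi}^{(i)}_m(x)/m$ and $G_j(m):=\log\widetilde\phi^{(j)}(\tau_j x_{|m})/m$. The bounded distortion property of each ${\bf\widetilde\Phi}^{(i)}$ and of each iterated potential $\widetilde\phi^{(j)}$ (which is preserved under the operations $\theta_i$) ensures that $H_i$ and $G_j$ are uniformly bounded sequences.

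For each individual difference one has $\limsup_n[u(\phi(n))-u(n)]\ge 0$ whenever $u$ is bounded and $\phi:\N\to\N$ is strictly increasing with $\phi(n)\to\infty$: indeed, $\limsup(a_n-b_n)\ge\liminf a_n-\liminf b_n$, and the liminf of $u$ along the subsequence $\phi(\N)\subset\N$ is bounded below by $\liminf_{m\in\N}u(m)$. The main obstacle is that the $\limsup$ of the sum is not bounded below by the sum of the individual $\limsup$'s, so one must align the subsequence selections across the $r+(k-1)$ terms. This is where we would invoke the structural constraints: using almost-additivity and bounded distortion, $H_i(\lfloor c_in\rfloor)$ can be expressed (up to $O(1/n)$) as a convex combination of $H_i(n)$ and ${\bf\widetilde\Phi}^{(i)}_{\lfloor c_in\rfloor-n}(T^nx)/(\lfloor c_in\rfloor-n)$, and similarly for $G_j$ by quasi-multiplicativity of $\widetilde\phi^{(j)}$. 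Then, by compactness of $X$, extract a subsequence $n_k$ along which $T^{n_k}x$ converges in $X$, along which $\tau_i(T^{n_k}x)$ converges in each $X_{i+1}$, and along which all the bounded quantities $H_i(n_k)$, $G_j(\ell_j(n_k))$, $G_j(\ell_{j+1}(n_k))$ converge to respective limits. Continuity of the almost-additive potentials together with the near-constancy on cylinders forces these joint limits to satisfy an identity that makes the bracketed sum non-negative in the limit, concluding the proof. An alternative route—which may simplify the subsequence extraction—is to first observe that the variational characterization of $\mu_\bq$ gives $\frac{1}{n}\int\log f_n\,d\mu_\bq\to 0$, combine with reverse Fatou (which applies since $\log f_n/n$ is bounded above), and extend the resulting $\mu_\bq$-a.e.\ inequality to all $x\in X$ by exploiting the full support of $\mu_\bq$ and the local uniformity given by the bounded distortion property.
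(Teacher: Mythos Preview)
Your reduction is correct and mirrors the paper's: applying Lemma~\ref{lem-gibbs} and cancelling the pressure term, the problem becomes showing that
\[
\limsup_{n\to\infty}\Big\{\sum_{i=1}^{r}\bigl[H_i(n)-H_i(\lfloor c_i n\rfloor)\bigr]+\sum_{j=1}^{k-1}\bigl[G_j(\ell_{j+1}(n))-G_j(\ell_j(n))\bigr]\Big\}\ge 0
\]
for bounded sequences $H_i,G_j$. The gap is in how you complete this step.

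The missing ingredient is not just boundedness of the $H_i,G_j$ but their \emph{slow variation}: by almost-additivity and bounded distortion one has $H_i(m+1)-H_i(m)=O(1/m)$ and $G_j(m+1)-G_j(m)=O(1/m)$. This is precisely the hypothesis of the Kenyon--Peres lemma (Lemma~\ref{corlemKePe} in the paper, a restatement of \cite[Lemma~4.1]{KePe96}), which directly yields the displayed $\limsup$ inequality for an arbitrary finite sum of such differences. You neither state this increment bound nor invoke a result that uses it; for merely bounded sequences the simultaneous $\limsup$ inequality need not hold, so boundedness alone cannot close the argument.

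Neither of your proposed completions works as stated. In route~(a), extracting a subsequence along which $T^{n_k}x$ converges and the averages stabilize gives you \emph{existence} of limits, but there is no identity linking, say, $\lim_k H_i(n_k)$ to $\lim_k H_i(\lfloor c_i n_k\rfloor)$ or to the averages at the shifted point; the convex-combination decomposition you write down introduces new terms (averages over $[n,\lfloor c_in\rfloor]$) whose limits are a~priori unrelated to the others, so no cancellation is forced. In route~(b), even granting $\limsup_n \frac{1}{n}\log f_n(x)\ge 0$ for $\mu_\bq$-a.e.\ $x$, you cannot transfer this to \emph{all} $x$ by full support and bounded distortion: $f_n(x)$ depends on $x$ through ${\bf\widetilde\Phi}_{\bc,n}(x)$, which involves coordinates of $x$ up to position $\max_i\lfloor c_in\rfloor$ (and up to $\ell_k(n)$ through the ball), so matching a generic point on any fixed finite prefix controls only finitely many $f_n$ and says nothing about the $\limsup$.
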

It is worth mentioning that the idea of considering the asymptotic behavior of such a function $f_n$ at {\it each} point of $X$ goes back to \cite{McM84} for the upper bound estimate of $\dim_H X$ when $k=2$. The proof of  Lemma \ref{McMullen} will be given later. To finish the proof of Lemma \ref{upperboundstep1}, we need the following classical lemma.
\begin{lem}[\cite{Bil65}, Ch. 14]\label{up}
Let $E$  be a non-empty subset of a compact metric space $(Y,d)$ endowed with an ultrametric distance. Let $\nu$ be a positive Borel measure on $Y$. Then $\dim_HE\le \sup_{x\in E} \displaystyle \liminf_{r\to 0^+}\frac{\log\nu(B(x,r))}{\log (r)}$.
\end{lem}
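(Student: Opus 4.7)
The plan is to show that $\dim_H E\leq t$ for every $t>s:=\sup_{x\in E}\liminf_{r\to 0^+}\log\nu(B(x,r))/\log r$, from which the desired bound follows upon letting $t\downarrow s$. First I would observe that for such a $t$, each $x\in E$, and each $\epsilon>0$, the definition of the liminf guarantees some radius $r_x\in(0,\epsilon)$ with $\log\nu(B(x,r_x))/\log r_x<t$; since $\log r_x<0$, this inverts to the mass lower bound $\nu(B(x,r_x))\geq r_x^t$. The family $\{B(x,r_x)\}_{x\in E}$ then covers $E$ by balls that are simultaneously open and closed (the ultrametric being crucial here), and by separability of the ambient compact space $Y$ I may extract a countable subcover.

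The next step is the key combinatorial move enabled by the ultrametric property: any two closed balls are either disjoint, or one contains the other. I would order the countably many chosen balls by decreasing radius and greedily discard each one that is already contained in a previously selected ball, thereby producing a pairwise disjoint subfamily $\{B(x_{i_k},r_{x_{i_k}})\}_k$ whose union still covers $E$. This is essentially the only delicate point in the argument; without the ultrametric hypothesis one would need a Vitali- or Besicovitch-type extraction, which typically introduces multiplicative constants and would preclude the sharp bound.

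Since $\mathrm{diam}\,B(x,r)\leq r$ in an ultrametric space, each selected ball has diameter at most $\epsilon$, so the disjoint family is admissible in the definition of the $\epsilon$-approximate Hausdorff outer measure $\mathcal{H}^t_\epsilon(E)$. Combining disjointness with the pointwise mass estimate gives
\begin{equation*}
\mathcal{H}^t_\epsilon(E)\le \sum_{k}r_{x_{i_k}}^t\le \sum_{k}\nu(B(x_{i_k},r_{x_{i_k}}))=\nu\Bigl(\bigsqcup_{k}B(x_{i_k},r_{x_{i_k}})\Bigr)\le \nu(Y)<\infty,
\end{equation*}
where finiteness of $\nu$ on the compact $Y$ is used in the last inequality. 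Letting $\epsilon\to 0^+$ yields $\mathcal{H}^t(E)\leq \nu(Y)<\infty$, hence $\dim_H E\leq t$; since $t>s$ was arbitrary, I conclude $\dim_H E\leq s$.
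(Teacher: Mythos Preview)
The paper does not give its own proof here; it simply cites Billingsley, and your argument is precisely the standard one found there.

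One technical wrinkle worth tightening: the greedy step ``order the balls by decreasing radius and discard those contained in an earlier selected ball'' tacitly assumes that the countable family of radii can be enumerated in non\nobreakdash-increasing order, which fails if the family contains an infinite strictly increasing chain $B_{n_1}\subsetneq B_{n_2}\subsetneq\cdots$ with $r_{n_k}\uparrow\rho<\epsilon$; in that case no ball in the chain is maximal and your selection procedure produces nothing covering the chain's union. The fix is immediate and keeps the rest of your estimate intact: group the countable subcover into equivalence classes under the relation ``the two balls intersect'' (which is transitive in an ultrametric space, since intersecting balls are nested). For each class $C$ the union $U_C=\bigcup_{B\in C}B$ has diameter at most $\rho_C:=\sup_{B\in C}r_B\le\epsilon$ and satisfies $\nu(U_C)\ge\sup_{B\in C}\nu(B)\ge\rho_C^{\,t}$; the sets $U_C$ are pairwise disjoint and cover $E$, so
\[
\mathcal H^t_\epsilon(E)\le\sum_C\rho_C^{\,t}\le\sum_C\nu(U_C)\le\nu(Y)<\infty,
\]
exactly as you intended.
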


Now, if $x\in E_{\{{\bf\widetilde \Phi}^{(j)}\},\bc}( \alpha,2\varepsilon)$ then, due to Lemma~\ref{McMullen}, for infinitely many $n$ we have simultaneously $f_n(x) \ge \exp(-n\varepsilon)$, and $\exp(\bq\cdot  {\bf \widetilde\Phi}_{\bc,n}(x))\ge \exp (n\alpha\cdot \bq)-3|\bq|\varepsilon n$. Consequently,
\begin{equation*}\label{exponent}
\liminf_{n\to\infty}\frac{\log\mu_\bq(B(x,e^{-n/a_1}))}{-n/a_1}\le P^{\ba}(T,\bq\cdot{\bf \widetilde\Phi})-\alpha\cdot\bq+(3|\bq|+a_1)\varepsilon.
\end{equation*}
Now, Lemma~\ref{up} and \eqref{controlP} yield
\begin{equation*}
\begin{split}
\dim_H E_{\{{\bf\Phi}^{(j)}\},\bc}(\alpha,\varepsilon)&\le\dim_H  E_{\{{\bf\widetilde \Phi}^{(j)}\},\bc}( \alpha,2\varepsilon) \le
P^{\ba}(T,\bq\cdot{\bf \widetilde\Phi})-\alpha\cdot\bq+(3|\bq|+a_1)\varepsilon\\
&\le P^{\ba}(T,\bq\cdot{\bf \Phi})-\alpha\cdot\bq+(4|\bq|+a_1)\varepsilon.
\end{split}
\end{equation*}
Letting $\varepsilon\to 0$, we obtain $\dim_H E_{\{{\bf\Phi}^{(j)}\},\bc}(\alpha)\le P^{\ba}(T,\bq\cdot{\bf \Phi})-\alpha\cdot\bq$. Since $\bq\in \R^d$ is arbitrarily given, we have
$$
\dim_H E_{\{{\Phi}^{(j)}\},\bc}( \alpha)\le \inf_{\bq\in\R^d} P^{\ba}(T,\bq\cdot{\bf \Phi})-\alpha\cdot\bq.
$$
This finishes the proof of Lemma \ref{upperboundstep1}.
\eproof

\smallskip
Before we prove Lemma~\ref{McMullen}, we  give some auxiliary lemmas.
  \begin{lem}[\cite{KePe96}, Lemma 4.1] Let $m\ge 1$ be an integer.  For $1\le j\le m$ let $f_j:\N\to\R$, $\beta_j>0$ and $\lambda_j>0$. If $\sup_{n\ge 1}|f_{j}(n+1)-f_j(n)|<\infty$ for each $j$, then
$$
\limsup_{t\to\infty}\frac{1}{t}\sum_{j=1}^r \left (\beta_jf_j\Big(\Big\lfloor\frac{t}{\lambda_j}\Big\rfloor\Big )-f_j\Big(\Big\lfloor\frac{\beta_jt}{\lambda_j}\Big \rfloor\Big )\right )\ge 0.
$$
\end{lem}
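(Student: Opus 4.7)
The plan is to pass from the pointwise $\limsup$ to a logarithmic Ces\`aro average that I can control explicitly. Set $S(t):=\sum_{j=1}^m \bigl(\beta_j f_j(\lfloor t/\lambda_j\rfloor)-f_j(\lfloor \beta_j t/\lambda_j\rfloor)\bigr)$ and $M_j:=\sup_n|f_j(n+1)-f_j(n)|$; the hypothesis yields the sub-linear bound $|f_j(n)|\le |f_j(1)|+M_j n$, so $t\mapsto S(t)/t$ is bounded on $[1,\infty)$. The claim will then reduce to showing that
$$
\overline S(T):=\frac{1}{\log T}\int_1^T \frac{S(t)}{t^2}\,dt \xrightarrow[T\to\infty]{} 0,
$$
since if $\limsup_{t\to\infty}S(t)/t$ equalled some $-\delta<0$, the bound $S(t)/t\le -\delta/2$ for $t$ large would force $\overline S(T)\le -\delta/2+o(1)$, contradicting the displayed convergence.

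To prove this convergence, I will, for each $j$, substitute $u=t/\lambda_j$ in the term $\beta_j\int_1^T f_j(\lfloor t/\lambda_j\rfloor) t^{-2}\,dt$ and $v=\beta_j t/\lambda_j$ in the term $\int_1^T f_j(\lfloor \beta_j t/\lambda_j\rfloor)t^{-2}\,dt$. Both substitutions turn these into $(\beta_j/\lambda_j)\int(\cdot)u^{-2}\,du$, the only difference being that the resulting intervals of integration are $[1/\lambda_j,T/\lambda_j]$ and $[\beta_j/\lambda_j,\beta_j T/\lambda_j]$ respectively. A telescoping argument then yields, when $\beta_j\ge 1$ and $T$ is large,
$$
\int_1^T \frac{S(t)}{t^2}\,dt=\sum_{j=1}^m \frac{\beta_j}{\lambda_j}\left(\int_{1/\lambda_j}^{\beta_j/\lambda_j}\frac{f_j(\lfloor u\rfloor)}{u^2}\,du\;-\;\int_{T/\lambda_j}^{\beta_j T/\lambda_j}\frac{f_j(\lfloor u\rfloor)}{u^2}\,du\right),
$$
with obvious sign modifications when $\beta_j<1$. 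The first of each pair of integrals is a constant depending only on $j$; for the second, the bound $|f_j(\lfloor u\rfloor)|\le M_j u+O(1)$ yields $M_j|\log\beta_j|+O(1/T)$. Hence $\int_1^T S(t) t^{-2}\,dt=O(1)$, so $\overline S(T)=O(1/\log T)=o(1)$, as required.

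The only point requiring care will be the bookkeeping in the telescoping, depending on whether $\beta_j$ is greater or smaller than $1$ and on the position of $\beta_j/\lambda_j$ relative to $T/\lambda_j$ for large $T$; but in every configuration, the end contributions come from integrals of $f_j/u^2$ over regions that are either fixed or of fixed logarithmic length, hence $O(1)$. The essential idea, which makes the scheme work uniformly in the $m$ summands, is that under the natural substitutions above the bulk contributions of $\beta_j\int f_j(\lfloor t/\lambda_j\rfloor)t^{-2}\,dt$ and $\int f_j(\lfloor \beta_j t/\lambda_j\rfloor)t^{-2}\,dt$ cancel exactly, leaving only boundary terms of controlled size.
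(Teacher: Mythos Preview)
The paper does not give its own proof of this lemma; it is quoted verbatim from Kenyon--Peres \cite{KePe96}, Lemma~4.1. Your argument is correct and is essentially the same as the one in the original reference: pass to the logarithmic Ces\`aro mean $\frac{1}{\log T}\int_1^T S(t)\,t^{-2}\,dt$, make the scaling substitutions $u=t/\lambda_j$ and $v=\beta_j t/\lambda_j$ so that the two terms become integrals of the \emph{same} function $f_j(\lfloor\cdot\rfloor)/u^2$ over intervals differing only at the endpoints, and bound those boundary contributions using $|f_j(n)|=O(n)$. The only residual cosmetic issue is the typo $r$ for $m$ in the summation index, inherited from the paper's statement.
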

The following lemma is essentially the same as the above one.
\begin{lem}\label{corlemKePe}
Let $m\ge 1$ be an integer. Consider $(\beta_p)_{1\le j\le m}$ and $(\gamma_p)_{1\le j\le m}$ two positive vectors, as well as $v_1,\dots,v_m$, $m$ bounded sequences such that $v_j(n+1)-v_j(n)=O(n^{-1})$ for each $1\le j\le m$. Then $\limsup_{n\to\infty} \sum_{j=1}^m v_j(\lfloor \beta_j n\rfloor )-v_j(\lfloor\gamma_jn\rfloor )\ge 0$.
\end{lem}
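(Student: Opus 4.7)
My plan is to reduce Lemma~\ref{corlemKePe} directly to the preceding Kenyon--Peres lemma by a change of variables, rather than redoing the underlying integration argument from scratch. First, I would set $f_j(n):=nv_j(n)$ and observe that
\[
f_j(n+1)-f_j(n)=v_j(n+1)+n\bigl(v_j(n+1)-v_j(n)\bigr),
\]
which is $O(1)$ since $v_j$ is bounded and $v_j(n+1)-v_j(n)=O(1/n)$. Thus each $f_j$ has bounded increments, so the Kenyon--Peres lemma applies to the family $(f_j)_{1\le j\le m}$.

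Second, I would apply that lemma with the parameters $\lambda_j:=1/\beta_j$ and $\widetilde\beta_j:=\gamma_j/\beta_j$, so that $t/\lambda_j=\beta_j t$ and $\widetilde\beta_j t/\lambda_j=\gamma_j t$. Using $\lfloor\alpha t\rfloor/t=\alpha+O(1/t)$ together with the boundedness of $v_j$, a direct expansion of $f_j(n)=nv_j(n)$ yields
\[
\frac{1}{t}\bigl[\widetilde\beta_j f_j(\lfloor\beta_j t\rfloor)-f_j(\lfloor\gamma_j t\rfloor)\bigr]=\gamma_j\bigl[v_j(\lfloor\beta_j t\rfloor)-v_j(\lfloor\gamma_j t\rfloor)\bigr]+O(1/t),
\]
so the Kenyon--Peres conclusion rewrites as the weighted inequality
\[
\limsup_{t\to\infty}\sum_{j=1}^m\gamma_j\bigl[v_j(\lfloor\beta_j t\rfloor)-v_j(\lfloor\gamma_j t\rfloor)\bigr]\ge 0.
\]

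Third, to erase the weights $\gamma_j$, I would apply the weighted inequality above to the rescaled sequences $\widetilde v_j:=v_j/\gamma_j$, which still satisfy the hypotheses (boundedness and $O(1/n)$ increments). This collapses the coefficient $\gamma_j$ in each summand and yields the desired inequality with $t$ running in $\R$. Restricting $t$ to integers only decreases the $\limsup$ by at most $o(1)$, since shifting $t$ to the nearest integer perturbs each $\lfloor\alpha t\rfloor$ by a bounded amount and hence each $v_j(\lfloor\alpha t\rfloor)$ by $O(1/n)$ by the slow-oscillation hypothesis.

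The only (mild) difficulty is book-keeping: checking that every error term arising from $\lfloor\alpha t\rfloor\neq\alpha t$ and from the boundedness of $v_j$ is genuinely $O(1/t)$ uniformly in $j$, so that these errors disappear in the $\limsup$. No new ingredient beyond the Kenyon--Peres lemma itself is needed, which is what the authors mean by ``essentially the same.''
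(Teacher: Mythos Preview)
Your reduction is correct and is exactly the kind of argument the paper has in mind when it says the lemma is ``essentially the same'' as the Kenyon--Peres lemma: define $f_j(n)=nv_j(n)$ to convert the slow-oscillation hypothesis into bounded increments, choose the parameters so that the two arguments of $f_j$ become $\lfloor\beta_jt\rfloor$ and $\lfloor\gamma_jt\rfloor$, and then rescale the $v_j$ to strip off the extraneous weights. The book-keeping you flag (floor errors of order $O(1/t)$, passage from real $t$ to integer $n$) is routine and handled correctly.
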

\noindent
{\it Proof of Lemma~\ref{McMullen}.} Let us denote $\bq\cdot{\bf \widetilde\Phi}_\bc$, $\bq\cdot{\bf\widetilde \Phi}$ and $\bq\cdot {\bf \widetilde \Phi}^{(j)}$ by $\widetilde\Phi_c$, $\widetilde \Phi$ and $\widetilde \Phi^{(j)}$ respectively. Next write $\widetilde\Phi_\bc$ under the following form:
$$
\widetilde\Phi_{\bc,n}=\widetilde\Phi_n+n\sum_{j=1}^r\frac{{\widetilde\Phi}^{(j)}_{\lfloor c_jn\rfloor }}{\lfloor c_jn\rfloor }-\frac{{\widetilde\Phi}^{(j)}_n}{n}.
$$

Let $x\in X$, $n\ge 1$ and let $B=B(x,e^{-n/a_1})$. By Lemma \ref{lem-gibbs}, we have
\begin{eqnarray*}
 \mu_\bq(B)&\approx&\exp \Big (\frac{-n P^\ba(T,\widetilde \Phi)}{a_1}\Big)\exp (\widetilde \Phi_n(x)/a_1) \prod_{j=1}^{k-1}\frac{\widetilde\phi^{(j)}(\tau_j(x_{|\ell_{j+1}(n)}))^{1/A_{j+1}}}{\widetilde\phi^{(j)}(\tau_j(x_{|\ell_{j}(n)}))^{1/A_{j}}}.
\end{eqnarray*}
Combining this with the definition of $f_n(x)$ yields
\begin{equation}\label{fn}
f_n(x)\approx\exp((\widetilde\Phi_n-\widetilde\Phi_{\bc,n}))(x)/a_1)
\prod_{j=1}^{k-1}\frac{\widetilde\phi^{(j)}(\tau_j(x_{|\ell_{j+1}(n)}))^{1/A_{j+1}}}{\widetilde\phi^{(j)}(\tau_j(x_{|\ell_{j}(n)}))^{1/A_{j}}}.
\end{equation}
Now, for $n\ge 1$, let us define
$$
\begin{cases}
u^{(j)}(n)=-\widetilde\Phi^{(j)}_n(x)/(a_1n) &\text{ for $1\le j\le r$,}\\
 \widetilde u^{(j)}(n)=\log \widetilde\phi^{(j)}(\tau_{j}(x_{|n}))/(a_1n)&\text{ for $1\le j\le k-1$}.
 \end{cases}
 $$
We notice that since the almost additive potentials $\widetilde \Phi$ and ${\bf\widetilde\Phi}^{(j)}$ satisfy the bounded distortion property, for any $v\in\{u^{(j)},\widetilde u^{(j)}: 1\le j\le r,\ 1\le i\le k-1\}$ the sequence $(v(n))_{n\ge 1}$ is bounded and $v(n+1)-v(n)=O(n^{-1})$. Then, by using \eqref{fn} we can get
$$
\frac{\log f_n(x)}{n}=\sum_{j=1}^r(u^{(j)}(\lfloor c_jn\rfloor )-u^{(j)}(n))+\sum_{j=1}^{k-1} (\widetilde u^{(i)}(\lfloor\widetilde c_i n\rfloor )-\widetilde u^{(i)}(\lfloor\widetilde c_{i-1} n\rfloor )) +O\Big(\frac{1}{n}\Big).
$$
Then, the fact that $\limsup_{n\to\infty} \frac{\log f_n(x)}{n}\ge 0$ comes from Lemma~\ref{corlemKePe}.
This finishes the proof of Lemma \ref{McMullen}.
\eproof

\medskip

\noindent Now we come to the proof of Theorem \ref{thm-0.7'}(2).  It is based on the following lemma and a modification of the Moran construction achieved in the proof of Theorem~\ref{Moran}. The proof of the lemma is postponed to the end of the section.

\begin{lem}\label{divergence}
Assume that $L_{\bf \Phi}$ is not a singleton. Then for all $\varepsilon>0$, there are two invariant quasi-Bernoulli measures $\nu_1$ and $\nu_2$ on $X$ with ${\bf\Phi}_*(\nu_1)\neq {\bf \Phi}_*(\nu_2)$, and  a non-negative vector $(h_i)_{1\le i\le k}$ such that $\sum_{i=1}^ka_ih_i\ge \dim_H X-\varepsilon$ and $h_{\nu_{l}\circ\tau_{i-1}^{-1}}(T_{i})\ge h_i$ for each $l\in \{1,2\}$ and $1\le i\le k$.
\end{lem}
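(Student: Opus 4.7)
The plan is to construct $\nu_1,\nu_2$ by perturbing a measure of maximal $\ba$-weighted entropy using small convex combinations with measures witnessing the non-degeneracy of $L_{\bf\Phi}$, then to approximate each perturbation by quasi-Bernoulli measures while preserving convergence of each factor entropy separately.

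First I would pick $\mu_1,\mu_2\in \M(X,T)$ with ${\bf\Phi}_*(\mu_1)\neq{\bf\Phi}_*(\mu_2)$, using that $L_{\bf\Phi}$ is not a singleton. Invoking Remark~\ref{dimX} together with the upper semi-continuity of $h^\ba_{(\cdot)}(T)$ on the compact set $\M(X,T)$, I would fix $\mu_0\in\M(X,T)$ attaining $h^\ba_{\mu_0}(T)=P^\ba(T,0)=\dim_H X$, and set $h_i^0:=h_{\mu_0\circ\tau_{i-1}^{-1}}(T_i)$. For a parameter $t\in(0,1)$ to be chosen later, form $\widetilde\mu_j=(1-t)\mu_0+t\mu_j$ for $j=1,2$. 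Since each $(\Phi_i)_*$ is affine on $\M(X,T)$ (Lemma~\ref{lem-2.1}(ii) applied to $\Phi\in\C_{asa}(X,T)^d$), one has ${\bf\Phi}_*(\widetilde\mu_1)-{\bf\Phi}_*(\widetilde\mu_2)=t\bigl({\bf\Phi}_*(\mu_1)-{\bf\Phi}_*(\mu_2)\bigr)\neq 0$; and since the measure-theoretic entropy $h_{(\cdot)\circ\tau_{i-1}^{-1}}(T_i)$ is affine on $\M(X,T)$, $h_{\widetilde\mu_j\circ\tau_{i-1}^{-1}}(T_i)\ge (1-t)h_i^0$ for every $i$.

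Next I would apply Theorem~\ref{thm-0.5'} to each $\widetilde\mu_j$ with the weight vector $(1,\ldots,1)$ in place of $\ba$: this produces a sequence of quasi-Bernoulli measures $(\nu_{j,n})_{n\ge 1}$ converging weak-star to $\widetilde\mu_j$ such that $\sum_{i=1}^kh_{\nu_{j,n}\circ\tau_{i-1}^{-1}}(T_i)\to\sum_{i=1}^kh_{\widetilde\mu_j\circ\tau_{i-1}^{-1}}(T_i)$. Combining this with the upper semi-continuity of each $h_{(\cdot)\circ\tau_{i-1}^{-1}}(T_i)$ along the weakly convergent sequence $(\nu_{j,n})_n$ forces coordinate-wise convergence $h_{\nu_{j,n}\circ\tau_{i-1}^{-1}}(T_i)\to h_{\widetilde\mu_j\circ\tau_{i-1}^{-1}}(T_i)$ for every $i$ — this is the argument indicated in Remark~\ref{simultapprox2}. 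Moreover, by the continuity of ${\bf\Phi}_*$ on $\M(X,T)$ (Lemma~\ref{lem-2.1}(ii)), ${\bf\Phi}_*(\nu_{j,n})\to{\bf\Phi}_*(\widetilde\mu_j)$ as $n\to\infty$.

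To finish, I would choose $t\in(0,1)$ small enough that $(1-t)\dim_HX\ge \dim_HX-\varepsilon/2$, fix $\delta>0$ with $\delta\sum_{i=1}^ka_i\le\varepsilon/2$, and then take $n$ large enough that simultaneously ${\bf\Phi}_*(\nu_{1,n})\neq{\bf\Phi}_*(\nu_{2,n})$ and $h_{\nu_{j,n}\circ\tau_{i-1}^{-1}}(T_i)\ge (1-t)h_i^0-\delta$ for $j\in\{1,2\}$ and $1\le i\le k$. Setting $\nu_l:=\nu_{l,n}$ for $l=1,2$ and $h_i:=\max\bigl(0,(1-t)h_i^0-\delta\bigr)$ yields the required data: $\nu_1$ and $\nu_2$ are invariant quasi-Bernoulli with ${\bf\Phi}_*(\nu_1)\neq{\bf\Phi}_*(\nu_2)$, and $\sum_{i=1}^ka_ih_i\ge (1-t)h^\ba_{\mu_0}(T)-\delta\sum_{i=1}^ka_i\ge \dim_HX-\varepsilon$. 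The main obstacle is the coordinate-wise entropy convergence: Theorem~\ref{thm-0.5'} alone only delivers convergence of the single quantity $h^\ba_{\mu_n}(T)$, so the clean way is to invoke it with the equal weight vector $(1,\ldots,1)$ and then squeeze each factor entropy using upper semi-continuity.
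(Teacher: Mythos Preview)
Your proof is correct and follows essentially the same approach as the paper: perturb a measure of maximal $\ba$-weighted entropy by a small convex combination to separate the ${\bf\Phi}_*$ values while keeping all factor entropies high, then approximate by quasi-Bernoulli measures using the coordinate-wise entropy convergence of Remark~\ref{simultapprox2}.

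The paper's argument is slightly more economical in one respect. It observes that the measure $\mu_0$ of maximal $\ba$-weighted entropy is already the unique $\ba$-weighted equilibrium state of the zero potential (Theorem~\ref{thm-0.2'} applied to $\Phi=0$), hence is itself quasi-Bernoulli. Thus the paper takes $\nu_1=\mu_0$ directly and only needs to perform one perturbation and one quasi-Bernoulli approximation (to produce $\nu_2$), whereas you perturb on both sides and approximate twice. This costs you nothing logically, but recognizing that $\mu_0$ is already quasi-Bernoulli would shorten your argument.
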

Let $\delta=|{\bf\Phi}_*(\nu_1)-{\bf\Phi}_*(\nu_2)|$. For each $1\le j\le r$ and $1\le i\le d$, let $g^{(j)}_i$ be a H\"older potential such that $\limsup_{n\to\infty}\|\Phi^{(j)}_{i,n}-S_ng^{(j)}_i\|_\infty/n\le \delta/8 r$. For each $1\le j\le r$ let ${\bf G}^{(j)}=( (S_ng^{(j)}_i)_{n=1}^\infty)_{1\le i\le d}$, and define ${\bf G}=\sum_{j=1}^r {\bf G}^{(j)}$. By construction, we have $\limsup_{n\to\infty}\|{\bf \Phi}_{\bc,n}-{\bf G}_{\bc,n}\|_\infty/n\le \delta/8$ (recall that ${\bf \Phi}_{\bc,n}$ is defined as in \eqref{Psichapeau}, and we define ${\bf G}_{\bc,n}$ similarly). Moreover, for each $l\in\{1,2\}$ we have $|{\bf \Phi}_*(\nu_l)-{\bf G}_*(\nu_l)|\le \delta/8$, hence $|{\bf G}_*(\nu_1)-{\bf G}_*(\nu_2)|\ge 3\delta/4$. Thus, the set $$D_{\bf G}= \bigcap_{l=1}^2 \{x\in X: \liminf_{n\to\infty} | {\bf G}_{\bc,n}(x)-n{\bf G}_*(\nu_l)|/n\le \delta/4\}$$ is included in the set of divergent points $X\setminus \bigcup_{\alpha\in L_{\bf\Phi}} E_{\{{\bf\Phi}^{(j)}\},\bc}(\alpha)$, and the conclusion will follow if we prove that
$$
\dim_H D_{\bf G}\ge  \dim_H X-\varepsilon.
$$
Now we briefly explain how to modify the Moran construction done in the proof of Theorem~\ref{Moran}. At first, without loss of generality, we suppose that the $c_j$'s are greater than 1. Also, we include the potentials $(S_ng^{(j)}_i)_{n=1}^\infty)$ in the family $\widetilde C$. Then,  the only changes are that for each $p\ge 1$, one takes $\mu_{2p-1}=\nu_1$ and $\mu_{2p}=\nu_2$ and to the controls \eqref{controlmoran1} and \eqref{controlmoran2} one adds  $L_{p-1}=o(\sqrt{N'_p})$. Then, for $p\ge 1$, let $n_p=L_{p-1}+\sqrt{N'_p}$. For $p$ large enough, for each $1\le j\le r$ we have $\lfloor c_j n_{p}\rfloor \in [L_{p-1}+\sqrt{N'_p}, L_{p}]$, so that for each $x\in\G$, $1\le j\le r$ and $1\le i\le d$  we have $\lim_{p\to\infty} S_{\lfloor c_j n_{2p-1}\rfloor }g^{(j)}_i(x)/\lfloor c_j n_{2p-1}\rfloor =\nu_1(g^{(j)}_i)$ and $\lim_{p\to\infty} S_{\lfloor c_j n_{2p}\rfloor }g^{(j)}_i(x)/\lfloor c_j n_{2p}\rfloor =\nu_2(g^{(j)}_i)$. Consequently, for each $x\in \G$, we have $\lim_{p\to\infty}{\bf G}_{\bc,n_{2p-1}}(x)/n_{2p-1}={\bf G}_*(\nu_1)$ and $\lim_{p\to\infty}{\bf G}_{\bc,n_{2p}}(x)/n_{2p}={\bf G}_*(\nu_2)$, so $\G\subset D_{\bf G}$. Moreover, the simultaneous controls from below of the entropies $h_{\nu_{l}\circ\tau_{i-1}^{-1}}(T_{i})$ by the same $h_i$ yield, for every $x\in \G$, $\liminf_{n\to\infty}\frac{\log\nu(B(x, e^{-n/a_1}))}{-n/a_1}\ge \sum_{i=1}a_ih_i\ge \dim_H X-\varepsilon$. \eproof

\smallskip

\noindent
{\it Proof of Lemma~\ref{divergence}.}
Let $g\in C(X)$ be the zero function. Let $\nu_1$ be the $\ba$-weighted equilibrium state of $g$. Then by Theorem~\ref{thm-0.2'} and Remark~\ref{dimX},
$\nu_1$ is quasi-Bernoulli,  and $h^{\ba}_{\nu_1}(T):=\sum_{i=1}^kh_{\nu_1\circ \tau_{i-1}^{-1}}(T_i)=P^{\ba}(T,0)=\dim_HX$.

%Hence to complete the proof of the lemma, it suffices to show that  for any $\varepsilon>0$,
%there exists an invariant quasi-Bernoulli measure $\nu_2$ such that ${\bf \Phi}_*(\nu_2)\neq {\bf \Phi}_*(\nu_1)$ and  $h^\ba_{\nu_2}(T)\geq \dim_HX-\varepsilon$.

Fix $\varepsilon>0$, and for each $1\le i\le k$ let $h_i=h_{\nu_1\circ \tau_{i-1}^{-1}}(T_i)-\varepsilon/(a_1+\cdots+a_k)$. Since $L_{\bf \Phi}$ is not a singleton, we can pick $\mu\in \M(X,T)$ such that ${\bf \Phi}_*(\mu)\neq {\bf \Phi}_*(\nu_1)$. Take a large positive integer $n$ so that
$$
h_{\mu_2\circ \tau_{i-1}^{-1}}(T_i)\ge h_{\nu_1\circ \tau_{i-1}^{-1}}(T_i)-\varepsilon/(2a_1+\cdots+2a_k),\quad (1\le i\le k)
$$
where $\mu_2=(1-1/n)\nu_1+(1/n)\mu$. Note that
 ${\bf \Phi}_*(\mu_2)=(1-1/n){\bf \Phi}_*(\nu_1)+(1-1/n){\bf \Phi}_*(\mu)\neq {\bf \Phi}_*(\nu_1)$. Now by Remark \ref{simultapprox2}, we can pick an invariant quasi Bernoulli measure $\nu_2$ so that $h_{\nu_2\circ \tau_{i-1}^{-1}}(T_i)\ge h_{\mu_2\circ \tau_{i-1}^{-1}}(T_i)-\varepsilon/(2a_1+\cdots+2a_k)$, hence $h_{\nu_2\circ \tau_{i-1}^{-1}}(T_i)\ge h_i$ for each $1\le i\le k$. By construction, the pair of measures $\{\nu_1,\nu_2\}$ is as desired.
\eproof

\subsection{{\bf Proof of Theorem~\ref{weakGibbs}}}
\label{s-5.6}
Since $P^\ba(T,\Phi)/A_k$ is by construction equal to the classical topological pressure of $\Phi^\ba$, the problem reduces to proving the following assertion: Let $\Psi=(\log (\psi_n))_{n=1}^\infty \in \C_{asa}(X,T)$. There exists a fully supported measure $\nu$ such that
$$
\nu(x_{|n})\approx_n \exp(-nP(T,\Psi)) \psi_n(x)\quad (\forall\, x\in X, \forall \, n\ge 1).
$$

By Lemma~\ref{lem-2.1}(iii)), we can fix $(g_p )_{p\ge 1}$, a sequence of H\"older potentials such that $\limsup_{n\to\infty}\|(\log(\psi_n)-S_ng_p )\|_\infty/n\le 2^{-(p+1)}$ for each $p\ge 1$. Then fix a sequence $(r_p)_{p\ge 1}$ such that for each $p\ge 1$ we have $\sup_{n\ge r_p} \|(\log(\psi_n)-S_ng_p )\|_\infty/n\le 2^{-p}$. In particular, we have $|P_\psi-P_{g_p }|\le 2^{-p}$, where $P_\psi$ and $P_{g_p }$ stand for $P(T,\Psi)$ and $P(T,g_p )$ respectively.

For each $p\ge 1$, let $\mu_p$ be a Gibbs state for $g_p $ and $\kappa_p>1$ a constant such that
$$
\kappa_p^{-1}\le  \frac{\mu_p(x_{|n})}{\exp(-n P_{g_p } \exp (S_ng_p (x))} \le \kappa_p \quad (\forall\, x\in X, \forall \, n\ge 1).
$$
Let $(N_p)_{p\ge 1}$ be a sequence of integers such that
$$
\begin{cases}
N_p\ge \max (r_p,r_{p+1}),\\
(\log (\kappa_1)+\cdots +\log (\kappa_{p+1}))+L_{p-1}+M_{p+1}=o(\sqrt{N_p}),
\end{cases}
$$
where $L_{p}=\sum_{j=1}^{p}N_j$, and $M_p=\max\{\|g_j\|_\infty: 1\le j\le p\}$.

For each $p\ge 1$ let $\mathcal{F}_p$ denote the $\sigma$-algebra generated by $\left\{[I]: I\in \A_1^{N_p}\right\}$. Then denote by $\widetilde \mu_p$ the restriction of $\mu_p$ to $\mathcal{F}_p$ and define
\begin{equation*}
\begin{split}
%\displaystyle\nu_p&=\otimes_{l=1}^p \widetilde\mu_l\text{ on $\Big(X,\otimes_{l=1}^p \mathcal{F}_p\Big )$}, \quad p\ge 1,\\
 \nu&=\otimes_{p=1}^\infty \widetilde\mu_p\text{ on $\Big(X,\otimes_{p=1}^\infty \mathcal{F}_p\Big )$}.
\end{split}
\end{equation*}
For $n\ge N_1$ let $t(n)=\max\{p: L_p\le n\}$. For any $x\in X$ and $n\ge 1$ we have
$$
\nu(x_{|n})=\Big (\prod_{p=1}^{t(n)}\mu_p(T^{L_{p-1}}x_{|N_p})\Big ) \mu_{t(n)+1}\big (T^{L_{t(n)}}x_{|n-L_{t(n)}}\big ).
$$
For each $1\le p\le t(n)-1$ we have
\begin{eqnarray*}
&&\big |\log \big (\mu_p(T^{L_{p-1}}x_{|N_p})\big )- P_\psi N_p-S_{N_p}g_{t(n)+1}(T^{L_{p-1}}x_{|N_p})\big|\\
&\le & \log (\kappa_p) +|P_\psi-P_{g_p }|N_p+\big |S_{N_p}(g_p -g_{t(n)+1})(T^{L_{p-1}}x_{|N_p})\big|\\
&\le &  \log (\kappa_p) +2^{-p}N_p+2M_{t(n)+1} N_p.
\end{eqnarray*}
Moreover,
\begin{eqnarray*}
&&\big |\log \big (\mu_{t(n)}(T^{L_{t(n)-1}}x_{|N_{t(n)}})\big )- P_\psi N_{t(n)}-S_{N_{t(n)}}g_{t(n)+1}(T^{L_{{t(n)}-1}}x_{|N_{t(n)}})\big|\\
&\le & \log (\kappa_{t(n)}) +|P_\psi-P_{g_{t(n)}}|N_{t(n)}+\big |S_{N_{t(n)}}(g_{t(n)}-g_{t(n)+1})(T^{L_{t(n)-1}}x_{|N_{t(n)}})\big|\\
&\le &  \log (\kappa_{t(n)}) +2^{-{t(n)}}N_{t(n)}+\|S_{N_{t(n)}}(g_{t(n)}-g_{t(n)+1})\|_\infty.
\end{eqnarray*}
Also, denoting $n-L_{t(n)}$ by $R_n$ we have
\begin{eqnarray*}
&&
\big |\log \big (\mu_{t(n)+1}(T^{L_{t(n)}}x_{|R_n})\big )-P_\psi R_n -S_{R_n}g_{t(n)+1}(T^{L_{{t(n)}}}x_{|R_n})\big|\\
&\le&   \log (\kappa_{t(n)+1})+|P_\psi-P_{g_{t(n)+1}}| R_n\le \log (\kappa_{t(n)+1})+ 2^{-{t(n)+1}}R_n.
\end{eqnarray*}
We deduce from the previous estimations that
\begin{eqnarray*}
&&\big |\log (\nu(x_{|n}))-n P_\psi-\log (\psi_n(x))\big |\\
&\le& \|\log (\psi_n)-S_ng_{t(n)+1} \|_\infty+ \big |\log (\nu(x_{|n}))-n P_\psi-S_ng_{t(n)+1}(x)\big |\\
&\le& \|\log (\psi_n)-S_ng_{t(n)+1} \|_\infty+\|S_{N_{t(n)}}(g_{t(n)}-g_{t(n)+1})\|_\infty\\
&&+2M_{t(n)+1}L_{t(n)-1}+ 2^{-{t(n)+1}}(n-L_{t(n)})+ \sum_{p=1}^{t(n)} 2^{-p}N_p +\sum_{p=1}^{t(n)+1} \log (\kappa_p).
\end{eqnarray*}
Since $n\ge N_{t(n)}\ge \max (r_{t(n)},r_{t(n)+1})$ we have $\|\log (\psi_n)-S_ng_{t(n)+1} \|_\infty\le 2^{-t(n)+1}n$ and $\|S_{N_{t(n)}}(g_{t(n)}-g_{t(n)+1})\|_\infty \le (2^{-t(n)}+2^{-t(n)+1})N_{t(n)}$. So both terms are $o(n)$, uniformly in $x$. Moreover, by construction $2M_{t(n)+1}L_{t(n)-1}=(o(\sqrt n))^2=o(n)$, $2^{-{t(n)+1}}(n-L_{t(n)})+\sum_{p=1}^{t(n)} 2^{-p}N_p =o(n)$ and $\sum_{p=1}^{t(n)+1} \log (\kappa_p)=o(\sqrt n)$ uniformly in $x$. Consequently,
$$
\lim_{n\to\infty} \frac{1}{n}\sup_{x\in X} \big |\log (\nu(x_{|n}))-n P_\psi-\log (\psi_n(x))\big |=0.
$$

\noindent
When $\Phi\in C_{aa}(X,T)$ and satisfies the bounded distortion property, relation \eqref{massball1} is obtained by using \eqref{ballmass}, which holds for any positive measure $\nu$, and then the quasi-Bernoulli property of $\mu$. Then \eqref{massball1} yields  \eqref{logmassball1} in this case. To get \eqref{logmassball1} in the general case, let $(g_p)_{p\ge 1}$ as above. For each $p\ge 1$, let $\mu_p$ be the unique $\ba$-weighted-equilibrium state associated with $g_p$. By construction, we have $\lim_{p\to\infty}\limsup_{n\to\infty}\|{\bf \Psi}^{(i)}_{\mu, n}-{\bf \Psi}^{(i)}_{\mu_p,n}\|_\infty/n=0$ for each $1\le i\le k$; in particular ${\bf \Psi}^{(i)}_{\mu}\in \C_{asa}(X,T)$. Fix $\varepsilon >0$. Applying \eqref{ballmass} to $\mu$, we can find $p_\varepsilon\in\N_+$ and $N_\varepsilon\in N_+$ such that for $n\ge N_\varepsilon$ we have
$$
\begin{cases}
\|{\bf \Psi}^{(i)}_{\mu,n}-{\bf \Psi}^{(i)}_{\mu_{p_\varepsilon},n}\|_\infty\le n\varepsilon&\forall\, 1\le i\le k\\\exp (-2\ell_k(n)\varepsilon)\mu_{p_\varepsilon}(B)\le \mu (B)\le \exp (2\ell_k(n)\varepsilon)\mu_{p_\varepsilon}(B)&\forall\, B\in\mathcal{B}_n
\end{cases}.
$$
Let $c(x,n)$ be associated with $\mu_{p_\varepsilon}$ like in \eqref{logmassball1} for $\mu_{p_\varepsilon}$. We know that $c(x,n)$ is bounded independently of $x$ and $n$ by a constant $c(\mu_{p_\varepsilon})$. By using the validity of  \eqref{logmassball1} for $\mu_{p_\varepsilon}$, for every $n\ge N_\varepsilon$ large enough so that $c(\mu_{p_\varepsilon})\le n \varepsilon$, for every $x\in X$ and $B=B(x,e^{-n/a_1})$ we get
\begin{eqnarray*}
&&\Big |\log \mu(B)-{\bf \Psi}^{(1)}_{\mu,n}(x)+\sum_{i=2}^k {\bf \Psi}^{(i)}_{\mu,\ell_i(n)}(x)- {\bf \Psi}^{(i)}_{\mu,\ell_{i-1}(n)}(x)\Big|\\
&\le&  \Big |\log \mu_{p_\varepsilon}(B)-{\bf \Psi}^{(1)}_{\mu_{p_\varepsilon},n}(x)+\sum_{i=2}^k {\bf \Psi}^{(i)}_{\mu_{p_\varepsilon},\ell_i(n)}(x)- {\bf \Psi}^{(i)}_{\mu_{p_\varepsilon},\ell_{i-1}(n)}(x)\Big|\\
&&+ |\log \mu(B)-\log \mu_{p_\varepsilon}(B)|+2\sum_{i=1}^k\|{\bf \Psi}^{(i)}_{\mu,\ell_i(n)}-{\bf \Psi}^{(i)}_{\mu_{p_\varepsilon},\ell_i(n)}\|_\infty\\
& \le & c(\mu_{p_\varepsilon})+(2k+2) \ell_{k}(n)\varepsilon\le (n+(2k+2) \ell_{k}(n))\varepsilon.
\end{eqnarray*}
This yields the desired result.
 \eproof

\noindent{\bf Acknowledgements}. Both authors were partially supported by the   France/Hong Kong joint research scheme {\it PROCORE} (projects 20650VJ,  F-HK08/08T). Feng was also partially
supported by the RGC grant (project CUHK401008) in the Hong Kong Special Administrative
Region, China.

\end{document}